\newcommand{\R}{\mathbb{R}}
\renewcommand{\;}{\, ; \,}
\newcommand{\eps}{\epsilon}
\newcommand{\tbl}[1]{#1} %\textcolor{blue}{#1}
\newcommand{\tblu}[1]{\textcolor{black}{#1}}
\newcommand{\norm}[1]{\lVert #1 \rVert}
\newcommand{\abs}[1]{\lvert #1 \rvert} 
\newcommand{\N}{\mathbb{N}}
\newcommand{\supp}[1]{\text{supp}\,(#1)}
\newcommand{\codim}{\textrm{codim}}
\renewcommand{\dim}{\textrm{dim}}
\newcommand{\f}[1]{{}}
\newcommand{\s}{\hspace{0.5pt}}
\newcommand{\PP}{\mathcal{P}}
\newcommand{\XX}{\mathcal{X}}
\def\IX{{\mathcal{X}^{-1}}} 
\newcommand{\COLOP}{\mathcal{Q}}
\newcommand{\sslimit}{\mathcal{S}}
\newcommand{\ccdot}{\,\cdot\,}
\newcommand{\p}{\partial}
\renewcommand{\;}{\hspace{0.5pt} ; \hspace{0.5pt}}
\newcommand{\antticomm}[1]{{\color{black}#1}}
\def\gin{\eta_{1}}
\def\ginp{\tilde{\eta}_{1}}
\def\gout{\gamma_{1}}
\def\dgin{\dot\eta_{1}}
\def\dginp{\dot{\tilde{\eta}}_{1} }
\def\tgin{\eta_{2}}
\def\tginp{\tilde{\eta}_{2}}
\def\gino{\sigma_{1}}
\def\ginpo{\tilde{\sigma}_{1}}
\def\nnn{{n}}
\def\OVS{{\overline{\mathcal{P}}^+ }}
\def\SP{{\mathcal{P}}^+ }
\def\IX{{\mathcal{X}^{-1}}} 
\definecolor{traceyedit}{RGB}{123, 25, 164}
\newcommand{\tjb}[1]{{#1}}
\newcommand{\antti}[1]{{#1}}
\newtheorem*{rep@theorem}{\rep@title}
\newcommand{\newreptheorem}[2]{%
\newenvironment{rep#1}[1]{%
 \def\rep@title{#2 \ref{##1}}%
 \begin{rep@theorem}}%
 {\end{rep@theorem}}}
\newtheorem{theorem}{Theorem}[section]
\newtheorem{lemma}[theorem]{Lemma}
\newtheorem{proposition}[theorem]{Proposition}
\newtheorem{corollary}[theorem]{Corollary}
\newtheorem{definition}[theorem]{Definition}
\theoremstyle{definition}
\newtheorem{remark}[theorem]{Remark}
\title{An Inverse Problem for the Relativistic Boltzmann Equation}
\author[Balehowsky]{Tracey Balehowsky}
\address{Department of Mathematics and Statistics, University of Calgary}
\email{tracey.balehowsky@ucalgary.ca}
\author[Kujanp\"a\"a]{Antti Kujanp\"a\"a}
\address{Department of Mathematics and Statistics, University of Helsinki}
\author[Lassas]{Matti Lassas}
\address{Department of Mathematics and Statistics, University of Helsinki}
\email{matti.lassas@helsinki.fi}
\author[Liimatainen]{Tony Liimatainen}
\address{Department of Mathematics and Statistics, University of Jyv\"askyl\"a, \newline \noindent Department of Mathematics and Statistics, University of Helsinki}
\email{tony.t.liimatainen@jyu.fi}
\date{\today}                                         
\begin{document}

\maketitle

\begin{abstract}
We consider an inverse problem for the Boltzmann equation on a globally hyperbolic Lorentzian spacetime $(M,g)$ with an unknown metric $g$. We consider measurements done in a neighbourhood $V\subset M$  of a timelike path $\mu$ that connects a point $x^-$ to a point $x^+$. The measurements are modelled by a source-to-solution map, which maps a source supported in $V$ to the restriction of the solution to the Boltzmann equation to the set $V$. We show that the source-to-solution map uniquely determines the Lorentzian spacetime, up to an isometry, in the set $I^+(x^-)\cap I^-(x^+)\subset M$. The set $I^+(x^-)\cap I^-(x^+)$ is the intersection of the future of the point $x^-$ and the past of the point $x^+$, and hence is the maximal set to where causal signals sent from $x^-$ can propagate  and return to the point $x^+$. The proof of the result is based on using the nonlinearity of the Boltzmann equation as a beneficial feature for solving the inverse problem.

\end{abstract}

\tableofcontents

%%%%%%%%%%%%%%%%%%%%%%%%%%%%%%%%%%%%%%%%%%%
%
%       Sections
%
%%%%%%%%%%%%%%%%%%%%%%%%%%%%%%%%%%%%%%%%%%%

\section{Introduction}

% MODEL \\
% MOTIVATION\\
% INVERSE PROBLEM\\
% RESULT\\
% EARLIER RESULTS\\
% SKETCH OF PROOF\\

% % % % % % % % % % % {\color{gray}
% % % % % % % % % % % The question ``From what information can an observer in a physical system determine properties of that system?'' is a fundamental one, and seeking an answer such a question is the archetypal expression of an inverse problem.  While there is a considerable amount of literature on the study of inverse problems for wave or linear transport equations, little is known for inverse problems involving key nonlinear transport equations. In this paper, we address an inverse problem for the nonlinear kinematic behaviour of particles of matter or radiation in some media. %From observer measurements, we show that we can uniquely determine certain physical properties of the media studied.
% % % % % % % % % % % }
%A common goal in all inverse problems is to recover information from indirect, incomplete or noisy observations.

In this paper we study what information can be recovered from indirect measurements of a system governed by the \tjb{Boltzmann equation.} The Boltzmann equation describes nonlinear particle dynamics
% in a physical system determine properties of that system?'' is a fundamental one, and seeking an answer such a question is the archetypal expression of an inverse problem.  
% 
% While there is a considerable amount of literature on the study of inverse problems for wave or linear transport equations, little is known for inverse problems involving key nonlinear transport equations. In this paper, we address an inverse problem for the nonlinear kinematic behaviour of particles of matter or radiation in some media. %From observer measurements, we show that we can uniquely determine certain physical properties of the media studied.
%Nonlinear particle dynamics 
which arise in many areas of \tjb{physics, such as} atmospheric chemistry, cosmology and condensed matter physics. For example, in cosmology, the Boltzmann equation describes how radiation is scattered by matter such as dust, stars and plasma on an Einstein spacetime. In condensed matter physics, the Boltzmann \tjb{equation may describe the transportation} of electrons or electron-phonon excitations in a media, which can be a metal or a semiconductor. In such situations, the geometry of the spacetime or resistivity of the media may be described by a Lorentzian manifold. % governs free propagation of light and matter in the media and . %, which governs free propagation of light and matter in the media. % $g$ on a manifold $M$ which models the spacetime.
%
%In both of these cases, free propagation of light and matter is governed by a Lorentzian manifold, which in the
%latter case models the resistivity of the media.
%In the former case, the Lorentzian manifold is of course the Einstein's spacetime and in the latter Lorentzian metric models resistivity of the media. 
In particular, we investigate the inverse problem of recovering the corresponding Lorentzian manifold of a system behaving according to the Boltzmann equation by making measurements in a confined, possibly small, area in space and time. 

In the kinetic theory we adopt, %fparticles is characterized by two principles. The first is causal one, namely that 
particles travel on a Lorentzian manifold $(M,g)$ along trajectories defined by either future-directed timelike geodesics (for positive mass particles) or future-directed lightlike geodesics (in the case of zero mass particles). In the absence of collisions and external forces, the kinematics of a particle density distribution $u\in C^\infty (TM)$ is captured by the \emph{Vlasov equation} \cite{Ringstrom}, \cite{Rendall} (or Liouville-Vlasov equation \cite{Choquet-Bruhat})  
\[
\XX u(x,p) =0 \quad \text{for } (x,p)\in \OVS M.
\]
Here $u:TM\to \R$ defines a density distribution of particles with position and velocity components $(x,p)\in TM$. Lightlike particles with position and velocity $(x,p)$ are defined by $g(p,p)=0$, timelike particles are defined by $g(p,p)<0$, and spacelike particles are defined by $g(p,p)>0$. The set $\OVS M$ above is the subset of $TM$ of the future directed causal (lightlike and timelike) velocity vectors and $\mathcal{X}: C^\infty (TM) \rightarrow C^\infty (TM)$ is the geodesic vector field. 
%The integral curves of $\XX$ are geodesic velocity curves in the tangent space $TM$. 
In terms of the Christoffel symbols $\Gamma_{\lambda\mu}^\alpha$, the latter is given as
\[
\mathcal{X} = \sum_{\alpha,\lambda,\mu}  p^\alpha  \frac{\partial}{\partial x^\alpha}  -  \Gamma^\alpha_{\lambda \mu}   p^\lambda  p^\mu  \frac{\partial}{\partial p^\alpha },
\]
%The in
where $\alpha, \lambda,\mu \in \{ 0,1,2,\ldots, \text{dim}(M)-1\}$. The behaviour of binary collisions is characterized by a \emph{collision operator} %$\COLOP[u,v]$
 \begin{equation}\label{collision_operator_formula_intro}
\COLOP [u, v] (x,p)  
%=  A(x,p,q, p', q')  \delta_{\tilde\Sigma } 
=\int_{\Sigma_{x, p }}  \left[u (x,p) v (x,  q )  - u (x,p' ) v (x,  q' ) \right] A(x,p,q, p', q') dV (x,p \; q,p',q'),
\end{equation}
where   $u,v \in C^\infty( \OVS M)$.
%where
%\[
%\Sigma_{x,p} := \{ ( p', q' , q) \in   T_x(\R \times N)^3 : p + q = p' + q' \}.
%\]
The volume form $ dV (x,p \; q,p',q') $ is a smooth volume form induced on the submanifold
\[
\Sigma_{x,p} := \{p\}\times  \{ (q,p',q')\in ( \overline{\mathcal{P}}_x M )^3  \,:\, p+q = p'+q'\} \subset (T_x M)^4
\]
from a choice of volume form on $(TM)^4$ (for example, one could choose the Leray form \cite[p. 328]{Choquet-Bruhat}).
%\antticomm{The choice of the volume form on $TM$ does not play a significant role in the work.}
%\[
% \overline{\mathcal{P}}_x^+ M := \{ (x,p) \in T_x M \setminus \{0 \} : -g(p,p)\geq 0, \ p \  \text{future-directed} \  \},
%\]
Above $\overline{\mathcal{P}} M\subset TM$ denotes the bundle of all past and future directed nonzero causal vectors on $M$. We also write $\mathcal{P} M \subset TM$ for the set of all nonzero past and future directed timelike vectors on $M$. The submanifold $\Sigma_{x,p}$ defines the set of particle collisions where conservation of 4-momentum is satisfied \cite[p. 328]{Choquet-Bruhat}, \cite{israel1963relativistic}. 
\begin{remark}
\antticomm{The momentum is often defined in the literature as a covector in $T^*M$ instead of a tangent vector in $TM$. This is  natural in the Hamiltonian context, where motion is described by the canonical flow along the level sets of the given Hamiltonian 
%\footnote{Note that  the conserved quantity described by the kinetic Hamiltonian would be mass rather than energy.} 
on the symplectic manifold $T^*M$. In our setting, %this function would be the purely kinematic $(x,\xi) \mapsto -\frac{1}{2}g(\xi,\xi)|_x$ and 
 the link between the two pictures on $TM$ (4-position, 4-velocity) and $T^*M$ (4-position, 4-momentum) is given by the isomorphism $(x,p) \to (x,-p^\flat )$. This isomorphism implies that the manifold $\Sigma_{x,p}$ defines conservation of momentum.} %Note that the conserved quantity described by the kinetic Hamiltonian in the relativistic context is mass rather than energy.} 
\end{remark}
For each $(x,p)\in \OVS M$ the function
\[
A(x,p, \cdot,\cdot,\cdot )\in C^\infty(\Sigma_{x, p }) 
\]
is called a \emph{collision kernel} (or a shock cross-section). %Essentially, this operator averages the numbers of 

We assume that $(M,g)$ is a globally hyperbolic $C^\infty$ smooth Lorentzian manifold (see Section~\ref{prelim}). Global hyperbolicity allows us to impose an initial state for the particle density function $u$ by using a Cauchy surface $\mathcal{C}\subset M$. Write $\mathcal{C}^\pm$ for the causal future ($+$) or the causal past ($-$) of $\mathcal{C}$. Given an initial state of no particles in $\mathcal{C}^-$ and a particle source $f$ supported in $\mathcal{C}^+$, the kinematics of a distribution of particles $u$ is given by the \emph{relativistic Boltzmann equation} \cite{Choquet-Bruhat}, \cite{Ringstrom}, \cite{Rendall},
\begin{equation}\label{boltzmann}
 \begin{split}
\mathcal{X}u(x,p)-\COLOP[u,u](x,p)&= f(x,p), \quad  (x,p)\in
\OVS M  \\
u(x,p)&= 0, \quad\quad\quad \ \  (x,p)\in \OVS \mathcal{C}^-.
 \end{split}
\end{equation}
%Here $\mathcal{C}^-\subset M$ is the causal past of the Cauchy surface $\mathcal{C}$. 
The space $\OVS \mathcal{C}^\pm\subset\OVS M$ denotes the set of future directed causal vectors on the causal future ($+$) or past ($-$) of the Cauchy surface $\mathcal{C}$. %i.e. $\OVS \mathcal{C}^\pm=\OVS [J^\pm(\mathcal{C})]$ (see~\eqref{causal_spaces} for more details). 
Though we do not consider it in this paper, Boltzmann's H-Theorem (which states that the entropy flux is nonincreasing in time) can be shown to hold for the relativistic Boltzmann kinematic model \eqref{boltzmann} when the collisions are reversible ($A(x,p,q, p', q') = A(x,p',q', p, q)$). We refer the reader to the works of \cite{Choquet-Bruhat}, \cite{Ringstrom}, or \cite{Rendall} for more details on this matter. While equation \eqref{boltzmann} is defined on the entire space $\OVS M$ (which contains particles of all masses), in this paper we will consider \eqref{boltzmann} for particles with mass contained in a finite interval which includes zero. Here zero mass particles are lightlike particles and particles with nonzero mass correspond to timelike particles.

%the physical quantities of total mass~\fottnote{Here mass refers to the quantity $\int,~\f{What is this specifically? Invariant mass might refer to ADM mass, which is not what we are discussing.} total energy, and total momentum can be defined and shown to be conserved under the relativistic Boltzmann kinematic model (see \cite{Choquet-Bruhat}, \cite{Ringstrom}, or \cite{Rendall} for more details).

%Here and below write $L^+V \subset TM$ for the bundle of lightlike future directed vectors over a set $V$ and 

We study an inverse problem where we make observations in an open neighbourhood $V$ of a timelike geodesic in $M$. We assume that $V$ has compact closure without further notice. We denote the bundle of all lightlike future directed vectors with base points in $V$ by $L^+V\subset TV$. The observations are captured by the 
\emph{source-to-solution map for light observations},%~\f{We use $\mathcal{L}^+$ and $L^+$. Let's pick one. TB: the first is the projection of $L^+$ to the base.} %which is the assignment
\begin{equation}\label{light-source-soln}
 \Phi_{L^+V} : B \rightarrow C_b( L^+V ) ,\quad \Phi_{L^+V}(f) := \Phi(f)|_{L^+V}.
\end{equation}
Here $\Phi$ is the \emph{source-to-solution map} for the relativistic Boltzmann equation
%{\color{red}(I cleaned this a little bit and restricted the image into the light-like bundle --Antti)}
\begin{equation}\label{supposedtolooklikethis}
\Phi : B \rightarrow 
%C_{\OVS([0,\infty)\times N) }^0  ( \OVS V ) 
C_b( \OVS M ), \quad \Phi(f) = u,
\end{equation}
where $u$ solves \eqref{boltzmann} with a source $f$. The set $B$ is a neighbourhood of the origin in the function space $C_K( \OVS \mathcal{C}^+):= C^0_K(\OVS \mathcal{C}^+)$, 
%$C_K( \OVS \mathcal{C}^+)= \{f\in C(\OVS \mathcal{C}^+)\,:\, \text{supp}(f)\subset K\}$, 
where $K\subset \OVS \mathcal{C}^+$ is a fixed compact set.
For integers $k\ge 0$ we define  
 \[
 C_K^k(\OVS \mathcal{C}^+) := \{f\in C^k(\OVS \mathcal{C}^+)\,:\, \text{supp}(f)\subset K\}
 \]
 equipped with the $C^k$ norm\footnote{We define the $C^k$ norm of a function in $C_K^k(\OVS \mathcal{C}^+)$ by fixing a partition of unity and summing up the $C^k$ norms of the local coordinate representations of the function.} and
 \[
 C_b(\OVS \mathcal{C}^+) := \{f\in C(\OVS \mathcal{C}^+)\,:\,f \text{ is bounded}\}
 \]
 equipped with the sup-norm. Note that these spaces above are Banach spaces.
%We also denote by $L^+V \subset TM$ the bundle of light-like future directed vectors whose base points are in $V$.
% and $\mathcal{C}^+\subset M$ is the causal future of $\mathcal{C}$.
%((0,\infty) \times N) \big)$, and $B_V = B \cap C_c^\infty(\SP V )$. 
%Here we define $\SP V = \OVS P\setminus L^+V$. 
Loosely speaking, the operator $\Phi_{L^+V}$ corresponds to measuring the photons received in $V\subset M$ from particle interactions governed by the Boltzmann equation \eqref{boltzmann}.

Known uniqueness and existence results to \eqref{boltzmann} depend inextricably on the properties of the collision kernel $A$. For general collision kernels, existence of solutions to \eqref{boltzmann} is not known. Complicating the analysis is the fact that it is not completely known what are the physical restrictions on the form of the collision kernel \cite[p. 155]{Ringstrom}. Further, in the case of Israel molecules \cite{israel1963relativistic} where one has a reasonable description of what the collision kernel should be, the collision operator can not be seen as a continuous map
% 
% term~\f{We use both $dV(x,p; q, p',q')$ and $dV_{x,p}(q,p',q')$. Let's pick one.} 
% \[
% \COLOP_{loss}[u, v] (x,p):= u (x,p) \int_{\Sigma_{x, p }}   v (x,  q ) A(x,p,q, p', q') dV_{x,p} (q,p',q'). %dV (x,p \ ; \ q,p',q') 
% \]
% cannot be written as a continuous map 
% 
between weighted $L^p$ spaces~\cite[Appendix F]{Ringstrom}. It is not always clear what is the relationship between conditions on the collision operator $\COLOP$ and the induced conditions on its collision kernel $A$. %Thus, current results for existence and uniqueness of solutions to \eqref{boltzmann} have imposed various restrictions on the collision kernel, as well as on the geometry and the data. 

To address the well-posedness of \eqref{boltzmann}, we consider collision kernels of the following type. Here we denote
\[
 \Sigma := \bigcup_{(x,p) \in \overline{\PP}M} \Sigma_{x,p} \subset (TM)^{ 4}.
\]
%and $L^+M$ and $\PP M$ are the bundles of light-like and timelike vectors over $M$ respectively.% $\PP M$ the bundle of t
\begin{definition}[Admissible Kernels]\label{good-kernels}
%Let $( M = \R\times N,g)$ be a Lorentzian spacetime, $r_1,r_2>0$ and $\hat\mu : [-1,1] \rightarrow (0,\infty) \times N$ be a smooth timelike curve.  For $-1< s^{-} < s^{+} < 1$, set $x^{\pm}:= \hat\mu (s^{\pm})$ and $W :=  I^-(x^{+}) \cap I^+(x^{-})$. 
We say that $A:\Sigma\subset (T M)^4\to\R$ is an \emph{admissible collision kernel} with respect to a  relatively compact open set 
%\footnote{Note that we have added `relatively compact open set' and removed the property that $W$ is a proper subset on Sep. 6, 2021. - Matti} 
 $W\subset M$ if $A$ satisfies
\begin{enumerate}
     \item\label{conditiony} $A\in C^\infty(\Sigma  )$. 
     
     \item\label{conditionyyyyy} The set $\pi (\text{supp} A)$ is compact and contains $W$ as a subset. \footnote{As we will take $W$ to be the largest domain of causal influence for the set where we take measurements (see \eqref{W-set}, interactions outside $W$ do not influence our observations. Thus for simplicity we assume that $A$ is compactly supported. Here and below $\pi$ denotes the projections $\pi:(TM)^4\to M$ and $\pi:TM\to M$ to the base point.}
     
     \item\label{condition4}  $A(x, p, q ,p',q')>0$, for all $(x,p,q,p',q') \in \Sigma \cap (L^+W \times \overline{\PP} W \times \PP W \times  \PP W)$. 
     % \footnote{\antti{removed c. --antti}}
     %\tbl{TB: Changed from $A(x, p, q ,p',q')>0$, for all $(x,p,q,p',q') \in \Sigma \cap (L^+W \times TW \times P^{m_1}W \times  P^{m_2}W)$}
     %$A(x, p, q ,p',q')>0$, for all $(x,p,q,p',q') \in \Sigma \cap (L^+W \times TW \times P^{m_1}W \times  P^{m_2}W)$.
     
     %
     
     %
     \item\label{conditionyy} There is a constant $C>0$ such that for all $(x,p) \in \OVS M$,
     \[
      \|A(x,p, \cdot,\cdot,\cdot)\|_{L^1(\Sigma_{x,p})} :=   \int_{\Sigma_{x,p}} | A(x,p, q, p',q')  | dV(x,p \; q,p',q')\leq C.
      %\quad \text{for} \quad ,
     \]
% 
%      
%      \|A(x,p, \cdot, \cdot, \cdot)\|_{L^1(\Sigma_{x,p})} \leq C,$ for every $(x,p) \in \OVS M$.
%       %
     \item\label{conditionyyyy} For every $(x,p) \in \OVS M$ the function $F_{x,p}(\lambda) :=  \|A(x,\lambda p, \cdot, \cdot, \cdot)\|_{L^1(\Sigma_{x,\lambda p})}$, \tbl{$\lambda\in \R$}, is continuously differentiable at $\lambda=0$ and satisfies $F_{x,p}(0) = 0$. 
 \end{enumerate}
\end{definition}
If there is no reason to emphasize the set $W$ we just use the term \emph{admissible collision kernel}. The most important case is when $W$ is a time-like diamond as in the main theorem, see Figure~\ref{fig:W_pic}. The reader may take this as an assumption, although many of the steps in our main proof can be carried through for more general $W$. 
%Although many steps apply 
%Although many of the 
%We
% In Definition \ref{good-kernels}, we set $L^+W$ to be the light-like bundle of future-directed vectors over $W$, $\PP W  = \overline{\PP} W \setminus L^+ W$,
%   \[
%   \|A(x,p, \cdot,\cdot,\cdot)\|_{L^1(\Sigma_{x,p})} :=   \int_{\Sigma_{x,p}} | A(x,p, q, p',q')  |dV(x,p ; q,p',q'), \quad \text{for} \quad (x,p) \in \OVS M,
%  \]
%  and
%  \[
% \text{supp} (x \mapsto A(x, \cdot, \cdot, \cdot, \cdot ) ) := \text{cl} \{ x \in M :  \exists (p,q,p',q' ) \in T_x M^4 \text{ such that }  A(x,p,q,p',q') \neq 0 \}   ,
% \]
% where $\text{cl}$ refers to the topological closure.

%Conditions (\ref{conditiony}) and (\ref{condition4}) in Definition \ref{good-kernels} ensure that the kernel defines reasonable binary collisions in the unknown set $W$. 
The condition \eqref{condition4} physically means that the collision of particles produces electromagnetic radiation, which propagate along rays of light. We expect that it might be possible to prove similar results for the inverse problem as the ones in this paper with this condition relaxed. However, in that case one needs to construct the (conformal class of the) Lorentzian manifold by a different method than in \cite{Kurylev2018}. We use the method of \cite{Kurylev2018}, which is based on observing rays of light resulting from the nonlinearity of the model. This is why we impose condition  \eqref{condition4}.  
%  \footnote{Our proofs carry through with the following weaker assumption: For each $x \in W$, let $\gamma_{(x,p)}$ be the geodesic with initial data $(x,p)$, and suppose there exists a set $l=l_x  \subset  L^+_x M$ of future-directed light-like vectors such that $G_{l}^+ := \{ y \in M : y = \gamma_{(x,p)} (s), \ s\in [0,\infty), \  p \in l \}$ satisfies $G_{l}^+ = \mathcal{L}^+(x)$ and $A(x, p , q , p' , q' ) > 0$ for every $p \in l$ and $ p',q' \in \mathcal{P}_x^+W$ in our notation. }
%  ~\f{TB: how is this wording? {\color{red} ''massive particle'' means something else in physics. I changed ''massive'' to ''with mass'' -A}} 

The conditions
%(\ref{conditiony})
%-(\ref{conditionyy}) and (\ref{conditionyyyy})
(\ref{conditionyy}) and (\ref{conditionyyyy}) are imposed to have control of collisions of relatively high and low momenta particles. These two conditions are required for our proof of the well-posedness of the Cauchy problem for the Boltzmann equation \eqref{boltzmann}. Roughly speaking, these conditions are valid when collisions happen mostly for mid-energy particles. While these conditions arise quite naturally in our proof of well-posedness, there are also different assumptions under which the well-posedness can be proven (at least in Euclidean spaces). We refer to the works \cite{diperna1989cauchy, villani2002review} for examples and references for such cases. (More references to works around the subject will also be given below.) We emphasize that our solution method for the inverse problem is valid whenever the forward problem is well-posed in suitable function spaces. Regarding the inverse problem, the conditions (\ref{conditionyy}) and (\ref{conditionyyyy}) can be replaced by other conditions, which guarantee well-posedness of the Boltzmann equation for small data.

%With the assumption that the collision kernel is admissible, we prove well-posedness of \eqref{boltzmann}:
\begin{theorem}\label{boltz-exist}

Let $n\ge3$ and $(M, g)$ be a globally hyperbolic $C^\infty$-Lorentzian $n$-manifold. Let also $\mathcal{C}$ be a Cauchy surface of $M$ and $K\subset \OVS \mathcal{C}^+$ be compact. % and $\XX$ its associated geodesic vector field. %$\compp$ be a compact set in $\OVS ((0, \infty) \times N) $ and set 
Assume that $A: \Sigma \to \mathbb{R}$ is an admissible collision kernel in the sense of Definition \ref{good-kernels}.
%Assume also that $\text{supp}(x \mapsto A(x,\cdot,\cdot,\cdot,\cdot))$ is compact. 
\antticomm{Moreover, assume that $\pi(\text{supp} A) \subset \mathcal{C}^+$}.\footnote{
%THIS FOOTNOTE IS FOR THE READER. DON'T REMOVE IT. ---ANTTI
\antticomm{That is; $\mathcal{C}$ is ``far enough'' in the past. Notice that the set $\pi(\text{supp}A)$ is compact for an admissible $A$.}}

Then, there are open neighbourhoods $B_1 \subset C_K(\OVS \mathcal{C}^+ )$ and $B_2\subset C_b( \OVS  M )$ of the respective origins such that if $f \in B_1$, the relativistic Boltzmann problem \eqref{boltzmann} with source $f$ has a unique solution $u\in B_2$. Further, there is a constant $c_{A,K}>0$ such that~\f{Norm symbols changed.}
\[ \tjb{\|u\|_{C( \OVS M )}\le c_{A,K}\|f\|_{C(\OVS M)}.}\]

\end{theorem}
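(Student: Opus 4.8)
The plan is to solve the Boltzmann problem \eqref{boltzmann} by a fixed-point argument, exploiting the fact that for a compactly supported admissible kernel the characteristic (geodesic) flow reaches the support of $A$ only along a bounded stretch of proper time starting from the Cauchy surface, so the collision integral contributes only a bounded ``amplification factor.'' First I would pass from \eqref{boltzmann} to its integral (Duhamel) form along geodesic characteristics: writing $\gamma_{x,p}(s)$ for the geodesic in $\OVS M$ with $\gamma_{x,p}(0)=(x,p)$ and using that $u$ vanishes on $\OVS\mathcal C^-$, one gets
\[
u(x,p) = \int_{s_0(x,p)}^{0} \Big( f(\gamma_{x,p}(s)) + \COLOP[u,u](\gamma_{x,p}(s)) \Big)\, ds =: \mathcal F(u)(x,p),
\]
where $s_0(x,p)\le 0$ is the (finite, because $\pi(\mathrm{supp}\,A)\cup K$ is compact and $M$ is globally hyperbolic, so causal diamonds are compact) parameter at which the characteristic leaves the relevant region; outside the compact causal hull of $K\cup\mathrm{supp}\,A$ the integrand vanishes. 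The solution operator is the map $\mathcal F: C_b(\OVS M)\to C_b(\OVS M)$, and I want to show $\mathcal F$ is a contraction on a small ball once $\|f\|$ is small.

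The key estimates are: (i) a linear bound on the ``free'' term, $\|\int f\circ\gamma\, ds\|_{C(\OVS M)} \le T_{A,K}\,\|f\|_{C(\OVS M)}$, where $T_{A,K}$ is an upper bound for the length of the proper-time interval over which the characteristic meets the support of the source; and (ii) a quadratic bound on the collision term. For (ii) the crucial input is condition \eqref{conditionyy}: for fixed $(x,p)$,
\[
\big|\COLOP[u,v](x,p)\big| \le 2\,\|u\|_{C(\OVS M)}\,\|v\|_{C(\OVS M)} \int_{\Sigma_{x,p}} |A(x,p,q,p',q')|\, dV \le 2C\,\|u\|\,\|v\|,
\]
uniformly in $(x,p)$, so that $\|\COLOP[u,u]\circ\gamma\|_{C}\le 2C\,T_{A,K}\,\|u\|^2$ after integrating over the bounded characteristic interval (here one also uses condition \eqref{conditiony} and the compactness of $\pi(\mathrm{supp}\,A)$ to guarantee continuity in $(x,p)$ of the integrated quantity, and condition \eqref{conditionyyyy} together with $F_{x,p}(0)=0$ to control the behaviour as the momentum degenerates, which is what keeps $\COLOP[u,u]$ in $C_b$ rather than merely bounded measurable). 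Similarly the difference estimate
\[
\|\mathcal F(u)-\mathcal F(v)\|_C \le 2C\,T_{A,K}\,\big(\|u\|+\|v\|\big)\,\|u-v\|_C
\]
follows from bilinearity of $\COLOP$. Then, choosing $r>0$ with $2C\,T_{A,K}\cdot 2r < 1$ and $B_1 = \{f : \|f\|<r/(2T_{A,K})\}$, $B_2 = \{u : \|u\|< r\}$, the Banach fixed point theorem gives a unique $u\in B_2$ solving $u=\mathcal F(u)$, hence \eqref{boltzmann}, and the contraction inequality applied to $u$ and $0$ yields $\|u\|_C \le 2T_{A,K}\,\|f\|_C =: c_{A,K}\,\|f\|_C$. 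Finally I would note that a fixed point of $\mathcal F$ is a genuine (distributional, then by elliptic/transport regularity along characteristics a $C^1$ or smoother) solution of \eqref{boltzmann}, and that the vanishing condition on $\OVS\mathcal C^-$ is built into the lower limit $s_0$ of the Duhamel integral, using the hypothesis $\pi(\mathrm{supp}\,A)\subset\mathcal C^+$ so that no collisions occur in the past of $\mathcal C$.

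The main obstacle I expect is not the contraction scheme itself but the \emph{regularity and continuity bookkeeping}: showing that $\mathcal F$ genuinely maps $C_b(\OVS M)$ into itself, i.e. that $(x,p)\mapsto \COLOP[u,u](x,p)$ and its integral along characteristics are continuous (and bounded) on all of $\OVS M$, including near the light cone $g(p,p)=0$ and as $|p|\to 0$ or $\infty$. This is precisely where conditions \eqref{conditionyy} and \eqref{conditionyyyy} do the real work — the uniform $L^1$ bound prevents blow-up at high momenta, and the differentiability of $F_{x,p}$ at $\lambda=0$ with $F_{x,p}(0)=0$ handles the low-momentum degeneration of the collision measure $dV$ on $\Sigma_{x,\lambda p}$ — and one must check carefully that the volume form $dV(x,p\,;q,p',q')$ on $\Sigma_{x,p}$ depends continuously on $(x,p)$ so that dominated convergence applies. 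A secondary technical point is verifying that the ``time of flight'' function $s_0(x,p)$ and the geodesic flow depend continuously enough on $(x,p)$ on the relevant compact causal region; this is standard ODE dependence on initial conditions combined with compactness of causal diamonds in a globally hyperbolic spacetime, but it must be stated.
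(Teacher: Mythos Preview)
Your overall strategy—pass to the Duhamel integral along geodesic characteristics and then run a contraction/fixed-point (or equivalently implicit function theorem) argument on $C_b(\OVS M)$—is exactly what the paper does. The difference between Banach fixed point and implicit function theorem is cosmetic here, since the nonlinearity is quadratic.

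There is, however, a genuine gap in your estimate (ii). You bound
\[
|\COLOP[u,v](x,p)| \le 2C\,\|u\|\,\|v\|
\]
using condition~\eqref{conditionyy}, and then integrate over a characteristic interval of length $T_{A,K}$, claiming this length is uniformly bounded. It is not. For $(x,p)\in\OVS M$ with $p$ small, the geodesic $\gamma_{(x,p)}$ traverses the compact set $\pi(\mathrm{supp}\,A)$ arbitrarily slowly in the affine parameter: one has $\ell(x,\lambda p)=\lambda^{-1}\ell(x,p)$ (see the paper's Lemma~\ref{param_length_lemma}), so $\ell(x,p)\to\infty$ as $p\to 0$ along a ray. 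Your product $2C\cdot T_{A,K}$ is therefore not a finite constant, and the contraction estimate fails as written. You correctly sense that condition~\eqref{conditionyyyy} is needed at small momenta, but you assign it the wrong role: it is not merely a continuity condition, it is what rescues \emph{boundedness} of the integrated collision term.

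The paper's fix (Lemma~\ref{lislem}) is to estimate the \emph{integrated} quantity directly rather than the pointwise collision term times a time bound. Writing $p=\lambda q$ with $q$ of unit length in an auxiliary Riemannian metric and changing variables $s'=\lambda s$, one gets
\[
\int_{-\infty}^0 \COLOP[u,v]\big(\gamma_{(x,p)}(s),\dot\gamma_{(x,p)}(s)\big)\,ds
= \frac{1}{\lambda}\int_{-\ell(x,q)}^0 \COLOP[u,v]\big(\gamma_{(x,q)}(s'),\lambda\dot\gamma_{(x,q)}(s')\big)\,ds',
\]
where now $\ell(x,q)\le l_0$ \emph{is} uniformly bounded since $q$ ranges over a compact set. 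The integrand is controlled by $2\|u\|\,\|v\|\,\|A(\,\cdot\,,\lambda\dot\gamma,\cdot,\cdot,\cdot)\|_{L^1}$. For $\lambda$ bounded away from $0$ one uses condition~\eqref{conditionyy} and absorbs the $1/\lambda$ prefactor; for small $\lambda$ condition~\eqref{conditionyyyy} gives $\|A(y,\lambda r,\cdot)\|_{L^1}=F_{y,r}(\lambda)=O(\lambda)$, and this $\lambda$ exactly cancels the $1/\lambda$ from the time rescaling. That cancellation is the heart of the well-posedness estimate, and it is missing from your sketch.
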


The well-posedness of~\eqref{boltzmann} has also been addressed in the following works. For exponentially bounded data, and an $L^1$-type bound on the cross-section of the collision kernel, it was shown in \cite{Bichteler} that a short-time unique solution to \eqref{boltzmann} exists in the setting of a $4$-dimensional, globally hyperbolic spacetime. Also for globally hyperbolic geometries, under conditions which require the collision operator to be a continuous map between certain weighted Sobolev spaces, it was proven in \cite{Bancel} that a unique short-time solution exists to \eqref{boltzmann} in arbitrary dimension. If the geometry of $(M,g)$ is close to Minkowski in a precise sense, the collision kernel satisfies certain growth bounds and the initial data satisfied a particular form of exponential decay, then unique global solutions exist to the Cauchy problem for the Boltzmann equation~\cite{Glassey}. We refer to~\cite{Choquet-Bruhat}, \cite{Ringstrom} and \cite{Rendall} for more information about the well-posedness of~\eqref{boltzmann}. Also, the recent paper~\cite{lai-uhlmann-yang}, related to our work, contains a well-posedness result in Euclidean spaces.

We are now ready to present our main theorem. In our theorem, measurements of solutions to the Boltzmann equation are made on a neighborhood $V$ of a timelike geodesic $\hat\mu$ $ : [-1,1] \to  M$ in $(M,g)$.  We prove that measurements made on $V$ both determines a subset $W$ of $M$ and the restriction of the metric to $W$ (up to isometry). The subset $W$ is naturally limited by the finite propagation speed of light. Indeed, given an initial point $x^-\in \hat \mu (-1,1)$ and an endpoint $x^+\in  \hat \mu (-1,1)$ in the future of $x^-$, the set $W$ can be expressed as 
\begin{align}
 W :=  I^-(x^{+}) \cap I^+(x^{-}),\label{W-set}   
\end{align}
where the sets
\begin{align*}
 I^+(z) &=\{y\in M\,:\, \text{ there is a future directed timelike geodesic from } z \text{ to } y \text{ and } z\ne y\}, \\
 I^-(z) &=\{y\in M\,:\,  \text{ there is past directed timelike geodesic from } z \text{ to } y \text{ and } z\ne y \}
\end{align*}
denote respectively the chronological future and past of a point $z\in M$. We call the set $W$ given as above the domain of causal influence. The situation is illustrated in Figure~\ref{fig:W_pic} below. 
 \begin{figure}[ht!]
    \centering
    \includegraphics[scale=0.4]{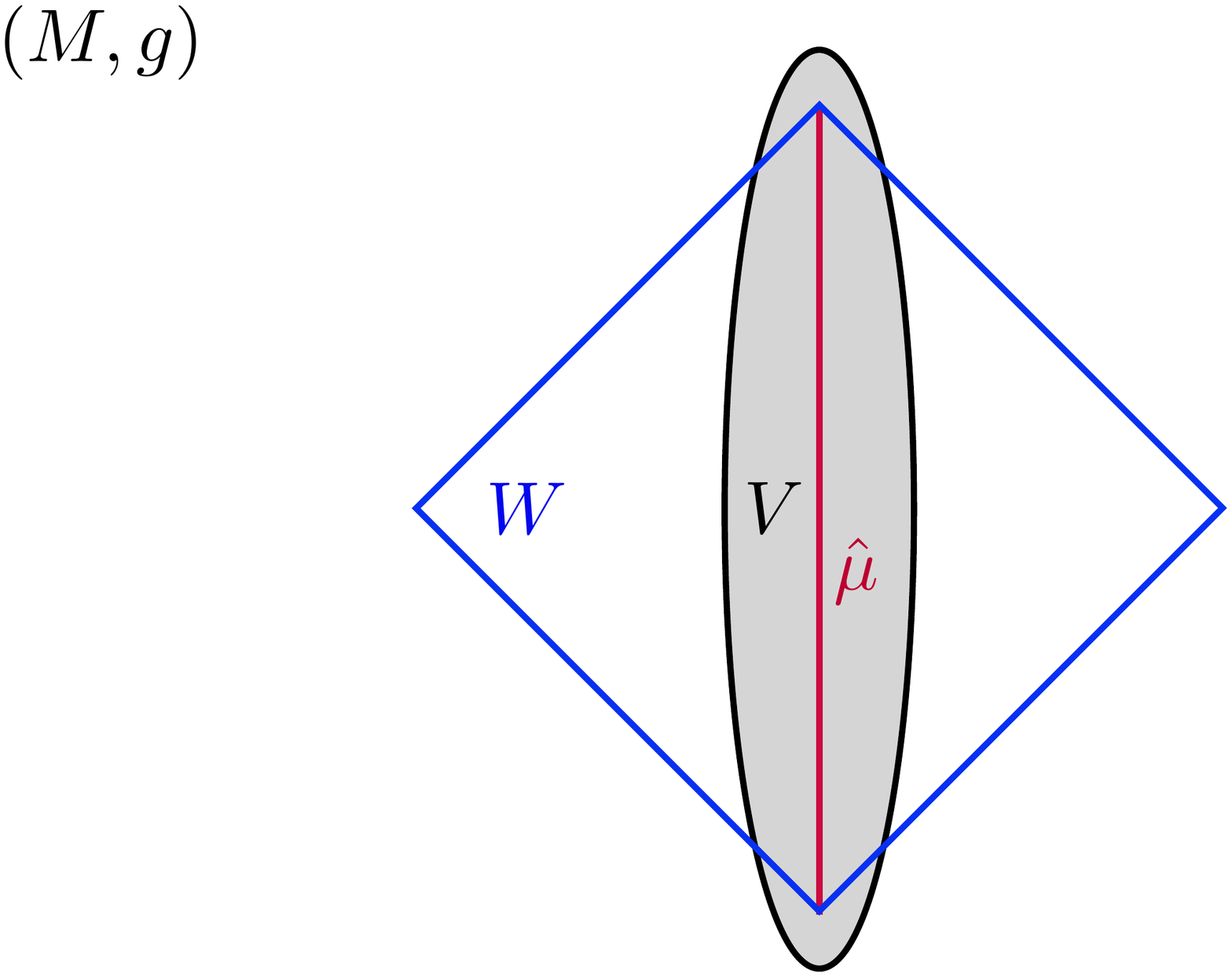}
    \caption{Illustration of the timelike geodesic $\hat\mu$, the known set $V$, and the unknown set $W$.}
    \label{fig:W_pic}
\end{figure}%, of the Lorentzian manifold $(M,g)$, which is naturally 

Before we continue, we introduce our notation. We consider source-to-solutions maps of the Bolzmann equation \eqref{boltzmann} defined on sources, which are supported on different compact sets $K\subset \OVS M$. It follows from Theorem \ref{boltz-exist} that for each such $K$ the source-to-solution map $B_1\to B_2\subset C_b(\OVS M)$ of the Boltzmann equation is well defined, where $B_1\subset C_K(\OVS \mathcal{C}^+)\subset C_c(\OVS \mathcal{C}^+)$. In this case, we denote $B_{1}=B_{1,K}$. In the next theorem we consider source-to-solution maps of Boltzmann equations defined on
\begin{equation}\label{union_domain}
 \mathcal{B}:=\bigcup_{\{K:\s \pi(K)\subset \tblu{V}\}} B_{1,K} \subset C_c(\OVS \mathcal{C}^+).
 \end{equation}
\tblu{We assume that $V$ is fixed from now on.}
 %As the constant $c_{A,K}$ in Theorem \ref{boltz-exist} depends on $K$, $\Phi$ can be considered as continuous mapping in the topology of test functions on $\OVS \mathcal{C}^+$.}

% Below in the theorem $e$ will be some auxiliary Riemannian metric on $M$, $K\subset \OVS M$ a compact set and $B_r$, $r>0$, the bundle of balls of radius $r$ of $TM$ in the metric $e$. In this case, $B_1\setminus B_{1/2}$ is a bundle over $M$ whose fibers look like hollow balls. The set $K$ is chosen so that we may approximate lightlike vectors (of length between $1/2$ and $1$ in the metric $e$) by timelike vectors of $K$. We also denote $K_\pi:=\pi(K)$.
% We will assume $(B_1\setminus B_{1/2})\cap \OVS \pi(K) \subset K$ so that we may approx

\begin{theorem}\label{themain}
Let $(M_1, g_1)$ and $(M_2,g_2)$ be globally hyperbolic $C^\infty$-Lorentzian manifolds of dimension $n \geq 3$.~\f{Dimension added.} Assume that $V$ is a mutual open subset of $M_1$ and $M_2$ and that $g_1|_V=g_2|_V$. Let $\mu : [-1,1] \rightarrow V$ be a smooth timelike curve. Let also $-1< s^{-} < s^{+} < 1$, and define $x^{\pm}:= \mu(s^{\pm})$ and 
\[
W_j :=  I_j^-(x^{+}) \cap I_j^+(x^{-}), \quad j=1,2. %is a relatively compact open set. 
\]
% Let $K\subset \OVS M$ be compact set such that $\mu[-1,1]\subset \subset \pi(K)\subset V$ and $(B_1\setminus B_{1/2}) \s \cap\s \OVS (K_\pi) \subset K$.
% $(B_1\setminus B_{1/2})\cap \OVS M\subset K$.

%Let $Z\subset V\subset M$ be a compact set such that $\mu[-1,1]\subset\subset Z$.

Suppose that $A_1$ and $A_2$ are admissible kernels in the sense of Definition \ref{good-kernels}. Assume that the source-to-solution maps  of the Boltzmann equation on $(M_1,g_1)$ and $(M_2,g_2)$ with kernels $A_1$ and $A_2$ respectively agree,
\[
 \Phi_{2,\,L^+V}(f)  = \Phi_{1,\,L^+V}(f), \qquad \text{ for all } f\in \mathcal{B},
\]
where $\mathcal{B}$ is as in \eqref{union_domain}.
Then there is an isometric $C^\infty$-diffeomorphism $F: W_1 \rightarrow W_2$,
\[
 F^*g_2=g_1 \text{ on } W_1.
\]
% 
% F: W_1 \rightarrow W_2$ such that $F^* g_{2}$ is conformal to $g_{1}$. Moreover, writing $F^* g_{2} = c(x)g_1$, we have $c(x)\equiv 1$ for $x\in W_1$.
\end{theorem}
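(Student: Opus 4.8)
The plan is to exploit the quadratic collision term $\COLOP$ in the spirit of \cite{Kurylev2018}: from sources supported over $V$ one manufactures pairs of ``particle beams'' crossing at a prescribed point $y\in W$, and one shows that the collision at $y$ emits a flux of \emph{lightlike} particles along the future null cone of $y$ that is visible in the data; matching these cones across $(M_1,g_1)$ and $(M_2,g_2)$ then yields $F$. To set up the perturbation, take a source $f=\sum_{k=1}^{N}\eps_k f_k\in\mathcal B$ with $\eps=(\eps_1,\dots,\eps_N)$ small; by Theorem~\ref{boltz-exist} the solution $u=u(\eps)$ depends smoothly on $\eps$ (differentiate the contraction in its proof). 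The first derivatives $v_k:=\p_{\eps_k}u|_{\eps=0}$ solve the linear transport equation $\XX v_k=f_k$, $v_k|_{\OVS\mathcal C^-}=0$, so $v_k=\IX f_k$ is the collisionless evolution of $f_k$; the mixed second derivatives $w_{k\ell}:=\p_{\eps_k}\p_{\eps_\ell}u|_{\eps=0}$ solve $\XX w_{k\ell}=\p_{\eps_k}\p_{\eps_\ell}\COLOP[u,u]|_{\eps=0}$ with a source bilinear in $v_k,v_\ell$ through the kernel $A$, and higher derivatives solve transport equations with sources built from lower-order terms. All of these, restricted to $L^{+}V$, are determined by $\Phi_{L^{+}V}$.

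Fix $y$ in the interior of the causal diamond $W$, and choose $f_1,f_2$ supported over $V$ --- and, by taking the Cauchy surface far enough in the past, over $\mathcal C^{+}$ --- whose collisionless evolutions $v_1=\IX f_1$, $v_2=\IX f_2$ are approximate delta-distributions concentrated near two future-\emph{timelike} geodesics $\gamma_1,\gamma_2$ issuing from base points in $V$ near $x^{-}$ and crossing at $y$; a transversality argument makes $\gamma_1,\gamma_2$ meet inside $W$ only at $y$. The mixed derivative $\p_{\eps_1}\p_{\eps_2}\Phi_{L^{+}V}(\eps_1 f_1+\eps_2 f_2)|_{\eps=0}=w_{12}|_{L^{+}V}$ is determined by the data, and in $\XX w_{12}=\p_{\eps_1}\p_{\eps_2}\COLOP[u,u]|_{\eps=0}$ every term of the source forces its base point to lie on both $\gamma_1$ and $\gamma_2$, hence to equal $y$; so $w_{12}=\IX(\text{source over }\{y\})$ is supported on geodesics from $y$. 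At a small future-null momentum $p$ at $y$ the ``loss'' part of this source vanishes --- it would require $p$ to be a beam direction, hence timelike --- while the ``gain'' part equals a positive multiple of $A\big(y,p,q,\dot\gamma_1(y),\dot\gamma_2(y)\big)$ with $q=\dot\gamma_1(y)+\dot\gamma_2(y)-p$ (timelike for $p$ small), which is strictly positive by condition \eqref{condition4}. Since geodesic flow preserves the causal character of velocities, the timelike part of the source propagates to timelike vectors (invisible in $L^{+}V$) and the null part, present and positive, propagates to null vectors; hence $w_{12}|_{L^{+}V}$ is nonzero and supported exactly on the set of $(z,\zeta)\in L^{+}V$ such that $\zeta$ is tangent at $z$ to a future null geodesic from $y$. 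Therefore $\Phi_{L^{+}V}$ determines, for every $y\in W$, the light observation set $\mathcal L_V(y)\subset V$.

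Now apply the reconstruction of \cite{Kurylev2018}: viewed inside the known $(V,g_1|_V)=(V,g_2|_V)$, the family $\{\mathcal L_V(y):y\in W\}$ recovers the smooth and conformal structure of $(W,[g])$, and $y_1\mapsto y_2$ defined by $\mathcal L_{1,V}(y_1)=\mathcal L_{2,V}(y_2)$ is a $C^\infty$-diffeomorphism $F:W_1\to W_2$ with $F^{*}[g_2]=[g_1]$. To upgrade this to an isometry, trace timelike geodesics: fixing beam $\gamma_1$ and varying the probe beam $\gamma_2$ over timelike beams from $V$ crossing $\gamma_1$, the crossing points are identified in $W$ through the correspondence just described, so $\gamma_1\cap W$ is recovered as a point set; this recovers, through each point of $W$, future-timelike geodesics of $g|_W$ in a nonempty open set of directions. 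Because the same sources are used in $M_1$ and $M_2$ and the data agree, $F$ maps each such $g_1$-geodesic onto a $g_2$-geodesic. Writing $F^{*}g_2=e^{2\phi}g_1$, the condition that these two metrics share these unparametrized geodesics gives $g_1(X,X)\nabla\phi\parallel X$ for all $X$ in an open set of timelike directions at each point, forcing $\nabla\phi=0$; then $F|_V=\mathrm{id}$ together with $g_1|_V=g_2|_V$ gives $\phi\equiv0$, so $F^{*}g_2=g_1$ on $W_1$.

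\emph{Main obstacle.} The crux is the converse implicit above: one must show that the full multilinear data of $\Phi_{L^{+}V}$ genuinely \emph{isolates} the null cones of interaction points --- that the entire perturbative series (cascades of collisions, re-interactions of produced particles with the incoming beams, all higher-order terms) contributes on $L^{+}V$ only along unions of future null geodesics from interaction points, and that the leading two-beam contribution above is not cancelled. This requires carefully tracking in the phase space $TM$ the supports of all iterates, using the geometry of the conservation submanifold $\Sigma_{x,p}$, the positivity \eqref{condition4} and the $L^1$-type bounds (\ref{conditionyy})--(\ref{conditionyyyy}), together with transversality of the beam geodesics so that interactions are isolated. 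A second technical layer is making ``beams'' rigorous: since $\XX$ propagates singularities along geodesic lifts in $TM$ rather than along bicharacteristics in $M$, one must work with approximate delta-type sources and the conormal measures they generate, verifying that such sources lie in $\mathcal B$ so that Theorem~\ref{boltz-exist} and the smooth dependence on $\eps$ apply. Finally one checks that for each $y\in W$ the required beam pairs through $y$ from $V$ exist --- this uses global hyperbolicity and the causal-diamond structure $W=I^-(x^{+})\cap I^+(x^{-})$.
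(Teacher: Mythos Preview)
Your overall architecture matches the paper's: second linearization to isolate the collision term, beams crossing at a point of $W$ to manufacture a localized source, lightlike propagation back to $V$, reconstruction of the conformal class via \cite{Kurylev2018}, and a geodesic--preservation argument to pin down the conformal factor (your last paragraph is essentially the paper's Proposition~\ref{conformii}). Two genuine gaps separate your sketch from a proof, and they are precisely where the paper invests most of its work.

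\textbf{Beam geometry.} You launch two single--geodesic beams. In the paper only one beam is a point $S_2=\{(\hat x,\hat p)\}$; the other is built from an $(n-2)$--dimensional submanifold $S_1\subset\PP^+V$ (Corollary~\ref{coro_renerew}), so that the projected flowouts $\pi K_{S_1}$ and $\pi K_{S_2}$ have complementary dimensions $n-1$ and $1$ in $M$ and intersect \emph{transversally} at the collision point. With two $1$--dimensional geodesics in dimension $n\ge 3$ this transversality fails, and the FIO/conormal calculus that computes the principal symbol of the interaction (Theorem~\ref{sofisti}, Corollary~\ref{sofisti_further}) does not apply. The $(n-2)$--dimensional initial data is not incidental: it is what makes $\COLOP_{gain}^P[u_1,u_2]$ a clean Lagrangian distribution over $T^*_{x_0}M$ with nonvanishing symbol governed by $A(x_0,\hat p,\ldots)>0$. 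Moreover $S_1$ must be chosen so that the admissible intersection property holds \emph{simultaneously} in $M_1$ and $M_2$; Lemma~\ref{lemma_renerew} does this with a countable--avoidance argument that your sketch does not address.

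\textbf{Support versus singular support.} Your central claim, that $w_{12}|_{L^+V}$ is ``supported exactly on the null geodesics from $y$'', confuses support with singular support. For smooth $h_j^\epsilon$ the collision term and hence $w_{12}$ are smooth with thick support; nothing about ordinary supports isolates $y$. The paper instead passes to the limit $\epsilon\to0$ in $\mathcal D'$, composes with a specific lightlike section $P_e:V_e\to L^+V_e$ chosen via Lemma~\ref{ooiiuu}, and reads off the \emph{singular} support of the limit (Proposition~\ref{kooasu}, Lemma~\ref{estupido}); your correct observation that $\COLOP_{loss}$ vanishes on lightlike momenta is exactly why composing with $P_e$ kills it. Finally, what one recovers is not the full light cone $\mathcal L^+(y)\cap V$ but the \emph{earliest} light observation set $\mathcal E_{\mathcal U}(y)$: geodesics may reintersect, so one must isolate the \emph{first} collision in each manifold and show these correspond under $F$ (Lemmas~\ref{ss_map_determines_intersection} and~\ref{intersection_lightlike_geo}) before \cite{Kurylev2018} can be invoked.
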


\tbl{Note that %we could also prove a slightly more general theorem. 
alternative to $V$ being a mutual open set of $M_1$ and $M_2$, we could instead assume that there are open sets $V_1\subset M_1$ and $V_2\subset M_2$ and an isometry $\mathcal{I}:V_1\to V_2$ so that the corresponding source-to-solution maps $\Phi_{j,L^+V_j}$, $j=1,2$, satisfy $\Phi_{1,L^+V_1}\mathcal{I}^*=\mathcal{I}^*\Phi_{2,L^+V_2}$. In this case there would be an isometry $W_1\to W_2$. We work with the assumption that $V$ is a mutual open set for simplicity.}

Inverse problems have been studied for equations with various nonlinearity types. Many of the earlier works rely on the fact that a solution to a related linear inverse problem exists. However, it was shown in Kurylev-Lassas-Uhlmann~\cite{Kurylev2018} that the nonlinearity can be used as a beneficial tool to solve inverse problems for nonlinear equations. They proved that local measurements of the scalar wave equation with a quadratic nonlinearity on a Lorentzian manifold determines \tjb{topological, differentiable, and conformal structure of the Lorentzian manifold}. Our proof of Theorem~\ref{themain}, which we explain shortly in the next section, builds upon this work ~\cite{Kurylev2018}. %We explain shortly the proof in the next section.

Recently, Lai, Uhlmann and Yang \cite{lai-uhlmann-yang} studied an inverse problem for the Boltzmann equation in the Euclidean setting. With an $L^1$ bound and symmetry constraint on the collision kernel, they show that one may reconstruct the collision kernel from boundary %the Albedo operator~ --  that is, 
measurements. % of the incoming and outgoing particle flux and the boundary of the Euclidean manifold.  
% Also recently, for compact manifolds with boundary, Oksanen, Salo, Stefanov, and Uhlmann \cite{OSSU} have shown that is it possible to determine the scattering relation, X-ray transform, and boundary expansion of operators of real principal type from Cauchy data on the boundary.
% 
% 
% There is a richer history for the study of inverse problems for linear versions of the Boltzmann equation. 
Linear equations such as Vlasov, radiative transfer (also called linear Boltzmann), or generalized transport equations model kinematics of particles which do not undergo collisions. We list a very modest selection of the literature on inverse problems for these equations next.  In Euclidean space, Choulli and Stefanov \cite{CS-1} showed that one can recover the absorption and production parameters of a radiative transfer equation from measurements of the particle scattering. They also showed that an associated boundary operator, \tjb{called the} Albedo operator, determines the absorption and production parameters for radiative transfer equations in \cite{CS-2} and \cite{CS-3}. Other results for the recovery of the coefficients of a radiative transfer equation in Euclidean space from the Albedo operator have been proven by Tamasan \cite{Tamasan}, Tamasan and Stefanov \cite{TamasanStefanov}, Stefanov and Uhlmann \cite{StefanovUhlmann}, Bellassoued and Boughanja \cite{BellassouedBoughanja}, and Lai and Li \cite{LaiLin}. Under certain curvature constraints on the metric, an inverse problem for a radiative transfer equation was studied in the Riemannian setting by McDowall \cite{McDowall1}, \cite{McDowall2}. A review of some of these and other inverse problems for radiative transfer and linear transportation equations is given by Bal \cite{GBal-1}. %\cite{Bal-LM}, \cite{Bal-J1}, \cite{Bal-J2}

The mentioned work~\cite{Kurylev2018} invented a \emph{higher order linearization method} for inverse problems of nonlinear equations in the Lorentzian setting for the wave equation. Other works studying inverse problems for nonlinear hyperbolic equations by using the higher order linearization method include: Lassas, Oksanen, Stefanov and Uhlmann \cite{lassas-oksanen-stefanov-uhlmann}; Lassas, Uhlmann and Wang \cite{lassas-uhlmann-wang-semi}, \cite{lassas-uhlmann-wang-emeq}; and Wang and Zhou \cite{wang-zhou}; Lassas, Liimatainen, Potenciano-Machado and Tyni~\cite{lassas2020uniqueness}. The higher order linearization method in inverse problems for nonlinear elliptic equations was used recently in Lassas, Liimatainen, Lin and Salo~\cite{lassas2019inverse} and Feizmohammadi and Oksanen~\cite{feizmohammadi2020inverse}, and in the partial data case in Krupchyk and Uhlmann~\cite{krupchyk2020remark} and Lassas, Liimatainen, Lin and Salo~\cite{lassas2019partial}.

%These results adapt a method of proof first appearing in the excellent work of \cite{Kurylev2018}. We also build upon the work of \cite{Kurylev2018}. Below we motivate our approach to the proof of Theorem \ref{themain}. %we now summarize the main ideas we use from \cite{Kurylev2018}.

%From light which radiates out from an unknown, compact domain of the spacetime and whose signal is received in a neighbourhood of the observer, Kurylev, Lassas, and Uhlmann \cite{Kurylev2018} proved that this information allows one to determine the topological and conformal structure of the unknown region. The authors also considered a wave equation with a quadratic nonlinearity and showed that from knowledge of the source-to-solution map for the wave equation restricted to a known neighbourhood of the observer one can learn the metric in a larger, unknown region of the spacetime. Two key results of this work were that (a) one can parametrize the unknown domain by the ``first observation of light'' sets, and (b) that the presence of the nonlinearity enabled them to obtain a stronger result then what was currently known for the linear problem. 

%which exploits the fact that the source to solution map for the wave problem captures information about the nonlinear terms appearing in the wave equation.

%%%%%%%%%%%%%%%%%%%%%%%%%%%%%%%%%%%%%%%%%%%%
%
% Summary
%

\subsection{Theorem \ref{themain} proof summary}\label{proof_summary_main_thm} Now we will explain the key ideas in our proof of Theorem \ref{themain}. To begin let $\hat\mu:[-1,1]\to M$ be a future directed, timelike geodesic and let $V$ be an open neighbourhood of the graph of $\hat\mu$ where we do measurements. For some $-1<s^-<s^+<1$, let $x^\pm=\mu(s^\pm)$, $W=I^-(x^+)\cap I^+(x^-)$ and $w\in W$. We adapt the method of higher order linearization introduced in~\cite{Kurylev2018} for the nonlinear wave equation to our Boltzmann setting \eqref{boltzmann}. Using this approach, we use the nonlinearity to produce a point source at $w$ for the linearized Boltzmann equation. \tblu{The method of \cite{Kurylev2018} is called the \emph{higher order linearization method} and we use the same term for our adapted method in this paper. The nonlinearity $u(x,p)u(x,q)-u(x,p')u(x,q')$ in the Boltzmann equation  depends on different variables. Consequently, our method is not a straightforward generalization of the earlier higher order linearization method that applies to nonlinearities depending only on one varible, say schematically of the form $U^k(X)$, $k\geq 2$.} 

The point source at $w$ is generated formally as follows. Let $\mathcal{C}$ be a Cauchy surface in $M$ and  $f_1,f_2\in C_c(\OVS V)$ be two sources of particles. For sufficiently small parameters $\eps_1$ and $\eps_2$, let $u_{\eps_1f_1+\eps_2f_2}$ be the solution to
\begin{equation}%\label{boltz-intro}
\begin{split}
\mathcal{X}u_{\epsilon_{1}f_1+\epsilon_{2}f_2}-\COLOP[u_{\epsilon_{1}f_1+\epsilon_{2}f_2},u_{\epsilon_{1}f_1+\epsilon_{2}f_2}]&= \epsilon_{1}f_1 +\epsilon_{2}f_2\quad  \text{in} \quad  \OVS M \\%\text{on }K_{Q,T}\\
 u_{\epsilon_{1}, \epsilon_{2}}(x,p)&= 0 \quad\quad\quad\quad\quad \ \text{in} \quad  \OVS \mathcal{C}^- %\text{if } x\in(-\infty,0]\times N
\end{split}
\end{equation}
By computing the mixed derivative 
\[
 \Phi^{2L}(f_1,f_2):=\frac{\p}{\p \eps_1}\frac{\p}{\p \eps_2}\Big|_{\eps_1=\eps_2=0} u_{\eps_1 f_1 + \eps_2 f_2},
\]
we show that $\Phi^{2L}(f_1,f_2)$ solves the equation
\begin{align}\label{2nd_lin_source}
%\Phi^L(f_j) &= \IX f_j, \; j=1,2, \notag \\
\XX \big(\Phi^{2L}(f_1,f_2)\big) &=  \COLOP [ \Phi^L(f_1),  \Phi^L( f_2) ]+  \COLOP [ \Phi^L (f_2),  \Phi^L( f_1)]% \label{second-ss-intro}
%\in C^\infty (\mathcal{S}),
\end{align}
on $\OVS M$. Here the functions $\Phi^L(f_l)$, $l=1,2$, are solutions to the linearization of the Boltzmann equation at $u=0$, which solve
\tbl{
\begin{equation}\label{transport_intro}
\begin{split}
\XX \big(\Phi^L(f_l)\big) &= f_l \s\quad \text{in}\quad \OVS M. \\%\text{on }K_{Q,T}\\
 \Phi^L(f_l)&= 0 \quad \  \text{in} \quad  \OVS \mathcal{C}^- %\text{if } x\in(-\infty,0]\times N
\end{split}
\end{equation}}
% 
% \begin{equation}\label{transport_intro}
%  \XX \big(\Phi^L(f_l)\big) = f_l \quad \text{on } \OVS M.
% \end{equation}
The transport equation \eqref{transport_intro} is also called the \emph{Vlasov equation}.
%which is the linearization of the Boltzmann equation at $0$ solution. 

\tblu{We mention at this point that if we consider the Boltzmann equation with a source of the form $\eps f$, where $f$ is a fixed source, and linearize at $\eps=0$, the resulting equation would have been \eqref{transport_intro}. Similarly, using a source of the form $f_0+\eps f$ would produce a radiative transport equation. In both of these cases, recovering a time independent metric from local measurements of the corresponding equations are open problems. Also, the scattering kernel in the latter case might be complicated. We will next explain how using sources of the form $\eps_1f_1+\eps_2f_2$ as above we will produce the equation
\eqref{2nd_lin_source} with the right hand side being a delta distribution type source at a point. Consequently, the equation \eqref{2nd_lin_source} propagates data at the point to our measurement set. We will use that to recover the metric. This is a phenomenon not present if one uses just one $\eps$ parameter and linearizes with respect to it. This partly explains the strength of the higher order linearization method. The name of the method is based on the fact that we differentiate the Boltzmann equation with respect to several parameters.}

Next, we build particular functions $f_1$ and $f_2$ such that the right hand side of \eqref{2nd_lin_source}, 
\[
\COLOP [ \Phi^L(f_1),  \Phi^L( f_2) ]+  \COLOP [ \Phi^L (f_2),  \Phi^L( f_1)],\]
 is a point source at $w$. To do this, we note that since $w\in W$, there exists lightlike geodesics $\eta$ and $\tilde\eta$, initialized in our measurement set $V$, which have their first intersection at $w$. We choose timelike geodesics $\gamma_{(\hat{x},\hat{p})}$ and $\gamma_{(\hat{y},\hat{q})}$ with initial data $(\hat{x},\hat{p})$ and $(\hat{y},\hat{q})$ in the bundle $\SP V$ of future-directed timelike vectors and which approximate the geodesics $\eta$ and $\tilde\eta$. %We then choose timelike directions $(\hat{x},\hat{p})$ and $(\hat{y},\hat{q})$ in $\SP V$ such that the geodesics arising from such initial data, $\gamma_{\hat{x},\hat{p}}$ and $\gamma_{\hat{y},\hat{q}}$ also have their first intersection at $w$ and are close to $\delta$ and $\tilde\delta$. 
Then, in Lemma \ref{coro_renerew}, we create $\dim(M)-2$  Jacobi fields on the geodesic $\gamma_{(\hat{y},\hat{q})}$ such that the variation of $\dot\gamma_{(\hat{y},\hat{q})}$ generated by the Jacobi fields is a submanifold $Y_2\subset \SP W\subset TM$. In particular, since $Y_2$ and $Y_1:=\text{graph}(\gamma_{(\hat{x},\hat{p})})\subset TM$ are geodesic flowouts, they can be considered as distributional solutions to the linear transport equation~\eqref{transport_intro}. The Jacobi fields are also constructed so that the projection of $Y_2\subset TM$ to $M$ near the intersection points of $\gamma_{(\hat{x},\hat{p})}$ and $\gamma_{(\hat{y},\hat{q})}$ is a $\dim(M)-1$ dimensional submanifold of $M$ which intersects the other geodesic $\gamma_{(\hat{x},\hat{p})}$ transversally in $M$. This transversality condition enables us to employ microlocal techniques to show in 
% and results (the bulk of which is done in 
Theorem \ref{sofisti} and Corollary \ref{sofisti_further} that $\COLOP [ \Phi^L(f_1),  \Phi^L( f_2) ]+  \COLOP [ \Phi^L (f_2),  \Phi^L( f_1)]$ represents a point source. 

To differentiate $u_{\eps_1f_1+\eps_2f_2}$ with respect to both $\eps_1$ and $\eps_2$, we also prove that the source to solution map of the Boltzmann equation \eqref{boltzmann} is two times Frech\'et differentiable in Lemma~\ref{deriv}. To analyze the nonlinear term $\COLOP [ \Phi^L(f_1),  \Phi^L( f_2) ]+  \COLOP [ \Phi^L (f_2),  \Phi^L( f_1)]$ when $\Phi^L(f_1)$ and $\Phi^L( f_2)$ are distributions in $\mathcal{D}'(TM)$, namely the delta distributions of the submanifolds $Y_1$ and $Y_2$ of $TM$, we view them as conormal distributions in $I^{k_l}(N^*Y_l)$ for some $k_l\in \R$ and $l=1,2$.  This allows us to compute the terms $\COLOP [ \Phi^L(f_1),  \Phi^L( f_2) ]$ and $\COLOP [ \Phi^L (f_2),  \Phi^L( f_1)]$ by using the calculus of Fourier integral operators. Slight complications to this analysis are due the fact that the distributions $\COLOP [ \Phi^L(f_1),  \Phi^L( f_2) ]$ and $\COLOP [ \Phi^L(f_2),  \Phi^L( f_1) ]$ have canonical relations in the sense of the theory of Fourier integral operators over the bundle of causal vectors, which is a manifold with boundary.  

% have relations~\f{Say more clearly.} which lie over a Lagrangian manifold with boundary. 
%{\color{red}(This is not an issue as these objects are zero on the boundary anyway (incoming particles are timelike). The boundary is present in the variable $p$ (outgoing 4-momentum or 4-velocity) of the collision operator $\COLOP[,] (x,p)$. --Antti)}
We circumvent these complications by only measuring lightlike signals. This means that we compose the source to solution map with a 
%signals received at $V$, and in doing so compose $\Phi^{2L}$ with a smooth, local, 
lightlike section $P:V_e\subset V\to L^+V$, where $V_e$ open subset of $V$. % can be connected to $w$ by a future-directed, lightlike geodesic and $V_e$ is a small open set about $e$. 
This reduces our analysis of the term $\COLOP [ \Phi^L(f_1),  \Phi^L( f_2) ]$ in \eqref{2nd_lin_source} 
to an analysis of the term
%\[ 
%\XX\big(\Phi^{2L}_{L^+V}(f_1,f_2)(x,P(x))\big) =  \COLOP_{gain} [ \Phi^L(f_1),  \Phi^L( f_2) ](x,P(x))+  \IX \COLOP_{gain} [ \Phi^L %(f_2), \Phi^L (f_1)](x,P(x)), 
%\]
%where 
\[
- \int_{\Sigma_{x, p}} \Phi^L(f_1) (x,p' ) \Phi^L(f_2) (x,  q' )  A(x,p,q, p', q') dV_{x,p} (q,p',q'), %dV (x,p \ ; \ q,p',q'), 
\]  
where $x$ lies in the open subset $V_e\subset M$ and $p=P(x)\in L^+V$, since the other part of the collision operator~\eqref{collision_operator_formula_intro} in this case yields zero. We analyze similarly the second right hand side term in \eqref{2nd_lin_source}. %, $\COLOP [ \Phi^L(f_2),  \Phi^L( f_1) ]$.

By the above construction, we construct a singularity at $w\in W\subset M$, which we may observe the singularity by using the source to solution map for light observations $\Phi_{L^+V}$.   
%we learn the locations in $V$ where we observe light particles which emanated from the interaction of the sources we constructed at $w$. 
Similar to \cite{Kurylev2018}, from our analysis we may recover the so-called 
%adopt a label these light observation locations in $V$ by functions, called 
\tbl{\emph{earliest observation time functions}} from the knowledge of $\Phi_{L^+V}$. Time separation function at $w$ compute the optimal travel time of light signal received from $w$. The collection of all \tbl{earliest observation time functions} from $w$ determines the \emph{earliest light observation set} that is the set of points $y\in \mathcal{U}$ that can be connected to $w$ by lightlike geodesics which have no interior cut points. This set is denoted by $\mathcal{E}_{\mathcal{U}}(w)$. Here $\mathcal{U}$ is an open subset of $V$ and we consider $\mathcal{E}_{\mathcal{U}}(w)$ as the map \tbl{$w\in W\mapsto \mathcal{E}_{\mathcal{U}}(w)$}.  We refer to Section \ref{time_sep_fnct} or~\cite{Kurylev2018} for explicit definitions. 

We apply the above to construct and measure point sources on two different Lorentzian manifolds $(M_1,g_1)$ and $(M_2,g_2)$ to prove:
%e then construct a smaller open set $\mathcal{U}\subset V$ foliated by timelike geodesics. Writing $\mathcal{E}^j_{\mathcal{U}}: w\in W_{\lambda}\mapsto \mathcal{E}^j_{\mathcal{U}}(w)$ for $j =1,2$, we prove
\begin{proposition} \label{ss_map_first_obs}
Let $\Phi_{1,\,L^+V}$ and $\Phi_{2,\,L^+V}$ be the source-to-solution maps for light observation satisfying the conditions in Theorem \ref{themain}.  Assume that the conditions of Theorem \ref{themain} are satisfied for the Lorentzian manifolds $(M_1,g_1)$ and $(M_2,g_2)$.
%, fix $w_{M_1}\in W_1$ and let $w_{M_2}\in W_2$ be as described above. 
Then $\Phi_{2,\,L^+V}  = \Phi_{1,\,L^+V}$ implies
\[
\{\mathcal{E}^1_{\mathcal{U}}({w_1}):\ w_1\in W_1\} = \{\mathcal{E}^2_{\mathcal{U}}({w_2}):\ w_2\in W_2\}. 
%\mathcal{E}^2_{\mathcal{U}}({W_2}). % (w_{M_2})
\]
\end{proposition}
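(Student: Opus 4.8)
The plan is to reduce Proposition \ref{ss_map_first_obs} to a purely local statement about how the higher-order linearization detects point sources, and then invoke the machinery summarized in the proof sketch (Lemma \ref{coro_renerew}, Theorem \ref{sofisti}, Corollary \ref{sofisti_further}, and the microlocal analysis of the collision operator). First I would fix $w_1 \in W_1$ and show that the equality of the light-source-to-solution maps forces the existence of a corresponding $w_2 \in W_2$ with $\mathcal{E}^1_{\mathcal{U}}(w_1) = \mathcal{E}^2_{\mathcal{U}}(w_2)$, and by symmetry the reverse inclusion; so the crux is this one-sided statement.

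\medskip

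\emph{Step 1: Produce a point source at $w_1$.} Since $w_1 \in W_1 = I_1^-(x^+) \cap I_1^+(x^-)$, choose two lightlike geodesics $\eta$, $\tilde\eta$ issued from the measurement region $V$ whose first intersection is at $w_1$, and approximating families of timelike geodesics $\gamma_{(\hat x,\hat p)}$, $\gamma_{(\hat y,\hat q)}$ as in the proof summary. Using Lemma \ref{coro_renerew} construct the geodesic flowout submanifolds $Y_1, Y_2 \subset \SP W_1$ with the required transversality of $\pi(Y_2)$ against $\gamma_{(\hat x,\hat p)}$ near their intersection. Pick sources $f_1, f_2 \in \mathcal{B}$ supported in $\OVS V$ whose linear solutions $\Phi^L(f_l)$ are conormal approximations to the delta distributions of $Y_l$ (with the correct orders $k_l$). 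Then by Theorem \ref{sofisti} and Corollary \ref{sofisti_further}, the right-hand side of \eqref{2nd_lin_source}, $\COLOP[\Phi^L(f_1),\Phi^L(f_2)] + \COLOP[\Phi^L(f_2),\Phi^L(f_1)]$, is (microlocally, modulo smoother terms) a conormal point source at $w_1 \in M_1$.

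\medskip

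\emph{Step 2: Transport the singularity to $V$ and read it off from $\Phi_{L^+V}$.} The second-order linearized quantity $\Phi^{2L}_1(f_1,f_2)$ solves the transport equation \eqref{2nd_lin_source} with this point source; hence $\Phi^{2L}_1(f_1,f_2)$ is a conormal distribution associated to the lightlike geodesics emanating from $w_1$, i.e.\ to the geodesic flowout of the light cone $\mathcal{L}^+_{w_1}$, and its restriction to $L^+V$ carries precisely the singular support needed. Because $\Phi_{1,L^+V}$ is twice Fr\'echet differentiable (Lemma \ref{deriv}), $\Phi^{2L}_1(f_1,f_2)$ is computed from $\Phi_{1,L^+V}$ alone as $\partial_{\eps_1}\partial_{\eps_2}|_0 \Phi_{1,L^+V}(\eps_1 f_1 + \eps_2 f_2)$. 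As in \cite{Kurylev2018}, by letting $f_1,f_2$ range over the admissible families that focus at a given $w_1$ and observing where the singularity of $\Phi^{2L}_1(f_1,f_2)|_{L^+V}$ first appears (composing with the lightlike section $P:V_e\to L^+V$ to avoid the boundary-of-causal-cone complications), one recovers the earliest observation time functions from $w_1$, hence the earliest light observation set $\mathcal{E}^1_{\mathcal{U}}(w_1)$, purely from $\Phi_{1,L^+V}$.

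\medskip

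\emph{Step 3: Match on $M_2$.} Run the identical construction with $(M_2,g_2)$, using the \emph{same} sources $f_1,f_2 \in \mathcal{B}$. Since $\Phi_{2,L^+V} = \Phi_{1,L^+V}$ on all of $\mathcal{B}$, the second-order linearizations agree: $\Phi^{2L}_2(f_1,f_2)|_{L^+V} = \Phi^{2L}_1(f_1,f_2)|_{L^+V}$. The singularity observed in $V$ must therefore have a source on $M_2$; one argues (again following \cite{Kurylev2018}) that this source is again a point source, located at some $w_2 \in W_2$ — the location in $W_2$ being forced because sources outside $W_2$ either cannot influence $V$ causally or produce observations with different earliest-arrival structure. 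Since the observed data in $V$ is literally the same function, the recovered earliest light observation sets coincide: $\mathcal{E}^2_{\mathcal{U}}(w_2) = \mathcal{E}^1_{\mathcal{U}}(w_1)$. Ranging over all $w_1 \in W_1$ and using symmetry gives the claimed equality of families of sets.

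\medskip

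\emph{Main obstacle.} The delicate point is Step 2--3: showing that the singularity detected in $V$ is genuinely produced by a \emph{single point} source on each manifold and that this point necessarily lies in the diamond $W_j$, rather than, say, a source spread along a geodesic or located outside $W_j$. This requires the transversality arrangement from Lemma \ref{coro_renerew} so that the only conormal directions surviving the collision-operator composition are those of the point $w$, together with a careful causality argument (finite propagation speed, $w \in I^-(x^+)\cap I^+(x^-)$) to pin the source into $W_j$; the restriction to lightlike observations via $P$ is what makes the Fourier integral operator calculus clean enough to conclude. The rest — two-fold differentiability, the flowout computations, the passage from earliest observation time functions to $\mathcal{E}_{\mathcal{U}}$ — follows the template of \cite{Kurylev2018} with the Boltzmann-specific modifications already indicated in the proof summary.
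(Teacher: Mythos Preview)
Your overall architecture matches the paper's: second-order linearization, conormal sources supported on geodesic flowouts with the admissible intersection property, composition with a lightlike section $P$ to read off singularities in $V$, and then a comparison between $M_1$ and $M_2$. Steps 1 and 2 are essentially what the paper does (Proposition \ref{kooasu} and Lemma \ref{estupido}), modulo the caveat that the collision term is not literally a point source at $w_1$: its wavefront set lies in $\Lambda_0\cup\Lambda_1\cup\Lambda_2$, and only after composing with the carefully chosen $P_e$ along the \emph{optimal} geodesic $\gamma$ from $w_1$ to $e$ do you get $\text{singsupp}(\mathcal S)=\gamma\cap V_e$. The spurious pieces along $\Lambda_1,\Lambda_2$ do not vanish automatically, so your Step 2 assertion that $\Phi^{2L}_1(f_1,f_2)$ is conormal to the full forward light cone of $w_1$ is too strong.

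The genuine gap is Step 3. You write that ``this source is again a point source, located at some $w_2\in W_2$'', but that is precisely what has to be proven and it is the bulk of the paper's work (Lemma \ref{ss_map_determines_intersection} and Proposition \ref{prof: Map F preserves E-sets}). The mechanism is not ``detect a point source on $M_2$ from the observed singularity''; rather, the \emph{same} initial vectors $(\hat x,\hat p),(\hat y,\hat q)\in\mathcal P^+V$ determine geodesics on both manifolds, and one must show: (i) these geodesics, as curves in $M_2$, actually intersect for positive times (this uses Lemma \ref{estupido} to infer non-empty intersection from $\text{supp}(\mathcal S_2)\neq\emptyset$, followed by a limiting argument in which timelike approximants $(\hat x_l,\hat p_l),(\hat y_l,\hat q_l)$ converge to the lightlike $\eta_2,\tilde\eta_2$ and a compactness argument in $J_2^+(\hat\mu(s_0))\cap J_2^-(e)$); (ii) the resulting first intersection $w_2$ is independent of the choice of $\eta_1,\tilde\eta_1$ and of the timelike approximants --- this is a non-trivial contradiction argument using the shortcut lemma (Lemma \ref{shortcut_lemma}) and the fact that $e=\hat\mu(f_{\hat a,1}^+(w_1))$ is an \emph{earliest} observation; (iii) the equality $\mathcal E^1_{\mathcal U}(w_1)=\mathcal E^2_{\mathcal U}(w_2)$ is obtained by proving $f_{a,1}^+(w_1)=f_{a,2}^+(w_2)$ for every $a\in\mathcal A$ via two one-sided inequalities, each coming from running the singularity-detection argument with the roles of $M_1$ and $M_2$ exchanged. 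None of (i)--(iii) follows from ``the observed data is literally the same function''; you need the interplay between Lemma \ref{estupido}, optimality of the chosen geodesic to $e$, and the shortcut argument. Your ``Main obstacle'' paragraph gestures at causality but does not supply these steps.
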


As shown in \cite[Theorem 1.2]{Kurylev2018}, the sets $\mathcal{E}^{j}_{\mathcal{U}}(w)$, $j=1,2$, determine the unknown regions $W_{j}$ and the conformal classes of the Lorentzian metrics $g_j$ on them. That is, there is a diffeomorphism $F:(W_1,g_1|_{W_1})\to (W_2,g_1|_{W_2})$ such that $F^*g_2=c\s g_1$ on $W_1$. 
 %and the Lorentzian metrics  use the result \cite{Kurylev2018} which shows that this parametrization uniquely recovers the topological structure and Lorentzian structure $g|_W$ up to conformal class. 
After the reconstruction of the conformal class of the metric $(W,g|_W)$, we conclude the proof of Theorem \ref{themain} by showing that the source-to-solution map in $V$ uniquely determines the conformal factor $c(x)$ in $W$. This implies that the source-to-solution map in the set $V$ determines uniquely the isometry type of the Lorentzian manifold $(W,g|_W)$.  

\textbf{Paper outline.} Section \ref{prelim} contains all the preliminary information. There we introduce our notation (\ref{notation}) and recall Lagrangian distributions (\ref{lagrangian-dist}). We provide some expository material on the collisionless (Vlasov) and Boltzmann models of particle kinematics in Sections \ref{Vlasov} and \ref{Bman-model} respectively.  In Section~\ref{microlocal-collision}, we analyze the operator \begin{align*}
\COLOP_{gain} [u_1, u_2] (x,p) :=- \int_{\Sigma_{x, p }} u_1 (x,p' ) u_2 (x,  q' )  A(x,p,q, p', q') dV(x,p;q,p',q'), %dV (x,p \ ; \ q,p',q'), 
\end{align*} from a microlocal perspective. Viewing particles as conormal distributions, we describe fully the wavefront set which arises from particle collisions. Additionally, in Section \ref{travserse-collisions-section} we construct a submanifold $S_2\subset TM$, whose geodesic flowout is $Y_2\subset TM$, such that the graph $Y_1$ of a geodesic and $Y_2$ satisfies the properties discussed in Section~\ref{proof_summary_main_thm}. 
% intersects a given geodesic transversally (in $TM$) through a finite collection of points. 
In Section \ref{main-proof}, we provide the proof of Theorem \ref{themain}, broken down into several key steps:
\begin{enumerate}
\item We show that from our source to solution map data, we can determine if our particle sources interact in $W$. 
\begin{enumerate}
\item[\textbullet] In section \ref{delta_dist_of_subm}, given choices of future-directed timelike vectors $(\hat{x},\hat{p})$ and $(\hat{y},\hat{q})$, as in Section~\ref{proof_summary_main_thm}, we construct the sources $f_1$ and $f_2$ described in Section~\ref{proof_summary_main_thm}.
\item[\textbullet] In this section we additionally construct smooth $C^\infty_c$ approximations $h_1^\epsilon$ and $h_2^\epsilon$, $\eps>0$, of the sources $f_1$ and $f_2$ respectively. This is done mainly for technical reasons, since we only consider the source-to-solution operator of the Boltzmann equation for smooth sources. % are well defined.
% 
% \footnote{For physical considerations, we emphasize that we work with smooth particle densities which approximate the ``real'' statistically defined sources. it is from these approximate measurements that we recover the desired Lorentzian metric and conformal class.}. Here $\epsilon>0$ is the regularization parameter which tells us how close our mollifications are to delta distributions. 
% We use these approximate sources to guarantee that the 
\item[\textbullet] In Section \ref{sca123} we describe \antti{a particular} future-directed lightlike vector field $P_e: V_e\to L^+V_e$, $V_e\subset V$ \antti{that admits as an integral curve a fixed optimal geodesic $\gamma$. We compose it with the collision operator~\eqref{collision_operator_formula_intro} and prove
% \[
% \sslimit \in  I^{\nnn/4 - 1/2} (V_e; N^* (\gamma  ) \cap T^* V_e), \ \ WF (\sslimit)  = N^* (\gamma  ) \cap T^* V_e,
% \]
%\antti{
\[
%\sslimit \in  I^{5\nnn/4}~\f{This might not be correct. Need to be checked. -Tony} (V_e; N^* (\gamma  ) \cap T^* V_e), \ \ 
\text{singsupp} (\sslimit)  =  \gamma   \cap  V_e,
\]
where $\mathcal{S}$ is known from our measurements, namely $\sslimit=\lim_{\epsilon \rightarrow 0}  \Phi^{2L} (h_1^\epsilon, h_2^\epsilon)  \circ P_e $. }
%\tbl{(Here $I^{5\nnn/4} (V_e; N^* (\gamma  ) \cap T^* V_e)$ denotes a class of conormal distributions, which we discuss in Section \ref{lagrangian-dist}.)}
\end{enumerate}
\item In section \ref{ss_elos} we prove Proposition \ref{ss_map_first_obs}, which says that we may determine earliest light observation sets from our measurements of $\mathcal{S}$. %This result is the first step which links the first light observations on $V$ and the data.
\item From the work in Section \ref{ss_elos} which connects  source-to-solution map data to the earliest light observation sets, we show we may determine the conformal class of the metric.
\item  Lastly, in the section \ref{det-of-conform} we finish the proof by showing that we can identify the conformal factor of the metric.
%showing that the conformal factor between the metrics in $W_1$ and $W_2$ is $1$.
\end{enumerate}
 Auxiliary lemmas, which include lemmas on the existence of solutions to the Cauchy problem of the Boltzmann equation with small data and the linearization of the source-to-solution map, among others, are contained in the Appendices.

\textbf{Acknowledgments.} 
The authors were supported by the Academy of Finland (Finnish Centre of Excellence in Inverse Modelling and
Imaging, grant numbers 312121 and 309963) and AtMath Collaboration project.

% Preliminary section
%
\section{Preliminaries}\label{prelim}
\subsection{Notation}\label{notation}
Throughout this paper, $(M,g)$ will be an $n$-dimensional Lorentzian spacetime with $\nnn \geq 3$. We additionally assume that $(M,g)$ is globally hyperbolic. Globally hyperbolicity (see e.g.~\cite{bernal2005}) implies that \tbl{$M=\R \times N$}, for some smooth $n-1$ dimensional submanifold $N\subset M$, and that the metric takes the form
\[
g (x)  = -g_{00}(x) dx^0 \otimes dx^0  + g_{N}(x), \ \ x=(x^0,\overline{x}), \  x^0 \in \R, \ \overline{x} \in N,
\]
where $g_{00} \in C^{\infty} (M)$ is a positive function and $g_{N} (x^0, \, \cdot  \,  )   $ is a smooth Riemannian metric on $N$, for each $x^0 \in \R$.
Global hyperbolicity implies that the manifold $(M,g)$ has a global smooth timelike vector field $\tau$. This vector field  defines the causal structure for $(M,g)$. Further, a globally hyperbolic manifold is both causally disprisoning and \antti{causally} pseudoconvex (see for example \cite{MR1216526}). \tjb{Causally disprisoning means that for each inextendible causal geodesic $\gamma: (a,b)\to M$, $-\infty\le a<b\le \infty$, and any $t_{0}\in(a,b)$, the closures in $M$ of the sets $\gamma(a,t_{0}]$ and $\gamma[t_{0},b)$ are not compact in $M$. }%Causally disprisoning means that for each inextendible causal geodesic $\gamma: (a,b)\to M$,\tbl{ $-\infty\le a<b\le \infty$}, and every compact set $K\subset M$, there are parameter values $a<t_1<t_2<b$ such that $\gamma(t)\in M\setminus K$ for all $a<t<t_1$ and $t_2<t<b$.
The manifold $M$ is pseudoconvex if % on the inextendible (i.e. maximal) domain of parameters and 
for every compact set of $M$, there is a geodesically convex compact set which properly contains it.  %another compact set that is convex with respect to causal geodesic segments.%~\footnote{\tbl{Remove starting from ``That is...'' if this is not needed later.}} 

%Let $N$ be a given a smooth manifold of dimension $n-1$, and consider the manifold $\R\times N$.
%The Cartesian product trivially splits each point $x\in \R \times N$ into two components:
%\[
%x= (x^0,\overline{x} ) , \ \  x^0 \in \R, \ \overline{x} \in N.
%\]
%We set
%\[
%g (x)  = -g_{00}(x) dx^0 \otimes dx^0  + g_{N}(x), \ \ x=(x^0,\overline{x}), \  x^0 \in \R, \ \overline{x} \in N
%\]
%to be a globally hyperbolic, geodesically complete, pseudo-Riemannian metric on $\R \times N$, where  $g_{00} \in C^{\infty} (\R \times N ; (0,\infty) )$, and $g_{N} (x^0, \ \cdot  \  )   $ is  a smooth Riemannian metric on $N$, for each $x^0 \in \R$.

We use standard notation for the causal structure of $(M,g)$. For $x,y\in M$ we write $x\ll y$ (respectively $x\gg y$) if  $x\ne y$ and there is a future directed timelike geodesic from $x$ to $y$ (respectively from $y$ to $x$). We write $x < y$ (respectively $x > y$) if  $x\ne y$ and there is a future directed causal geodesic from $x$ to $y$ (respectively from $y$ to $x$).

% $I^{+}(x):=\{y\, :\, x\ll y\}\subset M$
For $x\in M$ we write  $J^{+}(x):=\{y\in M\, :\, x< y \text{ or } x=y\}$ for the causal future of $x$ and $J^{-}(x):= \{y\in M\, :\, x> y \text{ or } x=y\}$ for the causal past of $x$. As stated in the introduction, the sets $I^{-}(x):= \{y\in M\, :\, x\gg y\}$ and $I^{+}(x) := \{y\in M\, :\, x\ll y \}$ denote respectively the chronological past and future of $x$. %Analogously, we define the causal and chronological past $J^-(x)$ and $I^-(x)$ for points and $J^-(U)$ and $I^-(U)$ for subsets in $\R \times N$.
The set of points in $M$ which may be reached by lightlike geodesics emanating from a point $x \subset M$ is 
\[
\mathcal{L}^{\pm}(x):= \{  y \in TM : y=\exp_x (sp) , \, p \in L_x^\pm M, \, s \in [0,\infty) \}, 
\]
where $L^\pm_x M \subset TM$ is the set of future ($+$) or past ($-$) directed lightlike vectors in $T_xM$.
%
%In the settings where we consider the causal regions of a set $U\subset M$, w
If $U\subset M$, we also write 
\begin{equation}\label{causal_spaces}
J^{\pm}(U) = \bigcup_{x\in U}  J^{\pm}(x), \quad I^{\pm}(U) = \bigcup_{x\in U}  I^{\pm}(x), \quad \mathcal{L}^{\pm}(U)= \bigcup_{x\in U}  \mathcal{L}^{\pm}(x).
\end{equation}
 
We express the elements of $TM$ as $(x,p)$ where $x\in M$ and $p\in T_xM$. Since $TM=  T\R \times TN$, each $(x,p)\in TM$ can be written in the form
\[
x=(x^{0}, \bar{x}), \ \ \text{ and } \ \ p = (p^{0},\bar{p}), 
\]
for $ x^{0} \in \R$, $\bar{x} \in N$, $p^{0}\in T_{x^{0}} \R$, and $\bar{p} \in T_{\bar{x}} N$.
% and 
% \[
% p = (p^{0},\bar{p}), \ \ p^{0}\in T_{x^{0}} \R, \ \ \bar{p} \in T_{\bar{x}} N.
% \] 
%Local coordinates in $\R \times N$ generate a local frame $\partial_\alpha : U \rightarrow T(\R \times N)$, $\alpha = 1,\dots, \nnn$ on the corresponding coordinate neighbourhood $U \subset \R \times N$. Using the ordered set 
%$\{ \partial_\alpha|_x \}_{\alpha=0}^{\nnn-1}
%% \subset T_x(\R \times N)
%$ as a coordinate basis in each fiber $T_x (\R \times N)$, $x\in U$ one identifies $TU \subset T(\R \times N)$ with the cartesian product $U \times \R^{\nnn}$. 
%Such a construction is called a local trivialization of the bundle $T(\R \times N)$. % We shall use the symbol $\approx$ when referring to local treatment. 
Given a local coordinate frame $\partial_\alpha : U\subset M \rightarrow TM$, $\alpha = 1,\dots, \nnn$, we identify $TU \approx  U \times \R^{\nnn}$. We use $(x,p^\alpha)$ to denote this local expression.%\footnote{\tbl{look for where we actually use this notaion}}

For $(x,p)\in TM$, we denote by $\gamma_{(x,p)}$ the geodesic with initial position $x$ and initial velocity $p$. 
% 
%Explicitly, such a geodesic is given locally by $\gamma_{(x,p)}(s)= \exp_x(sp)$. % where $\exp_x$ is the exponential map defined on an open neighbourhood of the origin in $T_xM$. 
%In this paper $M$ is considered to be geodesically complete which, by definition, implies that the exponential map is well defined on the whole fiber $T_x M$. That is, the curve  $\gamma_{(x,p)}$ is well defined as a function on $\R$. 
The velocity of $\gamma_{(x,p)}$ at $s\in \R$ is denoted by $\dot\gamma_{(x,p)}(s) \in T_{\gamma_{(x,p)}(s)} M$, $s\in \R$. To simplify the notation we occasionally refer also to the curve $s\mapsto ( \gamma_{(x,p)}(s), \dot\gamma_{(x,p)}(s) ) \in TM$ by $\dot\gamma_{(x,p)}(s)$. 

%$(p^0, \dots, p^{\nnn-1}) \in \R^{\nnn}$, an element $(x,p)\in TU$ takes the form $(x,p^0, \dots, p^{\nnn-1 } )$; we often simply write $(x,p^\alpha)$ for this local expression.

% local trivialization $T(\R \times N) \approx U \times \R^{\nnn}$ we use $(x,p^\alpha)$ as a shorthand for $(x,p^0, \dots, p^{\nnn-1 } )$, where $(p^0, \dots, p^{\nnn-1}) \in \R^{\nnn}$ are the local coordinates.

The appropriate phase spaces for our particles will be comprised of the following subbundles of $TM$. %; below are the various notation we use for these spheres. 
%Let $U\subset M$.
\antticomm{The bundle of time-like vectors on given $U \subset M$ is defined as
\[
%S_r (\R \times N ) :=
\mathcal{P} U := \{ (x,p) \in TM \setminus \{0\} : \sqrt{- g( p,p) } > 0, \ \ \pi(x,p)=x  \in U \} \subset TM.
\] 
}
The mass shell of mass $m\geq 0$ on $U$ is  %{\color{red} (Problematic notation. cf. $TQ$ when $Q$ submanifold of $M$. Confusion could maybe be avoided by saying that the meaning is usually clear from the context. What do you think? --Antti)}
%The hyperbolic sphere bundle of radius $r$ over a set $U$ is the set of timelike vectors 
\begin{equation}\label{massss}
%S_r (\R \times N ) :=
P^m U := \{ (x,p) \in TM \setminus \{0\} : \sqrt{- g( p,p) } = m, \ \ \pi(x,p)=x  \in U \} \subset TM ,
\end{equation}
where $\pi :TM \rightarrow M$ is the canonical projection. The time-like bundle on $U$ then is the union
\[
\mathcal{P}U =  \bigcup_{m>0}P^{m}U.
\]
We denote the inclusion of the light-like bundle $LU := P^0 U$ to this union as
\[
\overline{\mathcal{P}}U  :=  \bigcup_{m\geq 0}P^{m}U
\]
which is the bundle of causal vectors on $U$. 
%As we will often will consider these sphere bundles and collections thereof  over the whole space $M$, 
Notice that $\overline{\mathcal{P}}U$ excludes the zero section; that is, a zero vector is not causal in our conventions.

%\footnote{Such  diffeomorphism $M \to \R \times N$ exists for every globally hyperbolic manifold.} 
 In our setting, we may also write for $t \in \R$, $ x\in M$
%{\color{red} ( I've replaced the shorthands $P^r$, $\OVS$, $\SP$,... with $P^r M$, $\OVS M $, $\SP M$,... If/when you find some remains of the old notation, please update them. I've also tried to reduce unnecessary brackets when there is no danger of confusion, so expressions like $P^r (U)$ should be just $P^r U$ etc. The new nomenclature is also in line with the standard notation for bundles in differential geometry (cf. $TU$, $\mathcal{T} U$, etc.).  --Antti)}
\begin{alignat*}{2}
%P^r := P^rM, \ \ 
P^r_t U &:= (T_t\R  \times TN) \cap P^r U ,   \quad &&P^r_x U :=T_x M \cap P^r U \\%\overline{\mathcal{P}}_{t} := ( T_{t} \R \times  T N )\cap \overline{\mathcal{P}},
%\mathcal{P} := \mathcal{P}M, 
%\ \ \overline{\mathcal{P}} := \overline{\mathcal{P}}(M), 
\ \ \mathcal{P}_{t} U &:= ( T_{t} \R \times  T N )\cap \mathcal{P}U , \quad  &&\mathcal{P}_{x}U := ( T_{x} M )\cap \mathcal{P}U,
\end{alignat*}
and write similarly for $\overline{\mathcal{P}}_x U$ and $\overline{\mathcal{P}}_t U$.%, is considered for the causal vectors.
%
%\begin{align*}
%S^r &:= S^r(M), \\
% S^r_t &:= (T_t\R  \times TN) \cap S^r \ \  \text{for} \ \ t \in \R, \\
%  S_x^r&:= T_xM \cap S^r \ \ \text{for} \ \   x\in \R \times N,\\ \intertext{and} 
% \PP_{t} &:= ( T_{t} \R \times  T N )\cap \PP \ \ \text{for} \ \   t\in \R,\\
% \PP&:= \PP(M),\\
% \overline{\PP}&:= \overline{\PP}(\R \times N).
% \end{align*}
%We only consider the cases when $r > 0$ and $r=0$ which correspond to time-like and light-like mass shells in the context of general relativity.
%In particular, $S^0$ is the collection of light-like vectors in $T( \R \times N)$. 

Each one of the bundles above consists of future-directed and past-directed components which are distinguished by adding ``$+$'' or ``$-$'' to the superscript.
For example, we denote the manifold of future-directed causal vectors on an open set $U \subset M$ by %\footnote{\tbl{check sign of $g(T,p)$ below}}
\[
\OVS U := \{ (x ,p) \in TM \setminus \{0\} : x \in U, \,  -g(p,p)|_x \geq 0 , \, g(\tau,p)<0 \} \subset \overline{\mathcal{P}} U.
\]
%For $U = M$, once again we simply write $\OVS  := \OVS M$. 

%
%\begin{figure}
%    \centering
%    \includegraphics[width=\textwidth]{pictures/mass-shells1.pdf}
%    \caption{The light cone $L(x) = S^0_x$ (in red) and mass shells $S^r_x$ (in black) for different values of $r>0$ and fixed $x\in \R \times N$. The upper and  lower halves of the space indicate the future- and past-directed vectors, respectively. They differ by sign of the coordinate $p^0:= \langle dx^0, p \rangle $ whose absolute value corresponds to the energy in physics.}
%    \label{fig:my_label}
%\end{figure}
%   

%\tjb{TB: relabel manifolds below later. Any ideas for good names?} \\ 
%{\color{red} My (=Antti) suggestion : 
%\\
%$K_S$: (two-sided/complete/open)  flowout of $S$ in $TM$, \\
%$G_S$: (two-sided/complete/open) flowout of $S$ in $M$ --Antti }

Let $S \subset \overline{\mathcal{P}}_t M$, $t \in \R$ be a submanifold of $TM$. The \emph{geodesic flowout} of $S$ in $TM$ is the set
\[
K_S := \{ (x,p) \in TM : (x,p)= (\gamma_{y,q}(s),\dot{\gamma}_{y,q}(s)), \ s\in (-T(y,-q),T(y,q)), \ (y,q) \in S \} \subset \overline{\mathcal{P}} M.
\]
Here the interval $(-T(y,-q),T(y,q))$ is inextendible, i.e. the maximal interval where the geodesic $\gamma_{(y,q)}$ is defined. 
\begin{remark}\label{flowout-is-manifold}
The geodesic flowout $K_S$ is a smooth manifold. To see this, note that $K_S$ is the image of the map $(s,(y,q))\mapsto \varphi_s(y,q)$, where $\varphi_s(y,q)$ is the integral curve of the geodesic vector field $\XX$ at parameter time $s$ starting from $(y,q)\in S$. Since $\XX$ is transversal to $\overline{\mathcal{P}}_t M$ (see the  proof of Lemma~\ref{uuiisok}) we have that this map is an immersion, see e.g.~\cite[Theorem 9.20]{Lee_2013}. Given that $M$ is globally hyperbolic, there are no closed causal geodesics. From this it follows that this map is also injective and thus an embedding. Consequently, its image $K_S$ is a smooth submanifold of $TM$.
\end{remark}

% To see this, recall first that $M$ is causally disprisoning as described above. % first that causal vectors on a globally hyperbolic manifold escape compact sets.  
% %Since the manifold $M$ is globally hyperbolic, causal geodesics escape compact sets. 
% It follows that the intersection $K_S \cap TU$ with a relatively compact open set $U \subset M$ is a submanifold of $TM$. As a smooth manifold has a countable basis of regular coordinate balls \cite[Proposition 1.19]{Lee_2013}, we conclude that $K_S$ is a smooth manifold. ~\footnote{\tbl{Antti, can you elaborate your proof that $K_S$ is a submanifold?}}
%which splits up into the positive and negative halves, 
%\[
%K_S^{\pm} = \{ (x,p) \in TM : (x,p) = \dot\gamma_{(y,q)} (s), \ \pm s\in [0,\infty), \ (y,q) \in S \}.
%\]
We denote $G_S:=\pi(K_S)$, which is the set 
\begin{equation}\label{Gees}
G_S = \{ x \in M : x = \gamma_{(y,q)} (s), \ s\in (-T(y,-q),T(y,q)), \ (y,q) \in S \} \subset M. %G_M
\end{equation}
%where $\gamma_{(y,q)}$ is a geodesic in $M$ with the initial value $\dot\gamma_{(y,q)} (0) = (y,q)$. 
Unlike $K_S$, the set $G_S$ might  not  be a manifold since the geodesics describing this set might have conjugate points.%~\footnote{\tbl{Do we ever use $G_S^\pm$. The part about $G_S^\pm$ here is commented away.}} %due to possible presence conof caustics. 
%We refer to $G_S$ as the  geodesic flowout of $S$ in $M$. 
% % It splits up into the two halves,  
% % \[
% % G_S^\pm := \{ x \in M : x = \gamma_{(y,q)} (s), \ \pm s\in [0,T(y,\pm q) ), \ (y,q) \in S \},
% % \]
% % corresponding to the trajectories of half-closed geodesic segments with initial($+$) or final($-$) velocity in $S$. We only consider the case where $S$ consists of future directed vectors, i.e. $S\subset \OVS M$. In that case the set $G_S^\pm$ consists of the causal future($+$) or the past($-$) of the elements with initial position and velocity described by $S$.

If $X$ is a smooth manifold and $Y$ is a submanifold of $X$, the conormal bundle of $Y$ is defined as
\[
 N^*Y=\{(x,\xi)\in T^*X\setminus \{0\}: x\in Y,\, \xi \perp T_xY\}.
\]
Here $\perp$ is understood with respect to the canonical pairing of vectors and covectors. %We also define%~\footnote{\tbl{Check that this is what $\Lambda_S$ is supposed to be?}}

\subsection{Lagrangian distributions and Fourier integral operators}\label{lagrangian-dist}\tjb{Here we define the classes of distributions and operators we work with in this paper. We will follow the notation in Duistermaat's book ~\cite{duistermaat2010fourier}. }%; see also \cite{hormander1971fourier1}.% for thorough exploration of the field. 
\antti{See also the original sources \cite{hormander1971fourier1, MR388464}. }

Let $X$ be a smooth manifold of dimension $n \in \mathbb{N}$. We write $\mathcal{D}'(X)$ for the set of distributions on $X$ and $\mathcal{E}'(X)$ for the set of distributions with compact support on $X$. Let $\Lambda\subset T^*X\setminus \{0\}$ be a conic Lagrangian manifold~\cite[Section 3.7]{duistermaat2010fourier}. We denote the space of symbols  \cite[Definition 2.1.2]{duistermaat2010fourier} of order $\mu \in \R$ (and type $1,0$) on the conic manifold $X \times \R^k \setminus \{0 \}$ by $S^\mu(X \times \R^k \setminus \{0 \})$. 
The H\"ormander space  of Lagrangian distributions of order $m\in \R$ over $\Lambda$  is denoted by $I^m (X ; \Lambda )$, and consists of locally finite sums $u=\sum_j u_j \in \mathcal{D}' (X)$ of oscillatory integrals
\[
u_j(x) = \int_{\R^{k_j}} e^{i\varphi_j (x, \xi) } a_j(x,\xi) d\xi , \quad x\in X.
\]
Here $a_j \in S^{m - k_j/2+ n/4} (X\times \R^{k_j}\setminus \{0\} )$. The phase function $\varphi_j$ is defined on an open cone $\Gamma_j\subset X\times \R^{k_j}$ and satisfies the following two conditions: (a) it is nondegenerate, $d\varphi_j\neq 0$ on $\Gamma_j$ and (b) the mapping % on an open cone $\Gamma_j\subset X\times \R^{k_j}$ satisfying the following: The mapping 
\[
\Gamma_j \rightarrow \Lambda, \ \  (x,\xi) \mapsto (x,d_x \varphi_j (x,\xi) ) 
\]
defines a diffeomorphism between the set $\{ (x,\xi) \in \Gamma_j : d_\xi \varphi_j (x,\xi)=0 \}$ and some open cone in $\Lambda$. 
%We also assume that $\chi a \in S^{-\infty} (X\times \R^k\setminus \{0\} )$ for any smooth cut-off function $\chi$ that vanishes on $\Gamma_j$. 
When it is clear what the base manifold is we abbreviate $I^m (\Lambda) = I^m (X ; \Lambda )$. %Elements of $I^m(\Lambda)$ for some Lagrangian manifold $\Lambda$ are referred to as \emph{Lagrangian distributions}.
%For example, $\varphi (x^1,x^2, \xi_1,\xi_2) = \varphi(x) \xi_1 $
\antticomm{A particularly important sub-class of Lagrangian distributions is the class of conormal distributions: A distribution conormal to a submanifold $S \subset X$ is defined as an element in $I^m (N^*S)$ (i.e. $\Lambda= N^* S$) and the class of conormal distributions refers to the union of classes $I^m (N^*S)$ over smooth submanifolds $S\subset M$ and $m \in \R$.}
\footnote{\antticomm{The notation $I^\mu(S)$ is often used in the literature. Here $\mu$ stands for the order of the symbol.}}
%\antticomm{
%In coordinates $x=(x',x'')$ that characterise $S $ locally as $\{ x : x' = 0\}$  one may choose the phase $\varphi = x' \cdot \xi'$ in the oscillatory representation above. 
%}

\tjb{Below we let $X,Y,Z$ be $C^\infty$-smooth manifolds.  }% of dimension $n_X$ and $n_Y$ respectively. 
Let $\Lambda$ be a conic Lagrangian manifold in $T^*(X \times Y) \setminus \{0\}$.  
The manifold corresponds to a \tbl{canonical} relation $\Lambda'$ defined by
\begin{align}\label{relation_defn}
 \Lambda' = \{ (x,y  \;  \xi_x,\xi_y) \in T^*(X \times Y) :  (x,y \;  \xi_x,-\xi_y) \in \Lambda \}.
\end{align}
Equivalently, one may start with a canonical relation and obtain a Lagrangian manifold. In this paper we chose to represent canonical relations as twisted manifolds $\Lambda'$ of Lagrangian manifolds. 
%and  $G \in I^m (X \times Y ; \Lambda)$. We define 
%a Fourier integral 
%We define 
%an operator $F$ as %of order $m$ as
%\[
%F : C_c^\infty (Y, \Omega_{\frac{1}{2}}) \rightarrow \mathcal{D} (X, \Omega_{\frac{1}{2}})
%\]
Considering an element in $I^m (X \times Y ; \Lambda)$ as a Schwartz kernel defines an operator
\[
F : C_c^\infty (Y) \rightarrow \mathcal{D}' (X).%\qquad \langle \langle F ,\phi \rangle  , \psi \rangle = \langle G , \phi \otimes \psi  \rangle, 
\]
%\tbl{where $\psi\in C_c^\infty(X)$. 
%as a Schwartz kernel. 
%Here also $(\phi\otimes \psi)(y,x)=\phi(y)\psi(x)$ and we use the same notation for the tensor product of distributions (see e.g.~\cite{HormanderVol1}) over different spaces. Tensor product of distribution is defined
 %call $G$ the distribution kernel of $F$. 
%The manifold $\Lambda$ corresponds to a \tbl{canonical} relation $\Lambda'$ defined by
%\begin{align}\label{relation_defn}
% \Lambda' = \{ (x,y  \;  \xi_x,\xi_y) \in T^*(X \times Y) :  (x,y \;  \xi_x,-\xi_y) \in \Lambda \}.
%\end{align}
%We sometimes denote $\Lambda'$ by $C$.\footnote{\tbl{Why two labels? Remove this an explain when used.}} 
The class of operators of this form are called  Fourier integral operators of order $m \in \R $ associated with the relation $\Lambda'$. We denote this class of operators by % is denoted by
\[
I^m (X,Y ; \Lambda').
\]
We may also identify the space $ I^m (X ; \Lambda)$, $\Lambda \subset T^*X$, with \tbl{$I^m (X,\{0\}  ;  \Lambda \times \{ (0,0)\})$.}~\f{$\Lambda_0$ has another meaning later in the text.} % where $\Lambda' = \Lambda \times \{ (0,0)\}$. 
%Notice that canonical relations and Lagrangian manifolds in $X \times Y$ are in 

The wavefront set of $F$ is denoted by $WF'(F)$ and it is the set
\begin{align}
WF'(F)&=\{(x,\xi^x,y,\xi^y)\in (T^*X\times T^*Y)\setminus \{0\} \,:\, (x,y,\xi^x,-\xi^y)\in WF(G)\},\label{wf_set_fio}
\end{align}
where $WF(G)$ is the wavefront set of the distribution kernel  $G$ of $F$. We also define 
%the restricted wavefront sets of $F$ as 
\begin{align}
WF'_X(F)&= \{(x,\xi^x)\in (T^*X)\setminus \{0\} \,:\, (x,y,\xi^x,0)\in WF(G)\},\label{wf_set_fio_image}\\
WF'_Y(F)&= \{(y,\xi^y)\in (T^*Y)\setminus \{0\} \,:\, (x,y,0,\xi^y)\in WF(G)\}\label{wf_set_fio_domain}.
\end{align}

%for conic Lagrangian manifold $\Lambda \subset T^*X$.
\tjb{Consider two Fourier integral operators} \antti{ $u_1 \in I^{m_1} (X , Y ; \Lambda_1')$ and $u_2 \in I^{m_2} (Y , Z ; \Lambda_2')$ (i.e. Schwartz kernels in $  I^{m_1} (X \times  Y ; \Lambda_1)$ and $ I^{m_2} (Y \times  Z ; \Lambda_2)$, respectively)}
%\footnote{\antti{Removed ''Schwartz kernels''. Kernels are Lagrangian distributions on $X \times Y$. Not in the operator classes.}} 
with respective orders $m_1$ and $m_2$ and relations $\Lambda_1'$ and $\Lambda_2'$. Sufficient conditions for $u_1$ and $u_2$ to form a well defined composition $u_1\circ u_2 \in I^{m_1+m_2} (X,Z ; \Lambda_1' \circ \Lambda_2')$ are described in  \antti{theorems \cite[Theorem 2.4.1, Theorem 4.2.2]{duistermaat2010fourier} which provide the rules of basic microlocal operator calculus, often referred to as transversal intersection calculus.} 
%\antti{The operator calculus generated by the theorems is often referred to as transversal intersection calculus. }
 The relation $\Lambda_1' \circ \Lambda_2'$ is defined as % given by those elements $ (x,z \ ; \ \xi_x,\xi_z ) \in T^*( X \times Z)$ that satisfy 
\begin{align}\label{relation_of_composition}
\Lambda_1' \circ \Lambda_2':=\{ (x,z \, ; \, \xi^x,\xi^z ) &\in T^*( X \times Z): \nonumber\\
&(x,y \, ; \, \xi^x,\xi^y) \in \Lambda_1' , \ \ (y,z \, ; \, \xi^y,\xi^z) \in \Lambda_2' , \  \text{for some}  \ (y \, ; \, \xi^y) \in T^*Y \}.
\end{align}

Lastly, we remark that products of Lagrangian distributions are naturally defined as distributions over an interesting pair of conic Lagrangian manifolds $\Lambda_0,\Lambda_1 \in T^*X\setminus \{0\}$, \tjb{ which are described next. We refer to \cite{ES-thesis, guillemin1981, melrose-uhlmann1979, greenleaf_uhlmann1990, greenleaf-uhlmann} for a thorough presentation of such distributions.}

\tjb{To begin, a pair $(\Lambda_0,\Lambda_1)$ of conic Lagrangian manifolds} $\Lambda_0,\Lambda_1 \in T^*X\setminus \{0\}$, \tjb{is called an intersecting pair if their} intersection is clean: $\Lambda_0\cap \Lambda_1$ is a smooth manifold and 
\begin{align*}
%&\Lambda_0\cap \Lambda_1\text{ is smooth}\\
T^*_\lambda(\Lambda_0\cap \Lambda_1) = T^*_\lambda\Lambda_0 \cap T^*_\lambda\Lambda_1 \text{ for all } \lambda\in \Lambda_0\cap\Lambda_1.
\end{align*}

\tjb{Let $(\Lambda_0,\Lambda_1)$ be an intersecting pair of conic Lagrangians with $\text{codim}(\Lambda_{0}\cap \Lambda_{1})=k$, and $\mu,\nu\in \R$. For $\ell \in \mathbb{Z}_{+}$, we denote by $S^{\mu,\nu}(X\times (\R^{\ell}\setminus\{0\})\times \R^{k})$ the space of symbol-valued symbols on $X\times (\R^{\ell}\setminus\{0\})\times \R^{k}$ (see \cite{ES-thesis,greenleaf_uhlmann1990}). We say that $u\in \mathcal{D}'(X)$ is a paired Lagrangian distribution of order $(\mu,\nu)$ associated to $(\Lambda_0,\Lambda_1)$ if $u$ can be expressed as a locally finite sum of the form}
\[
u= u_0 +u_1 + v,
\]
where $u_0\in I^{\mu+\nu}(\Lambda_0)$, $u_1\in I^{\nu}(\Lambda_1)$, and 
\[
v(x) = \int_{\R^{\ell}}\int_{\R^{k}} e^{i\varphi(x;\theta;\sigma)}a(x;\theta;\sigma)\,d\theta d\sigma
\]
for $a(x;\theta;\sigma)\in S^{\tilde\mu,\tilde\nu}(X\times (\R^{\ell}\setminus\{0\})\times \R^{k})$,  some $\ell\in \mathbb{Z}_+$, $\mu = \tilde\mu + \tilde\nu + \frac{\ell+k}{2} - \frac{n}{4}$, and $\nu = -\tilde\nu - \frac{n}{2}$\f{fixed tildes}. Above, the multiphase function $\varphi(x;\theta;\sigma)$ satisfies the following three conditions: for any $\lambda_{0} \in \Lambda_{0}\cap \Lambda_{1}$, (a) there is an open conic set $\Gamma \subset X\times (\R^{\ell}\setminus\{0\})\times \R^{k}$~\f{Brackets added.} such that $\varphi(x;\theta,\sigma) \in C^{\infty}(\Gamma)$, (b) $\varphi(x;\theta,0)$ is a phase function parametrizing $\Lambda_{0}$ in a conic neighbourhood of $\lambda_{0}$, and (c) $\varphi(x;0,\sigma)$ is a phase function parametrizing $\Lambda_{1}$ in a conic neighbourhood of $\lambda_{0}$. 
\antti{In particular,
\begin{align}
u\in I^{\mu,\nu}(\Lambda_0,\Lambda_1) \Longrightarrow WF(u)\subset \Lambda_0\cup\Lambda_1.
\end{align}
}
%\footnote{\antti{removed repetition and incorrect statements}}
\tjb{We denote the set of paired Lagrangian distributions of order $(\mu,\nu)$, $\mu,\nu\in \R$, associated to $(\Lambda_0,\Lambda_1)$ by $I^{\mu,\nu}(\Lambda_0,\Lambda_1)$.}

\section{Vlasov and Boltzmann Kinetic Models}\label{LB-models}
%%%%%%%%%%%%%%%%%%%%%%%%%%%%%%%%%%%%%
%
%  The Vlasov model
%
%%%%%%%%%%%%%%%%%%%%%%%%%%%%%%%%%%%%%

\subsection{The Vlasov model}\label{Vlasov}
We consider a system of particles on a globally hyperbolic $C^\infty$-Lorentzian manifold $(M,g)$ having positions $x\in M$ and momenta $p\in \PP_xM$ in a statistical fashion as a density distribution $u\in \mathcal{D}'( \PP M )$. We assume that the support of $u$ is contained in some compact and proper subset $K\subset \overline{\PP} M$. %{\color{red}( $\leftarrow$ Here you had $ \mathcal{D}'( K )$ which I changed since I believe you actually mean the distributions $\mathcal{D}_K'( \PP M )$. Notice that $\mathcal{D}_K'( \PP M ) \neq \mathcal{D}'( K ) $ in general. --Antti)} 
The Vlasov model describes the trajectories of a system of particles in a relativistic setting where there are no external forces and where collisions between particles are negligible. In this situation, the individual particles travel along geodesics determined by initial positions and velocities. % to by those conditions. 
On the level of density distributions, the behaviour of the system of particles is captured by the \emph{Vlasov equation} %(see \cite{ \tjb{add citations to CB, HR, AR})
\begin{align}
\mathcal{X}u&=f,
\end{align}
where $f\in \mathcal{D}'( \PP M )$ is a source of particles and $\mathcal{X} : TM \rightarrow TTM$ on $TM$ is the geodesic vector field. Locally, this vector field has the expression~\f{$x$ added.}  
\[
\mathcal{X} =   p^\alpha  \frac{\partial}{\partial x^\alpha}  -  \Gamma^\alpha_{\lambda \mu}(x)   p^\lambda  p^\mu  \frac{\partial}{\partial p^\alpha }.
\]
Above $\alpha, \lambda,\mu$ sum over  $0,1,2,\ldots, \text{dim}(M)-1$. The functions $\Gamma^\alpha_{\lambda \mu}$ are the Christoffel symbols of the Lorentzian metric $g$.

The vector field  $-i \mathcal{X}:C^\infty(\PP M ) \rightarrow C^\infty(\PP M )$ may be viewed as a pseudo-differential operator with the real-valued principal symbol 
\begin{align}\label{principal_symbol_of_X}
%\sigma_{-i\mathcal{X}}  : T^* TM \rightarrow \R, \ \ 
\sigma_{-i\mathcal{X}} (x,p^\alpha \, ; \, \xi^x,  \xi^p ) =   p^\alpha \xi^x_\alpha   - \Gamma^\alpha_{\lambda \mu} (x)  p^\lambda  p^\mu  \xi^p_\alpha.  %= \langle \xi ,  i\mathcal{X}  \rangle
\end{align}
%\antticomm{ANTTI EDITING ON THIS PAGE NOW... }
Note that as $(M,g)$ is $C^\infty$, the coefficients of $\sigma_{-i\mathcal{X}}$ are $C^\infty$. Writing $\langle \ccdot, \ccdot \rangle$ for the dual paring between $T \mathcal{P}M$ and $T^*\mathcal{P}M$, $\sigma_{-i\mathcal{X}}$ takes the form
\[
\sigma_{-i\mathcal{X}} (x,p \, ; \, \xi) =  \langle \xi , \mathcal{X} \rangle |_{(x,p)} =  \big\langle \xi , \partial_s \antticomm{\big( \gamma_{(x,p)} (s) ,} \dot\gamma_{(x,p)} (s) \antticomm{\big)} \big\rangle\big|_{s=0}.
\]
Therefore, a function $\phi \in C^\infty ( \antticomm{U} )$ \antticomm{on an open $U \subset \mathcal{P}M$} is characteristic for $\mathcal{X}$, that is, satisfies $\sigma_{-i\mathcal{X}} (x,p,d\phi |_{(x,p)} ) = 0$ at every $(x,p) \in U$, if and only if $\phi$ is constant along geodesic velocity curves $s\mapsto (\gamma_{(x,p)}(s) , \dot\gamma_{(x,p)}(s))$.
\antticomm{Any characteristic vector $(x_0,p_0 ; \xi_0) \in T^*\mathcal{P}M$ can be locally extended into such a graph  $(x,p,d\phi|_{(x,p)})$ (see e.g.  \cite[Theorem 3.6.3]{duistermaat2010fourier}). In fact, the smooth function $\phi$ is locally unique, provided a smooth initial data $\phi|_{U}=\psi$ on a neighbourhood $U$ of $(x_0,p_0)$ in  $\mathcal{P}_{t_0} M \ni x_0$. 
It follows that the bicharacteristic strip\footnote{Defined as the image of an integral curve of the Hamiltonian vector field of the principal symbol.} of $\mathcal{X}$ through  $(\tilde{x},\tilde{p} \; \tilde\xi )\in \sigma_{-i\mathcal{X}}^{-1} (0)$ is the image of the smooth curve $s \mapsto (\gamma_{(\tilde{x},\tilde{p})}(s) , \dot\gamma_{(\tilde{x},\tilde{p})}(s) \ ; \ \xi(s) ) \in T^* \mathcal{P}M$, where the development of $\xi(s)$ is governed by the initial value $\xi(0)=\tilde\xi$ and local presentability in the form $\xi(s) = d \phi|_{(\gamma_{(\tilde{x},\tilde{p})} (s),\dot\gamma_{(\tilde{x},\tilde{p})} (s)) }$, 
%\[
%s\mapsto (x(s),p(s) \ ; \ \xi(s) ) \in T^* T(\R \times N),
%\]
%\[
%\begin{split}
%%&(x(s),p(s) \ ; \ \xi(s) ) \in T^* \mathcal{P}M, \quad s \in \R, \\
%&  (x(s),p(s)) = ( \gamma_{(\tilde{x},\tilde{p})}(s) , \dot\gamma_{(\tilde{x},\tilde{p})}(s)) , \ \  \xi(s) = d %\phi|_{\dot\gamma_{(\tilde{x},\tilde{p})} (s) }, 
%\end{split}
%\]
for $\phi$ as above. 
%a local smooth function that is constant along the geodesic flow in $\mathcal{P} M$, as above. 
%and satisfies $d \phi|_{(\tilde{x},\tilde{p})} = \tilde{\xi}$ {\color{blue}(see \cite[Theorem 3.6.3]{duistermaat2010fourier}). } 
In particular, the covector $\xi(s)$ is normal to the image of $( \gamma_{(\tilde{x},\tilde{p})} , \dot\gamma_{(\tilde{x},\tilde{p})})$.\footnote{The image of the time-like curve $( \gamma_{(\tilde{x},\tilde{p})} , \dot\gamma_{(\tilde{x},\tilde{p})})$ is a submanifold by global hyperbolicity. Hence, the conormal bundle of it is well defined in the usual sense.} } We denote by $\Lambda_\XX$ the collection of pairs 
\begin{equation}\label{lambda_XX}
 ((x,p \, ; \, \xi) ,(y,q \, ; \, \eta)) \in T^* TM \times T^*  TM
 \end{equation}
% that are characteristic (i.e. belong to $\sigma_{-i\mathcal{X}}^{-1} (0)$) and 
that lie on the same bicharacteristic strip of $\XX$.

 \begin{lemma}\label{uuiisok}
Let $(M,g)$ be a globally hyperbolic $C^{\infty}$-Lorentzian manifold and write it in the standard 
%$($global time $\times$ space$)$ 
form $M = \R \times N$ of global time and space. Let $t \in \R$. 
The vector field $-i\mathcal{X}$ on $\PP M$ is a strictly hyperbolic operator of multiplicity $1$  with respect to the submanifold $\PP_{t}M:= (T_t\R  \times TN) \cap \PP M$. 

%Likewise, $\mathcal{X}^0$ is a strictly hyperbolic operator of multiplicity $1$ with respect to the submanifold $P_{t}^0M= P^0M \cap ( T_{t} \R \times  T N )$. %(T_t\R  \times TN) \cap \PP M$.
%\end{lemma}
\end{lemma}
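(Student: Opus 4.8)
The plan is to verify the definition of strict hyperbolicity of multiplicity $1$ directly, using the explicit form of the principal symbol $\sigma_{-i\mathcal{X}}$ computed in \eqref{principal_symbol_of_X}. Recall that a first-order operator $P$ on a manifold is strictly hyperbolic of multiplicity $1$ with respect to a hypersurface $\Sigma$ if, writing covectors as $\xi = \tau\, d\phi + \xi'$ where $\phi$ is a defining function for $\Sigma$ and $\xi'$ is tangent to $\Sigma$, the equation $\sigma_P(z; \tau\,d\phi + \xi') = 0$ has exactly one real simple root $\tau$ for every $z \in \Sigma$ and every nonzero $\xi'$ conormal direction transverse to $d\phi$ — equivalently, the characteristic variety meets each fiber line $\{\xi' + \tau\, d\phi : \tau \in \R\}$ in exactly one point, with the symbol vanishing to first order there. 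In our setting $\Sigma = \mathcal{P}_t M = (T_t\R \times TN)\cap \mathcal{P}M$, a hypersurface in $\mathcal{P}M$, and $\phi$ can be taken to be the pullback of the global time coordinate $x^0$ under $\pi: \mathcal{P}M \to M$, i.e.\ $\phi(x,p) = x^0$.

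First I would set up coordinates: using the splitting $TM = T\R \times TN$ with coordinates $(x^0,\bar x, p^0,\bar p)$ on $\mathcal{P}M$, the conormal variable splits as $\xi = (\xi^x_0, \bar\xi^x, \xi^p_0, \bar\xi^p)$, and $d\phi = dx^0$ corresponds to the covector with only the $\xi^x_0$-component nonzero. The defining hypersurface $\mathcal{P}_t M$ is $\{x^0 = t\}$, whose conormal at a point is spanned exactly by $dx^0$. So I write a general covector as $\xi = \tau\, dx^0 + \xi'$ with $\xi' = (0,\bar\xi^x,\xi^p_0,\bar\xi^p)$ annihilating $\partial_{x^0}$... wait, more carefully: $\xi'$ ranges over all covectors, and I must solve for $\tau$. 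From \eqref{principal_symbol_of_X},
\[
\sigma_{-i\mathcal{X}}(x,p^\alpha;\xi) = p^\alpha \xi^x_\alpha - \Gamma^\alpha_{\lambda\mu}(x)p^\lambda p^\mu \xi^p_\alpha = p^0\,\xi^x_0 + \bar p\cdot\bar\xi^x - \Gamma^\alpha_{\lambda\mu}p^\lambda p^\mu \xi^p_\alpha.
\]
Setting $\xi^x_0 = \tau$, this is \emph{affine linear} in $\tau$ with leading coefficient $p^0$. The key observation is that for $(x,p)\in \mathcal{P}M$ (future- or past-directed timelike, nonzero), $p$ is timelike, hence $p^0 \neq 0$ — this is exactly where global hyperbolicity and the product structure $g = -g_{00}\,dx^0\otimes dx^0 + g_N$ enter, since a timelike vector must have nonzero time component. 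Therefore $\sigma_{-i\mathcal{X}}(x,p; \tau\,dx^0 + \xi') = 0$ has the unique solution $\tau = -(p^0)^{-1}(\bar p\cdot\bar\xi^x - \Gamma^\alpha_{\lambda\mu}p^\lambda p^\mu \xi^p_\alpha)$, and since the coefficient of $\tau$ is $p^0 \neq 0$ the root is simple; this is multiplicity $1$. I would also note that the same computation shows $\mathcal{X}$ (equivalently its principal symbol's Hamiltonian vector field) is transverse to $\mathcal{P}_t M$, since $\langle dx^0, \mathcal{X}\rangle = p^0 \neq 0$ — this is the transversality fact invoked in Remark~\ref{flowout-is-manifold}.

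The main thing to be careful about — the only real obstacle — is bookkeeping around the fact that $\mathcal{P}M$ is $2n$-dimensional while the ``space'' variables and ``momentum'' variables play asymmetric roles: the hyperbolicity is only with respect to the time function $x^0$ lifted from the base $M$, not with respect to any splitting of the momentum fiber, and one must check that $\mathcal{P}_t M$ is genuinely a smooth hypersurface of $\mathcal{P}M$ (it is, being $\pi^{-1}$ of the Cauchy-type slice $\{x^0=t\}$ intersected with the open set of timelike vectors) and that $dx^0$ restricts to a nonvanishing conormal there. I would also remark that the result extends to the causal bundle $\overline{\mathcal{P}}M$ in the interior sense but that on the light cone $p$ can be null with $p^0 \neq 0$ still (a nonzero null vector also has nonzero time component in this gauge), so in fact $p^0 \neq 0$ persists on all of $\overline{\mathcal{P}}M$; this is why the geodesic vector field remains transverse to $\overline{\mathcal{P}}_t M$, as used elsewhere. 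Finally I would record the conclusion that, being strictly hyperbolic of multiplicity $1$, $-i\mathcal{X}$ has the standard propagation-of-singularities and well-posedness properties for the characteristic Cauchy problem along the foliation $\{\mathcal{P}_t M\}_{t\in\R}$, which is what subsequent sections use.
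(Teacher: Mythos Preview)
Your proposal is correct and is exactly the ``straightforward verification of the conditions in the definition'' that the paper alludes to (the paper omits the proof entirely). The core computation---that $\sigma_{-i\mathcal{X}}$ is affine in $\tau=\xi^x_0$ with nonvanishing leading coefficient $p^0$ because a nonzero timelike vector in the product metric $g=-g_{00}\,dx^0\otimes dx^0+g_N$ must have $p^0\neq 0$---is precisely the point, and your remark that this also yields the transversality $\langle dx^0,\mathcal{X}\rangle=p^0\neq 0$ used in Remark~\ref{flowout-is-manifold} is apt.
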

Recall that vector field $-i\mathcal{X}$ on $\PP M$ is a strictly hyperbolic operator of multiplicity $1$ with respect to the submanifold $\PP_{t}M$ means that all bicharacteristic curves of $-i\XX$ are transversal to $\PP_{t}M$ and there is exactly one solution to the characteristic equation for given initial data $(\tilde{x},\tilde{p} \; \tilde\xi )\in T^*\PP_t M$ (see \cite[Definition 5.1.1]{duistermaat2010fourier}). We omit the proof of the lemma, which is a straightforward verification of the conditions in the definition. 
%~\cite[Definition 5.1.1]{duistermaat2010fourier}.
%{\color{gray} We have placed the proof of the above lemma in Appendix \ref{appendix-al}.}~\f{We omit the proof from the journal version.}

% {\color{blue}

In this section, the space $(M,g)$ is assumed to be globally hyperbolic
%geodesically complete 
$C^\infty$- Lorentzian manifold, but not necessarily geodesically complete. 
%, and 
%$A:
%(TM)^{\otimes4}
%\Sigma \to \mathbb{R}$ an admissible collision kernel. 
 Given \tbl{the initial data that $u$ vanishes in the past of a Cauchy surface and a source $f$}, we will show that the Vlasov equation $\mathcal{X}u=f$ has a unique solution. There is a substantial amount of literature on this topic (see for example \cite{Choquet-Bruhat}, \cite{Andreasson}, \cite{Ringstrom}, and \cite{Rendall}). 
For example, Lemma \ref{uuiisok} together with standard results for hyperbolic Cauchy problems (see e.g.~\cite[Theorem 5.1.6]{duistermaat2010fourier}) demonstrates uniqueness of solutions to the Vlasov equation. 

Let us denote by $\gamma_{(x,p)}:(-T_1,T_2)\to M$ the inextendible geodesic which satisfies
\begin{equation}
\gamma_{(x,p)}(0)=x \text{ and } \dot{\gamma}_{(x,p)}(0)=p.                                                                                                                            
\end{equation}
Since $(M,g)$ is not necessarily geodesically complete, we might have that $T_1<\infty$ or $T_2<\infty$. Existence of solutions to $\XX u =f$, with initial data $u=0$, in the case where $f(x,p)$ is a smooth function on $\OVS M$ with compact support in the base variable $x\in M$, can be shown by checking that 
\begin{equation}\label{integrate_over_flow}
u(x,p) := \int_{-\infty}^0 f( \gamma_{(x,p)}(t) , \dot\gamma_{(x,p)}(t) )  dt \quad \text{on} \quad (x,p) \in \OVS M
\end{equation}
 is well-defined and satisfies $\XX u =f$. 
%  
%  Indeed, this integral is well-defined since on a globally hyperbolic Lorentzian manifold any causal geodesic $\gamma$ exits a given compact set $K_\pi\subset M$ permanently after finite parameter times. That is, there are parameter times $t_1,t_2$ such that $\gamma(\{t<t_1\}),\gamma(\{t>t_2\})\subset M\setminus K_\pi$. Thus the integral above is actually an integral of a smooth function over a finite interval. 
%  
%  
\tbl{Indeed, on a globally hyperbolic Lorentzian manifold for a given compact set $K_\pi\subset M$ and a causal geodesic $\gamma$, there are $t_1,t_2\in \antticomm{(-T_1,T_2)}$ such that for parameter times $t\notin[t_1,t_2]$ we have $\gamma(t)\notin K_\pi$.  \antticomm{(See Appendix \ref{appendix-ss}) Thus the tail of the integral is actually zero as the geodesic $s \mapsto \gamma_{(x,p)}(-s) $ eventually exits the compact set  $\pi( \text{supp}f)$ permanently.
}
Because $f$ and the geodesic flow on $(M,g)$ are smooth, the function $u(x,p)$ is smooth.  If $(M,g)$ is not geodesically complete, and if $\gamma_{(x,p)}:(-T_1,T_2)\to M$, we interpret the integral above to be over $(-T_1,0]$. % when $\gamma_{(x,p)}:(-T_1,T_2)\to M$. 
 We interpret similarly for all similar integrals without further notice.}

For our purposes it is convenient to have explicit formulas for solutions to the Vlasov equation and therefore we give a proof using the solution formula~\eqref{integrate_over_flow}.
\begin{theorem}\label{vlasov-exist}
Assume that $(M, g)$ is a globally hyperbolic $C^\infty$-Lorentzian manifold. Let $\mathcal{C}$ be a Cauchy surface of $(M,g)$, $K\subset \OVS \mathcal{C}^+$ be compact and $k\ge 0$.
Let also $f\in C_K^k(\OVS M)$. Then, the problem
\begin{alignat}{2}\label{vlasov1}
%\begin{gathered}
\mathcal{X}u(x,p)&= f(x,p) \quad &&\text{on} \quad \OVS M \nonumber \\
u(x,p)&= 0 \quad &&\text{on} \quad  \OVS \mathcal{C}^- 
%\end{gathered}
\end{alignat}
has a unique solution $u$ in $C^k ( \OVS M )$. We write $u=\IX(f)$ and call $\IX: C^k_K(\OVS M)\to C^k(\OVS M)$ the solution operator \tbl{to~\eqref{vlasov1}}. In particular, \tbl{if $Z\subset \OVS M$ is compact, there is a constant $c_{k,K,Z}>0$ such that  
\begin{align} \norm{u|_Z}_{C^k ( Z)}\le c_{k,K,Z}\|f\|_{C^{k}
( \OVS M )}. \label{est-vlasov1}
\end{align}
If $k=0$, the estimate above is independent of $Z$: 
\[
 \norm{u}_{C ( \OVS M)}\le c_{K}\|f\|_{C( \OVS M )}.
\]
}
\end{theorem}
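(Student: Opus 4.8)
\textbf{Proof plan for Theorem \ref{vlasov-exist}.}

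The plan is to verify directly that the explicit formula \eqref{integrate_over_flow} produces the desired solution and to establish uniqueness via Lemma~\ref{uuiisok}. First I would record the geometric input: since $\pi(K)$ is compact and $K \subset \OVS \mathcal C^+$, the support of $f$ lies in $\mathcal C^+$, so for $(x,p) \in \OVS M$ the backward geodesic $t \mapsto (\gamma_{(x,p)}(t), \dot\gamma_{(x,p)}(t))$, $t \le 0$, is relevant to the integral only while it remains in $\pi(\supp f)$. Here I invoke the fact (to be proven or cited in Appendix \ref{appendix-ss}) that on a globally hyperbolic manifold a causal geodesic spends only a bounded parameter interval inside any fixed compact set, so the integrand $t \mapsto f(\gamma_{(x,p)}(t), \dot\gamma_{(x,p)}(t))$ has compact support in $(-T_1,0]$; moreover the bound on this interval is locally uniform in $(x,p)$ because the geodesic flow is smooth and $K$ is compact. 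Combined with smoothness of $f$ and of the geodesic flow, this shows $u$ in \eqref{integrate_over_flow} is well-defined and lies in $C^k(\OVS M)$: each $x$-derivative or $p$-derivative of $u$ is obtained by differentiating under the integral sign, the differentiated integrand again being compactly supported in $t$ with locally uniform bounds.

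Next I would check that $u$ solves \eqref{vlasov1}. The key observation is that $\XX$ is precisely the generator of the geodesic flow $\varphi_t$ on $TM$, i.e. $\frac{d}{ds}\big|_{s=0} (h\circ \varphi_s) = \XX h$ for smooth $h$. Writing $u(\varphi_s(x,p)) = \int_{-\infty}^0 f(\varphi_{t+s}(x,p))\,dt = \int_{-\infty}^{s} f(\varphi_{\tau}(x,p))\,d\tau$ after the substitution $\tau = t+s$, differentiating at $s=0$ gives $\XX u(x,p) = f(x,p)$ by the fundamental theorem of calculus. For the Cauchy condition: if $(x,p) \in \OVS \mathcal C^-$, then $\gamma_{(x,p)}(t) \in J^-(\mathcal C)$ for all $t \le 0$ (the backward causal geodesic from a point in the causal past of $\mathcal C$ stays in the causal past, since $\mathcal C$ is a Cauchy surface), so $\gamma_{(x,p)}(t) \notin \supp f \subset \mathcal C^+$ for $t \le 0$, whence $u(x,p) = 0$. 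Uniqueness follows from Lemma~\ref{uuiisok}: $-i\XX$ is strictly hyperbolic of multiplicity one with respect to the Cauchy surface $\PP_{t}M$, so the homogeneous problem with vanishing data has only the zero solution by the standard theory of hyperbolic Cauchy problems, e.g.~\cite[Theorem 5.1.6]{duistermaat2010fourier}; one reduces to this by noting that the difference of two solutions vanishes on $\OVS \mathcal C^-$ and propagating forward along the flow, or directly because any $C^k$ solution is constant along geodesic curves modulo the source and hence forced to equal \eqref{integrate_over_flow}.

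Finally I would extract the estimates. For the $k=0$ bound, directly from \eqref{integrate_over_flow},
\[
|u(x,p)| \le \int_{-\infty}^0 |f(\gamma_{(x,p)}(t),\dot\gamma_{(x,p)}(t))|\,dt \le L(x,p)\,\|f\|_{C(\OVS M)},
\]
where $L(x,p)$ is the length of the parameter interval during which the backward geodesic from $(x,p)$ meets $\pi(K)$; by the global-hyperbolicity compactness fact this $L(x,p)$ is bounded by a constant $c_K$ depending only on $K$ (and the fixed geometry), uniformly in $(x,p)$, giving $\|u\|_{C(\OVS M)} \le c_K\|f\|_{C(\OVS M)}$. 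For $k \ge 1$ one differentiates \eqref{integrate_over_flow} up to order $k$; each resulting integrand is a polynomial in derivatives of $f$ (up to order $k$) with coefficients built from derivatives of the geodesic flow up to order $k$, integrated over the same bounded $t$-interval. On a compact $Z \subset \OVS M$ the flow derivatives are bounded by a constant depending on $Z$, $k$ and $K$, which yields \eqref{est-vlasov1}. I expect the main obstacle to be the careful bookkeeping for the higher-order estimate — specifically, justifying differentiation under the integral sign and controlling the derivatives of the geodesic flow on the relevant compact sets — together with making precise the claim, used throughout, that a causal geodesic on a globally hyperbolic manifold meets a fixed compact set only over a bounded parameter interval with the bound locally uniform in the initial data; the rest is routine once that is in hand.
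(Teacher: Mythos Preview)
Your existence, uniqueness, and verification that \eqref{integrate_over_flow} solves \eqref{vlasov1} match the paper's approach. The gap is in your $k=0$ estimate: the claim that the parameter length $L(x,p)$ of the backward geodesic inside $\pi(K)$ is bounded uniformly in $(x,p)\in\OVS M$ is false. The set $\OVS M$ contains future causal vectors of arbitrarily small (Riemannian) norm, and since $\gamma_{(x,\lambda q)}(s)=\gamma_{(x,q)}(\lambda s)$ one has $L(x,\lambda q)=\lambda^{-1}L(x,q)\to\infty$ as $\lambda\to 0^+$. So your bound $|u(x,p)|\le L(x,p)\|f\|_{C}$, while correct pointwise, does not yield a uniform sup-norm estimate.

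The paper fixes this by exploiting the second piece of information in the hypothesis $K\subset\OVS\mathcal C^+$ compact: not only is $\pi(K)$ compact, but $K$ is bounded away from the zero section. One first reduces to $(x,p)\in\OVS K_\pi$ (since $u$ is constant along the flow once the geodesic has left $\pi(\supp f)$), then writes $p=\lambda q$ with $q$ of unit Riemannian length, so $(x,q)$ ranges over a compact set $X$ on which $\ell$ is bounded by some $l_0$. The scaling identity gives
\[
u(x,p)=\frac{1}{\lambda}\int_{-l_0}^{0} f\big(\gamma_{(x,q)}(s),\,\lambda\dot\gamma_{(x,q)}(s)\big)\,ds,
\]
and for $\lambda$ below an explicit threshold $\lambda_{\min}>0$ (determined by how close $K$ comes to the zero section) the integrand vanishes identically because $\lambda\dot\gamma_{(x,q)}(s)$ is too short to lie in $K$. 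Hence the factor $1/\lambda$ is harmless and one obtains $\|u\|_{C(\OVS M)}\le (l_0/\lambda_{\min})\|f\|_{C(\OVS M)}$. Your $C^k$ sketch on a compact $Z$ is fine once this is in place, and the paper proceeds the same way there.
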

We have placed the proof of the theorem in Appendix \ref{appendix-ss}. The proof follows from the explicit formula~\eqref{integrate_over_flow} for the solution. 
%We call $\IX$ the solution operator to~\eqref{vlasov1}. 
The source-to-solution map $\Phi^L$ of the Vlasov equation~\eqref{vlasov1} is defined as %by $\Phi^L$ as 
% 
% \begin{remark}
% Note that Theorem \ref{vlasov-exist} gives the existence of the source-to-solution map %$C_c^k(\OVS M)$, $k\ge 0$, such that the map 
% {\color{blue}
\[ \Phi^L: C_K^k (\OVS M) \to C^k  ( \OVS M ), \ \ \Phi^L(f) = u,\]
% of the Vlasov equation.
where $u$ is the unique solution to \eqref{vlasov1} with the source $f\in C^k_K(\OVS M)$ and $k\geq 0$. As we will soon see, the Vlasov equation is the linearization of the Boltzmann equation with a source-to-solution map $\Phi$.
We also consider the setting where we relax the condition that the source $f$ above is smooth and show that the solution operator to~\eqref{vlasov1} (when considering separately timelike and lightlike vectors) has a unique continuous extension to the class of distribution $f$, which satisfy $WF(f)\cap N^*(\SP \mathcal{C})=\emptyset$. For the precise statement and  proof thereof, please see Appendix \ref{appendix-ss}.

\subsection{The Boltzmann model}\label{Bman-model}

The Boltzmann model of particle kinetics in $(M,g)$ modifies the Vlasov model to take into account collisions between particles. This modification is characterized by the (relativistic) \emph{Boltzmann equation}
\begin{align*}
\mathcal{X} u(x,p) - \COLOP [u,u ] (x,p) &= 0.
\end{align*}
Here $\COLOP[\ccdot,\ccdot]$ is called the \emph{collision operator}. It is explicitly given by
\begin{align}\label{colop}
 \COLOP&:  C_c^\infty (\overline\PP M ) \times C_c^\infty (\overline\PP M)
%L^\infty(T(\R \times N) ) \times L^\infty(T(\R \times N) )  
\rightarrow C^\infty (\overline\PP M )\\
\COLOP[u_1,u_2](x,p) &=\int_{\Sigma_{x, p }} \left[ u_1 (x,p) u_2 (x,  q )  - u_1 (x,p' ) u_2 (x,  q' )\right]  A(x,p,q, p', q') dV(x,p;q,p',q'), \nonumber
\end{align}
where $ dV(x,p;q,p',q')$ for fixed $(x,p)$ is a volume form defined on 
\begin{align}\label{Sigma_Set}
\Sigma_{x,p} &= \{ (p, q, p', q' ) \in   (\overline{\mathcal{P}}_{x}M)^4 : p + q = p' + q' \} \subset (T_xM)^4,
\end{align} 
which is induced by a volume form $dV(x,p, q,p',q')$ on the manifold
\begin{align}\label{Sigma_union}
\Sigma &= \bigcup_{(x,p)\in \OVS} \Sigma_{x,p} \subset (TM)^4.
%\Sigma_{x,p} &= \{ (q, p', q' ) \in   (T_{x}M)^3 : p + q = p' + q' \}.
\end{align}
We call $A=A(x,p,q,p',q')$ the \emph{collision kernel} and assume that it is admissible in the sense of Definition~\ref{good-kernels}. 
% satisfies $A \in C^\infty(\Sigma)$, $A\geq 0$.

Heuristically, $\COLOP$ describes the average density of particles with position and velocity $(x,p)$, which are gained and lost from the collision of two particles $u_1,u_2$. The contribution to the average density from the gained particles is
\begin{align}\label{Q-gain}
\COLOP_{gain} [u_1, u_2] (x,p) :=- \int_{\Sigma_{x, p }} u_1 (x,p' ) u_2 (x,  q' )  A(x,p,q, p', q') dV(x,p;q,p',q'), %dV (x,p \ ; \ q,p',q'), 
\end{align}
and the contribution from the lost particles is % as a result of the collision, given by the first term in \eqref{colop}
\begin{align}
\COLOP_{loss}[u_1, u_2] (x,p):= u_1 (x,p) \int_{\Sigma_{x, p }}   u_2 (x,  q ) A(x,p,q, p', q') dV(x,p;q,p',q'). %dV (x,p \ ; \ q,p',q') 
\end{align}

%Though we do not consider them, the physical quantities of invariant mass, total energy, and total momentum can be defined and shown to be conserved under the Boltzmann kinematic model (see \cite{Choquet-Bruhat}, \cite{Ringstrom}, or \cite{Rendall} for more details).~\footnote{\tbl{Move this to intro.}}

%\begin{remark}
%One could also define the collision operator on causal particle densities by choosing the domain of the collision operator to be $ C_c^\infty (\overline\PP M ) \times C_c^\infty %(\overline\PP M)$ instead of $C_c^\infty (\PP M ) \times C_c^\infty (\PP M)$. This is not necessary here. Notice that the outgoing particle distribution, i.e. the range of the collision %operator, covers also the photons. 
%\end{remark}

The existence and uniqueness to the initial value problem for the Boltzmann equation has been studied in the literature under various assumptions on the geometry of the Lorentzian manifold, properties of the collision kernel and assumptions on the data, see for example~\cite{Bichteler,Bancel,Glassey}. These references consider the Boltzmann equation on $L^2$-based function spaces.
%to \eqref{boltz-smooth}, global existence and uniqueness to \eqref{boltz-smooth} has been shown; see
% (as opposed to $L^2$ based function spaces.)
%As we are not concerned with controlling the energy of solutions (i.e. we do not require $u$ to lie in some Sobolev space), 

We consider the following initial value problem for the Boltzmann equation on a globally hyperbolic manifold $(M,g)$ with a source $f$ % and initial data $f_0$ 
\begin{align*}%\label{boltz-smooth}
%\begin{cases}
\mathcal{X} u - \COLOP [u,u ] &= f, \quad \text{on }\OVS M\\
%\rho_0  \partial_t u = g_1 \\
 u  &= 0, \quad \text{on }\OVS \mathcal{C}^{-},
%\end{cases}
\end{align*}
%where we recall that $\rho_0$ is the restriction operator to $\PP_0 M$. 
where $\mathcal{X}$ is the geodesic vector field on $TM$, $\mathcal{C}$ is a Cauchy surface of $M$ and $\mathcal{C}^{\pm}$ denotes the causal future (+)/past (-) of $\mathcal{C}$. %The condition $u=0$ on $\OVS [J^-(\mathcal{C})]$ is equivalent to $u=0$, for $\mathcal{T}\leq 0$ if $\mathcal{T}$ is a time function on $(M,g)$ with $\mathcal{C}=\{\mathcal{T}=0\}$. %ifold $\overline\PP M$ with boundary $P^0 M$. 
%In this paper, 
%We will mostly consider the Boltzmann model of kinematics with the presence of a smooth particle source $f \in C_c^\infty (\PP^+ M)$, considered as an element of $C^\infty(\overline\PP M )$, and trivial initial data $f_0 = 0$. 
%The existence (local or global) and uniqueness of solutions to \eqref{boltz-smooth} is highly dependent on the choice of collision kernel for $\COLOP$. %At the time of writing, the properties collision kernel should posses are not satisfactorily understood. 
We assume that $(M,g)$ is globally hyperbolic and that the collision kernel of $\COLOP$ is admissible in the sense of Definition~\ref{good-kernels}. Our conditions on the collision kernel allow us to consider the Boltzmann equation in the space of continuous functions. We also assume that the sources $f$ are supported in a fixed compact set $K\subset \OVS C^+$. Note that (since $0\notin \OVS M$) this especially means that $f$ is supported outside a neighbourhood of the zero section of $TM$. We work with the following function spaces, each equipped with the supremum norm:
\begin{align*}
C_b(\OVS \mathcal{C}^+) &:= \{h\in C(\OVS \mathcal{C}^+)\,:\,h \text{ is bounded}\},
\end{align*}
and 
\[
C_K(\OVS \mathcal{C}^+) := \{h\in C(\OVS \mathcal{C}^+)\,:\, \text{supp}(h)\subset K\}.
\]

\begin{reptheorem}{boltz-exist}
% Assume that $(M, g)$ is a globally hyperbolic $C^\infty$-smooth Lorentzian manifold. Let $k\ge 0$ and $\mathcal{S}$ be a Cauchy surface of $M$. % $\mathcal{X}: TM\to TTM$ be the geodesic vector field, and let $\compp \subset  \OVS ( (0,\infty) \times N)$ be a compact set. 
% %Then, there is an open neighbourhood $V_K \subset C_K^k (\OVS M)$ of $0$ such that if $f \in V_K$, the Cauchy proble

\tjb{Let $(M, g)$ be a globally hyperbolic $C^\infty$-Lorentzian manifold. Let also $\mathcal{C}$ be a Cauchy surface of $M$ and $K\subset \OVS \mathcal{C}^+$ be compact. % and $\XX$ its associated geodesic vector field. %$\compp$ be a compact set in $\OVS ((0, \infty) \times N) $ and set 
Assume that $A: \Sigma \to \mathbb{R}$ is an admissible collision kernel in the sense of Definition \ref{good-kernels}.}
\antticomm{Moreover, assume that $\pi (\text{supp}A) \subset \mathcal{C}^+$.} 
%Assume also that $\text{supp}(x \mapsto A(x,\cdot,\cdot,\cdot,\cdot))$ is compact. 

\tjb{There are open neighbourhoods $B_1 \subset C_K(\OVS \mathcal{C}^+ )$ and $B_2\subset C_b( \OVS  M )$ of the respective origins such that if $f \in B_1$, the Cauchy problem}
\begin{alignat}{2}\label{boltz1}
%\begin{gathered}
\mathcal{X}u(x,p)-\COLOP[u,u](x,p)&= f(x,p) \quad && \text{ on } 
\OVS M \nonumber \\
u(x,p)&= 0 \quad &&\text{ on } \OVS \mathcal{C}^-
%\end{gathered}
\end{alignat}
\tjb{has a unique solution $u\in B_2$. There is a constant $c_{A,K}>0$ such that}
\[ \|u\|_{C( \OVS M )}\le c_{A,K}\|f\|_{C(\OVS M)}.\]

% Let $(M, g)$ be a globally hyperbolic $C^\infty$-smooth Lorentzian manifold. Let also $\mathcal{C}$ be a Cauchy surface of $M$ and $K\subset \OVS \mathcal{C}^+$ be compact. Assume that $A: \Sigma \to \mathbb{R}$ is an admissible collision kernel in the sense of Definition \ref{good-kernels}. 

% There are open neighbourhoods $B_1 \subset C_K(\OVS \mathcal{C}^+ )$ and $B_2\subset C_b( \OVS  M )$ of the respective origins such that if $f \in B_1$, the Cauchy problem
% \begin{alignat}{2}\label{boltz1}
% %\begin{gathered}
% \mathcal{X}u(x,p)-\COLOP[u,u](x,p)&= f(x,p) \quad && \text{ on } 
% \OVS M \nonumber \\
% u(x,p)&= 0 \quad &&\text{ on } \OVS \mathcal{C}^-
% %\end{gathered}
% \end{alignat}
% has a unique solution $u\in B_2$. There is a constant $C>0$ such that
% \[ \|u\|_{C_b( \OVS M )}\le C\|f\|_{C_K(\OVS M)}.\]
\end{reptheorem}

%\begin{theorem}\label{boltz-exist}
%% Assume that $(M, g)$ is a globally hyperbolic $C^\infty$-smooth Lorentzian manifold. Let $k\ge 0$ and $\mathcal{C}$ be a Cauchy surface of $M$. % $\mathcal{X}: TM\to TTM$ be the geodesic vector field, and let $\compp \subset  \OVS ( (0,\infty) \times N)$ be a compact set. 
%% %Then, there is an open neighbourhood $V_K \subset C_K^k (\OVS M)$ of $0$ such that if $f \in V_K$, the Cauchy proble
%
%Let $(M, g)$ be a globally hyperbolic $C^\infty$-smooth Lorentzian manifold. Let also $\mathcal{C}$ be a Cauchy surface of $M$. % and $\XX$ its associated geodesic vector field. %$\compp$ be a compact set in $\OVS ((0, \infty) \times N) $ and set 
%Assume that $A: \Sigma \to \mathbb{R}$ is an admissible collision kernel in the sense of Definition \ref{good-kernels}.
%%Assume also that $\text{supp}(x \mapsto A(x,\cdot,\cdot,\cdot,\cdot))$ is compact. 
%
%There are open neighbourhoods $B_1 \subset C_c(\OVS \mathcal{C}^+ )$ and $B_2\subset C( \OVS  M )$ of the respective origins such that if $f \in B_1$, the Cauchy problem
%\begin{alignat}{2}\label{boltz1}
%%\begin{gathered}
%\mathcal{X}u(x,p)-\COLOP[u,u](x,p)&= f(x,p), \quad && (x,p)\in
%\OVS M \nonumber \\
%u(x,p)&= 0, \quad && (x,p)\in \OVS \mathcal{C}^-
%%\end{gathered}
%\end{alignat}
%has a unique solution $u\in B_2$. There is a constant $C>0$ such that
%\[ \|u\|_{C( \OVS M )}\le C\|f\|_{C(\OVS M)}.\]
%\end{theorem}

We give a proof of Theorem \ref{boltz-exist} in Appendix \ref{appendix-ss}.
As a direct consequence, we find:
\begin{corollary}\label{ss-corollaari}
Assume as in Theorem \ref{boltz-exist} and adopt its notation. % Under the assumptions of Theorem \ref{boltz-exist} % and by using its notaion $B_1 Then the $B \subset C_c (\OVS  M)$ about the origin such that we have a well-defined 
The source-to-solution map
\[
\Phi : B_1 \rightarrow B_2 , \quad \Phi(f) = u
\]
is well-defined.
Here $u \in  B_2\subset C_b( \OVS M )$ is the unique solution to the Boltzmann equation~\eqref{boltz1} with the source $f\in B_1\subset C_K(\OVS M)$.
\end{corollary}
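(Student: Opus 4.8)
The claim is essentially a restatement of Theorem~\ref{boltz-exist} in the language of maps, so the plan is simply to unpack what ``well-defined'' means and check each part against that theorem. Recall that a map $\Phi\colon B_1\to B_2$, $\Phi(f)=u$, is well-defined precisely when: (i) for every $f\in B_1$ there exists at least one $u\in B_2$ solving \eqref{boltz1} with source $f$ (so the rule assigns some value), and (ii) this $u$ is the unique element of $B_2$ with that property (so the rule assigns a single, unambiguous value). The plan is to verify (i) and (ii) by directly quoting Theorem~\ref{boltz-exist}.

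First I would fix the open neighbourhoods $B_1\subset C_K(\OVS\mathcal{C}^+)$ and $B_2\subset C_b(\OVS M)$ of the respective origins furnished by Theorem~\ref{boltz-exist}, under the standing hypotheses of that theorem (globally hyperbolic $(M,g)$, Cauchy surface $\mathcal{C}$, compact $K\subset\OVS\mathcal{C}^+$, admissible kernel $A$ with $\pi(\mathrm{supp}\,A)\subset\mathcal{C}^+$). For any $f\in B_1$, Theorem~\ref{boltz-exist} asserts that \eqref{boltz1} has a solution $u\in B_2$; this gives point (i). The same theorem asserts that this solution is \emph{unique} in $B_2$; this gives point (ii). Hence the assignment $f\mapsto u$ is a genuine function $B_1\to B_2$, and we name it $\Phi$. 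I would then add one sentence noting that, by construction, $\Phi(f)$ is for each $f\in B_1$ the unique element of $B_2$ solving the Boltzmann equation \eqref{boltz1} with source $f$, which is exactly the statement of the corollary. (One may optionally remark that the a priori bound $\|u\|_{C(\OVS M)}\le c_{A,K}\|f\|_{C(\OVS M)}$ from Theorem~\ref{boltz-exist} shows in addition that $\Phi$ is bounded near the origin, but this is not needed for well-definedness.)

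There is essentially no obstacle here: the only thing to be careful about is that uniqueness in Theorem~\ref{boltz-exist} is asserted \emph{within the neighbourhood $B_2$}, not among all of $C_b(\OVS M)$, so the well-definedness of $\Phi$ is as a map with codomain $B_2$ — which is exactly how the corollary is phrased. No further work (no contraction argument, no regularity theory) is required, since all of that is already packaged inside Theorem~\ref{boltz-exist}, which we are entitled to assume.
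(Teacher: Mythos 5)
Your proposal is correct and follows essentially the same route as the paper, which presents the corollary as a direct consequence of Theorem~\ref{boltz-exist}: the implicit-function-theorem proof of that theorem already furnishes the neighbourhoods $B_1$, $B_2$ and the assignment $f\mapsto u$ as a map, so well-definedness amounts precisely to the existence and uniqueness (within $B_2$) that you quote. Your remark that uniqueness is only asserted within $B_2$ is the right point of care and matches the paper's phrasing.
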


The proof of Corollary \ref{ss-corollaari} appears in Appendix \ref{appendix-ss}.

Given the existence of the source-to-solution map $\Phi$ associated to the Boltzmann equation, we now formally calculate the first and second Frech\'et differentials of $\Phi$, which will correspond to the first and second linearizations of the Boltzmann equation. %This correspondence is explained heuristically as follows. % is nonlinear
%As metioned in the introduction, we want to relate the source to solution map $\Phi$ for the Boltzmann equation to the source to solution map of the 
%collision kernel through a 
%second linearization of $\Phi$. 
%We recall the formal argument from the introduction. 
Let $\mathcal{C}$ be a Cauchy surface in $M$, and $f_1,f_2 \in C_c^\infty(\OVS \mathcal{C}^+)$, and consider the $2$-parameter family of functions
 \[
(\epsilon_1, \epsilon_2 ) \mapsto \Phi ( \epsilon_1 f_1 + \epsilon_2 f_2  )
\]
where $\eps_1,\eps_2$ are small enough so that $\eps_1f_1+\eps_2f_2\in B_1$.
Formally expanding in $\epsilon_1$ and $\epsilon_2$, we obtain
\[
\Phi(\epsilon_1 f_1 + \epsilon_2 f_2) = \epsilon_1 \Phi^L(f_1) + \epsilon_2 \Phi^L(f_2) + \epsilon_1\epsilon_2 \Phi^{2L}(f_1,f_2)  + \text{higher order terms},
\]
where the higher order terms tend to zero as $(\epsilon_1, \epsilon_2 )\to (0,0)$ \tbl{in $C_b(\OVS M)$}. Substituting this expansion of $\Phi(\epsilon_1 f_1 + \epsilon_2 f_2)$ into the Boltzmann equation and  %$\Phi(\epsilon_1 f_1 + \epsilon_2 f_2)$ solves \eqref{boltz1} with source $ \epsilon_1 f_1 + \epsilon_2 f_2$, we obtain by 
differentiating in the parameters $\eps_1$ and $\eps_2$ at $\eps_1=\eps_2=0$ yields the equations
\[
\XX \Phi^L(f_j) = f_j, \quad j=1,2,
\]
and
\[
\XX \Phi^{2L}(f_1,f_2) =  \COLOP [ \Phi^L(f_1),  \Phi^L( f_2) ]+  \COLOP [ \Phi^L (f_2),  \Phi^L( f_1)] . \label{second-ss}
%\in C^\infty (\mathcal{C}),
\]
We call these equations the first and second linearizations of the Boltzmann equation. Notice that the first and second linearizations are a Vlasov-type equation \eqref{vlasov1} with a source term.

The next lemma makes the above formal calculation precise. We have placed the proof of the lemma in Appendix \ref{appendix-ss}.

% \begin{lemma}\label{deriv}
% Consider the setting of Theorem \ref{boltz-exist} above, and let $\Phi$ be the source-to-solution map for \eqref{boltz1}. Given $f,h\in C^\zerok_K (\OVS((0,\infty)\times N))$, we have:
% \begin{enumerate}
% \item the source to solution map $\Phi^{L}$ for the Vlasov Cauchy problem \eqref{vlasov1} (the first linearization of \eqref{boltz1} about $0$), satisfies \[ \Phi'(0;f):= \lim_{\epsilon\to0}\frac{\Phi(\epsilon f)-\Phi(0)}{\epsilon} = \Phi^{L}(f).\]
% \item the source to solution map $\Phi^{2L}$ for the second linearization of \eqref{boltz1} about $(0,0)$ satisfies \[ \Phi''(0;f,h):= \lim_{\epsilon\to0}\frac{\Phi'(\epsilon h;f)-\Phi'(0;f)}{\epsilon} = \Phi^{2L}(0).\]
% \end{enumerate}
% 
% \end{lemma}
\begin{lemma}\label{deriv}
Assume as in Theorem \ref{boltz-exist} and adopt its notation. Let $\Phi:B_1 \to B_2 \subset C_b(\OVS M)$, $B_1\subset C_K(\OVS \mathcal{C}^+)$, be the source-to-solution map of the Boltzmann equation. % as in Corollary~\ref{}.

The map $\Phi$ is twice Frech\'et differentiable at the origin of $C_K(\OVS \mathcal{C}^+)$. If $f,\, h\in B_1$, then:
\begin{enumerate}
\item The first Frech\'et derivative $\Phi'$ of the source-to-solution map $\Phi$ at the origin satisfies %for the Vlasov equation \eqref{vlasov11} (the first linearization of \eqref{boltz11} about $u=0$), 
%satisfies 
\[ \Phi'(0;f)
%\lim_{\epsilon\to0}\frac{\Phi(\epsilon f)-\Phi(0)}{\epsilon} 
= \Phi^{L}(f),\]
where $\Phi^L$ is the source-to-solution map of the Vlasov equation~\eqref{vlasov1}. %the limit is in $C(\OVS M)$.% $\Phi'$ denotes the Frech\'et derivative of $\Phi$
\item The second Frech\'et derivative $\Phi''$ of the source-to-solution map $\Phi$ at the origin satisfies %for the Vlasov equation \eqref{vlasov11} (the first linearization of \eqref{boltz11} about $u=0$), 
%satisfies
%The source to solution map $\Phi^{2L}$ for the second linearization of \eqref{boltz11} about $u=0$ and $v=0$, satisfies 
\[ \Phi''(0;f,h)%= \lim_{\epsilon\to0}\frac{\Phi'(\epsilon h;f)-\Phi'(0;f)}{\epsilon} 
= \Phi^{2L}(f,h),\]
where $\Phi^{2L}(f,h)\in C(\OVS M)$ is the unique solution to the equation 
\begin{alignat}{2}\label{second_lin_bman}
%\begin{gathered}
\mathcal{X}\Phi^{2L}(f,h)&= \COLOP[\Phi^L(f),\Phi^L(h)] + \COLOP[\Phi^L(h),\Phi^L(f)], \quad &&\text{ on }
\OVS M,\\
\Phi^{2L}(f,h)&= 0, \quad && \text{ on }\OVS \mathcal{C}^-.  \nonumber 
%\end{gathered}
\end{alignat}
 %is the source-to-solution map of the second linearization (see ~\eqref{} below) of the Boltzmann equation.
\end{enumerate}

\end{lemma}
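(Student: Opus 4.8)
The plan is to establish the twice Fr\'echet differentiability of $\Phi$ at the origin by working directly with the integral-equation formulation of the Boltzmann equation, exploiting the contraction-mapping structure already used to prove Theorem~\ref{boltz-exist}. Writing $\IX$ for the Vlasov solution operator from Theorem~\ref{vlasov-exist}, a solution $u=\Phi(f)$ to \eqref{boltz1} satisfies the fixed-point equation $u = \IX(f) + \IX(\COLOP[u,u])$. The admissibility conditions \eqref{conditionyy} and the estimates in Theorem~\ref{boltz-exist} guarantee that the map $u\mapsto \IX(\COLOP[u,u])$ is a well-defined $C^\infty$ (in fact polynomial, being quadratic) map of $C_b(\OVS M)$ into itself on the relevant neighbourhoods, and that $(\mathrm{Id} - D)^{-1}$ exists as a bounded operator, where $D v := \IX(\COLOP[u_0,v] + \COLOP[v,u_0])$ is the linearization of the quadratic term at a given solution $u_0$. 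Since $\Phi(0)=0$, at the origin $D=0$ and the implicit function theorem in Banach spaces yields that $\Phi$ is as smooth as the map $(f,u)\mapsto u - \IX(f) - \IX(\COLOP[u,u])$, hence $C^\infty$ near $0$; in particular it is twice Fr\'echet differentiable there.

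Next I would identify the derivatives. Differentiating the identity $\Phi(f) = \IX(f) + \IX\big(\COLOP[\Phi(f),\Phi(f)]\big)$ once in $f$ at $f=0$ and using $\Phi(0)=0$, bilinearity of $\COLOP$, and linearity of $\IX$ gives $\Phi'(0;f) = \IX(f)$, which by definition is exactly $\Phi^L(f)$, the Vlasov source-to-solution map --- proving part (1). Differentiating a second time at $0$, the only surviving contribution from the quadratic term is $\IX\big(\COLOP[\Phi'(0;f),\Phi'(0;h)] + \COLOP[\Phi'(0;h),\Phi'(0;f)]\big)$ (the terms involving $\Phi''(0;\cdot)\cdot$ composed with one copy of $\Phi(0)=0$ vanish), so
\[
\Phi''(0;f,h) = \IX\big(\COLOP[\Phi^L(f),\Phi^L(h)] + \COLOP[\Phi^L(h),\Phi^L(f)]\big),
\]
which is precisely the statement that $\Phi''(0;f,h)$ solves \eqref{second_lin_bman} with zero initial data on $\OVS\mathcal{C}^-$, giving part (2). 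One should note that $\COLOP[\Phi^L(f),\Phi^L(h)]$ is continuous (since $\Phi^L(f),\Phi^L(h)\in C(\OVS M)$ have appropriate support and $A$ satisfies the $L^1$ bound), so $\IX$ of it is well-defined in $C(\OVS M)$ by Theorem~\ref{vlasov-exist}.

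The main obstacle I anticipate is the careful bookkeeping of function spaces and supports needed to make the fixed-point/IFT argument rigorous: one must check that $\COLOP$ maps the relevant neighbourhoods of $C_b(\OVS M)$ (or its subspaces built from sources supported in $K$) boundedly and with the right compact-support propagation so that $\IX\circ\COLOP$ stays in the space where the contraction was set up; this requires invoking admissibility conditions \eqref{conditionyy}--\eqref{conditionyyyy} and the fact that $\pi(\mathrm{supp}\,A)\subset \mathcal{C}^+$ is compact, exactly as in the proof of Theorem~\ref{boltz-exist}. A secondary technical point is that $\IX$ as stated acts on $C^k_K(\OVS M)$ with compact support in the base, whereas $\COLOP[u,u]$ for $u\in C_b$ need only be bounded; here the compact support of $A$ in the base variable saves the day, since $\COLOP[u,u]$ inherits base-support inside $\pi(\mathrm{supp}\,A)$, so the solution formula \eqref{integrate_over_flow} and the estimate \eqref{est-vlasov1} apply. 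Once these are in place, the differentiation-under-the-fixed-point-equation computation is routine and the identification of $\Phi^L$ and $\Phi^{2L}$ follows immediately.
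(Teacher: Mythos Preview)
Your approach is correct and closely parallels the paper's setup, but the execution differs in a meaningful way. The paper's proof of Theorem~\ref{boltz-exist} already applies the implicit function theorem to $F(f,u)=u-\IX f-\IX\COLOP[u,u]$ to obtain $C^1$ regularity of $\Phi$, exactly as you describe. However, for Lemma~\ref{deriv} itself the paper does \emph{not} invoke higher-order IFT smoothness; instead it proceeds by explicit Taylor expansion with remainder: writing $u_\eps=\eps v_0+r_\eps$ and then $u_\eps=\eps v_0+\eps^2 w+R_\eps$, it shows $r_\eps=\mathcal{O}(\eps^2)$ and $R_\eps=\mathcal{O}(\eps^3)$ directly via Lemma~\ref{lislem} and the a~priori bound $\|u_\eps\|\le C\eps$, and then recovers the mixed second derivative $\Phi''(0;f,h)$ from the diagonal one via the polarization identity $\partial_{\eps_1}\partial_{\eps_2}|_0\,\Phi(\eps_1f_1+\eps_2f_2)=\tfrac14\partial_\eps^2|_0\big[\Phi(\eps(f_1{+}f_2))-\Phi(\eps(f_1{-}f_2))\big]$.

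Your route---observing that $F$ is polynomial hence $C^\infty$, so IFT gives $\Phi\in C^\infty$ near $0$, and then differentiating the fixed-point identity---is cleaner and yields all higher derivatives at once. The paper's hands-on expansion is more self-contained and produces explicit quantitative remainder bounds, which could in principle be reused (though they are not elsewhere in the paper). Your identification of the function-space issue is also accurate and matches how the paper handles it: the compact base-support of $A$ forces $\COLOP[u,v]$ to have base-support in $\pi(\mathrm{supp}\,A)$, so the integral formula for $\IX$ applies; this is exactly the content of Lemma~\ref{lislem}.
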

% We note that technically speaking solvability of~\eqref{second_lin_bman} does not follow from Theorem~\ref{vlasov-exist}, since the term $\COLOP[\Phi^L(f),\Phi^L(h)] + \COLOP[\Phi^L(h),\Phi^L(f)]$ might not have compact support. However, this term is compactly supported in the base variable on $M$ and we may thus use the formula 
% 
% The first linearization of the Boltzmann equation about $u=0$ is the Vlasov equation \eqref{vlasov1}. Here, the second linearization of \eqref{boltz1} about the origin is 
% \begin{alignat}
%  \XX w &= \COLOP[v_1,v_2] + \COLOP[v_2,v_1]\quad \OVS M\\
%  w&=0,\quad \OVS \mathcal{C}^-,
% \end{alignat}
% where $v_1,v_2$ solve \eqref{vlasov1} with source data . The associated source to solution map is defined as 
% \begin{align}
%  \Phi^{2L}:=\IX\left(\COLOP[v_1,v_2] + \COLOP[v_2,v_1]\right).
% \end{align}
% 
% 
% 
% To define the second linearization, consider two smooth sources , 

%We call Equation~\ref{second_lin_bman} the second linearization of the Boltzmann equation.
We remark that the terms $\COLOP[\Phi^L(f),\Phi^L(h)] + \COLOP[\Phi^L(h),\Phi^L(f)]$ in~\eqref{second_lin_bman} might not have compact support in $\OVS M$, and thus unique solvability of~\eqref{second_lin_bman} does not follow directly from Theorem~\ref{vlasov-exist}. However, a unique solution to~\eqref{second_lin_bman} is shown to exist in the proof of the above lemma.

In our main theorem, Theorem~\ref{themain}, the measurement data consists of solutions to the Boltzmann equation restricted to our measurement set $V$. By Lemma~\ref{deriv} above, we obtain that the measurement data also determines the solutions to the first and second linearization of the Boltzmann equation restricted to  $V$. We will see from \eqref{second_lin_bman} that the second linearization $\Phi^{2L}$ captures information about the (singular) behaviour of the collision term $\COLOP [ \Phi^L(f_1),  \Phi^L( f_2) ]$. In the next section we will analyze the microlocal behaviour of $\COLOP [ \Phi^L(f_1),  \Phi^L( f_2) ]$. Then, in Section~\ref{main-proof}, we use this analysis to recover information about when particles collide in the unknown region $W$. From such particle interactions in $W$, we will parametrize points in the unknown set $W$ by light signals measured in $V$, which are obtained by restricting $\Phi^{2L}$ to lightlike vectors.

\section{Microlocal analysis of particle interactions}\label{microlocal-collision}

In this section we consider the gain term $\COLOP_{gain}$ of the collision operator and prove that we can extend $\COLOP_{gain}$ to conormal distributions over a certain class of submanifolds in $\PP M$. 
%{\color{red}($\longleftarrow$ distribution is conormal over a manifold. usually not over a Lagrangian manifold. Lagrangian manifolds appear in the microlocal level. For example, the conormal bundle $N^*S$ is a Lagrangian manifold etc.)} 
% As $\COLOP_{gain}$ is quadratic in its arguments, to define an extension we will need to analyze products of Lagrangian distributions. {\color{red}($\longleftarrow$How are the Lagrangian distributions and quadratic operators related?)} 
%Such products may be defined when the base Lagrangian manifolds {\color{red} ( $\longleftarrow$what is a base Lagrangian manifold?)} satisfy either a transverse or clean intersection property (see \cite{DH1972}, \cite{guillemin1975clean}, \cite{melrose-uhlmann1979}, \cite{guillemin1981}, and \cite{greenleaf-uhlmann}). 
%In this paper, we consider distributions over Lagrangian manifolds  which are the geodesic flowout of some submanifold in $\OVS M$ {\color{red}($\longleftarrow$Again, the Lagrangian manifolds occur in the microlocal level. Not in the base-manifold.)}. With such Lagrangian manifolds in mind, we first describe an intersection property which suits our purpose of extending the operator $\COLOP_{gain}$.
%
%
%Let $S_1$ and $S_2$ be submanifolds of $\OVS M$ with $S_1\cap S_2=\emptyset$, whose respective geodesic flowouts $Y_{1} := K_{S_{1}}$ and $Y_{2}:= K_{S_{2}}$ in $\overline{\PP} M$ have admissible intersection: {\color{red} (There is no $S_1$ or $S_2$ needed in the definition. The manifolds $Y_1$, $Y_2$ dont necessarily need to be $K_{S_j}$, $j=1,2$.)}
%
%

\antticomm{
We say that two submanifolds $Y_1,Y_2 \subset \PP M$ has the admissible intersection property at $x_0 \in \pi (Y_1) \cap \pi( Y_2)$ if there is an open neighbourhood $U \subset M$ of $x_0$, 
two submanifolds $N_1,N_2 \subset U$, and smooth time-like vector fields $\theta_k : N_k \to TN_k$, $\theta_k(x) \in T_x N_k$, of $N_k$, $k=1,2$ such that 
\begin{itemize}
    \item $N_1$ and $N_2$ are transversal,
    \item $N_1 \cap N_2 = \{x_0\}$,
    \item $Y_k\cap TU$ coincides with the graph of $\theta_k$, i.e.
    \[
    Y_k\cap TU = \{ (x,\theta_k(x) ) : x \in N_k \}.
    \]
\end{itemize} 
 In this case, $U \cap \pi Y_k  = N_k$. 
}

In our inverse problem, the submanifolds $Y_1$ and $Y_2$ will be the graphs of unions of geodesics, which have a submanifold $S_{1}\subset \PP^+ V$ of dimension $(n-2)$ and a single point space $S_{2}=\{(x,p)\}\subset \PP^+ V$ as their initial data respectively. We will choose $S_1$ and $S_2$ so that there is neighborhood $U$ of the intersection point $x_0\in \pi(Y_1)\cap \pi(Y_2)$ with the following property: to each point $x\in N_1:=U\cap \pi(Y_{1})$ there is a unique geodesic passing, which has initial data at $S_1$, that passes through $x$. The velocity vectors of the corresponding geodesics give the vector field $\theta_1:N_1\to T N_1$. (The vector field $\theta_2$ is just the velocity vector field of the geodesic with initial data $S_2$ and we set $N_2:=U\cap \pi(Y_{S_2})$.) 
%Especially $p_k$ will be a vector field on $G_{S_k}$  that assigns the velocity vector of the corresponding 
 
In coordinates the definition is as follows:

\begin{definition}\label{intersection_coords}[Admissible intersection property]
We say that submanifolds 
%\footnote{\antti{Please stop adding ''open submanifold'' to everywhere. It does not make sense. In what sense is a curve for example an open manifold? If manifold is not closed it does not mean it is open. Btw. these manifolds here can be closed manifolds so please pay attention. }} 
$Y_1 \subset \PP M$ and $Y_2 \subset \PP M$ has the admissible intersection property 
%in $X \subset M$ if the following holds: For every 
at $x_0\in \pi(Y_1)\cap \pi(Y_2)$ if there exists an open neighbourhood \tbl{$U_{x_0} = U\subset M$} of $x_0$ and coordinates
\begin{equation}\label{x_splits}
x = (x',x'') : U \rightarrow \R^\nnn, \quad x' = (x^1,\dots,x^d), \quad x''=(x^{d+1}, \dots, x^\nnn)
\end{equation}
\antticomm{at $x_0 = (0,0)$} 
% on some neighbourhood 
such that
\begin{equation}\label{turhautuminen1}
\begin{split}
Y_1 \cap TU &= \{ (x,p)   \in TU  : x' = 0  , \ p' = 0, \ p''= \antticomm{ \theta_1(x'')  }  \}    \\
%Y_2 \cap TU &= \{  (x,p) \in TU : x'' = 0, \ p''= 0, \  p' = (1,0,\dots,0) \}
Y_2 \cap TU &= \{  (x,p) \in TU : x'' = 0, \ p''= 0, \ p'= \antticomm{ \theta_2(x') } \}
%\\ \quad d &=  \dim K_{M_2} = \nnn -  \dim K_{M_1}.
\end{split}
\end{equation}
\antticomm{for some smooth $ x'' \mapsto \theta_1(x'') $ and $x' \mapsto \theta_2(x')$ 
%(the fields $p_k$ above in coordinates)
}
in the associated canonical coordinates $(x,p) = (x',x'',p',p'')$ of $TU$. 
Moreover, for $X \subset M$ we say that the pair $Y_1$ and $Y_2$ has the admissible intersection property in $X$ if either the property holds at every $x_0 \in \pi(Y_1)\cap \pi(Y_2) \cap X$ or $ \pi(Y_1)\cap \pi(Y_2) \cap X = \emptyset $. 

Given the point $x_0$ and its neighbourhood $U$ as above we define the following conic Lagrangian submanifolds \antticomm{of $T^*U$}: 
\[
\Lambda_0 := N^*\{ x'=0, \ x''=0 \} = T_{x_0}^* M, \quad \Lambda_1 := N^*  \{ x   \in U  : x' = 0  \}, \quad \Lambda_2 := N^*  \{ x   \in U  : x'' = 0  \}. 
\]
\antticomm{In terms of the manifolds $N_k= U \cap \pi Y_k$, $k=1,2$ above, 
\[
\Lambda_k = N^* N_k,  \quad \Lambda_0 = N^* (N_1\cap N_2) = T^*_{x_0} M. 
\]
}
%Moreover, if $X \subset M$ and the property holds at every $x_0 \in \pi(Y_1)\cap \pi(Y_2) \cap X$ we say that the pair $Y_1$ and $Y_2$ has the admissible intersection property in $X$. 
\end{definition}
%\antti{Notice that any two submanifolds $Y_1$, $Y_2$ in $\PP M$ with $\pi(Y_1)\cap \pi(Y_2) \cap X = \emptyset$ have admissible intersectiion property in $X$.}

%We remark that our statement of Definition %\ref{intersection_coords} is motivated by the %following case which arises in our proof of Theorem %\ref{themain}. Let $V\subset M$ be as in Theorem %\ref{themain}. \antti{ In the proof of the theorem we %will consider two delta-type distributions supported %over a submanifold $S_{1}\subset \PP^+ V$ of %dimension $(n-2)$ and a single point space %$S_{2}=\{(x,p)\} \subset \PP^+ V$.} 

In our inverse problem, the submanifold $S_{1}$ will be constructed  in \antti{Corollary \ref{coro_renerew}} %(see also Lemma \ref{two_manifold_jacobi_const}) 
so that 
%its projection to $M$ is a Cauchy surface {\color{red}($\longleftarrow$ this is not true)} and such that 
the geodesic flowouts $Y_{1} := K_{S_{1}}$ and $Y_{2}:= K_{S_{2}}$, where $S_{2}$ is a single point in $\PP^+ V$, have admissible intersection property (see Figure \ref{fig:intersection}). 
%\tbl{In the coordinates $(x',x'')$ of Definition~\ref{intersection_coords}, the tangent vectors ( i.e. the fiber components) in $Y_1$ and $Y_2$ with base points in $\pi(Y_1)$ and $\pi(Y_2)$ are represent by the constant vector fields $p''=(1,0,\ldots,0)$ and $p'=(1,0,\ldots,0)$ respectively.} 
In a fixed compact set, 
%\footnote{\antticomm{NB The intersection does not necessarily behave well outside the compact set --Antti}},  
the intersection points $\pi(Y_1)\cap \pi(Y_2)$ will be finite and thus also \antti{discrete}. % in the ``unknown'' submanifolds $W_{1}$ and $W_{2}$ of Theorem \ref{themain}. %\footnote{\tbl{Explain what are $Y_1$ and $Y_2$ in practice. $d=1$. Say that the projection to base manifold intersect transversally. Explain that if $Y_1$ and $Y_2$ are tangent spaces of transversally intersecting submanifolds of $M$.}}
Moreover, due to the admissible intersection property of $Y_1$ and $Y_2$
%, the fibre component $T_x M_j \cap Y_a$  of $Y_a$, $a=1,2$, for each point $x$ sufficiently near an intersection point is a single vector 
%(the vector $(1,0,\dots,0)$ in coordinates) 
%and hence 
the map $\pi :TM \to M$ defines a diffeomorphism from $TU\cap Y_j$ to $U\subset \pi (Y_j)$, $j=1,2$, where $U$ is open neighborhood of an intersection point. 
Globally the set $G_{S_1}:=\pi(Y_{S_1})$ (also $G_{S_2}:=\pi(Y_{S_2})$ if $S_2$ is not required to be a single point space) may fail to be a manifold due to caustic effects. 

The constructions of $S_1$ and $S_2$ are done so that the admissible intersection property holds on both of the manifolds $(M_j,g_j)$, $j=1,2$, simultaneously. The sets $\pi(S_1)$ and $\pi(S_2)$ will be subsets of $V$, which is the set where we do our measurements.

\begin{figure}[h]
 \includegraphics[width=0.4\textwidth]{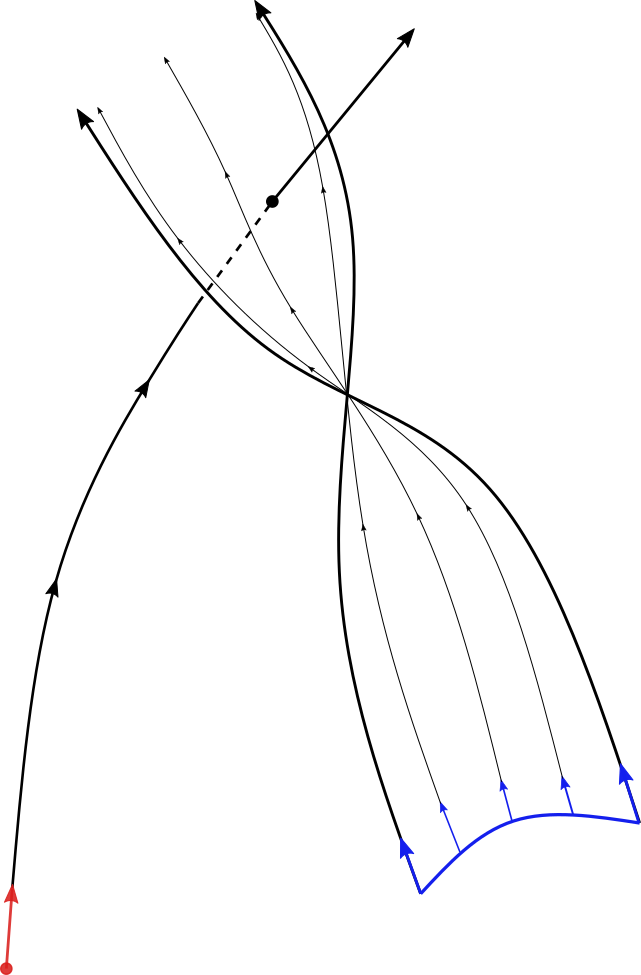}
 \caption{Schematic of an admissible intersection. The manifolds $S_1\subset \PP^+V$ and $S_2 \subset \PP^+V$ ($S_2$ a single point space) are indicated in blue and red, respectively. The geodesic flowouts $Y_1=K_{S_1}$ and $Y_2= K_{S_2}$ have the admissible intersection property. Near the intersection point, $G_{S_1}$ behaves as a manifold. For the construction of $S_1$ and $S_2$ with this property, see Corollary \ref{coro_renerew}.}
 \label{fig:intersection}
 \end{figure}

Our goal is to extend the operator $\COLOP_{gain}[\ccdot,\ccdot] : C^\infty_c( \PP M)\times C^\infty_c( \PP M) \rightarrow C^\infty( \overline{\PP} M) $ to conormal distributions over submanifolds 
%{\color{red} ($\longleftarrow $ again, not Lagrangian submanifolds. 
%Distribution is conormal over a manifold. The singularities are on Lagrangian manifolds.)} 
of $\PP M$, such as those described above. %However, the extended operator would take values in $\mathcal{D}'( \overline{\PP} M)$. 
The analysis of such an extension requires microlocal analysis on manifolds with boundary, since $\overline{\PP} M$ has a boundary given by the collection of lightlike vectors. %is a manifold with boundary,  and its %(x,p)$ for $(x,p)\in \overline{\PP} M$ and 
%analysis 
%of $\COLOP_{gain}[\ccdot,\ccdot](x,p)$ for $(x,p)\in \overline{\PP} M$ 
%requires microlocal analysis on manifolds with boundary, which 
The analysis of distributions over manifolds with boundary can be involved and technical. \tbl{We are able to avoid difficulties related to manifolds with boundary by introducing an auxiliary vector field and composing it with $\COLOP_{gain}$. This is explained next.} % wish to avoid the potential complexities in microlocal analysis on manifold with boudary. %However, this level of complexity is not required for our purposes. 

Fix an open set $U\subset M$ and a smooth vector field \tjb{$P: U\to \overline{\PP} U$}\f{changed F to P. This change was done throughout this section (and hence the document).}. Let $u,v\in C_c^\infty(\PP M)$, and let $x\in U$. We define the operator $\COLOP_{gain}^P: C_c^\infty(\PP M)\times C_c^\infty(\PP M)\to C^\infty(U)$ as 
\begin{equation}\label{def_of_QF}
% \COLOP_{gain}^P:=F^*\COLOP_{gain}, \qquad 
\COLOP_{gain}^P[u,v](x)=\COLOP_{gain}[u,v](P(x)).
\end{equation}
%Here $F^*$ denotes the pull back of functions by $F$.
\tbl{We will see in this section that we are able to analyze $\COLOP_{gain}^{P}$ operator by using standard techniques such as those in~\cite{duistermaat2010fourier}. }\tjb{The analysis of $\COLOP^{P}_{gain}$ presented in this section will be used to study the singular structure of solutions $\Phi_{L^{+}V}f$ to \eqref{boltzmann} for given sources $f$ which are constructed in Section \ref{main-proof}. We note now that in Section \ref{main-proof},  we we will choose a specific $P$, which we will denote by $P_{e}$.}%~\f{There are now $F$, $P$, $P_e$, $P_{e'}$. Remove some. I think $P$ and $P_e$ should suffice. -Tony} %We will also analyse the 
% 
% the operators 
% \[
%  \COLOP_{gain}[\ccdot,\ccdot](x,F(x)) \text{ and } \COLOP_{gain}[\IX(\ccdot),\IX(\ccdot)](x,F(x)), \quad x\in U
% \]
%\COLOP_{gain}[\ccdot,\ccdot](x,F(x))$ and $$, defined for $x\in U$.
% '
%Before doing so, we first record a couple of lemmas, whose proofs we have placed in the appendix.

We first record a couple of auxiliary lemmas. \tbl{The first lemma considers the conormal bundle of a submanifold of $U\times \PP U\times \PP U$. The points of $U\times \PP U\times \PP U$ are denoted by % consisting of points 
$(x,y,z,p,q) = (x,(y,p), (z,q))$.} % $x\in U$, $(y,p)\in \PP U$ and $(z,q)\in \PP U$.%  \{ x\} \times  \PP_xU \times \PP_xU$
\begin{lemma}\label{Lambda_R-lemma}
Let $Y_1$ and $Y_2$ and $U$ be as in Definition~\ref{intersection_coords} and adopt also the associated notation.
% of Property~\ref{intersection_coords}. %be an open subset of a smooth Lorentzian manifold $(M,g)$ and 
Define
\[
\Lambda_R =   N^* \Big( \bigcup_{x\in U}  \{ x\} \times  \PP_xU \times \PP_xU  \Big).
\]
%We identify $T^*(U \times  \PP U  \times \PP U )$ locally as a subset of $(U^3 \times \R^{2\nnn} ) \times \R^{5\nnn} $ by using canonical coordinates on $U$. In these canonical coordinates,
%In canonical coordinates of $T^*(U \times  \PP(U) \times \PP(U) )$, identified as a subset of $(U^3 \times \R^{2\nnn} ) \times \R^{5\nnn} $, 
The submanifold $\Lambda_R$ of $T^*( U \times \PP U \times \PP U) \setminus \{0 \}$ equals the set
\begin{align*}%\label{uuUUU90777}
\Lambda_R = \{ %(x,y,z, p,q\, ; \, \xi^x, \xi^y , \xi^z, \xi^p,\xi^{q} )
\big(x, y, z,p, q \, ; \, \xi^x, \xi^y, \xi^z,\xi^p,\xi^{q} \big) \in \  &T^*(U \times \PP U \times  \PP U) \setminus \{0\}  :  \\
%&  \quad\quad\quad\quad\quad\quad\quad\quad\quad\quad\quad\quad\quad %%<---u kidding? just move the & in the previous line.
&  \xi^x + \xi^y +\xi^z = 0 , \ \xi^p = \xi^{q} = 0, \ x=y=z  \}.
\end{align*}

\tbl{We have
\begin{align}\label{lambda_R_prime}%\label{hauska122211}
\Lambda_R' &%= \{ (x,y,z, p,q\, ; \, \xi^x, \xi^y , \xi^z, \xi^p,\xi^{q} )  : x=y=z, \ \xi^x + \xi^y +\xi^z = 0 , \ \xi^p = \xi^{q} = 0  \} \\
  =\{ \big((x ; \xi^x), (y, z,p, q \, ; \, \xi^y, \xi^z,\xi^p,\xi^{q} )\big) \in \ T^*U \times T^* ( \PP U \times  \PP U) \setminus \{0\}   : \nonumber  \\
 &  \quad\quad\quad\quad\quad\quad\quad\quad\quad\quad\quad\quad\quad \xi^x =\xi^y+ \xi^z \neq 0, \
   \xi^p =  \xi^{q} =0 , \ 
    x=y=z \}.
\end{align}
}
% 
% \begin{equation}\label{lambda_R_prime}
% \begin{split}
% \Lambda_R' %= \{ (x,y,z, p,q\, ; \, \xi^x, \xi^y , \xi^z, \xi^p,\xi^{q} )  : x=y=z, \ \xi^x + \xi^y +\xi^z = 0 , \ \xi^p = \xi^{q} = 0  \} \\
%   :=\{ \big((x ; \xi^x), (y, z,p, q \, ; \, \xi^y, \xi^z,\xi^p,\xi^{q} )\big) \in \  T^*U \times & T^* ( \PP U \times  \PP U)  \setminus \{0\}   :   \\
% % &  \quad\quad\quad\quad\quad\quad\quad\quad\quad\quad\quad\quad\quad \xi^x =\xi^y+ \xi^z \neq 0, \ %%<---u kidding? just move the & in the previous line.
%  & \xi^p =  \xi^{q} =0 , \ 
%     x=y=z \}.
%     \end{split}
% \end{equation}
The spaces $\Lambda'_R \times ( N^*[Y_1 \times  Y_2] ) $ and $T^*U \times \text{diag}\, T^*( \PP M \times \PP M) $ intersect transversally in $T^*U \times T^*(\PP M \times \PP M) \times  T^*(\PP M \times \PP M)$.

\end{lemma}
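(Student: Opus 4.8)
The plan is to verify the three asserted claims in order, treating the first two (the explicit descriptions of $\Lambda_R$ and $\Lambda_R'$) as essentially bookkeeping and devoting the real effort to the transversality statement at the end. First I would compute $\Lambda_R = N^*Z$ where $Z = \bigcup_{x\in U}\{x\}\times\PP_xU\times\PP_xU$. Working in the canonical coordinates $(x,y,z,p,q)$ on $U\times\PP U\times\PP U$, the submanifold $Z$ is cut out by the equations $x=y$ and $x=z$ (the fibre variables $p,q$ being free), so $Z$ has codimension $2n$ and its tangent space at a point consists of vectors with $\delta x = \delta y = \delta z$ and arbitrary $\delta p,\delta q$. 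A covector $(\xi^x,\xi^y,\xi^z,\xi^p,\xi^q)$ annihilates this tangent space precisely when $\xi^p=\xi^q=0$ and $\xi^x+\xi^y+\xi^z=0$ (pairing against the common value of $\delta x=\delta y=\delta z$), which is exactly the claimed formula; the base point condition $x=y=z$ is just membership in $Z$. The formula for $\Lambda_R'$ then follows immediately from the twist convention \eqref{relation_defn}: flipping the sign of the covector components in the $(y,z,p,q)$-factor turns $\xi^x+\xi^y+\xi^z=0$ into $\xi^x = \xi^y+\xi^z$, and the genericity condition $\xi^x\neq 0$ (equivalently, the whole covector is nonzero) is recorded because $\xi^p=\xi^q=0$ forces the nonzero part to live in the $\xi^x,\xi^y,\xi^z$ slots.

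For the transversality assertion I would argue pointwise. Fix a point lying in the intersection of $\Lambda_R'\times N^*[Y_1\times Y_2]$ and $T^*U\times \mathrm{diag}\,T^*(\PP M\times\PP M)$ inside the ambient space $T^*U\times T^*(\PP M\times\PP M)\times T^*(\PP M\times\PP M)$. I need to show the two tangent spaces together span the ambient tangent space, equivalently that the only covector annihilating both is zero — or, more concretely, to exhibit enough freedom. The diagonal factor contributes vectors of the form $(0; w, w)$ with $w\in T(T^*(\PP M\times\PP M))$ arbitrary; the first factor contributes $T\Lambda_R'$ in the $T^*U$-and-first-$T^*(\PP M\times\PP M)$ slots and $T(N^*[Y_1\times Y_2])$ in the last slot, independently. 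The key point is that $\Lambda_R'$, read as a relation, has its projection to the first $T^*(\PP M\times\PP M)$ factor being a \emph{submersion} onto the allowable locus (since given any $(y,z,p,q;\xi^y,\xi^z,0,0)$ we may freely set $x=y$, $\xi^x=\xi^y+\xi^z$), so the $\Lambda_R'$-part alone already surjects onto the first $\PP M\times\PP M$ phase-space directions modulo the base-point constraint, and then the diagonal soaks up the difference between the first and last $\PP M\times\PP M$ slots; the admissible intersection property of $Y_1,Y_2$ is what guarantees $N^*[Y_1\times Y_2]$ meets the relevant locus cleanly so that $N^*Y_1\times N^*Y_2$ contributes the remaining directions transversally. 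The hard part will be organizing this linear-algebra check cleanly: one must be careful that the constraint $\xi^p=\xi^q=0$ built into $\Lambda_R'$ does not obstruct transversality, and here it does not, precisely because the diagonal factor supplies arbitrary $\xi^p,\xi^q$-directions in the last $\PP M\times\PP M$ slot while $N^*[Y_1\times Y_2]$ supplies the matching ones — so the codimensions add up correctly.

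Concretely I would finish by a dimension count to confirm cleanness automatically reduces to the span condition: compute $\mathrm{codim}$ of each of the two submanifolds in the ambient manifold, compute $\mathrm{codim}$ of their intersection (which is forced to be the set where $x=y=z$ at both "ends" and the covectors match the diagonal), and check that the codimensions add. If they do, and the intersection is a manifold of the expected dimension, transversality follows. I expect the main obstacle to be purely notational — keeping track of five covector blocks across three copies of phase space — rather than conceptual; the geometric input (the explicit product structure of $\Lambda_R$ forcing $\xi^p=\xi^q=0$, and the admissible intersection property making $Y_1\times Y_2$ a nice submanifold) does all the real work, and the transversality is essentially the statement that a "multiplication/restriction-to-diagonal" relation composes transversally with a product of conormal bundles, which is the standard hypothesis one verifies to apply the transversal composition calculus of \cite[Theorem 2.4.1, Theorem 4.2.2]{duistermaat2010fourier} in the next step.
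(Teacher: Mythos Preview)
Your proposal is correct and follows essentially the same route as the paper. The paper computes $\Lambda_R$ by the same tangent-space/annihilator pairing plus fiber-dimension count you describe, reads off $\Lambda_R'$ from the twist convention, and then proves transversality by passing to the explicit admissible-intersection coordinates, writing down $L_1=\Lambda_R'\times N^*[Y_1\times Y_2]$, $L_2=T^*U\times\mathrm{diag}\,T^*(\PP M\times\PP M)$ and $L_1\cap L_2$ in those coordinates, and verifying $\dim L_1+\dim L_2-\dim(L_1\cap L_2)=9n+10n-n=18n=\dim X$; your submersion/diagonal heuristic is a nice intuition for why the count works, but the concrete argument you land on is the same dimension check the paper carries out.
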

Please see Appendix \ref{appendixa1} for a proof. 
\medskip

%\begin{lemma}\label{tensorlemma}
%Let $Y_j\subset TM$, $j=1,2$, be as in in Definition \ref{intersection_coords}, and let $f_j  \in  I^{m_j} (N^* Y_j )$. Then,~\f{Tensor product of distributions added.}
%\[
%\begin{split}
%f_1 \otimes f_2 \in I^{m_1+m_2 + \nnn,  -m_2- \nnn} \big( N^* [Y_1  \times \PP M] , N^* [Y_1  \times  Y_2]   \big) \\
% +  I^{m_1+m_2+ \nnn, -m_1-\nnn} \big( N^*[\PP M \times  Y_2 ] , N^* [Y_1  \times  Y_2]   \big). 
%%+ I^{\tilde\mu,\mu} \big(  \PP \times K_{M_2}  ,  K_{M_1}  \times K_{M_2}   \big)
%\end{split}
%\]
% \end{lemma}

% We will next consider submanifolds $Y_1$ and $Y_2$ of $\overline{\PP} M$ intersecting transversally. If $(x_0,p_0)$ is in the intersection of $Y_1$ and $Y_2$, there is an open neighbourhood $U$ of $x_0$ and coordinates $x$~\footnote{\tbl{Check this and clean.}} %on $U$ with the following properties:
% \begin{equation}\label{x_splits}
% x = (x',x'') : U \rightarrow \R^\nnn, \quad x' = (x^1,\dots,x^d), \quad x''=(x^{d+1}, \dots, x^\nnn)
% \end{equation}
% % on some neighbourhood 
% such that
% \begin{equation}\label{turhautuminen1}
% \begin{split}
% Y_1 \cap TU &= \{ (x,p)   \in TU  : x' = 0  , \ p' = 0,  \ p'' = (1,0,\dots,0)  \}  \\
% Y_2 \cap TU &= \{  (x,p) \in TU : x'' = 0, \ p''= 0, \  p' = (1,0,\dots,0) \}
% %\\ \quad d &=  \dim K_{M_2} = \nnn -  \dim K_{M_1}.
% \end{split}
% \end{equation}
% in the associated canonical coordinates $(x,p) = (x',x'',p',p'')$ of $TU$. 
% 
% ÂŽintersect transversally, if $ is an intersection point there are coordinates $x$ on an We introduce some notation. Let $(U,x)$ be coordinate neighbourhood of $M$. The
%  

For the next theorem let $Y_1,Y_2 \subset \PP M$ be smooth manifolds which satisfy the properties in Definition \ref{intersection_coords}. Fix $x_0 \in \pi(Y_1) \cap \pi(Y_2) \cap X$ (below we choose $X= \pi (\text{supp}A) $) and let $\Lambda_j$, $j=1,2$, be the Lagrangian manifolds specified in the definition. In the canonical coordinates $(x,\xi) = (x',x'', \xi', \xi'')$ on $T^*U$ we obtain the expressions
\begin{align}
\Lambda_0 = \{ (x,\xi) : x' = 0, \ x'' = 0 \}, \label{ossdad1} \\
\Lambda_1 = \{ (x,\xi) : x' = 0, \ \xi'' = 0 \}, \\
\Lambda_2 = \{ (x,\xi) : x''= 0, \ \xi' = 0 \}\label{ossdad3}.
\end{align}
Thus, the elements of the manifolds $\Lambda_0,\Lambda_1, \Lambda_2$ can be parametrized by the free coordinates $(\xi',\xi'')$, $(x'',\xi')$, and $(x',\xi'')$, respectively.

\antticomm{For the purposes of the theorem below, we reparametrize $TU$ with new coordinates $(\tilde{x}',\tilde{x}'',\tilde{p}',\tilde{p}'')$ so that locally $Y_1= \{ \tilde{x}' = 0, \ \tilde{p}'= 0, \ \tilde{p}''= 0 \}$ and $Y_2 = \{ \tilde{x}''= 0,  \ \tilde{p}'= 0, \ \tilde{p}''= 0\}$. That is, we consider the local  reparametrization 
\begin{equation}\label{repara}
(\tilde{x}',\tilde{x}'',\tilde{p}',\tilde{p}'') \mapsto (x'( \tilde{x}' ),x''(\tilde{x}'') ,p'(\tilde{x}',\tilde{p}'),p''(\tilde{x}'',\tilde{p}''))
\end{equation}
given by 
\[
\begin{split}
&x'( \tilde{x}' ) := \tilde{x}' \\
&x''( \tilde{x}'' ) := \tilde{x}'' \\
&p'(\tilde{x}',\tilde{p}') := \theta_2(\tilde{x}')+ \tilde{p}' \\
&p''(\tilde{x}'',\tilde{p}''):=  \theta_1(\tilde{x}'') + \tilde{p}''
%(\tilde{x}',\tilde{x}'',\tilde{p}',\tilde{p}'') \mapsto 
%(x'( \tilde{x}' ),x''(\tilde{x}'') ,p'(\tilde{x}',\tilde{p}'),p''(\tilde{x}'',\tilde{p}'')) := (\tilde{x}',\tilde{x}'',\theta_2(\tilde{x}')+ \tilde{p}',  \theta_1(\tilde{x}'') + \tilde{p}'')
\end{split}
\]
and proceed in these coordinates. 
%Notice that the coordinates $(\tilde{p}',\tilde{p}'')$ are not canonically induced by the coordinates %$(\tilde{x}',\tilde{x}'')$ in the base manifold (i.e. $(p',p'')$ does not usually equal $p'_j \frac{\partial}{\partial x_j'}, %p_k'' \frac{\partial}{\partial x_j'})  $ ). 
Above, $\theta_1,\theta_2$ are the local fields in Definition \ref{intersection_coords}. With a slight abuse of notation, we redefine coordinates $(x',x'',p',p'')$ as the reparametrization  $(\tilde{x}',\tilde{x}'',\tilde{p}',\tilde{p}'')$. Again, we denote $x= (x',x'')$ and $p=(p',p'')$ etc.  Notice that the $p$-coordinates are not canonically induced by the $x$-coordinates.  
%that is, $p$ does not stand for $p^j \frac{\partial}{\partial x^j}$. 
One checks that the expressions of $\Lambda_R$ and $\Lambda_R'$ in Lemma \ref{Lambda_R-lemma} hold also in the canonical coordinates 
of the new parametrization (substituting $(x,p;\xi^x,\xi^p) = (x',x'',p',p''\ ; (\xi^x)' , (\xi^x)'',(\xi^p)',(\xi^p)'')$ and similarly with the other parameters). Since the $x$-coordinates are not changed, the reparametrisation has no effect on the expressions 
%canonical coordinates $(x',x'',\xi',\xi'')$ still parametrize 
(\ref{ossdad1}-\ref{ossdad3}). 
}

\antticomm{Throughout this section we consider the reparametrisation above. } 
In the canonical coordinates \\ $(x,p ; \xi^x,\xi^p) = (x',x'',p',p'' ; (\xi^x)',(\xi^x)'',(\xi^p)',(\xi^p)'' )$ in $T^*TU$ we have that
\begin{align}
N^*Y_1 = \{ (x,p ; \xi^x,\xi^p) : x'= 0, \ p'=0, \ \antticomm{p''=0}, \ (\xi^x)'' = 0\}, \label{YFEW1}\\
N^*Y_2 = \{ (x,p ; \xi^x,\xi^p) : x''= 0, \ p''=0, \ \antticomm{p'=0}, \ (\xi^x)' = 0\} \label{YFEW2},
\end{align}
so the manifolds $N^* Y_1$ and $N^* Y_2 $ are locally parametrized by the \tbl{coordinates $(x'', ( \xi^x)', \xi^p)$ and $(x', ( \xi^x)'', \xi^p)$ respectively. 
%The trivialisations above are considered in 
The identity \eqref{symboiu} in the next theorem is written in terms of the coordinates $(x'', ( \xi^x)', \xi^p)$, $(x', ( \xi^x)'', \xi^p)$ and $((\xi^x)',(\xi^x)'')$ for the symbols $\sigma(f_1)$, $\sigma(f_2)$ and $\sigma ( \COLOP_{gain}^P[f_1,f_2])$ respectively. }

We extend $\COLOP_{gain}^P$ to conormal distributions as follows. % captured in the following proposition.

%To avoid dealing with operators on a manifold with boundary we would like to extend the operator $\COLOP_{gain}[ \ \cdot \ , \ \cdot \ ] (x,p)$ for fixed variable $p \in \OVS$. 
%However, this  creates a slight technical issue since the domain of $p = F(x)$ now depends on the base point $x$. We circumvent this problem by employing local coordinates, where we can identify $T_xM = \R^n$. Fixing $p \in \R^n$ then corresponds to a smooth local section $x \mapsto (x, p^\alpha \partial_\alpha |_x)$ in the bundle $\pi: \OVS \rightarrow \R \times N$. Motivated by this, we proof the following:

\begin{theorem}\label{sofisti}   
Let $(M,g)$ be a globally hyperbolic Lorentzian manifold.  
Let $Y_1,\, Y_2 \subset \PP M$, 
%S^{r_j}_{t_j}$, $t_j \in \R$, $r_j > 0$ , $j=1,2$  
be smooth manifolds that have the admissible intersection \antti{property} (Definition \ref{intersection_coords}) at some $x_0 \in \pi Y_1 \cap \pi Y_2$ and let $\Lambda_l$, $l=0,1,2$, and $U\subset M$ be as in Definition~\ref{intersection_coords}. %Let also $m_1,\,m_2\in \R$. %By using the coordinates~\eqref{x_splits},~\eqref{turhautuminen1} % that 
%for each $x_0 \in \text{supp} ( x \mapsto A(x,\cdot,\cdot,\cdot , \cdot) ) \cap G_{M_1} \cap G_{M_2}$ 
%there is an open neighbourhood $ U \subset M$ and local coordinates 
% \[
% x = (x',x'') : U \rightarrow \R^\nnn, \quad x' = (x^1,\dots,x^d), \quad x''=(x^{d+1}, \dots, x^\nnn)
% \]
% % on some neighbourhood 
% satisfying~\footnote{\tbl{Does this exactly mean that $Y_1$ and $Y_2$ intersect transversally (on $U$)? If so we could put it into a lemma.}}
% \begin{equation}\label{turhautuminen1}
% \begin{split}
% Y_1 \cap TU &= \{ (x,p)   \in TU  : x' = 0  , \ p' = 0,  \ p'' = (1,0,\dots,0)  \}  \\
% Y_2 \cap TU &= \{  (x,p) \in TU : x'' = 0, \ p''= 0, \  p' = (1,0,\dots,0) \}  , 
% %\\ \quad d &=  \dim K_{M_2} = \nnn -  \dim K_{M_1}.
% \end{split}
% \end{equation}
% in the associated canonical coordinates $(x,p) = (x',x'',p',p'')$ of $TU$. 
%Using the coordinates $(x,p)$, 
% define the conic Lagrangian submanifolds 
% \[
% \Lambda_0 := T_{0}^* \R^\nnn, \quad \Lambda_1 := N^*  \{ x   \in U  : x' = 0  \} \quad \text{and} \quad \Lambda_2 := N^*  \{ x   \in U  : x'' = 0  \}. 
% \]
%
Let $P$  be a smooth section of the bundle $\pi: \overline{\mathcal{P}}U \rightarrow U$  %, that is, a smooth function $F : U \rightarrow \overline{\mathcal{P}} U$ such that $\pi \circ F(x) = x$. %That is, in 
and  $\COLOP$ be the collision operator with an admissible collision kernel. 
%\begin{enumerate}
%\item[i)] 

Then  the operator $\COLOP_{gain}^P: C_c^\infty (\PP M ) \times C_c^\infty (\PP M ) \rightarrow C^\infty ( U )$ defined in~\eqref{def_of_QF}
% \begin{align*}
% \COLOP_{gain}  [h_1,h_2]_F (x) :=\COLOP_{gain} [h_1,h_2] (x ,p(x) ), 
% \end{align*}
%given by
%\[
%\COLOP_{gain}  [h_1,h_2]_F (x) =\COLOP_{gain} [h_1,h_2] (x ,p(x) ) 
%\]
extends into a \antticomm{sequentially} continuous  map
\[
 %I^{m_1}_{comp} ( \PP M ;\, N^*Y_1 )\times I^{m_2}_{comp} (\PP M ;\, N^*Y_2 ) 
 \antticomm{\mathcal{E}'_{N^*Y_1 } (\PP M )  \times \mathcal{E}'_{N^*Y_2 } (\PP M )  }
 \rightarrow \mathcal{D}' (U), 
 %\quad \mathcal{E}'_{\Gamma} (\PP M \times \PP M ):= \{ f \in \mathcal{E}'  (\PP M \times \PP M ): WFf \subset \Gamma \}
 %\quad m_1,m_2 \in \R
% % I^{m_2-\nnn/4,m_1+ \nnn/2} (  \Lambda_0, \NG_{M_1} ,   ; U )+I^{m_1-\nnn/4,m_2 + \nnn/2} ( \Lambda_0,  \NG_{M_2}  ;  U  )
\]
\antticomm{where $\mathcal{E}_{N^*Y_k}'(\PP M) := \{ f \in  \mathcal{E}' (\PP M) : WF(f) \subset N^*Y_k \}$. (See also Remark \ref{jalkirem} below)}
%\antticomm{where each $I^{m_1}_{comp} ( \PP M ;\, N^*Y_k )$, $k=1,2$ is equipped with the relative topology inherited from the space $\mathcal{D}'_{N^*Y_k} (\PP M) \cap \mathcal{E}' (\PP M) $ of compactly supported distributions with $WF(f) \subset N^*Y_k$. }
%\footnote{\antticomm{In fact, the sequential continuity holds for $\mathcal{D}'_{\Gamma_k} (\PP M)\cap \mathcal{E}' (\PP M) \times \mathcal{D}'_{\Gamma_k} (\PP M)\cap \mathcal{E}' (\PP M)$ for any closed cones $\Gamma_k$  sufficiently near $N^*Y_k$.}}
%for any closed cone $\Gamma \supset N^*Y_1 \times N^*Y_2$, $k=1,2$ in a sufficiently small conic neighbourhood of $N^*Y_1 \times N^*Y_2$. 
%}
%which is continuous in the relative topology inherited from $\mathcal{D}_{\Gamma_1\times \Gamma_2}'(\PP M)$ for a conical %closed $\Gamma$. }
%where $\Gamma$ is a closure }
%\item
%\item[ii)]  

For $(f_1 , f_2) \in \antticomm{\mathcal{E}'_{N^*Y_1 } (\PP M )  \times \mathcal{E}'_{N^*Y_2 } (\PP M )  }$
%I^{m_1}_{comp} ( \PP M ;\, N^*Y_1)\times I^{m_2}_{comp} (\PP M ;\, N^*Y_2)$ 
we have that 
\[
WF(\COLOP_{gain}^P[f_1,f_2]) \subset \Lambda_0 \cup \Lambda_{1} \cup \Lambda_{2}. 
%\end{split}
\]
%where the product type symbol class is defined in the sense of \cite{guillemin1981}.
%\item[]

Moreover, microlocally away from both $\Lambda_{1}$ and $\Lambda_{2} $, we have 
%\antticomm{ for every }
%\[
%(f_1,f_2) \in I^{m_1}_{comp} ( \PP M ;\, N^*Y_1)\times I^{m_2}_{comp} (\PP M ;\, N^*Y_2) 
%\]
%that 
%\begin{align}
\begin{equation}\label{si3478}
\begin{split}
\COLOP_{gain}^P : I^{m_1}_{comp} ( \PP M ;\, N^*Y_1)\times I^{m_2}_{comp} (\PP M ;\, N^*Y_2)  \to  I^{m_1+m_2+3\nnn/4} (U;\,\Lambda_0 \setminus( \Lambda_{1} \cup \Lambda_{2}) ), \\ 
%\forall  (f_1,f_2) \in I^{m_1}_{comp} ( \PP M ;\, N^*Y_1)\times I^{m_2}_{comp} (\PP M ;\, N^*Y_2) 
\end{split}
\end{equation}
together with the symbol 
\begin{equation}\label{symboiu}
\begin{split}
 \sigma ( \COLOP_{gain}^P [f_1,f_2]) \tbl{(\xi',\xi'')} = 
%c A(x, \hat{p}(x) ,  p_x+q_x- \hat{p}(x) , p_x,q_x)  
 C \sigma( f_1) (0 ;  \tbl{\xi' , 0  ) \sigma(f_2) ( 0 ;  \xi''} , 0) , %\\  (\xi',\xi'') \in \R^\nnn \setminus (  \{0\} \times \R^\nnn  \ \cup \    \R^\nnn \times \{ 0\} ) 
 \end{split}
\end{equation}
where $C$ is given \antticomm{in terms of the unique vectors $\theta_1(x_0) \in Y_1 \cap T_{x_0} M$ and $\theta_2(x_0) \in Y_2 \cap T_{x_0} M$ by 
%in the canonical coordinates $p = (p',p'')$ and $q=(q',q'')$ by 
\[
C = c\s A\big( \ x_0 \  , \ \hat{p} (x_0) \ , \ \theta_1(x_0)+\theta_2(x_0)- \hat{p}(x_0) \ , \ \theta_1(x_0) \ , \ \theta_2(x_0) \ \big)
\] }
and $\hat{p}(x_0) \in \overline{\PP}_{x_0} U$ is such that $P(x_0) = ( x_0, \hat{p}(x_0)) $. 
%for some constant $c\neq 0$ and the unique vectors $p,q \in \PP_x M$ satisfying $(x,p) \in Y_1 \cap T_x U$ and $(x,q) \in Y_2 \cap T_x U$. 
Here $c$ is some non-zero constant
%\footnote{The constant depends on the choice of the smooth volume form used in the definitions of $\mathcal{Q}$ and the admissible kernel $A$.} 
\footnote{The constant is coordinate invariant. It depends on geometric quantities, such as the choice of the smooth volume form on $(TM)^4$.} and the manifolds $\Lambda_0 \setminus( \Lambda_{1} \cup \Lambda_{2})$, $T^* TU \cap N^* Y_1$, and $T^* T U \cap N^* Y_2$ are parametrised by the  coordinates \tbl{$(\xi ',\xi'')$}, $(x'', (\xi^x)',\xi^p)$ and $(x' , (\xi^x)'', \xi^p)$ respectively \antticomm{ $($see \eqref{ossdad1} and (\ref{YFEW1}-\ref{YFEW2}) after the reparametrization \eqref{repara}$)$.}
% \antticomm{$($see \eqref{repara}$)$}.  
%for every $(f_1 , f_2) \in I^{m_1}_{comp} (  N^*Y_1 ; \overline{\mathcal{P}} M  )\times I^{m_2}_{comp} ( N^*Y_2 ;  \overline{\mathcal{P}} M )$.
%
%\item[iii)] 

%\end{enumerate}
\end{theorem}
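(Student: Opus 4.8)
The plan is to reduce everything to a model computation with oscillatory integrals in the canonical coordinates fixed before the statement, and then to invoke the transversal intersection calculus of \cite{duistermaat2010fourier} together with Lemma \ref{Lambda_R-lemma}. First I would unwind the definition \eqref{def_of_QF}: the operator $\COLOP_{gain}^P[u,v](x) = \COLOP_{gain}[u,v](P(x))$ is the composition of $\COLOP_{gain}$ with the pullback by the smooth section $P : U \to \overline{\PP}U$, which is a Fourier integral operator (in fact the restriction to a graph, so its canonical relation is $N^*(\mathrm{graph}\,P)'$, an honest smooth canonical relation since $P$ is smooth). So it suffices to analyse $\COLOP_{gain}$ itself as a bilinear operator and then compose. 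Writing the Schwartz kernel of $\COLOP_{gain}$ as an integral against $A\, dV$ over the fibre $\Sigma_{x,p}$, I would identify its wavefront relation: the constraint $p + q = p' + q'$ on $\Sigma_{x,p}$ together with the fact that $A\,dV$ is smooth means the kernel of $\COLOP_{gain}$, viewed as a distribution on $\overline{\PP}U \times \PP M \times \PP M$ (variables $(x,p),(x,p'),(x,q')$ after using momentum conservation to eliminate $q$), is conormal to the submanifold $\bigcup_{x}\{x\}\times \PP_x U\times \PP_x U$ — this is exactly where $\Lambda_R$ of Lemma \ref{Lambda_R-lemma} enters.

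The core step is then the composition $\COLOP_{gain}^P[f_1,f_2] = (\text{kernel of } \COLOP_{gain}^P) \circ (f_1\otimes f_2)$, where $f_l \in I^{m_l}_{comp}(\PP M; N^*Y_l)$. Using Lemma \ref{Lambda_R-lemma}, the relevant composition of canonical relations is $\Lambda_R' \circ (N^*Y_1\times N^*Y_2)'$ after intersecting with $T^*U\times \mathrm{diag}\,T^*(\PP M\times\PP M)$, and the lemma asserts precisely that this intersection is transversal, so \cite[Theorem 2.4.1, Theorem 4.2.2]{duistermaat2010fourier} apply and give a well-defined composition with the stated order. The order count: $\COLOP_{gain}$ contributes a fixed amount from the codimension of $\Sigma$ inside $(T_xM)^4$ and the dimension shifts in the FIO convention; tracking the $-k_j/2 + n/4$ normalisations through the composition yields the shift $3\nnn/4$ in \eqref{si3478} (this is a bookkeeping computation I would do carefully but not display). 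To locate the output wavefront set I would compute $\Lambda_R'\circ(N^*Y_1\times N^*Y_2)'$ explicitly in the reparametrised coordinates \eqref{repara}: the constraint $\xi^x = \xi^y + \xi^z$, $\xi^p = \xi^q = 0$, $x=y=z$ from \eqref{lambda_R_prime}, fed with $\xi^y \in$ (fibre of $N^*Y_1$, i.e. $(\xi^x)''=0$ component after the reparam, living over $x'=0$) and $\xi^z\in$ (fibre of $N^*Y_2$, $(\xi^x)'=0$, over $x''=0$), forces $x=x_0=(0,0)$ and $\xi^x$ arbitrary — i.e. exactly $\Lambda_0 = T^*_{x_0}M$, together with boundary contributions along $\Lambda_1$ and $\Lambda_2$ when one of $\xi^y,\xi^z$ is allowed to vanish, which is why the clean statement $WF \subset \Lambda_0\cup\Lambda_1\cup\Lambda_2$ appears and why \eqref{si3478} is only claimed microlocally away from $\Lambda_1\cup\Lambda_2$.

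For the symbol formula \eqref{symboiu} I would use the symbol calculus in the transversal composition theorem: the principal symbol of the composed distribution is the product of the principal symbols of the factors, integrated over the fibre variables that get eliminated, evaluated at the stationary point. Because the fibre integral in $\COLOP_{gain}$ is against the smooth density $A\,dV$, and because at $x_0$ the admissible intersection property pins down the unique incoming vectors $\theta_1(x_0)\in Y_1\cap T_{x_0}M$ and $\theta_2(x_0)\in Y_2\cap T_{x_0}M$ (these are the $p''$ resp. $p'$ directions after the reparametrisation, with $\tilde p = 0$), momentum conservation forces $p' = \theta_1(x_0)$, $q' = \theta_2(x_0)$, $p = \hat p(x_0)$ (the value $P(x_0)$), and $q = \theta_1(x_0)+\theta_2(x_0)-\hat p(x_0)$; hence the constant $C$ is $c\,A$ evaluated at exactly these five arguments, with $c$ an invariant factor coming from the Jacobian of the stationary phase / the chosen volume form on $(TM)^4$. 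The symbols $\sigma(f_l)$ get evaluated at $x=0$ with the $\xi^p=0$ slot (from $\Lambda_R'$) and the appropriate half of $\xi^x$, producing $\sigma(f_1)(0;\xi',0)\,\sigma(f_2)(0;\xi'',0)$ as written.

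\textbf{Main obstacle.} The delicate point is the interaction with the boundary of $\overline{\PP}M$: the full collision operator naturally lives on a manifold with boundary (the lightlike vectors), and a priori the calculus of \cite{duistermaat2010fourier} does not apply. The device that saves the argument is precisely the composition with the section $P$ (the operator $\COLOP_{gain}^P$ rather than $\COLOP_{gain}$): because $f_1,f_2$ are supported in the open bundle $\PP M$ of strictly timelike vectors, and because we only evaluate at $x\in U$ (not on a fibre that touches the boundary transversally in a bad way), the fibre integral defining $\COLOP_{gain}^P[f_1,f_2](x)$ stays away from the lightlike boundary where it matters, so all the oscillatory-integral manipulations take place on open (boundaryless) pieces. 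Making this reduction rigorous — verifying that the relevant pieces of $N^*Y_1$, $N^*Y_2$, and $\Lambda_R$ that contribute to the composition are disjoint from the conormal directions of the boundary $L M$, so that the transversality in Lemma \ref{Lambda_R-lemma} genuinely licenses the clean composition — is the step I expect to require the most care, and it is the reason the theorem is phrased with $P$ built in and with the conclusion stated only microlocally away from $\Lambda_1\cup\Lambda_2$.
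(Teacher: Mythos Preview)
Your overall architecture matches the paper's: write $\COLOP_{gain}^P[f_1,f_2] = R_A(f_1\otimes f_2)$ for a Fourier integral operator $R_A\in I^{-n/4}(U,\PP M\times\PP M;\Lambda_R')$ with kernel conormal to $\bigcup_x\{x\}\times\PP_xU\times\PP_xU$, check $WF'_{\PP M\times\PP M}(R_A)\cap WF(f_1\otimes f_2)=\emptyset$ via \cite[Corollary 1.3.8]{duistermaat2010fourier} to get the sequentially continuous extension, and compute $\Lambda_R'\circ\big((N^*Y_1\times\{0\})\cup(\{0\}\times N^*Y_2)\cup(N^*Y_1\times N^*Y_2)\big)=\Lambda_1\cup\Lambda_2\cup\Lambda_0$ for the wavefront bound. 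That part is fine.

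Where your plan diverges from the paper is in the identification of the ``main obstacle.'' The boundary of $\overline{\PP}M$ is \emph{not} the issue inside this proof: once you compose with $P$, the target is $C^\infty(U)$ with $U\subset M$ boundaryless, and the inputs $f_j$ are compactly supported in the open manifold $\PP M$, so the calculus of \cite{duistermaat2010fourier} applies directly with no further verification about conormals to $LM$. The paper spends no effort on this. The genuine technical step you omit is how to make the statement ``microlocally away from $\Lambda_1\cup\Lambda_2$'' rigorous, i.e.\ how to isolate the $I^{m_1+m_2+3n/4}(\Lambda_0)$ piece. The paper does this by writing $f_1\otimes f_2$ as a sum of paired Lagrangian distributions in $I^{\bullet,\bullet}(N^*Y_1\times\{0\},N^*[Y_1\times Y_2])$ and $I^{\bullet,\bullet}(\{0\}\times N^*Y_2,N^*[Y_1\times Y_2])$, then introducing explicit conic cutoffs $\psi$ that vanish near $N^*Y_j\times\{0\}$ to split each piece into a genuine conormal distribution on $N^*[Y_1\times Y_2]$ (to which the transversal calculus \cite[Theorem 4.2.2]{duistermaat2010fourier} applies, with Lemma \ref{Lambda_R-lemma} supplying the transversality) plus a remainder whose image under $R_A$ has wavefront in an arbitrarily small conic neighbourhood $\Gamma_j$ of $\Lambda_j$. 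Without this decomposition you cannot simply ``apply the composition theorem'' to $f_1\otimes f_2$, because the tensor product is not itself in a single $I^m(N^*[Y_1\times Y_2])$ class.
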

\antticomm{
\begin{remark}\label{jalkirem}
The sequential continuity above remains true even if one replaces $\mathcal{E}_{N^*Y_k}'(\PP M)$, $k=1,2$ with $\mathcal{E}_{\Gamma_k}'(\PP M)$, where $\Gamma_k \supset N^*Y_k$ is a closed cone near $N^*Y_k$. The proof in that case is essentially the same. Indeed, the existence of the sequentially continuous extension follows from \eqref{intersection_of_wavefronts_empty} by \cite[Corollary 1.3.8]{duistermaat2010fourier}, as deduced for $\Gamma_k =  N^*Y_k $ in the proof below. 
\end{remark}
}

%\subsection{Proof of Proposition \ref{sofisti}}

%
%
%
%%%%%%%%%%%%%%%%%%%%%%%%%%%%%%%%%
%
% Theorem for Q[X^{-1} . , X^{-1} . ](x, F(x))
%
%%%%%%%%%%%%%%%%%%%%%%%%%%%%%%%%%
%
%
%

\begin{proof}[Proof of Theorem \ref{sofisti}]
%Let us  now continue to the proof of Proposition \ref{sofisti}. 
We write $I^m (\PP M ;\, \Lambda) = I^m(\Lambda)$ for a Lagrangian manifold $\Lambda \subset T^* (\PP M)$ and $m \in \R$. 
To prove the claims of the proposition, %that $\COLOP_{gain}^P [ f_1,f_2 ]$ can be extended to compactly supported conormal distributions $f_j  \in  I^{m_j}_{comp} (N^* Y_j )$, $j=1,2$, and that the extension has the claimed properties, 
we first represent $\COLOP_{gain}^P [ f_1, f_2 ]$ as a composition of a Fourier integral operator and a tensor product of $f_1$ and $f_2$. Then we appeal to results for Fourier integral operators in~\cite{duistermaat2010fourier} to conclude the proof. %in particular to \cite[Theorem 2.4.1]{duistermaat2010fourier} and  \cite[Corollary 1.3.8]{duistermaat2010fourier}. 
The majority of the proof consists of demonstrating that the Fourier integral operators we use to decompose $\COLOP_{gain}^P [ f_1,f_2 ]$ satisfy the conditions required by~\cite[Theorem 2.4.1]{duistermaat2010fourier} and~\cite[Corollary 1.3.8]{duistermaat2010fourier}. 
%By the definition of $\COLOP_{gain}^P [ f_1,f_2 ]$ it suffi

%{\color{gray}Before we begin, we remark that the shorthand notation $\PP:= \PP U$ is used extensively in this proof.} {\color{red}($\leftarrow$ Lets not do this. Too dangerous notation.)}

Let \antticomm{$(x',x'',p',p'') := (\tilde{x}',\tilde{x}'',\tilde{p}',\tilde{p}'')$ be the local coordinates \eqref{repara}} in $TU$, where $U$ is an open neighbourhood of $x_0$, as described in Definition~\ref{intersection_coords}. 
%be a coordinates described in~\eqref{x_splits} and \eqref{turhautuminen1}. 
 Let us define an % Fourier 
 integral operator 
\[
R : C^\infty_c  ( \PP M  \times \PP M  ) \rightarrow \mathcal{D}' ( U  )
\]
by the formula%~\footnote{Need to choose one domain for $\delta_\Sigma$. The notation is funny, because not it looks like a pairing of $\phi$. I'd recommend $\delta_\Sigma[\phi]$. Also, I guess the target of $\Delta_\Sigma$ can be taken to be smooth functions.}
\[
\begin{split}
%\langle \delta_\Sigma ( x,p \, ; \,\cdot\,  ) ,  \phi \rangle = \int_{\Sigma_{x,p}} \phi (y,z,w,q,p',q')\, dV (y,z,w,q,p',q'), \ \phi \in C^\infty_c ( \mathcal{C}^3 ).
%\langle \delta_\Sigma ( x,p \, ; \,\cdot\,  ) ,  \phi \rangle  
\langle R [\phi] , \psi \rangle =&  \int_{  \PP U \oplus \PP U } \phi ( x, p , x , q) \psi(x) \, dv (x,p,q) \\
%= & \int_{\Sigma} \psi(x,p)  \phi (z,p',w,q') \, d\Sigma
%V (y,z,w,q,p',q').%, \ \phi \in C^\infty_c ( \mathcal{C}^3 ).
\end{split}
\]
%where $(x,p) \mapsto dV_{x,p}$ is a volume form on $T_{(x,x)} ( \R \times N)^2$
where $dv$ is \tbl{the induced volume form on the direct sum bundle $\PP U \oplus \PP U := \{ \big((x,p), (y,q) \big) \in \PP U \times \PP U : x = y \}$ and where $\phi\in C^\infty_c  ( \PP M  \times \PP M  )$ and $\psi\in C_c^\infty(U)$. The induced volume form $dv$ is given by considering $\PP U \oplus \PP U$ as a submanifold of $\PP U\times \PP U$ equipped with the product volume form.} %Since $R$ is intgration over a submanifold of $\PP M \times \PP M$, we have that $R\in I^{-\nnn/4}( U, \PP M \times \PP M , \Lambda'_R)$.

% and $d\Sigma$~\footnote{Make precise later. Check Choque-Bruhat.} is the induced volume form on 
%\[
%\Sigma_{x,p} := \{ \vec{p} \in \{ (x,p) \} \times T(\R \times N)^3 : x= y=z=w  , \quad q  + p = p'+q'  \}.
%% \quad \text{ ($(x,p)$ fixed) }
%\]
%
%Then $\delta_\Sigma$ is a Fourier integral operator with kernel is the delta function $\Sigma$ 
%Here the notation $\Sigma$ refers to the set defined in~\eqref{Sigma_union} as
%The notation refers to the manifold
%\begin{align}%\label{def_sigma}
% \Sigma&:=\bigcup_{(x,p)\in T(\R \times N)} \Sigma_{x,p} =\{\,\vec{p}\in T(\R\times N)^4:\, x=y=z=w, \  p + q = p' + q' \}.\nonumber
%\end{align}
%from the definition of the collision operator. 
Let $P$ be a smooth local section of the bundle $\pi: \overline{\mathcal{P}}U \rightarrow U$. We represent $P$ as $P(x)=(x,\hat{p}(x))$.%\footnote{\antticomm{removed here ``by using the coordinates $x$'' --Antti}}
%by using the coordinates $x$. 
Next, consider the operator	
\[
 \langle R_A [\phi] , \psi \rangle = \int_{  \PP U \oplus \PP U } A(x,\hat{p}(x),\, p+q - \hat{p}(x) ,\, p,\,q)\phi ( x, p , x , q ) \psi(x) \, dv (x,p,q) 
\]
% 
% 
% \[
% R_A (x \, ; \, z,p',w,q') := 
% %\bigg( \int_{L^+_x(\R \times N)} A(x,p , p'+q' - p , p',q') dw_x(p) \bigg) \cdot R (x \ ;  z,p',w,q') 
% A(x,p(x),\, p'+q' - p(x) ,\, p',\,q') \cdot R (x \, ;\,  z,p',w,q'). 
% \]
%Here the symbol $\cdot$ refers to multiplication by a smooth function. 
Notice that
\begin{equation}\label{uusi7878782}
%R_A (x \, ;  \, z,p',w,q') [ h_1 (z,p')  h_2(w,q') ] 
R_A[h_1 \otimes h_2]= 
%\pi_{L^+}^* 
\COLOP_{gain}^P [h_1, h_2] , \quad h_1,h_2 \in C_c^\infty( \PP M ).
\end{equation}
Here $h_1\otimes h_2$ is the tensor product of $h_1$ and $h_2$\tbl{, i.e. $(h_1\otimes h_2)((x,p),(y,q))= h_1(x,p)h_2(y,q)$.}
%for volume forms $dw_x$ and $dv$ such that $dw_x \wedge dv$ corresponds to the volume form in the definition of the collision operator under the parametrization
%of $\Sigma$ by variables $(x,p,q',p') \mapsto (x,p, p'+q'-p,p' ,q') \in \Sigma$.  
%Above
%Where the symbol $\cdot$ refers to multiplication by a smooth function. 
%Notice that the choice of volume form does not affect to microlocal properties since each volume form differs from others by multiplication of positive smooth function. 
%
\tbl{The distribution kernel associated to the operator $R $ is
%has the associated  kernel 
\[
k_R  \in \mathcal{D}' ( U \times  \PP M \times \PP M ). 
%, \quad 
%\langle  k_R  , \tilde{\phi}  \rangle :=  \int_{  \PP U \oplus \PP U } \tilde{\phi} (x, x, p,x,q ) dv %(x,p,q) , 
\]}
%where $\tilde{\phi} \in C^\infty_c ( U \times  \PP M \times \PP M)$. 
%The kernel is  a conormal distribution of class 
Since \tbl{$k_R$} is a delta distribution over the submanifold 
\[
 \bigcup_{x\in U}  \{ x\} \times  \PP_x M \times \PP_x M \  \subset \ U  \times  \PP M \times \PP M,
\]
it can be viewed as a conormal distribution
\begin{align*}
k_R &\in  I^{-\nnn/4 }(\tbl{U  \times  \PP M \times \PP M \s;  \Lambda_R }), \\
\Lambda_R &:=   N^* \Big( \bigcup_{x\in U}  \{ x\} \times  \PP_x M \times \PP_x M \Big).
\end{align*}
Hence $R$ is a Fourier integral operator %of order $-\nnn/2$ in the sense that
%\[
 of class $I^{-\nnn/4}( U, \PP M \times \PP M , \Lambda'_R)$. Since the collision kernel $A$ is smooth, $R_A$ is also of class $I^{-\nnn/4}( U, \PP M \times \PP M , \Lambda'_R)$. %belongs to the same class of Fourier integral operators as $R$.~\footnote{\tbl{We haven't shown that $R$ is a Fourier integral operator. Move this sentence later.}}
By Lemma \ref{Lambda_R-lemma}, %(see Appendix \ref{appendix-al}), 
the Lagrangian manifold $\Lambda_R$ is given by
\begin{align*}%\label{uuUUU90777}
\Lambda_R = \{ %(x,y,z, p,q\, ; \, \xi^x, \xi^y , \xi^z, \xi^p,\xi^{q} )
\big(x, y, z,p, q \, ; \, \xi^x, \xi^y, \xi^z,\xi^p,\xi^{q} \big) \in \  & T^*(U \times \PP U \times  \PP U) \setminus \{0\}  : \\
%&  \quad\quad\quad\quad\quad\quad\quad\quad\quad\quad\quad\quad\quad <--- please use & to avoid this --Antti
& \xi^x + \xi^y +\xi^z = 0 , \ \xi^p = \xi^{q} = 0, \ x=y=z  \}.
\end{align*}
%\[
%\Lambda_R = \{ (x,y,z, p,q\, ; \, \xi^x, \xi^y , \xi^z, \xi^p,\xi^{q} )  :\, x=y=z, \ \xi^x + \xi^y +\xi^z = 0 , \ \xi^p = \xi^{q} = 0  \} .
%\]
%
Further, we have that $WF'(R_A) \subset \Lambda_R'$, where $\Lambda_R'$ equals by its definition~\eqref{relation_defn} the set %in~\eqref{lambda_R_prime}
\begin{align}\label{hauska12221}%\label{lambda_R_prime}
\Lambda_R' %= \{ (x,y,z, p,q\, ; \, \xi^x, \xi^y , \xi^z, \xi^p,\xi^{q} )  : x=y=z, \ \xi^x + \xi^y +\xi^z = 0 , \ \xi^p = \xi^{q} = 0  \} \\
  =\{ \big((x ; \xi^x), (y, z,p, q \, ; \, \xi^y, \xi^z,\xi^p,\xi^{q} )\big) & \in \  T^*U \times T^* ( \PP U \times  \PP U) \setminus \{0\}   : \nonumber  \\
 %&  \quad\quad\quad\quad\quad\quad\quad\quad\quad\quad\quad\quad\quad %%<--- please use & to avoid this --Antti
 & \xi^x =\xi^y+ \xi^z \neq 0, \
   \xi^p =  \xi^{q} =0 , \ 
    x=y=z \}.
\end{align}

% 
% \begin{align}%\label{hauska12221}
% %\resizebox{0.91\hsize}{!}{$
% %\Lambda'_R= \{ ((x ; \xi^x), (y, z,q, p' \ ; \ \xi^y, \xi^z,\xi^q,\xi^{p'} )) \in T^*(U) \times T^* ( \PP(U) \times  \PP(U)) \setminus \{0\}   : \xi^x =\xi^y+ \xi^z \neq 0, \ \xi^p = 0, \  \xi^{q} =0 , \  x=y=z \} 
% %$%
% %}%
% %\begin{split}
% %%\begin{split}
%   \{ \big((x ; \xi^x), (y, z,p, q \, ; \, \xi^y, \xi^z,\xi^p,\xi^{q} )\big) \in \ &T^*U \times T^* ( \PP U \times  \PP U) \setminus \{0\}   : \nonumber  \\
%  & \xi^x =\xi^y+ \xi^z \neq 0, \
%    \xi^p =  \xi^{q} =0 , \ 
%     x=y=z \}.
%  %\end{split}
% \end{align}
%(c.f. the expression in ~\eqref{relation_defn}).
%\]
%where the relation satisfies
%of order $0$~\footnote{Define order here, or define in the intro and refer there. This needs some details still.} with relation % (by definition~\cite{})
% \[
% \begin{split}
% \Lambda'_R &\subset T^* (U) \times T^* ( \PP U \times \PP U) \setminus \{0\} ,
% %\quad  \Lambda'_F = \{ ((x,p\  ; \ \xi), (z,w,p',q' \ ; \ \eta)) : (x,z,w,p,p',q' \ ; \ \xi, -\eta))\in \Lambda'_F \} 
% %&\approx ( T^* U \times T^* \R^{\nnn} )\times( T^* U^3 \times T^* \R^{3\nnn}),
% \end{split}
% \]
%defined by (\ref{relationss}), 
%and has the coordinate form 

Now we show % (such as that contained in \cite{duistermaat2010fourier}),
%we next show 
that $\COLOP_{gain}^P [\ccdot, \ccdot]$ can be extended to give a \antticomm{sequentially continuous map on  $\mathcal{E}'_{N^*Y_1 } ( \PP M) \times \mathcal{E}'_{N^*Y_2 } ( \PP M)$. In particular, the extension is then defined on} the compactly supported conormal distributions in $I^{m_1}_{comp} (N^* Y_1 )\times I^{m_2}_{comp} (N^* Y_2 )$, $m_1,m_2\in \R$. By \cite[Corollary 1.3.8]{duistermaat2010fourier}, it is sufficient to demonstrate that 
\begin{equation}\label{intersection_of_wavefronts_empty}
WF_{\PP M \times \PP M}'(R_A)\cap WF(f_1 \otimes f_2)=\emptyset.
\end{equation}
% 
% the composition $R_A \circ( f_1 \otimes f_2)$ is well-defined 
% 
for \antticomm{$(f_1,f_2)\in \mathcal{E}'_{N^*Y_1 } ( \PP M) \times \mathcal{E}'_{N^*Y_2 } ( \PP M)$. }
%I^{m_1}_{comp} (N^* Y_1 )\times I^{m_2}_{comp} (N^* Y_2 )$. 
%
%Note that $WF_{\PP M \times \PP M}'(R_A) = WF_{\PP M \times \PP M}'(R)$ since the collision kernel $A$ is smooth.  
%
%We check that the required conditions in \cite[Corollary 1.3.8]{duistermaat2010fourier} on the respective wavefront sets associated to the operators $R_A$ and $( f_1 \otimes f_2)$ are satisfied. First we will show that $WF_{\PP M \times \PP M}'(R_A)$ does not meet the wave front set $WF(f_1 \otimes f_2) $.
% % % 
% % % The wavefront set of $F$ is denoted by $WF'(F)$ and is the set
% % % \begin{align}
% % % WF'(F)&=\{(x,\xi^x,y,\xi^y)\in (T^*X\times T^*Y)\setminus \{0\} \,:\, (x,y,\xi^x,-\xi^y)\in WF(G)\},\label{wf_set_fio}
% % % \end{align}
% % % where $WF(G)$ is the wavefront set of the distribution kernel  $G$ of $F$. We also define the restricted wavefront sets of $F$ as 
% % % \begin{align}
% % % WF'_X(F)&= \{(x,\xi^x)\in (T^*X)\setminus \{0\} \,:\, (x,y,\xi^x,0)\in WF(G)\},\label{wf_set_fio_image}\\
% % % %
% % % WF'_Y(F)&= \{(y,\xi^y)\in (T^*Y)\setminus \{0\} \,:\, (x,y,0,\xi^y)\in WF(G)\}\label{wf_set_fio_domain}.
% % % \end{align}
%
Since the wavefront set of $f_j \in  \antticomm{  \mathcal{E}_{N^*Y_j} ' ( \PP M)}$, 
%\in I^m(N^* Y_j)$, 
$j=1,2$ is contained in $N^* Y_j$, we have by \cite[Proposition 1.3.5]{duistermaat2010fourier} that the wavefront set of the tensor product $f_1 \otimes f_2$ satisfies 
\[
WF(f_1 \otimes f_2) \subset (N^* Y_1 \times \{0\}_{\PP M }) \cup (\{0\}_{\PP M} \times N^*Y_2) \cup (N^*Y_1 \times N^*Y_2 ),
\]
where $\{0\}_{\PP M} \subset T^* (\PP M)$ is the zero bundle over $\PP M$. 
%Notice that in the above, the right hand side is a union of closed cones in $TTM$, and thus a closed cone itself.<----UNION OF CLOSED IS NOT NECESSARILY CLOSED --Antti
 By using the fact that $WF(k_R)\subset \Lambda_R$ and the equation~\eqref{hauska12221}, the set $WF_{\PP M \times \PP M}'(R) \subset  T^* ( \PP U \times  \PP U) \setminus \{0 \} $, defined in~\eqref{wf_set_fio_domain}, satisfies
\begin{align}\label{restricted_WF_calc}
 WF_{\PP M \times \PP M}'(R) &= \{ (y,z, p, q \; \xi^y, \xi^z,\xi^p,\xi^{q})\in T^*(\PP M\times \PP M)\setminus \{0\}:  \nonumber \\ 
&\qquad\quad \text{ there is }
%\text{there exists} \
x \in  U
\text{ such that } \big((x\;0), (y,z, p, q \; \xi^y, \xi^z,\xi^p,\xi^{q}) \big) \in WF(k_R) \} \nonumber
\\
& \subset \{ (x,x,p, q \; \xi^y, -\xi^y,0,0)\in T^*(\PP M\times \PP M)\setminus \{0\} :  p,q \in \PP_x U, \  \xi^y \in \R^\nnn \setminus \{0 \} \}.%,\\
%&=:\mathcal{V}.
\end{align}
%Here we used~\eqref{hauska12221} and the definition of the wavefront set~\cite[eq. (1.3.19)]{duistermaat2010fourier}~\footnote{\tbl{Place definitions of the wavefront sets in preliminarie section.}}
Since $Y_1$ and $Y_2$ satisfy Definition \ref{intersection_coords}, pairs of  \antticomm{$\xi^x$}-elements  in the fibers of $N^*Y_1$ and $N^*Y_2$ are linearly independent. They are also non-zero by the definition of a normal bundle. Thus an element of $N^*Y_1\times N^*Y_2$ cannot be of the form $(x,x, p, q \; \xi^y, -\xi^y,0,0)$.
%unless $\xi^y=0$. %%%<-- which is also impossible as zero does not belong to the normal bundle --Antti.
 We deduce that $WF_{\PP M \times \PP M }'(R) \cap (N^*Y_1 \times N^*Y_2 )= \emptyset$. By a similar consideration, we see that the set in~\eqref{restricted_WF_calc} does not intersect $(N^* Y_1 \times \{0\}_{\PP M}) \cup (\{0\}_{\PP M} \times N^*Y_2 )$. In particular, we have~\eqref{intersection_of_wavefronts_empty}. % that the wavefront set 
 The  set $WF_{U}'(R) \subset  T^* U \setminus \{0 \}$, defined in~\eqref{wf_set_fio_image}, satisfies
%Since $WF(k_R)\subset \Lambda_R$ we have by definition~\cite[eq. (1.3.18)]{duistermaat2010fourier} that 
\begin{align*}
 WF_{U}'(R) &= \{ (x,\xi^x) \in T^*U: \,\text{ there is } \\ 
&\qquad\qquad
%\text{there exists} \
(y,p,z, q) \in  \PP U\times \PP U
\text{ such that } \big((x\; \xi^x), (y,z, p, q\, ;\, 0, 0,0,0) \big) \in WF(k_R) \}.
  \end{align*}
  By using the fact that $WF(k_R)\subset \Lambda_R$ and~\eqref{hauska12221}, we see that if $\big((x\; \xi^x), (y,z, p, q\, ;\, 0, 0,0,0) \big) \in WF(k_R)$, then $\xi^x=0$. 
%If $(x\; \xi^x)\in WF_{U}'(R)\neq \emptyset$, then $\xi^x$ in the brackets above must satisfy $\xi^x=-\xi^y-\xi^z=0$. But this implies $\xi^x=0$, which is a contradiction since $WF_{U}'(R)$ does not contain the zero vector by definition. 
Thus $WF_{U}'(R)= \emptyset$.
By \cite[Corollary 1.3.8]{duistermaat2010fourier} the composition $R_A \circ( f_1 \otimes f_2)$ is well defined and we obtain the desired \antticomm{sequential} continuous extension for $\COLOP_{gain}^P [ f_1,f_2 ] \in \mathcal{D}'(U)$.
%By \cite[Corollary 1.3.8]{duistermaat2010fourier} we obtain the desired continuous extension for $\COLOP_{gain}^P [ f_1,f_2 ] \in \mathcal{D}'(U)$.
The second claim of the proposition follows directly from \cite[Corollary 1.3.8]{duistermaat2010fourier} and the facts $WF_{U}'(R_A) \subset WF_U'(R)= \emptyset$ and $WF(R_A)\subset \Lambda_R'$: %we also achieve the second claim of the Proposition:
\begin{equation}\label{wave_front_calc_in_prop4}
WF ( \COLOP_{gain}^P [ f_1,f_2 ] ) \subset \Lambda_R' \circ  \Big( (N^* Y_1 \times \{0\}_{\PP M}) \cup (\{0\}_{\PP M} \times N^*Y_2) \cup (N^*Y_1\times N^*Y_2) \Big).
\end{equation}
%\tbl{stopped here.}

By using the coordinate description \eqref{hauska12221}, we show next that
\begin{equation}\label{lambda_composed_with_Y1}
 \Lambda'_R \circ (  N^*Y_1 \times \{0\}_{\tbl{ \PP M}}) = \Lambda_{1}, \quad \Lambda'_R \circ ( \{0\}_{\tbl{\PP M}} \times N^*Y_2) = \Lambda_{2}.
\end{equation}
 By definition (see \eqref{relation_of_composition}), we have that  %and by using local coordinates
\begin{align}\label{lambda_composed_with_Y2}
\Lambda'_R \circ (  N^*Y_1 \times \{0\}_{\tbl{ \PP M }})&=\{ (x,\xi^x)\in T^*U: \text{ such that } \big((x,\xi^x), (y,p \; \xi^y, \xi^p), (z,q \; \xi^z, \xi^q)\big)\in \Lambda_R' \nonumber \\
& \text{ where } (y,p \; \xi^y, \xi^p)\in N^*Y_1 \text{ and } \xi^z=\xi^q=0 \}.
\end{align}
By using~\eqref{hauska12221}, it follows that in the expression above, we must have that $(x,\xi^x)$ is any element of the form $(y,\xi^y)$, where $(y,p\; \xi^y, 0)\in N^*Y_1$ for some $p$.~\f{The condition $(\xi^x)''=0$ was indeed missing. Removed.}  Thus we have $\Lambda'_R \circ (  N^*Y_1 \times \{0\}_{\tbl{\PP M}}) = \Lambda_{1}$. We similarly have $\Lambda'_R \circ ( \{0\}_{\tbl{\PP M}} \times N^*Y_2) = \Lambda_{2}$. We have proven~\eqref{lambda_composed_with_Y1}.	

By using the coordinates $(x,p)=(x',x'',p',p'')$ we may also write any $(x,\xi)\in T^*U$ as $(x,\xi)=(x',x'',\xi',\xi'')$. %described in Property~\ref{intersection_coords}. By using the property of $Y_1$ and $Y_2$ and the coordinates in~\eqref{turhautuminen1} 
We have that  
%in canonical coordinates $(x,\xi^x) $,  $x = ( x^1,\dots,x^\nnn)$, 
%we may write
\begin{align*}
\Lambda_{1} &= \{ (x\;\xi)\in T^*U : x'=0, \ \xi''=0 \}, \\ % x^1=  \dots = x^d = 0, \ \xi_{d+1}^x = \dots = \xi_\nnn^x = 0 \} \\
\Lambda_{2} &= \{ (x\;\xi)\in T^*U : x''= 0, \ \xi'=0 \}.% \dots = x^\nnn = 0, \ \xi_{1}^x = \dots = \xi_d^x = 0 \}.
\end{align*}
We calculate similarly as in~\eqref{lambda_composed_with_Y2} 
\begin{align}\label{lambda_composed_with_Y3}
\Lambda'_R \circ (  N^*Y_1 \times N^*Y_2)&=\{ (x,\xi^y+\xi^z)\in T^*U: \text{ such that } \big((x,\xi^x), (x,p \; \xi^y, 0), (x,q \; \xi^z, 0)\big)\in \Lambda_R' \nonumber \\
& \quad\quad\quad \text{ where } (x,p \; \xi^y, 0)\in N^*Y_1 \text{ and } (x,q \; \xi^z, 0)\in N^*Y_2 \} \nonumber \\
%and therefore
%\[
% \Lambda'_R \circ (  N^*Y_1 \times N^* Y_2 ) 
 &= \{ (x', x'' \; \xi', \xi'')\in T^*U \setminus \{0 \} :  x'= x'' = 0 \} = \Lambda_0.
\end{align}
Here we used again~\eqref{hauska12221}.
By combining~\eqref{wave_front_calc_in_prop4},~\eqref{lambda_composed_with_Y2} and~\eqref{lambda_composed_with_Y3}, we have shown that 
\[
WF ( \COLOP_{gain}^P [ f_1,f_2 ]) \subset  \Lambda_0 \cup \Lambda_{1} \cup \Lambda_{2}. 
\]
%Lastly, we finish the proof by showing that the final claim of the proposition holds. 

We are left to show the last claims of the theorem. Fix arbitrary conic neighbourhoods $\Gamma_1,\Gamma_2$ of $\Lambda_1$ and $\Lambda_2 $, respectively. 
Let $\epsilon>0$ be small enough to satisfy
\begin{equation}\label{zskla}
   \Big\{ (x,\xi) \in T U :  \| x' \| < \epsilon , \    \|\xi''\| < \epsilon \|\xi' \| \Big\} \subset \Gamma_1 
\end{equation}
and
\[
\Big\{ (x,\xi) \in T U :   \| x'' \| < \epsilon , \|\xi' \|  < \epsilon \| \xi'' \|  \Big\} \subset \Gamma_2 .
\]
By multiplying the amplitude in the oscillatory integral representation of $f_1 \otimes f_2$ by
\[
1 = \phi + (1-\phi) , 
\]
where $\phi \in C^\infty( T^*( \PP M \times \PP M) \setminus \{0\} )$ is positively homogeneous of degree $0$ and equals $1$ near $N^*[Y_1 \times \PP M] = N^* Y_1 \times \{ 0 \}_{\PP M}$ and $0$ near $N^* [\PP M \times Y_2] = \{ 0 \}_{\PP M} \times N^* Y_2$,~\f{To Matti's question, I don't know.}     
we write 
\[
f_1 \otimes f_2  \equiv v_1+ v_2 
\]
where
\begin{align}
&v_1    \in I^{m_1+m_2 + \nnn, - m_2 - \nnn}_{comp} \big( N^* Y_1  \times \{0\}_{\PP M},   N^* [Y_1  \times  Y_2]  \big), \\
 &v_2 \in I^{m_1 + m_2+ \nnn,-m_1-\nnn}_{comp} \big( \{0\}_{\PP M} \times N^*Y_2  , N^* [Y_1  \times  Y_2]  \big), 
\end{align}
%and 
%\begin{align}
%\sigma(v_1) &=  \phi  \sigma(f_1) \otimes \sigma(f_2), \\ 
%\sigma(v_2)& =  (1-\phi) \sigma(f_1) \otimes \sigma(f_2).
%\end{align}
% \antti{ far from origin}. 
%on $N^* [Y_1 \times Y_2]$. 
%Moreover, we choose $\phi$ such that it equals $1$ in a neighbourhood of the set
%\[
%J_1 := \Big\{ (x,p,y,q; \xi^x,\xi^p,\xi^y,\xi^q) \in T^* ( \PP M \times \PP M)   :  x=x_0 = y, \ \frac{| (\xi^x)' | }{| \xi^x |} < \epsilon  \Big\}
%\]
%and zero in a neighbourhood of the set 
%\[
%J_2 := \Big\{ (x,p,y,q; \xi^x,\xi^p,\xi^y,\xi^q) \in T^* ( \PP M \times \PP M)   :  x=x_0 = y, \ \frac{| (\xi^y)'' | }{| \xi^x |} < \epsilon  \Big\}.
%\]
To prove \eqref{si3478} it is sufficient to show that for both $j=1,2$ there is a decomposition $R_A \circ v_j = u_j + r_j$, where $u_j$ is a Lagrangian distribution over $ \Lambda_0$ and $WF(r_j) \subset \Gamma_j$. %The symbol identity \eqref{symboiu} follows from \cite[eq. (4.2.10)]{duistermaat2010fourier}. 
We only consider the $v_1$ component of $f_1\otimes f_2$ and write $v=v_1$. The argument for the other component is similar. 
%\\
%{\color{red} 
%------------------------------------------------------------------------------------------------------\\
%Several things below need to be fixed. Please find the details below and compare them to what was written in my original proof. --Antti \\
%------------------------------------------------------------------------------------------------------\\
%}
%By~\eqref{lagrangian_pairs} %the above~\eqref{} and by greenleaf uhlmann (1.4) 
Fix small $\delta_1,\delta_2 \in (0,1)$ with $\frac{2\delta_1}{1-\delta_2}< \epsilon$, and 
let $\psi \in C^\infty (T^* ( \PP M \times \PP M) \setminus \{0\})$ be positively homogeneous of degree 0 such that it equals $1$ on the conic neighbourhood
\[ 
\begin{split}
X_1:= \Big\{ (x,p,y,q; \xi^x,\xi^p,\xi^y,\xi^q) \in T^* ( \PP U \times \PP U) \setminus \{0\}   :  \| (\xi^x)'' \| < \frac{\delta_1}{2} \| (\xi^x)' \| , \\
 \| x' \| < \frac{\epsilon}{2} , \  \| (\xi^y)'' \| < \frac{\delta_1}{2} \| (\xi^x)' \|,  \ 
\| (\xi^y)' \| < \frac{\delta_2}{2} \| (\xi^x)' \| \Big\} 
\end{split}
\] of $N^*Y_1 \times \{0 \}_{\PP M}$ 
and vanishes in the exterior of the larger neighbourhood
\[
\begin{split}
X_2:= \Big\{ (x,p,y,q; \xi^x,\xi^p,\xi^y,\xi^q) \in T^* ( \PP U \times \PP U) \setminus \{0\}   :  \| (\xi^x)'' \| < \delta_1 \| (\xi^x)' \| , \\
 \| x' \| < \epsilon , \  \| (\xi^y)'' \| < \delta_1 \| (\xi^x)' \|, 
\| (\xi^y)' \| < \delta_2 \| (\xi^x)' \| \Big\} .
\end{split}
\]
By dividing the amplitude in the oscillatory integral according to $1 = \psi + (1-\psi)$ we obtain the decomposition $v = v' + v''$, where $v'  \in I_{comp}^{m_1+m_2 + n} ( N^*[Y_1 \times Y_2])$ and $WF(v'') \subset X_2$. Moreover, 
\[
\sigma(v') = (1-\psi) \sigma(v) = \sigma(v)= c \phi (\sigma(f_1) \otimes \sigma(f_2) )\quad \text{on} \quad  (N^*[Y_1 \times Y_2]) \setminus X_2. 
\]
Applying \cite[Corollary 1.3.8]{duistermaat2010fourier} and $WF(v'') \subset X_2$ we deduce
\[
\begin{split}
WF( R_A \circ v'') \subset    WF'(R_A) \circ WF(v'') \cup WF_U'(R_A) \subset  ( \pi(\text{supp} A) ) \cap   ( \Lambda_R' \circ WF(v'')) \\ 
 \subset ( \pi(\text{supp} A) )  \cap   ( \Lambda_R' \circ X_2) 
\end{split}
\]
By definition, an element $(x,\eta)$ in $( \Lambda_R' \circ X_2)$ satisfies  $\| x' \| < \epsilon$ and 
\[
\eta = \xi^x+ \xi^y, \quad  \| (\xi^x)'' \| < \delta_1 \| (\xi^x)' \| , 
 \  \| (\xi^y)'' \| < \delta_1 \| (\xi^x)' \|,  \ 
\| (\xi^y)' \| < \delta_2 \| (\xi^x)' \|. 
\]
By using the triangle inequality and the inequalities above one computes
\[
\frac{ \| \eta'' \| }{\| \eta' \| } \leq \frac{ \| (\xi^x)'' \| + \| (\xi^y)'' \|  }{| \| (\xi^x)' \| - \| (\xi^y)'\| |  } < \frac{2 \delta_1 }{ 1-\delta_2 } < \epsilon
% \quad \Rightarrow \quad \| \eta'' \| < \epsilon \| \eta' \|.
\]
which implies $ \| \eta'' \| < \epsilon \| \eta' \|$. 
Thus, by \eqref{zskla}, we conclude
\[
WF( R_A \circ v'') \subset  \Gamma_1
\]
To finish the proof we are left to show that the conditions of \cite[Theorem 2.4.1]{duistermaat2010fourier} (cf. the global formulation  \cite[Theorem 4.2.2]{duistermaat2010fourier}) for the composition of $v'$ and $R_A$ are satisfied. 
% (cf. the global formulations for the transversal intersection calculus \cite[Theorem 4.2.2]{hormander1971fourier1}, \cite[Theorem 4.2.2]{duistermaat2010fourier} and for the clean intersection calculus \cite[Theorem II]{guillemin1975clean}).   
The symbol identity \eqref{symboiu} follows from \cite[eq. (4.2.10)]{duistermaat2010fourier}. 
% and
%\[
%\sigma(v_1') + \sigma(v_2') = c \phi ( \sigma(f_1 ) \otimes \sigma( f_2) )  +(1-\phi) c  ( \sigma(f_1 ) \otimes \sigma( f_2) )   = c ( \sigma(f_1 ) \otimes \sigma( f_2) )
%\]
%Since the admissible collision kernel $A$ is compactly supported with respect to the base point $x\in M$ it follows that $R_A$ is properly supported. 
%\antti{(removed an old line from here -antti)} \footnote{\antti{removed an old line here as it was not true --antti}}
The condition \cite[eq. (2.4.8)]{duistermaat2010fourier}  is satisfied since $v'$ is compactly supported. The  conditions \cite[eq. (2.4.9), (2.4.10)]{duistermaat2010fourier} follow from the definitions (\ref{turhautuminen1}) and (\ref{hauska12221}). The last condition  \cite[eq. (2.4.11)]{duistermaat2010fourier} follows from Lemma \ref{lambda_R_prime}.
Since the conditions of \cite[Theorem 2.4.1]{duistermaat2010fourier} are met, we have that $R_A \circ v'$ is a well-defined oscillatory integral of order $m_1+m_2 + 3\nnn/4$ with the canonical relation $\Lambda'_R \circ (  N^*(Y_1 \times Y_2)) = \Lambda_0$. 
In conclusion, for arbitrary conic neighbourhoods $\Gamma_1,\Gamma_2$ of $\Lambda_1$ and $\Lambda_2$ there is the decomposition
\[
 \COLOP_{gain}^P [ f_1,f_2 ] = u + r, \quad u := R_A \circ (v_1'+v_2') \in I_{comp}^{m_1+ m_2 +3\nnn/4}( \Lambda_0) , \quad  r:=  R_A \circ (v_1''+v_2'') ,
\]
such that $WF(r) \subset \Gamma_1 \cup \Gamma_2$ and the symbol identity  \eqref{symboiu} holds on $\Lambda_0 \setminus (\Gamma_1 \cup \Gamma_2)$. 
\end{proof}

%
%
%
%%%%%%%%%%%%%%%%%%%%%%%%%%%%%%%%%
%
% Theorem for Q[X^{-1} . , X^{-1} . ](x, F(x))
%
%%%%%%%%%%%%%%%%%%%%%%%%%%%%%%%%%
%
%
%
%As heuristically argued in Section \ref{Bman-model}, the second linearization of our source-to-solution map data determines terms of the form $\IX\COLOP_{gain}[\IX f_1,\IX f_2]$ for conormal distributions $f_1,f_2$. We now extend $\COLOP_{gain}$ to distributions of the form $\IX f$, where $f$ is a Lagrangian distribution over the geodesic flowout of some submanifold in $\OVS M$.

Recall that $K_S \subset \mathcal{S}^+ M$ is the geodesic flowout of $S \subset \mathcal{P}^+ M$ in $\mathcal{P}^+ M$, that is, the union of the inextendible geodesic velocity curves $(\gamma_{(x,p)},\dot\gamma_{(x,p)})$ over $(x,p) \in S$. 
%
%\antticomm{ANTTIEDITING!
%
%We will apply the Theorem \ref{sofisti} to distributions of the form $u_j = \mathcal{X}^{-1} f_j \in  I^m (N^*K_{S_j})$, %$j=1,2$, where $f_j $ is a compactly supported distribution conormal to a submanifold $S_j \subset \mathcal{P}^+ %\mathcal{C}$ with base points in a given Cauchy surface $\mathcal{C} \subset M$. Before proceeding, we briefly deduce %sufficient conditions for the manifolds $S_1,S_2$ to imply that the pair $Y_j := K_{S_j}$, $j=1,2$ has admissible %intersection property.
%By definition (see Definition \ref{intersection_coords} and the equivalent definition above it) the projections $N_{k} = \pi %K_{S_k}$, $k=1,2$ (i.e. the flowouts $G_{S_k}$ in $M$) need to be transversally intersecting  manifolds locally near every %$x_0 \in G_{S_1} \cap G_{S_2} \cap \pi(\text{supp} A)$. 
%Moreover, the definition requires $G_{S_1} \cap G_{S_2} \cap U = \{ x_0 \}$ for a convenient neighbourhood $U\subset M$ of %such a point $x_0$.
% }
%
%
We next use Theorem \ref{sofisti} together with the microlocal properties of the geodesic vector field to show that $\mathcal{Q}^P_{gain}[\cdot,\cdot]$ can be extended to a sequentially continuous operator over $I^{l_1}_{comp} ( \PP M ;\, N^*S_1) \times I^{l_2}_{comp} (\PP M ;\, N^*S_2)$ whenever $l_1,l_2$ are integers, $\mathcal{C}$ is a Cauchy surface in $M$, and $S_1,S_2\subset \mathcal{P}^+ \mathcal{C}$ are two submanifolds whose geodesic flowouts satisfy the admissible intersection property. We remark that in the inverse problem we consider, we only require the existence of such manifolds $S_1, S_2 \subset\mathcal{P}^+(\mathcal{C})$. The existence is proven later in Corollary 4.8. We do not require nor give an algorithmic characterization of how to build the manifolds $S_1$ and $S_2$.

%Putting together the microlocal properties of the geodesic vector field reviewed in Section \ref{Vlasov}, and the analysis of Theorem \ref{sofisti} additionally yields the following result:

\begin{corollary}[Extension to distributions solving Vlasov's equation]\label{sofisti_further}
%Let $(M,g)$ be geodesically complete, globally hyperbolic Lorentzian manifold.  
%Let $S_j \subset P^{r_j}_{t_j}$, $t_j \in \R$, $r_j > 0$ , $j=1,2$  
%be smooth manifolds such that    
% \[
% \PP(U)  \cap (S_1 \cup  S_2) = \emptyset,  
% \]
% and for $Y_j := K_{S_j}$, $j=1,2$, there is an open neighbourhood $ U \subset M$ and local coordinates 
%\[
%x = (x',x'') : U \rightarrow \R^\nnn, \quad x' = (x^1,\dots,x^d), \quad x''=(x^{d+1}, \dots, x^\nnn)
%\]
%% on some neighbourhood 
%satisfying
%\begin{equation}\label{turhautuminen2}
%\begin{split}
%Y_1 \cap TU &= \{ (x,p)   \in TU  : x' = 0  , \ p' = 0,  \ p'' = (1,0,\dots,0)  \}  \\
%Y_2 \cap TU &= \{  (x,p) \in TU : x'' = 0, \ p''= 0, \  p' = (1,0,\dots,0) \}  , 
%%\\ \quad d &=  \dim K_{M_2} = \nnn -  \dim K_{M_1}.
%\end{split}
%\end{equation}
% in the associated canonical coordinates $(x,p) = (x',x'',p',p'')$ of $TU = U \times \R^\nnn$. 
%Further, define
%\[
%\Lambda_0 := T_{0}^* \R^\nnn, \quad \Lambda_1 := N^*  \{ x   \in U  : x' = 0  \} \quad \text{and} \quad \Lambda_1 := N^*  \{ x   \in U  : x'' = 0  \} . 
%\]
%Let $F(x)= (x,p(x))$,  be a smooth local section of the bundle $\pi: \overline{\mathcal{P}}(U) \rightarrow U$.
%
%Let $(M,g)$ be a globally hyperbolic Lorentzian manifold.  

\tbl{Let $(M, g)$ be a globally hyperbolic Lorentzian manifold and let $\mathcal{C}$ be a Cauchy surface of $(M,g)$. %, $K\subset \OVS \mathcal{C}^+$ be compact and $k\ge 0$.
Let $S_1, S_2 \subset \PP^+ \mathcal{C} $} be smooth manifolds such that 
%\begin{itemize}
%\item $\pi (\overline{S}_1 \cup \overline{S}_2) \cap  U = \emptyset$ .
%\item
the geodesic flowouts $Y_1 := K_{S_1}$ and $Y_2:= K_{S_2}$ have an admissible intersection \antti{property} (see Definition \ref{intersection_coords}) in $\pi(\text{supp} A)$ for admissible $A$. 

\antticomm{Assume that  there is some $x_0 \in \pi (Y_1) \cap \pi (Y_2) \cap \pi(\text{supp} A)$ (cf. Remark \ref{remarko} below)} and let $U$ be a small neighbourhood as in the definition of admissible intersection, let $P : U \rightarrow \OVS U$ be a smooth section of $\OVS U$ and assume additionally that $S_1,S_2 \subset \PP M \setminus \PP \overline{U} $ (e.g. $ \overline{\pi S_1} \cap   \overline{ \pi S_2} = \emptyset$ and small $U$). 
%\footnote{\antti{added bar over U. --antti}}
%\end{itemize}
Additionally, for $f_j \in   I_{comp}^{l_j}(\PP M ;\,N^*S_j)$, $j=1,2$,
 let $ u( f_j) $ solve the Vlasov's equation with source $f_j$ \tbl{and which vanish in $\mathcal{C}^-$.}

 Then, the operator \tbl{$\COLOP_{gain}^P [ u(\ccdot) , u(\ccdot) ]$, defined in~\eqref{def_of_QF},}
% \[
%  \COLOP_{gain} [ v , v ]_F   ( h_1 , h_2  , f_1, f_2 ) := \COLOP_{gain} [ v ( h_1 , f_1 ), v (  h_2 , f_2 )]_F  
%  \]
 defines a  \antticomm{sequentially continuous}  map
 \[
 \begin{split}
%\COLOP_{gain} [ u(\cdot) , u(\cdot) ]_F   
I^{l_1}_{comp} ( \PP M ;\, N^*S_1) \times I^{l_2}_{comp} (\PP M ;\, N^*S_2)\rightarrow \mathcal{D}' (U) 
\end{split}
\]
%\antticomm{
%which is sequentially continuous in the relative topology inherited from $\mathcal{E}_{N^*Y_1}'(\PP M) \times \mathcal{E}_{N^*Y_2}'(\PP M)$.   
%(or from $\mathcal{E}_{\Gamma_1}'(\PP M) \times \mathcal{E}_{\Gamma_2}'(\PP M)$ described in Remark \ref{jalkirem}) 
%via the trivial inclusion.
%}

Moreover, microlocally away from both $\Lambda_{1}$ and $\Lambda_{2} $, we have that 
%\begin{align}
\begin{equation}
\COLOP_{gain}^P [u(f_1),u(f_2)]\in I^{l_1+l_2+3\nnn/4-1/2} (U;\,\Lambda_0 \setminus( \Lambda_{1} \cup \Lambda_{2}))
\end{equation}
together with the symbol 
\begin{equation}\label{symbol_in_the_corollary}
\begin{split}
 \sigma ( \COLOP_{gain}^P [u(f_1),u(f_2)]) (\xi',\xi'') = 
%c A(x, \hat{p}(x) ,  p_x+q_x- \hat{p}(x) , p_x,q_x)  
 C \sigma( u(f_1) ) (0;\xi',0) \sigma(u(f_2)) ( 0 ;  \xi'' , 0) , %\\  (\xi',\xi'') \in \R^\nnn \setminus (  \{0\} \times \R^\nnn  \ \cup \    \R^\nnn \times \{ 0\} ) 
 \end{split}
\end{equation}
where the constant $C$ is given 
\antticomm{in terms of the unique vectors $\theta_1(x_0) \in Y_1 \cap T_{x_0} M$ and $\theta_2(x_0) \in Y_2 \cap T_{x_0} M$ by 
%in the canonical coordinates $p = (p',p'')$ and $q=(q',q'')$ by 
\[
C = c\s A\big( \ x_0 \  , \ \hat{p} (x_0) \ , \ \theta_1(x_0)+\theta_2(x_0)- \hat{p}(x_0) \ , \ \theta_1(x_0) \ , \ \theta_2(x_0) \ \big)
\] }
%\antticomm{in terms of the unique vectors $p \in Y_1 \cap T_{x_0} M$ and $q \in Y_2 \cap T_{x_0} M$ by 
%%in the canonical coordinates $p = (p',p'')$ and $q=(q',q'')$ by 
%\[
%C = c\s A(x_0, \hat{p} (x_0) , p+q- \hat{p}(x_0) , p,q) . 
%\] 
%}
%in the canonical coordinates $p = (p',p'')$ and $q=(q',q'')$ by 
%\[
%C = c A(x_0, \hat{p} (x_0) , p+q- \hat{p}(x_0) , p,q) \big|_{p = (1,0,\dots,0), \ q=(1,0,\dots,0) } , 
%\] 
%and $\hat{p}(x_0) \in \overline{\PP}_{x_0} U$ is such that $P(x_0) = ( x_0, \hat{p}(x_0)) $. 
%for some constant $c\neq 0$ and the unique vectors $p,q \in \PP_x M$ satisfying $(x,p) \in Y_1 \cap T_x U$ and $(x,q) \in Y_2 \cap T_x U$. 
Here $c$ is some non-zero constant and the manifolds $\Lambda_0 \setminus( \Lambda_{1} \cup \Lambda_{2})$, $T^*TU \cap N^* Y_1$, and $T^* TU \cap N^* Y_2$ are parametrised by the canonical coordinates $(\xi',\xi'')$, $(x''; (\xi^x)',\xi^p  )$, and $(x' ; (\xi^x)'', \xi^p)$, respectively \antticomm{ $($see \eqref{ossdad1} and (\ref{YFEW1}-\ref{YFEW2}) and the reparametrization \eqref{repara}$)$.}
% \antticomm{$($see \eqref{repara}$)$}.
\end{corollary}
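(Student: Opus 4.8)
The plan is to deduce this from Theorem~\ref{sofisti} by showing that, microlocally over $U$, each Vlasov solution $u(f_j)$ is a compactly supportable conormal distribution over $N^*Y_j$ of order $l_j-1/4$, and then substituting $u(f_1),u(f_2)$ for $f_1,f_2$ in Theorem~\ref{sofisti} (recall that $Y_j=K_{S_j}$ is a smooth submanifold of $\PP M$ by Remark~\ref{flowout-is-manifold}). First I would fix the geometry: since $x_0\in\pi(\text{supp}A)\subset\mathcal{C}^+$ with $\pi(\text{supp}A)$ compact, by taking $\mathcal{C}$ far enough in the past (as in Theorem~\ref{boltz-exist}) I may shrink $U$ so that $\overline U$ is disjoint from $\mathcal{C}$, and by the hypothesis $S_1,S_2\subset\PP M\setminus\PP\overline U$ also so that $\PP\overline U\cap(S_1\cup S_2)=\emptyset$. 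The crucial elementary observation is that $\COLOP_{gain}^P[u,v]|_U$ depends on $u,v$ only through their restrictions to $\PP U$, because $\COLOP_{gain}[u,v](x,p)$ is an integral over the fibre $\Sigma_{x,p}\subset(T_xM)^4$ (see \eqref{def_of_QF}); hence I may replace $u(f_j)$ by $\chi u(f_j)$ for a cut-off $\chi\in C_c^\infty(\PP M)$ equal to $1$ on $\PP U$, with $\text{supp}\chi$ compact and disjoint from $\PP\mathcal{C}$ (hence from $S_1\cup S_2$).

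The heart of the argument is the claim that $\chi u(f_j)\in I^{l_j-1/4}_{comp}(\PP M;N^*Y_j)$. Since $\XX u(f_j)=f_j$ and $-i\XX$ is a strictly hyperbolic operator of multiplicity one with respect to $\PP_tM$ (Lemma~\ref{uuiisok}), propagation of singularities gives $WF(u(f_j))\subset WF(f_j)\cup(\text{bicharacteristic flowout of }WF(f_j))$; and because the bicharacteristic strips of $-i\XX$ project onto geodesics (Section~\ref{Vlasov}, cf.\ the definition of $\Lambda_\XX$ in \eqref{lambda_XX}), the flowout of $N^*S_j\supset WF(f_j)$ is exactly $N^*K_{S_j}=N^*Y_j$. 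Restricting over $\text{supp}\chi$, which lies away from $\mathcal{C}$ and from $S_j$, removes both the Cauchy-surface part of $WF(f_j)$ and $WF(f_j)$ itself, so $WF(\chi u(f_j))\subset N^*Y_j$. For the order and symbol I would use that, away from $\mathcal{C}$, the Vlasov solution operator $\IX$ is a Fourier integral operator associated with the flowout canonical relation of $\Lambda_\XX$ (Section~\ref{Vlasov}; see also the paired-Lagrangian references in Section~\ref{lagrangian-dist}), that its composition with the conormal source $f_j\in I^{l_j}(N^*S_j)$ is clean since the bicharacteristics of $-i\XX$ are transversal to $\PP_tM\supset S_j$ (Lemma~\ref{uuiisok}), and that the principal symbol transports along the bicharacteristics by the usual half-density law, so $\sigma(\chi u(f_j))$ is nonvanishing wherever $\sigma(f_j)$ is. Also $f_j\mapsto\chi u(f_j)$ is sequentially continuous from $I^{l_j}_{comp}(\PP M;N^*S_j)$ into $\mathcal{E}'_{N^*Y_j}(\PP M)$, being the restriction of the FIO $\IX$ followed by multiplication by $\chi$.

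Finally I would invoke Theorem~\ref{sofisti} with $Y_1,Y_2,P,\COLOP$ as here and the conormal inputs $\chi u(f_1)\in I^{l_1-1/4}_{comp}(N^*Y_1)$, $\chi u(f_2)\in I^{l_2-1/4}_{comp}(N^*Y_2)$. This yields at once: the sequentially continuous extension $\COLOP_{gain}^P[u(\cdot),u(\cdot)]:I^{l_1}_{comp}(\PP M;N^*S_1)\times I^{l_2}_{comp}(\PP M;N^*S_2)\to\mathcal{D}'(U)$, obtained by composing the continuity statement of Theorem~\ref{sofisti} (and Remark~\ref{jalkirem}) with $f_j\mapsto\chi u(f_j)$; the inclusion $WF\subset\Lambda_0\cup\Lambda_1\cup\Lambda_2$; and, microlocally away from $\Lambda_1\cup\Lambda_2$, membership in $I^{(l_1-1/4)+(l_2-1/4)+3\nnn/4}(U;\Lambda_0\setminus(\Lambda_1\cup\Lambda_2))=I^{l_1+l_2+3\nnn/4-1/2}$ with the symbol \eqref{symboiu} evaluated at $u(f_1),u(f_2)$ --- which is precisely \eqref{symbol_in_the_corollary} --- and the same constant $C$ as in Theorem~\ref{sofisti}, now expressed through $\theta_1(x_0)\in Y_1\cap T_{x_0}M$ and $\theta_2(x_0)\in Y_2\cap T_{x_0}M$, the velocities at $x_0$ of the geodesics making up $Y_1$ and $Y_2$. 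If no $x_0\in\pi(Y_1)\cap\pi(Y_2)\cap\pi(\text{supp}A)$ exists, then after shrinking $U$ the set $\pi Y_1\cap\pi Y_2$ is disjoint from $\text{supp}A$ over $U$, and $\COLOP_{gain}^P[u(f_1),u(f_2)]$ is smooth on $U$, consistent with Remark~\ref{remarko}.

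The step I expect to be the main obstacle is the identification of $u(f_j)$ over $\PP U$ as a conormal distribution over $N^*Y_j$ with the correct order and transported symbol: this requires treating $\IX$ as a Fourier integral (equivalently, paired-Lagrangian) operator adapted to the Cauchy initial condition and checking that over $\text{supp}\chi$, chosen disjoint from both $\mathcal{C}$ and $S_j$, no wavefront directions other than the flowout $N^*Y_j$ survive. A minor but genuine technical point here is that $N^*S_j$ does contain covectors conormal to $\SP\mathcal{C}$, so the distributional extension of $\IX$ from Section~\ref{Vlasov} may be applied only after localizing strictly to the chronological future of $\mathcal{C}$, where the geodesic flow acts cleanly on $f_j$. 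The remaining steps are bookkeeping built directly on top of Theorem~\ref{sofisti}.
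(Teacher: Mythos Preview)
Your approach is essentially the paper's: localize $u(f_j)$ by a cut-off away from $S_1\cup S_2$, use that $\chi u(f_j)\in I^{l_j-1/4}_{comp}(\PP M;N^*Y_j)$ (the paper simply invokes Lemma~\ref{vlasov_ext_conormal}, which packages the FIO/propagation argument you sketch), and then feed these into Theorem~\ref{sofisti}. One small technical slip worth fixing: a compactly supported $\chi\in C_c^\infty(\PP M)$ cannot equal $1$ on all of $\PP U$ since the fibres are noncompact; the paper instead takes $\chi\in C_c^\infty(M)$ on the base, equal to $1$ on $U\cap\text{supp}(x\mapsto A(x,\cdot))$ and vanishing near $\pi(S_1\cup S_2)$, which suffices because $f_j$ is compactly supported and timelike geodesics are not trapped, so $u(f_j)$ already has compact fibre support over $\overline U$.
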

%\antticomm{ANTTIEDITING...}

%\begin{remark}

%\end{remark}
%ing the construction simultaneously in both of them. 
%Before continuing, let us formally explain what are the geometric requirements for the manifolds $S_1,S_2 \subset \mathcal{P}^+ \mathcal{C}$ in a single space-time $(M,g)$ to imply the intersection property in $\pi(\text{supp} A)$ for the flowouts $Y_k := K_{S_k}$. Let us focus on a small neighbourhood $U$ of an arbitrary point $x_0 \in \pi(\text{supp} A) \cap \pi(Y_1) \cap \pi (Y_2)$. First of all, $N_k = U \cap \pi Y_k$ must be a manifold for both $k=1,2$. Formally speaking, this can be ensured by varying the vectors in  $S_k$ so that geodesics emanating from it do not accumulate into a focal point at $x_0$. Similarly, a small variation in spaces implies transversality of the intersection. Secondly, the manifold $TU \cap Y_k$ must be a graph of a vector field which 

%\[
% \COLOP_{gain} [ v ( f_1 ), v ( f_2 )]_F \in I^{l_1+l_2+ 3\nnn /4 - \tbl{1}} (\Lambda_0 \setminus( \Lambda_{1} \cup \Lambda_{2}) ;U ) .
%\]
%for every $( h_1, h_2, f_1  , f_2 )) \in  C^\infty_c (\PP_0 M )  \times  C^\infty_c (\PP_0 M )  \times I^{l_1}_{comp} (  N^*Y_1 ; \PP M  )\times I^{l_2}_{comp} ( N^*Y_2 ; \PP M )$.
% \tbl{* check the order of the distribution space}

\begin{proof}%[Proof of Proposition \ref{sofisti_further}]

%We assume for simplicity that $(M,g)$ is geodesically complete. 

For $j=1,2$, let $f_j \in I^{l_j}_{comp} ( N^* S_j ; \PP M )$. Choose a cut-off function $\chi \in C^\infty_c (M) $ so that $\chi =1 $ on $U \cap \text{supp} (x \mapsto A(x,\ccdot))$ and \tbl{$\chi=0$} on a neighbourhood of $\pi(S_1 \cup S_2) \subset M \setminus \overline{U}$ 
%\footnote{\antti{added bar over U. --antti}}. 

Now, for each $j=1,2$, by Lemma \ref{vlasov_ext_conormal}, there exists a solution $u(f_j)$ to the Vlasov equation with source $f_j$ and initial data $0$. Moreover, as $Y_j = K_{S_j}$, \antti{the sources $f_j$ are compactly supported and time-like geodesics can not be trapped} we get $\chi u(f_j) \in I_{comp}^{l_j -1/4} (\PP M ;\,N^*Y_j )$.

% and may be expressed as~\f{I think the integral is not well defined for distributions and it should be removed.}
% \[
% u(f_j)(x,p) = \int_{-\infty}^0 f_j( \gamma_{(x,p)}(s) , \dot\gamma_{(x,p)}(s) )  ds, 
% \]
% where the integration is understood in sense of distributions. 
% Since a globally hyperbolic manifold is causally disprisoning, we have that $\chi u(f_j)$ is compactly supported and vanishes near $S_j$ for both $j=1,2$.  Thus, one deduces $\chi u(f_1) \in I_{comp}^{l_1 -1/4} (N^*Y_1 ; \PP M )$ and $\chi u ( f_2) \in I_{comp}^{l_2 -1/4} (N^*Y_2 ; \PP M )$

%Then, 
Substituting $u = \chi u  + (1-\chi) u$ into $\COLOP_{gain}$, we obtain that 
 \[
 \begin{split}
 \COLOP_{gain}^P [ u(f_1), u(f_2)]    \equiv & \COLOP_{gain}^P [\chi  u(f_1), \chi  u(f_2)].  
 \end{split}
 \]
Thus we have reduced to the setting of Proposition \ref{sofisti} and obtain the desired results.

\end{proof}

\antticomm{
% Construction of manifolds $S_1,S_2 \subset \mathcal{P}^+ \mathcal{C}$ (around given vectors) so that the conditions of Corollary  \ref{sofisti_further} are satisfied for admissible $A$ 
% %(see Corollary \ref{coro_renerew}) 
% is the objective  of Section \ref{travserse-collisions-section} below. In fact, Corollary \ref{coro_renerew} will imply this in two manifolds $(M_l,g_l)$, $l=1,2$ simultaneously which is needed later in Section \ref{main-proof} due to the presence of the two space-times in Theorem \ref{themain}. 
% One of the manifolds $S_1,S_2$ is taken to be a single vector in the construction. 
%
%Formally, manifolds $S_1,S_2 \subset \mathcal{P}^+ \mathcal{C}$ with flowouts $K_{S_1}$ and $K_{S_2}$ that have admissible %intersection property in  $\pi( \text{supp}A )$ for admissible $A$ (or in any compact set) can be constructed around any two %vectors $(x,p),(y,q) \in  \mathcal{P}^+ \mathcal{C}$, $x\neq y$ by first beginning with some submanifold candidates $S_1 \ni %(x,p)$ and $S_2 \ni (y,q)$ in the associated mass-shells $P^{m_j} \mathcal{C}$, $j=1,2$ (see \eqref{massss} for the %definition) and so that the dimensions satisfy $\text{dim} S_1 +  \text{dim} S_2 = n -2$. For the flowouts $Y_j := K_{S_j}$ %one then deduces $\text{dim} Y_1 +  \text{dim} Y_2 = n$. After localizing and varying the manifolds $S_1,S_2$ slightly one %gets that

%ANTTI EDITING
\begin{remark}\label{remarko}
Corollary \ref{sofisti_further} above does not include the case where the base manifold flowouts $G_{S_j} = \pi (Y_j)$, $j=1,2$ do not meet in $\pi ( \text{supp}A )$. This corresponds to a setting where no interactions of singularities take place. Such situations can be reduced to the trivial case \[\COLOP_{gain} [ u(f_1), u(f_2)]= 0 \] via  localization of the sources. Indeed, the techniques used in this article allow us to localize the support of each source $f_j \in I_{comp}^m (N^*S_j)$, $j=1,2$ arbitrarily close to a given vector. This implies that the projected support 
\[
\pi \ \text{supp}( u(f_j)) = \pi \text{supp} \Big( (x,p) \mapsto \int_{-\infty}^0 f_j( \gamma_{x,p}(s) , \dot\gamma_{x,p}(s) ) ds \Big) 
\]
(the integral in the sense of distributions) 
intersects the compact set $\pi ( \text{supp}A )$ only arbitrarily near  the single geodesic $ \gamma_j $ (note that $(M,g)$ is causally disprisoning) through the point at which $f_j$ was localized to. Thus, if these geodesics $\gamma_1$ and $\gamma_2$ do not intersect in $\pi ( \text{supp}A )$, the localization of the sources implies that $\pi ( \text{supp}A ) \cap \pi (\text{supp} \  u(f_1)) \cap \pi ( \text{supp} \ u(f_2)) = \emptyset $ and hence $\COLOP_{gain} [ u(f_1), u(f_2)]  $ vanishes at every point. 

\end{remark}
}

\subsection{Existence of transversal collisions}\label{travserse-collisions-section}

%\color{magenta}

In this section, we prove the existence of submanifolds $S_1,S_2\subset TM$ whose geodesic flowouts $K_{S_1}$ and $K_{S_2}$ satisfy the admissible intersection property (see Definition~\ref{intersection_coords}). %which performs three roles. % in our main proof.
%The flowout $K_S$ of the submanifold $S$ under the geodesic vector field will be used in Section \ref{approx_delta_dists} to define the microlocal support of a source of particles. 

In the proof of our main theorem, Theorem~\ref{themain}, we will construct particle sources in a common open set $V$ of two manifolds $M=M_l$,  $l=1,2$, such that they send information into the unknown region $W_l \subset M_l$ to create point singularities produced by using the nonlinearity. We then use the source-to-solution map to study the propagation of that singularity.

To construct the singularity, the idea is that for two time-like future pointing vectors $(y,q)$ and $(x,p)$ with distinct base-points in $V$ we build a manifold $S_1\subset \OVS V$ around $(y,q)$ such that the geodesic flowouts $Y_1= Y_{1,M_l,g_l} := K_{S;M_l,g_l} $ and $Y_2=Y_{2,M_l,g_l} :=K_{\{(x,p)\} ; M_l,g_l}$ will satisfy the admissible intersection property (see Definition~\ref{intersection_coords} and Figure~\ref{fig:intersection}) in $ W_l \subset \pi (\text{supp}A_l)$ in both manifolds $M_l$ simultaneously. 
Here $A_l$ is an admissible collision kernel associated to the Boltzmann equation in $(M_l,g_l)$. 
The Corollary \ref{lemma_renerew} below states that such a manifold $S_1$ exists.  
Consequently, %about the intersection of conditions in 
Corollary \ref{sofisti_further} will be applicable in both manifolds for sources conormal to $S_1$ and $S_2 = \{(x,p)\}$.

Working independently on a single manifold is unfortunately not sufficient. Indeed, fixing $S_1,S_2$ in the common set $V$ such that the intersection property holds for the flowouts $Y_j =  K_{S_j}$, $j=1,2$ in one spacetime, say in $(M_1,g_1)$, does not in general imply that the property holds for the analogous flowouts in $(M_2,g_2)$.

%That is, the collision operator for two particle sources with their microlocal support on $Y_1$ and on a given time-like geodesic arc $Y_2$ will be well-defined in both $M_l$ simultaneously. 
%\tbl{In the inverse problem, we consider $K_S$ related to a geodesic $\gamma$ and another geodesic. In the next lemma, the points $z_{l,j}$ correspond to the intersection points of the projection of $K_S$ and the other geodesic. The vectors $V_k$ correspond to the tangent vectors of the other geodesic at the points $x_k$.}

%  \begin{figure}
%  \includegraphics[width=0.4\textwidth]{intersection}
%  \caption{Schematic of an admissible intersection. The initial vector $(x,p)\in TM$ and the manifold $S\subset TM$ are indicated in red and blue, respectively. The geodesic flowouts $Y_1=K_{S}$ and $Y_2= K_{(x,p)}$ have the admissible intersection property.  Near the intersection point, $G_S$ behaves as a manifold transversal to $\gamma_{(x,p)}$.}
%  \label{fig:intersection}
%  \end{figure}

%Below $\SP \mathcal{C}\subset TM$ stands for set of future directed time-like vectors with base points in a given Cauchy surface $\mathcal{C}$. 

\begin{lemma}\label{lemma_renerew}
Let $(M_l,g_l)$, $l=1,2$ be two globally hyperbolic manifolds containing open $V_l \subset M_l$ and assume that there is a diffeomorphism $\Psi : V_1 \rightarrow V_2$ such that $g_1|_{V_1} = \Psi^* g_2$. 
For $l=1,2$ consider $(x_l,p_l) \in \mathcal{P}^+ V_l$ with $(x_2,p_2) = D\Psi (x_1,p_1)$. Let $\{ (z_{l,j},v_{l,j}) : j \in J_l \}$ $($possibly $J_l  = \emptyset$ $)$ be a countable  family of vectors in the set
\[
\antticomm{\mathcal{P}^+_{ \gamma_{(x,p)}} M_l \ \setminus \ \R \dot\gamma_{(x_l,p_l)}  := }\big\{ (z,v) \in  \mathcal{P}^+M_l  \ \big| \  \exists t \ : \ z= \gamma_{(x_l,p_l)} (t) , \   v \in   \mathcal{P}^+_z M_l  \setminus \text{span}\{ \dot\gamma_{(x_l,p_l)}(t) \} \big\} , 
\]
\antticomm{i.e. time-like future-pointing vectors on $\gamma_{(x_l,p_l)}$ that are not tangent to the curve}.  
Let $\mathcal{C}$ be a space-like Cauchy surface through $x_l$ in one of the manifolds $M_l$, $l=1,2$ and copy its restriction $\mathcal{C}_l := \mathcal{C} \cap V_l$ to the other by setting $\mathcal{C}_2 = \Psi \mathcal{C}_1$. 
Then there exist $\antticomm{(n-2)}$-dimensional submanifolds $S_l \subset \mathcal{P}^+  \mathcal{C}_l$, $l=1,2$  
such that $(x_l,p_l) \in S_l$, $S_2 = D\Psi S_1$ and the following two conditions hold for every $j\in J_l$:
\begin{enumerate}[(i)]
%\item $(x_l,p_l) \in S_l$, 
%\item $S_2 = D\Psi S_1$,
\item The point $z_{l,j}$ has an open neighbourhood $U_{l,j} \subset M_l$ such that the intersection $U_{l,j} \cap G_{S_l}$, where $G_{S_l} := \pi K_{S_l}$, is a $(n-1)$-dimensional submanifold of $M_l$. 
\item
$ v_{l,j} \notin  T_{z_{l,j}}G_{S_l}  $.
\end{enumerate}
%Moreover, we can fix $S_l$, $l=1,2$ above such that for any space-like Cauchy surface $\mathcal{C}$ through $x_1$ $($resp. $x_2)$ the  $S_1$ $($resp. $S_2)$ lies in $\mathcal{P}^+  \mathcal{C} $. 
\end{lemma}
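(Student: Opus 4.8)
The plan is to translate both requirements on $S_l$ into linear conditions on Jacobi fields along the geodesics $\gamma_{(x_l,p_l)}$, and then to satisfy the resulting countable list of conditions by a Baire category argument carried out on a single finite-dimensional space of ``initial jets'' which, crucially, is common to the two spacetimes through $D\Psi$. Throughout, write $z_{l,j}=\gamma_{(x_l,p_l)}(t_j)$.

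\emph{Reduction to Jacobi fields.} Parametrize $S_l$ by $\sigma\mapsto\Theta_l(\sigma)$, $\sigma\in\R^{n-2}$, with $\Theta_l(0)=(x_l,p_l)$. By Remark~\ref{flowout-is-manifold} the flowout $K_{S_l}$ is the image of the embedding $(s,\sigma)\mapsto(\gamma_{\Theta_l(\sigma)}(s),\dot\gamma_{\Theta_l(\sigma)}(s))$, so $\pi|_{K_{S_l}}$ is represented by $(s,\sigma)\mapsto\gamma_{\Theta_l(\sigma)}(s)$, whose differential at $(t_j,0)$ sends $\partial_s\mapsto\dot\gamma_{(x_l,p_l)}(t_j)$ and $\partial_{\sigma_i}\mapsto J_{l,i}(t_j)$, where $J_{l,i}$ is the Jacobi field along $\gamma_{(x_l,p_l)}$ with $J_{l,i}(0)=d\pi\big(\partial_{\sigma_i}\Theta_l(0)\big)\in T_{x_l}\mathcal{C}_l$ and with $D_sJ_{l,i}(0)\in T_{x_l}M_l$ prescribed by the vertical part of $\partial_{\sigma_i}\Theta_l(0)$. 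Hence, if $\dot\gamma_{(x_l,p_l)}(t_j),J_{l,1}(t_j),\dots,J_{l,n-2}(t_j)$ are linearly independent, then $\pi|_{K_{S_l}}$ is an immersion at $(z_{l,j},\dot\gamma_{(x_l,p_l)}(t_j))$, so (after shrinking $S_l$) $G_{S_l}$ is an embedded hypersurface near $z_{l,j}$ with tangent space equal to their span; it then suffices to arrange that this span also avoids $v_{l,j}$ to obtain conditions (i) and (ii) at $z_{l,j}$.

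\emph{One shared parameter space, and genericity.} The pairs $\big(J_{l,i}(0),D_sJ_{l,i}(0)\big)$ range freely over $T_{x_l}\mathcal{C}_l\times T_{x_l}M_l$, and since $\Psi$ is an isometry carrying $\mathcal{C}_1$ to $\mathcal{C}_2$ and $(x_1,p_1)$ to $(x_2,p_2)$, $D\Psi$ identifies the finite-dimensional vector space $\mathcal{A}:=\big(T_{x_1}\mathcal{C}_1\times T_{x_1}M_1\big)^{\,n-2}$ of such data for $S_1$ with the one for $S_2$; thus a single point of $\mathcal{A}$ determines $S_1$ (to first order at $(x_1,p_1)$) and, by transport, $S_2=D\Psi S_1$. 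For each $l\in\{1,2\}$ and each $j\in J_l$ let $\mathcal{O}_j^{l}\subset\mathcal{A}$ be the set of data for which conditions (i) and (ii) hold at $z_{l,j}$ and for which the $n-2$ directions $\partial_{\sigma_i}\Theta_l(0)$ are independent. Each $\mathcal{O}_j^{l}$ is open (non-vanishing of a determinant together with an open condition on $v_{l,j}$), and its complement is a proper real-algebraic subset of $\mathcal{A}$, since for fixed $\gamma_{(x_l,p_l)}$ and $t_j$ the values $J_{l,i}(t_j)$ depend linearly on the data; hence $\mathcal{O}_j^{l}$ is dense once it is nonempty. To see $\mathcal{O}_j^{l}\neq\emptyset$, observe that the evaluation $J\mapsto J(t_j)$ on Jacobi fields along $\gamma_{(x_l,p_l)}$ with $J(0)\in T_{x_l}\mathcal{C}_l$ is onto $T_{z_{l,j}}M_l$: this holds even at conjugate points of $x_l$, because the Jacobi field $t\mapsto(t-t_j)\dot\gamma_{(x_l,p_l)}(t)$ vanishes at $t_j$ yet has initial value $-t_j\,p_l\notin T_{x_l}\mathcal{C}_l$, which forces the restricted evaluation to have full rank $n$ (the case $t_j=0$, i.e. $z_{l,j}=x_l\in\mathcal{C}_l$, is handled directly by choosing $d\pi\big(T_{(x_l,p_l)}S_l\big)\subset T_{x_l}\mathcal{C}_l$ to miss the $T_{x_l}\mathcal{C}_l$-component of $v_{l,j}$). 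Since moreover $\dot\gamma_{(x_l,p_l)}(t_j)$ is always in that image and $v_{l,j}$ is not tangent to $\gamma_{(x_l,p_l)}$, one can realize any hyperplane of $T_{z_{l,j}}M_l$ through $\dot\gamma_{(x_l,p_l)}(t_j)$ avoiding $v_{l,j}$ as the span above, giving $\mathcal{O}_j^{l}\neq\emptyset$.

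\emph{Conclusion and the main obstacle.} As $\mathcal{A}$ is a Baire space, $\bigcap_{l=1,2}\bigcap_{j\in J_l}\mathcal{O}_j^{l}$ is dense, hence nonempty; picking data in it, letting $S_1\subset\mathcal{P}^+\mathcal{C}_1$ be an embedded $(n-2)$-submanifold through $(x_1,p_1)$ realizing this first-order datum and small enough that $G_{S_1}$ is an embedded hypersurface near each $z_{1,j}$, and setting $S_2:=D\Psi S_1$, we obtain $(S_l,\{z_{l,j}\})$ satisfying (i) and (ii) for $l=1,2$. The main obstacle is the density step: one must check that at an arbitrary parameter time $t_j$ — in particular at a conjugate point of $\gamma_{(x_l,p_l)}$, which the hypotheses do not exclude — conditions (i) and (ii) remain attainable, and the observation about $(t-t_j)\dot\gamma_{(x_l,p_l)}$ is exactly what forces the needed surjectivity. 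A secondary point is the bookkeeping required to phrase the conditions at the points $z_{2,j}$ (which involve the $M_2$-evolution of the $D\Psi$-transported initial data, agreeing with the $M_1$-evolution only while the geodesics stay in $V$) as open dense conditions on the \emph{same} space $\mathcal{A}$ before invoking Baire. The remaining steps — selecting an embedded representative $S_l$ of the chosen first-order datum and shrinking it so that $G_{S_l}$ is genuinely an embedded submanifold near each of the (possibly infinitely many) points $z_{l,j}$ — are routine once one knows $\pi|_{K_{S_l}}$ is an immersion at the corresponding points of $K_{S_l}$.
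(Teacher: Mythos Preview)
Your proof is correct and follows the same underlying strategy as the paper: reduce conditions (i) and (ii) to a first-order constraint on $T_{(x_1,p_1)}S_1$, then use that countably many proper subvarieties cannot cover a finite-dimensional real space. The packaging differs. You work with Jacobi fields and Baire category on the data space $\mathcal{A}=(T_{x_1}\mathcal{C}_1\times T_{x_1}M_1)^{n-2}$; the paper instead works directly in $T_{(x_1,p_1)}\mathcal{P}^+\mathcal{C}_1$ and, for each $j$, pulls back the $2$-plane $L_{l,j}=\mathrm{span}\{p_{l,j},v_{l,j}\}$ through $\pi$ and then forward through the geodesic-flow projection $\phi_l:\mathcal{P}^+M_l\to\mathcal{P}^+\mathcal{C}_l$ to obtain an $(n+1)$-dimensional ``bad'' subspace $E_{l,j}=D\phi_l\,(D\pi)^{-1}L_{l,j}$. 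One then chooses an $(n-2)$-plane $T_{(x_1,p_1)}S_1$ meeting every $E_{1,j}$ and every $D\Psi^{-1}E_{2,j}$ only at the origin, which is possible since $(n-2)+(n+1)=2n-1=\dim T_{(x_1,p_1)}\mathcal{P}^+\mathcal{C}_1$ and the family is countable. The paper's formulation is a bit more economical: because $\dim E_{l,j}=n+1$ holds regardless of whether $z_{l,j}$ is conjugate to $x_l$, no separate surjectivity argument at conjugate points is needed, and the single transversality condition $T_{(x_1,p_1)}S_1\cap E_{l,j}=\{0\}$ yields both (i) and (ii) via a short diagram chase. Your Jacobi-field formulation, on the other hand, makes the identification $T_{z_{l,j}}G_{S_l}=\mathrm{span}\{\dot\gamma(t_j),J_1(t_j),\dots,J_{n-2}(t_j)\}$ explicit, which is arguably more transparent. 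Both proofs leave the final ``shrink $S_l$ so that $G_{S_l}$ is an embedded hypersurface near each $z_{l,j}$'' step at the same level of detail.
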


%The lemma above implies the following corollary:

\begin{corollary}\label{coro_renerew}
Let $(M_l,g_l)$, $l=1,2$ be globally hyperbolic manifolds with a mutual open set $ V  \subset M_l$,
$l=1,2$ and assume that $g_1|_V = g_2|_V$. Consider $(x,p),(y,q)  \in \mathcal{P}^+ V$ with distinct base points $x\neq y$.  Assume that the geodesics $\gamma_l$ and $\tilde\gamma_l$ in $M_l$, defined by 
%Then, for a pair $\gamma_l $, $\tilde\gamma_l$ of time-like future-directed geodesics distinguishable as paths (on their maximal domains) and entangled in $V$ via the initial values 
\begin{align}
&  (\gamma_2(0) ,\dot\gamma_2(0) ) =   (\gamma_1(0) ,\dot\gamma_1(0) )=(x,p) , \\
 %\\
%(\hat{y}_l,\hat{q}_l):= (\tilde\gamma_l(0) ,\dot{\tilde\gamma}_l(0) ) \in \mathcal{P}^+V_l, \quad (\hat{y}_2,\hat{q}_2) = D\Psi (\hat{y}_1,\hat{q}_1) 
%\\
&(\tilde\gamma_2(0) ,\dot{\tilde\gamma}_2(0) ) = (\tilde\gamma_1(0) ,\dot{\tilde\gamma}_1(0) ) =(y,q) , 
% \quad 
%y \neq x, 
\end{align} 
are distinguishable as paths (on their maximal domains), that is, $(x,p)$ and $(y,q)$ are not tangent to the same geodesic. Let $\mathcal{C}$ be a space-like Cauchy surface through $x$ $($resp. $y)$ in one of the manifolds $M_l$ and let $X_l \subset M_l$ be compact. $($e.g. $X_l = \pi(\text{supp} A_l)$ for admissible collision kernels $A_l$,   $l=1,2)$
%\footnote{Sets $W_l$
%are added as well as intersections of $K_S$ with $TW_l$ on  Sep. 6, 2021- Matti. 
%This is done as we may have infinitely many intersection points on $M_l$ and to get transversal intersection of $K_S$ and $K_{(x,p)}$ we may have to make $S$ ``smaller'' in every intersection point.
%Antti, could you please check if you agree with this change?}  
%$X_l\subset M_l$ be relatively compact sets, $l=1,2$. 
Then, there is a $\antticomm{(n-2)}$-dimensional
%\footnote{Dimension of $S$ is added on  Sep. 6, 2021- Matti} 
submanifold $S \subset \mathcal{P}^+ ( \mathcal{C} \cap V)$ containing $(x,p) $ $(\text{resp. }(y,q) )$ such that 
% $ K_S = K_{S;M_l,g_l}$, $l=1,2$ 
%satisfy that 
the pair consisting of the flowouts $K_S = K_{S;M_l,g_l}$ and $K_{(y,q)}= K_{\{ (y,q)\};M_l,g_l}$ $($resp.  $K_{(x,p)} = K_{\{(x,p)\};M_l,g_l})$ have admissible intersection property in $X_l$ for both $l=1,2$. 
%of the curve $(\tilde\gamma_l, \dot{\tilde\gamma}_l)$ $(\text{resp. } (\gamma_l ,\dot\gamma_l ) )$. 
%Moreover, if $\mathcal{C} \subset V$ is a restriction of a space-like Cauchy surface through $x$ in either $M_l$, $j=1,2$, we can choose $S$ such that $S \subset \mathcal{P}^+  \mathcal{C}$. 
 \end{corollary}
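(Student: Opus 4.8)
The plan is to derive Corollary~\ref{coro_renerew} from Lemma~\ref{lemma_renerew} by feeding the latter the right family of obstruction vectors along the geodesic through $(x,p)$, namely the velocities of the other geodesic at the points where the two geodesics meet. I argue the case of $(x,p)$; the case of $(y,q)$ is identical after exchanging roles. Since $V$ is a mutual open set, the diffeomorphism of Lemma~\ref{lemma_renerew} may be taken to be the identity, so the lemma will produce a single $(n-2)$-dimensional submanifold $S=S_1=S_2\subset \mathcal{P}^+(\mathcal{C}\cap V)$ that serves both spacetimes. Write $\gamma_l:=\gamma_{(x,p)}$ and $\tilde\gamma_l:=\gamma_{(y,q)}$ for the maximal geodesics in $M_l$, and identify $\tilde\gamma_l$ with its image; since $\pi K_{(y,q)}=\tilde\gamma_l$, the admissible intersection property must be verified at every point of $G_S\cap\tilde\gamma_l\cap X_l$, where $G_S=\pi K_S$.

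First I would analyse $\gamma_l\cap\tilde\gamma_l$. In a geodesically convex neighbourhood two distinct geodesics meet at most once, so (as $(x,p)$ and $(y,q)$ are not tangent to a common geodesic) the intersection points of $\gamma_l$ and $\tilde\gamma_l$ are isolated; since $(M_l,g_l)$ is globally hyperbolic, hence causally disprisoning, a causal geodesic meets the compact set $X_l$ in a parameter-compact set, so $\gamma_l\cap X_l$ and $\tilde\gamma_l\cap X_l$ are compact and $\gamma_l\cap\tilde\gamma_l\cap X_l=\{z_{l,j}:j\in J_l\}$ is finite (possibly empty). Put $v_{l,j}:=\dot{\tilde\gamma}_l(z_{l,j})$: it is future-directed time-like (because $(y,q)\in\mathcal{P}^+V$) and not a scalar multiple of $\dot\gamma_l(z_{l,j})$ (otherwise the two geodesics would coincide), so $(z_{l,j},v_{l,j})$ lies in the admissible family of Lemma~\ref{lemma_renerew}. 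Applying that lemma simultaneously to the families $\{(z_{l,j},v_{l,j}):j\in J_l\}$, $l=1,2$, produces an $(n-2)$-dimensional $S_0\subset\mathcal{P}^+(\mathcal{C}\cap V)$ with $(x,p)\in S_0$ together with neighbourhoods $U_{l,j}\ni z_{l,j}$ on which $G_{S_0}$ is an $(n-1)$-dimensional submanifold and $v_{l,j}\notin T_{z_{l,j}}G_{S_0}$.

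Next I would promote this to the admissible intersection property (in the geometric form stated just before Definition~\ref{intersection_coords}) near each $z_{l,j}$. After shrinking $U_{l,j}$ — transversality and $(n-1)$-dimensionality are open conditions, $\dot{\tilde\gamma}_l$ is continuous, a causal geodesic in a globally hyperbolic spacetime does not self-intersect, and near a non-self-intersecting time-like geodesic a thin flowout projects diffeomorphically to its base — I may assume that on $U_{l,j}$: $\pi$ maps $K_{S_0}\cap TU_{l,j}$ diffeomorphically onto the $(n-1)$-submanifold $G_{S_0}\cap U_{l,j}$, exhibiting $K_{S_0}$ there as the graph of a field $\theta_1$ that is time-like (its values are close to $\dot\gamma_l$); $K_{(y,q)}$ is the graph of the time-like field $\theta_2=\dot{\tilde\gamma}_l$ over $\tilde\gamma_l$; and $\dot{\tilde\gamma}_l(w)\notin T_wG_{S_0}$ for every $w\in\tilde\gamma_l\cap U_{l,j}$. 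Then for any $x_0\in G_{S_0}\cap\tilde\gamma_l\cap U_{l,j}$, taking $N_1=G_{S_0}\cap U$ and $N_2=\tilde\gamma_l\cap U$ for a small enough $U\ni x_0$ gives transversal submanifolds with $N_1\cap N_2=\{x_0\}$ (a transversal intersection of an $(n-1)$-manifold and a curve is isolated) carrying the required time-like graph fields, i.e. the admissible intersection property at $x_0$.

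Finally I would drop the localization to the $z_{l,j}$ by shrinking the source manifold: replace $S_0$ by a sufficiently small open neighbourhood $S\subset S_0$ of $(x,p)$ in $\mathcal{C}\cap V$. The germ of $S$ at $(x,p)$ equals that of $S_0$, so $G_S=G_{S_0}$ near $\gamma_l$ and the structure above survives on the $U_{l,j}$; meanwhile, by continuous dependence of geodesics on initial data together with causal disprisonment, $G_S\cap X_l$ lies in an arbitrarily thin tube about the compact set $\gamma_l\cap X_l$ once $S$ is small. Since $\gamma_l\cap X_l$ and $\tilde\gamma_l\cap X_l$ are compact with intersection $\{z_{l,j}\}$, a standard compactness argument then forces every point of $\tilde\gamma_l\cap X_l$ lying in such a tube to lie in $\bigcup_j U_{l,j}$; hence $G_S\cap\tilde\gamma_l\cap X_l\subset\bigcup_j U_{l,j}$ and the admissible intersection property holds at each of its points (if $J_l=\emptyset$ the tube misses $\tilde\gamma_l\cap X_l$ and this set is empty). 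Doing this for $l=1,2$ at once, and symmetrically for $(y,q)$, finishes the proof. I expect the main obstacle to be exactly this last step: retaining the conclusions of Lemma~\ref{lemma_renerew} while making $G_S$ a controlled tube around $\gamma_l$, and ruling out intersections of $\tilde\gamma_l$ with $G_S$ off $\gamma_l$ that escape the good neighbourhoods $U_{l,j}$.
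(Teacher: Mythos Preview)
Your proposal is correct and follows exactly the route the paper intends: the paper states Corollary~\ref{coro_renerew} immediately after Lemma~\ref{lemma_renerew} and proceeds directly to the proof of the lemma, leaving the derivation of the corollary implicit. You have filled in precisely the argument the authors had in mind --- feed Lemma~\ref{lemma_renerew} the velocity vectors $v_{l,j}=\dot{\tilde\gamma}_l$ at the (finitely many) intersection points $z_{l,j}\in\gamma_l\cap\tilde\gamma_l\cap X_l$, then shrink $S$ so that $G_S\cap\tilde\gamma_l\cap X_l$ is contained in the good neighbourhoods $U_{l,j}$ --- and your treatment of the shrinking step (the part you flagged as the main obstacle) via compactness and continuous dependence on initial data is the standard way to close it.
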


 \begin{proof}[Proof of Lemma \ref{lemma_renerew}]
%Let us first omit the index $l=1,2$. 
%Let $C$ be a Cauchy surface through $x_1$ in $M_1$ and define $ \mathcal{C}_l \subset V_l$, $l=1,2$ by $\mathcal{C}_1 := C \cap V_1$ and $\mathcal{C}_2 :=\Psi (C \cap V_1) $.  
 Since $M_l$ is globally hyperbolic, the space $\mathcal{P}^+M_l$  can locally near the curve $(\gamma_{x_l,p_l}, \dot\gamma_{x_l,p_l})$ be written  as the product $\mathcal{P}^+\mathcal{C}_l   \times \R$ by identifying $(x,p,t)  $ with $ ( \gamma_{(x,p)}(t), \dot\gamma_{(x,p) }(t) )$. 
 %It suffices to construct suitable  $S_l$ in the initial vector manifold $ \mathcal{P}^+\mathcal{C}_l$. 
 Denote by $\phi_l $ be the projection from the neighbourhood of $(\gamma_{x_l,p_l}, \dot\gamma_{x_l,p_l})$ to $ \mathcal{P}^+\mathcal{C}_l$ that in terms of the identification above equals the cartesian projection $(x,p,t) \mapsto (x,p) $. 
 That is; $\phi_l$ takes each $(x,p)$ in the neighbourhood into the unique intersection of $(\gamma_{(x,p)}, \dot\gamma_{(x,p)})$ and $\mathcal{P}^+\mathcal{C}_l $. Let 
 $\phi_l(x,p)=(z_{l,j},p_{l,j})\in \mathcal{P}^+\mathcal{C}_l$, 
 that is,
 $p_{l,j}$ stand for the velocity $\dot\gamma_{x_l,p_l}$ at $z_{l,j}$. We define 
  $L_{j,l} \subset T_{z_{l,j} } M_l $ to be the 2-plane spanned by $ p_{l,j} $ and $ v_{l,j}$.  Let $\pi :  \mathcal{P}^+M \to M$ be the canonical projection. 
We see that the linear space
 \[
 E_{l,j} :=  D_{(z_{l,j},p_{l,j})}  \phi_l  (  D_{(z_{l,j},p_{l,j})} \pi )^{-1}  L_{l,j}   \subset T_{(x_l,p_l)} \mathcal{P}^+\mathcal{C}_l , 
  \]
   is $(n+1)$-dimensional. There are only countable many of such manifolds so 
 there is a small submanifold $S_1 \subset  \mathcal{P}^+\mathcal{C}_1 $ through $(x_1,p_1)$ of dimension $\dim T_{(x_1,p_1)} \mathcal{P}^+\mathcal{C}_1 - (n+1) = n-2$ such that each of the spaces $ E_{1,j}$, $j\in J_1$ and $D\Psi^{-1}   E_{2,j}$, $j\in J_2$ intersect $T_{x_1,p_1} S_1$ transversally. By considering the dimension of these linear spaces, we observe that the intersection occurs only at the origin.  
  This implies the analogous condition also for $S_2 := D\Psi S_1$.
   It is straightforward to check that
   \[
   \text{ker} ( D_{(z_{l,j}, p_{l,j})} \pi )  \cap T_{(z_{l,j}, p_{l,j})} K_{S_l} = \{ 0 \} 
   \]
which ensures that $D_{(z_{l,j}, p_{l,j})} \pi $ defines an isomorphism from $T_{(z_{l,j}, p_{l,j})} K_{S_l}$ to its image. This implies that $G_{S_l} = \pi  K_{S_l}$ near $(z_{l,j}, p_{l,j})$ is a manifold of dimension $\dim K_{S_l} =\dim S_l +1 = n-1$. 
Let us deduce the condition $ v_{l,j} \notin  T_{z_{l,j}}G_{S_l}  = \emptyset$.  
By the construction above, 
\[
T_{(x_l,p_l)} S_l \cap   E_{l,j}= \{0\}.
\]
Hence, 
\[
  ( D_{(z_{l,j},p_{l,j})} \phi_l )^{-1} (  T_{(x_l,p_l)} S_l \  \cap \     E_{l,j}  )\subset     \text{ker} ( D_{(z_{l,j},p_{l,j})} \phi_l )  .
\]
We check that
\[
 ( D_{(z_{l,j},p_{l,j})} \phi_l )^{-1} (  T_{(x_l,p_l)} S_l )= T_{(z_{l,j},p_{l,j})} K_{S_l}
 \]
  and 
  \[
   ( D_{(z_{l,j},p_{l,j})} \phi_l )^{-1}  E_{l,j}  =  (D_{(z_{l,j},p_{l,j})} \pi)^{-1}  L_{l,j}.
   \] 
By substitution we conclude 
%\footnote{Antti, please add details in \eqref{a numbered formula}.}
\begin{equation}\label{a numbered formula}
T_{(z_{l,j},p_{l,j})} K_{S_l}  \ \cap \   (D_{(z_{l,j},p_{l,j})} \pi)^{-1}  L_{l,j}  \subset   \text{ker} ( D_{(z_{l,j},p_{l,j})} \phi_l ). 
\end{equation}
Applying $D_{(z_{l,j},p_{l,j})} \pi$ gives 
\[
T_{z_{l,j}} G_{S_l}  \ \cap \    L_{l,j}  \subset  D_{(z_{l,j},p_{l,j})} \pi  (  \text{ker} ( D_{(z_{l,j},p_{l,j})} \phi_l ) )  =  \R p_{l,j}
\] 
which implies $v_{l,j} \notin  T_{z_{l,j}} G_{S_l} $ by the definition of $ L_{l,j}$.

 \end{proof}

\color{black}

\section{Proof of Theorem \ref{themain}}\label{main-proof}

In this section we prove our main result Theorem~\ref{themain}. As shown in Section \ref{Bman-model}, the second Frech\'et derivative $\Phi''$ of the source-to-solution map $\Phi$  satisfies
% % % % % \begin{alignat}{2}\label{vlasov1}
% % % % % %\begin{gathered}
% % % % % \mathcal{X}u(x,p)&= f(x,p) \quad &&\text{on} \quad \OVS M \nonumber \\
% % % % % u(x,p)&= 0 \quad &&\text{on} \quad  \OVS \mathcal{C}^- %((-\infty,0] \times N)
% % % % % %\end{gathered}
% % % % % \end{alignat}
\begin{equation}\label{adso00}
\begin{split}
%\Phi_{L^+V}''(h_1,h_2) (x,p) &=
\Phi '' (0; f,h) 
%\\
&= \IX \COLOP [ \IX(f),  \IX( h) ]+  \IX \COLOP [ \IX (h),  \IX(f)] \quad \text{on } \SP M,
\end{split}
\end{equation}
where $f,\, h$ are any compactly supported smooth functions and $\IX$ is the solution operator of the linearized problem~\eqref{vlasov1}. %In our proof of Theorem~\ref{themain}, we will only measure lightlike particles. In that case, the restriction of $\Phi''$ is 
%In the case $(x,p) \in L^+ V$ and $h_{1},h_{2}\in C_c^\infty(\SP V)$, it follows from the definition that this derivative is completely 
%determined by the operator $\Phi_{L^+ V}$.~\footnote{\tbl{Make this sentence better. What was the point?}} %Thus, it is sufficient to show that  $\Phi '' (h_1,h_2) (x,p)$ for $(x,p) \in L^+ V$ and $h_{1},h_{2}\in C_c^\infty(\SP V)$ determines the Lorentzian metric in $W$. 
We use the microlocal properties of the collision operator $\COLOP$ we proved in the previous section to determine the wavefront set of $\Phi''(h_1,h_2)$ for sources $h_{1}$ and $h_{2}$ with singularities. %\in C_c^\infty(\SP V )$. %~\footnote{\tbl{Make this sentence better. There is no wavefront set for smooth sources.}} 
%This is described next.

\subsection{Delta distribution of a submanifold}\label{delta_dist_of_subm}

% {\color{blue}

First, we construct the specific particle sources which we will use in our proofs.
Let $(M,g)$ be a $C^\infty$ smooth globally hyperbolic manifold of dimension $n$. Let $\mathcal{C}$ be a Cauchy surface of $(M,g)$. \antticomm{Despite the similar notation, this Cauchy surface should not be confused with the one in Theorem \ref{boltz-exist} and in the source-to-solution map. 
%In those theorems the Cauchy surface is ``far enough'' in the past, whereas  
The surface here is fixed for the construction of the controllable sources below and it may intersect $\pi (\text{supp}A) $. 
%The abuse of notation is reasonable as we do not really need to deal with the other Cauchy surface.
} We introduce a parametrization $\R\times \SP \mathcal{C} \to \SP M$ for $\SP M$ as:% given by
\begin{equation}\label{ll5789}
   \Psi(s, (x,p))= (\gamma_{x,p}(s),\dot{\gamma}_{x,p}(s)), \quad s\in \R, \ (x,p) \in  \SP \mathcal{C}.  %\dot\gamma_{(x,p)}(s)
\end{equation}
Here $\SP \mathcal{C}\subset TM$ is the set of future-directed time-like vectors with base points in $\mathcal{C}$. We call the parametrization~\eqref{ll5789} the \emph{flowout parametrization} of $\SP M$. %This slightly abuses the common terminology in that we might not have chosen local coordinates for $\SP \mathcal{C}$. Especially we will speak about coordinate representations of functions, volume forms etc. in the coordinates~\eqref{ll5789}. 
We refer to~\cite[Theorem 9.20]{Lee_2013} for properties of flowouts in general.

%Let $\widetilde{\mathcal{C}}$ be another Cauchy surface of $(M,g)$, such that $\widetilde{\mathcal{C}}=\{\tilde{t}\}\times N$, $\tilde{t}>0$. That is $\widetilde{\mathcal{C}}$ is in the future $\mathcal{C}^+$ of $\mathcal{C}$. 
Let $S$ be a  submanifold \antti{(not necessarily closed) }
%\footnote{\antti{simplified the sentence--Antti}} 
%}
of $\SP\mathcal{C}\subset \SP M$.
%, which either is a compact submanifold or its closure $\overline{S}$ is a submanifold with boundary.
The delta distribution $\delta_S\in \mathcal{D}'(\SP M)$ of the submanifold $S$ on $\SP M$ is defined as usual by
\[
 \delta_S(f)=\int_S \tbl{f(x,p)}\, dS, \text{ for all } f\in C_c^\infty( \PP^+ M),
\]
where $dS$ is the volume form \tbl{ of the submanifold $S$ of $\PP^+M$.} 
For our purposes it will be convenient to consider the delta distribution on $S$, in the case where $S$ is considered as a submanifold $\SP \mathcal{C}$ (instead of $\SP M$). \tbl{We distinguish this case and denote by $\check{\delta}_S\in \mathcal{D}'(\SP \mathcal{C})$ the distribution $\check{\delta}_S(\check{f})=\int_S \check{f}(x,p)\, dS$  for all  $\check{f}\in C_c^\infty(\SP \mathcal{C})$. 
%where $dS_{\PP^+\mathcal{C}}$ is the volume form induced from $\PP^+\mathcal{C}$ to $S$.
The representation of $\delta_S$ in the flowout parametrization~\eqref{ll5789} of $\SP M$ is then 
\[
 \delta_S=\delta_0\otimes\check{\delta}_S,
\]
where $\delta_0\in \mathcal{D}'(\R)$ is the delta distribution on the real line with its support at the $0\in \R$, and $\check{\delta}_S\in \mathcal{D}'(\SP \mathcal{C})$ is as above. We will write simply $\delta_S(s,(x,p))=\delta_0(s)\s \check{\delta}_S(x,p)$, $s\in \R$ and $(x,p)\in \SP \mathcal{C}$. We write similarly for other products of pairs of distributions %on a product space in cases the distributions of the pairs are 
supported in mutually separate variables.}% \footnote{\antti{lots of text, nothing happens}} %components of the product space. 

% where $a>0$ is the coordinate representation of the volume form up to normalization.
 \tbl{We would like to view $\delta_S$ as a conormal distribution of the class $I^{m}(\SP M; N^*S)$. However, this is not strictly speaking possible due to the possible bonudary points of $\overline{S}$. 
 %
 %\tbl{For this, we need to discuss the technicality that in our application $\overline{S}$ might be a submanifold with non-empty boundary in $\SP M$. }
%
To deal with the possible boundary points of the submanifold $\overline{S}$, we are going to consider the product of a cutoff function and $\delta_S$ as follows. Let $(y,q)\in S$ and let $(s',s'')\in \R^{\dim{(\SP \mathcal{C})}-\dim{(S)}}\times  \R^{\dim{(S)}}$ be coordinates on a neighborhood $B\subset \SP \mathcal{C}$ of $(y,q)$ such that $S$ corresponds to the set $\{s'=0\}$ and $S$ is parametrized by the $s''$ variable. Let $\chi_R=\chi_R(s'')$ be a \antticomm{non-negative} cutoff function, which is supported in a small ball of radius $R$ and outside a neighborhood of the boundary of submanifold $\overline{S}$. We have that %$\chi_R\check{\delta}_S\in \mathcal{D}'(\SP \mathcal{C})$ and  
$\chi_R\delta_S =\chi_R(\delta_0\otimes\check{\delta}_S)\in \mathcal{E}'(\SP M)$ in the flowout parametrization is the oscillatory integral
%\begin{equation}\label{delta_as_oscillory}
%  (\chi_R\check{\delta}_S)(s',s'')= \chi_R(s'')\int e^{i s' \cdot \xi'} a(s'')d\xi',
% \end{equation}
\begin{equation}\label{delta_as_oscillory2}
 (\chi_R\delta_S)(s,s',s'')=\chi_R(s'')a(s'')\iint e^{is\s \xi} e^{i s' \cdot \xi'} d\xi d\xi'.
\end{equation}
Here $a(s'')$ corresponds the volume form of $S$ and the integration is over $\xi\in \R$ and $\xi'\in \R^{\dim{(\SP \mathcal{C})}-\dim{(S)}}$. 
}

By the above definition, $\chi_R(x,p)\delta_S(x,p)$ %equals $\chi_R\s \delta_S$ and it 
is a conormal distribution in the class $I^{m}_{comp}(\SP M; N^*S)$, where the order $m$ is 
\begin{equation}\label{order_m}
 m=\codim(S)/2-\dim(\SP M)/4,
\end{equation}
see Section~\ref{lagrangian-dist} for the definition of the order $m$. Here $\codim(S)=\dim(\SP M) -\dim(S)$. % is with respect to the dimension $2n$ of $\SP M$.
In the proof of Theorem~\ref{themain}, the submanifold $S$ will be either of dimension $0$ or $n-2$, and thus $m$ will be either $n/2$ or $1$ respectively. We remark that when $S$ is of dimension $0$ the cutoff function in~\eqref{delta_as_oscillory2} can be omitted.

\subsubsection{Approximate delta distributions}\label{approx_delta_dists}
% {\color{blue}

We will use $C^\infty$ smooth sources that approximate delta distributions \tbl{$\delta_{S_1}$ and $\delta_{S_2}$ (multiplied by cutoff functions), where $S_1$ and $S_2$ are  submanifolds of $\SP \mathcal{C}$.} The dimension of $S_1$ will be $n-2$ and $S_2$ will be a point. These approximations are described next.

Let $S$ be a submanifold of $\SP \mathcal{C}\subset \SP M$ and let $\delta_S$ and $\chi_R$ be a \antticomm{non-negative} cut-off as in Section~\ref{delta_dist_of_subm} above. By using standard (Friedrichs) mollification, see e.g.~\cite{friedlander1998}, we have that there is a sequence $(h^\eps)$, $\eps>0$, of \antticomm{non-negative} functions $C_c^\infty(\SP M)$ such that
\begin{equation}\label{regu}
 h^\eps \to \chi_R\s \delta_{S} \text{ in } \mathcal{D}'(\SP M),
\end{equation}
as $\eps\to 0$. %We have similarly a sequence $(h_2^\eps)$ of functions in $C_c^\infty(\SP M)$, with $h_2^\eps \to \delta_{S_2}$ in $\mathcal{D}'(\SP M)$ as $\eps\to 0$. 
Let $u^\eps=\IX h^\eps\in C^\infty(\SP M)$ be the solution to
\begin{equation}\label{approx_sol}
 \begin{split}
\XX u^\eps  &= h^\epsilon \text{ on } \SP M \\
u^\eps &= 0 \text{ on } \SP \mathcal{C}^-.
\end{split}
\end{equation}
By the representation formula of solutions to~\eqref{approx_sol} given in Theorem~\ref{vlasov-exist} we have that %~\footnote{\tbl{Insert specific reference. Put a representation formula of solution to the existence of Vlasov.}}
%Expressing $\IX h_1^\epsilon$ and $\IX h_2^\epsilon$ in an integral form or directly by referring to the properties of $\IX$ in Section \ref{mla_valsov} one obtains 
\begin{equation}\label{approx_sol_prop}
u^\eps \geq 0, \quad \text{supp}(u^\eps) \subset K_{\text{supp}(h^\epsilon)}.
\end{equation}
By considering only small enough $\eps>0$, the support of the solutions $u^\epsilon$ can be taken to be in any neighborhood of $K_S$ chosen beforehand. %and $ K_{\{ (\hat{x},\hat{p})\}} = \dot\gamma_{(\hat{x},\hat{p})} $ as $\epsilon \longrightarrow 0$.~\footnote{\tbl{Is this needed? If, there might be need for some compactness statement.}} 

Let $\widetilde{\mathcal{C}}$ be another Cauchy surface of $(M,g)$, which is in the past $\mathcal{C}^-$ of $\mathcal{C}$. Note that $u^\eps$ is then also a solution to $\XX u^\eps=h^\eps$ with \tbl{ $u^\eps =0$ on $\SP \widetilde{\mathcal{C}}^{-}$}. By Lemma~\ref{vlasov_ext_conormal}, $u^\eps$ is  unique. We also have that $WF(\chi_R\s\delta_S)\cap N^*(\SP \widetilde{\mathcal{C}})\subset N^*S\cap N^*(\SP \widetilde{\mathcal{C}})=\emptyset$.
By applying Lemma~\ref{vlasov_ext_conormal} again, we obtain % $\IX$ has a unique continuous extension to $\{\}$, and thus it follows that%~\footnote{\tbl{Insert specific result which gives this after Section 3.1.1 has been finalized.}}
\[
\lim_{\epsilon \rightarrow 0} u^\eps
%= \lim_{\epsilon \rightarrow 0}\IX (\rho^\epsilon *  \delta_{S_1}) (r, r')   
= u
\]
%and
%\[
%\lim_{\epsilon \rightarrow 0} \IX h_2^\epsilon =  \lim_{\epsilon \rightarrow 0} \IX (\rho^\epsilon * f) (s, s' , s'') =  \IX f (s, s' , s'')
%\]
in the space of distributions $\mathcal{D}'(\SP M)$. Here $u=\IX (\chi_R\s\delta_{S})$ solves
\begin{equation}\label{solution_with_delta_source}
\begin{split}
\XX u = \chi_R\s\delta_S \text{ on } \SP M \\
u = 0 \text{ on } \SP \widetilde{\mathcal{C}}.
\end{split}
\end{equation}
Finally, if $\chi\in C_c^\infty(\SP M)$ is a cutoff such that $\supp{\chi}\subset\subset \SP M\setminus S$, we have that $\chi u\in I^{m-1/4}(\SP M; N^*K_S)$, with $m$ given in~\eqref{order_m}.

\subsection{\tbl{Nonlinear interaction in the inverse problem}}\label{sca123}
Throughout this Section~\ref{sca123} we assume that $(M,g)$ is a globally hyperbolic manifold of dimension $n$ and that $\mathcal{C}$ is a Cauchy surface of $(M,g)$. \antticomm{Again, this Cauchy surface should not be confused with the one in Theorem \ref{boltz-exist} used for the source-to-solution map.} We also assume that the submanifolds $S_1\subset \PP^+ \mathcal{C}$ and $S_2=\{(x_0,p_0)\}\in \PP^+ \mathcal{C}$, $x_0 \notin \overline{S}_1$,  are such that the flowouts $K_{S_1}$ and $K_{S_2}$ satisfy the admissible intersection property (Definition~\ref{intersection_coords}) in $\pi(\text{supp} A)$, where $A$ is an admissible collision kernel.  
\antticomm{\tblu{As shown in Corollary \ref{coro_renerew}, such submanifolds $S_1$ and $S_2$ can be constructed on neighborhoods of  any pair of distinct base points of $\mathcal{C}$. The choice of $S_1$ and $S_2$ can be done so that the corresponding flowouts have admissible intersection property on both manifolds $M_1$ and $M_2$ (in $\pi(\text{supp} A_1)$ and $\pi(\text{supp}A_2)$ respectively).} In the proof for the main theorem of this article (Theorem \ref{themain}) we shall vary the sources and the underlying Cauchy surface in order to generate collisions at variable points in the space-time. }
\antticomm{Let $\hat\mu : [-1,1] \rightarrow M$ be $C^\infty$-smooth timelike curve and $V\subset M $ be an open neighbourhood of $\hat{\mu}$. As we are allowed to control the sources in $V$, the distinct points which the sources are constructed around shall lie in $\mathcal{P}^+V$. By localizing the sources we may always assume that $S_1,S_2$ also lie in $\mathcal{P}^+V$. }

%Let us recall some semi-Riemannian geometry concepts. 
%The $(M,g)$ be a Lorentzian manifold. 
The length of a piecewise smooth causal path $\alpha : [a,b] \rightarrow M$ is defined  as %be defined by 
\begin{equation}\label{lengtyht}
l(\alpha) := \sum_{j=0}^{m-1} \int_{a_j}^{a_{j+1}} \sqrt{ -g( \dot\alpha(s), \dot\alpha(s)) } ds,
\end{equation}
where $a_0<a_1< \cdots<a_{m-1} < a_m$ are chosen such that $\alpha$ is smooth on each interval $(a_j,a_{j+1})$ for $j=0,\dots,m-1$. \tbl{The time separation function, see e.g.~\cite{oneill1983semiriemannian}, is denoted by $\tau : M \times M \rightarrow [0, \infty)$ and defined as 
% \[
% \tau(x,y) :=\begin{cases}  \sup \int_0^1 \sqrt{ -g( \dot\alpha (s) , \dot\alpha (s))  } ds, & x<y \\
% 0, & \text{otherwise.}
% \end{cases}
% \] 
\[
\tau(x,y) :=\begin{cases}  \sup l(\alpha) , & x<y \\
0, & \text{otherwise,}
\end{cases}
\] 
where the supremum 
%of the length function % ~\eqref{lengtyht},
%\begin{equation*}%\label{lengtyht}
%$
%l(\alpha) = \sum_{j=0}^{m-1} \int_{b_j}^{b_{j+1}} \sqrt{ -g( \dot\alpha(s), \dot\alpha(s)) } ds 
%$ %\end{equation*}
is taken over all piecewise smooth lightlike and timelike curves $\alpha : [0,1] \rightarrow M$, which are smooth on each interval $(b_j,b_{j+1})$, and that satisfy $\alpha(0)  = x$ and $\alpha(1) = y$. %where $a_0<a_1< \cdots<a_{m-1} < a_m$ are chosen such that 
%$\alpha$ is . % for $j=0,\dots,m-1$. 
%The causality notation $x<y$ was defined in the introduction.
%}
%
%%
%
%\tbl{The time separation function is $\tau(x,y) = \sup l(\alpha)$, where $x,y\in M$ and 
%$l$ is the semi-Riemannian length   (\ref{lengtyht}) and 
%the supremum is taken over causal piecewise smooth paths connecting $x$ to $y$ and $\tau(x,y) = 0$ if there are no such paths. 
If $\tau(x,y)=0$ and there is a lightlike geodesic $\gamma$ connecting points $x,y\in M$, we call $\gamma$ \emph{optimal}.
By~\cite[Proposition 14.19]{oneill1983semiriemannian}, we have that if $(M,g)$ is globally hyperbolic and if $x,y\in M$ satisfy $\tau(x,y)=0$, then an optimal lightlike geodesic $\gamma$ always exists. 
%If $\tau(x,y)=0$, then $\gamma$ is lightlike.
}
%We simplify the notation by denoting the image of a given curve $\gamma (s)$ in $M$ by $\gamma$. 

The main result of this Section~\ref{sca123} is the following:
\begin{proposition}\label{kooasu} 
%Let $Y_1,\, Y_2 \subset \PP M$,
%S^{r_j}_{t_j}$, $t_j \in \R$, $r_j > 0$ , $j=1,2$  
%be smooth manifolds that have admissible intersection according to Definition \ref{intersection_coords}.
%%
%
Let $(M,g)$ be a globally hyperbolic manifold, $A:(TM)^4\to \R$ an admissible collision kernel with respect to the relatively compact subset $W \antticomm{=  I^-(x^{+}) \cap I^+(x^{-}) }\subset M$ \antticomm{$($see \eqref{W-set}$)$}. 
%$W\subset M$ be an open relatively compact set where $A$ is strictly positive\footnote{Positivity of $A$ is added on Sep. 6 - Matti} 
Let $\mathcal{C}\subset M$ be a Cauchy surface \antticomm{such that $\pi(\text{supp}A) \subset \mathcal{C}^+$}, $S_1$ be a submanifold of $\PP^+ \mathcal{C}$, and $S_2=\{(x_0,p_0)\}\in \PP^+ \mathcal{C}$, $x_0\notin \overline{S}_1$. Assume that
%\footnote{Set $W$ added on Sep. 6, 2021. - Matti} 
$K_{S_1}$ and $K_{S_2}$ have the admissible intersection property in $\pi(\text{supp} A)$ according to Definition \ref{intersection_coords}. Assume also that $\pi(K_{S_1})$ and $\pi(K_{S_2})$  intersect  in $ \pi( \text{supp} A)$ first time at some $z_1\in W$. Let $\gamma$ be \tbl{an optimal} future-directed light-like geodesic in $M$ such that $\gamma(0) = z_1$ and $e := \gamma(T)$. % that maximizes the length %Assume furthermore that $\gamma :[0,T]\to M$ maximizes length 
%over all piecewise smooth causal paths from $z_1$ to $e$.%~\f{We could use the optimal geodesic term here.}

Additionally, let $h_{1}^{\epsilon},\s h_{2}^{\epsilon}\in C_c^\infty(\PP^+M)$ be the  approximations of the distributions $\chi_{R}\s\delta_{S_1}$ and $\delta_{S_2}$, \tbl{where $S_2$ is a point}, as described 
% lightlike curves $\delta_{S_1}$ and $\delta_{S_2}$ as described 
in Section~\ref{approx_delta_dists}. %sources in Section \ref{regu}.
 
Then, there is a section $P_{e}: V_e\to L^+V_e$ on a neighborhood $V_e$ of $e$ %of the point $e$ and a section  % $x \mapsto P_e (x) \in L^+(V_e)$, $x\in V_e$ 
such that the limit%~\f{$\mathcal{C}$ has no double meaning. Have to fix this in the end.}
% \[
%  \sslimit:=\lim_{\epsilon \rightarrow 0} (  \Phi'' (0;h_1^\epsilon, h_2^\epsilon)  \circ P_e ) \in  I^{\nnn/4 - 1/2} (V_e; N^* (\gamma ) \cap T^* V_e)
%  \]
\[
 \sslimit:=\lim_{\epsilon \rightarrow 0} (  \Phi'' (0;h_1^\epsilon, h_2^\epsilon)  \circ P_e ) 
 %\in  I (V_e; N^* (\gamma ) \cap T^* V_e)
 \]
  exists and 
  \antti{ 
  %is a conormal distribution over $\gamma$ in $V_e$ with singularities
%Then there is a neighbourhood $U_{a} \subset V$ of $a$ such that 
\[
  \text{singsupp} ( \sslimit)   = \gamma  \cap V_e
\]
}
%In particular, $\mathcal{E}_{V'} (z_1)$ is obtained from the source-to-solution data for any open $V' \subset V$.
%{\color{gray}Precisely, $\sslimit$ is a delta distribution over the part of the graph of $\gamma$ in $V_e$.}
\end{proposition}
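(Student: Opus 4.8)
The plan is to compute the limit $\sslimit = \lim_{\epsilon\to0}(\Phi''(0;h_1^\epsilon,h_2^\epsilon)\circ P_e)$ by first passing the limit through the solution operator $\IX$ of the Vlasov equation and then analyzing the resulting conormal/paired-Lagrangian distribution. First I would use Lemma~\ref{deriv} to rewrite $\Phi''(0;h_1^\epsilon,h_2^\epsilon) = \IX\COLOP[\IX h_1^\epsilon,\IX h_2^\epsilon] + \IX\COLOP[\IX h_2^\epsilon,\IX h_1^\epsilon]$. By the discussion in Section~\ref{approx_delta_dists}, the solutions $u_j^\epsilon := \IX h_j^\epsilon$ converge in $\mathcal{D}'(\SP M)$ to $u_j := \IX(\chi_R\delta_{S_j})$ (with $\chi_R$ omitted for $j=2$), and after multiplying by a suitable cutoff $\chi$ supported away from $S_j$ these limits are conormal distributions $\chi u_j \in I^{m_j-1/4}(\SP M;N^*K_{S_j})$ with $m_1 = 1$, $m_2 = n/2$. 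The key point is then that composing with the lightlike section $P_e$ kills the loss term of the collision operator: on $L^+V_e$ the argument $p = P_e(x)$ is lightlike, while the conservation constraint $p+q = p'+q'$ with $q\in\OVS$ forces the loss-term integrand $u_1(x,p)u_2(x,q)$ to vanish since $\SP M$ excludes lightlike vectors — so $\COLOP^{P_e}[\cdot,\cdot] = \COLOP_{gain}^{P_e}[\cdot,\cdot]$, and Corollary~\ref{sofisti_further} applies.

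Next I would invoke Corollary~\ref{sofisti_further} with $S_1,S_2$ as given: since $K_{S_1}$ and $K_{S_2}$ have the admissible intersection property in $\pi(\text{supp}A)$ and $z_1\in W$ is their first intersection point there, the corollary gives that $\COLOP_{gain}^{P_e}[u(f_1),u(f_2)]$ is, microlocally away from $\Lambda_1\cup\Lambda_2$, a Lagrangian distribution in $I^{l_1+l_2+3n/4-1/2}(U;\Lambda_0\setminus(\Lambda_1\cup\Lambda_2))$ with nonvanishing symbol, the nonvanishing being guaranteed by condition~\eqref{condition4} of Definition~\ref{good-kernels} (the collision kernel is strictly positive on $L^+W\times\OVS W\times\SP W\times\SP W$, and the relevant momenta $\theta_1(x_0)+\theta_2(x_0)-\hat p(x_0)$, $\theta_1(x_0)$, $\theta_2(x_0)$ all lie in the appropriate sets near the optimal geodesic). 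Here $\Lambda_0 = N^*\{z_1\}$ is the cotangent fiber at the interaction point $z_1$. Thus the source term of the second-linearization equation for $\sslimit$ behaves microlocally like a point source (delta-type distribution) at $z_1$, plus contributions with wavefront set in $\Lambda_1\cup\Lambda_2$. Applying $\IX$ (extended to such conormal sources via Lemma~\ref{vlasov_ext_conormal}) propagates this singularity along the geodesic flowout: the $\Lambda_0$ part flows out along all future lightlike (and timelike) geodesics from $z_1$, but after composing with $P_e$ and restricting near $e$, only the optimal geodesic $\gamma$ through $z_1$ and $e$ survives in $V_e$, because $\gamma$ is optimal ($\tau(z_1,e)=0$) so no earlier lightlike geodesic reaches $V_e$, and the section $P_e$ is chosen to have $\gamma$ as an integral curve. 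The $\Lambda_1,\Lambda_2$ contributions, being conormal to $\pi(K_{S_j})$, produce at $V_e$ only singularities along those flowouts, which I would arrange (by the freedom in choosing $S_1$ near $z_1$ and restricting $V_e$) to not meet $\gamma\cap V_e$ except possibly at finitely many points, or which are of lower order and swept into a disjoint part of the singular support — this is exactly where I expect the main technical work.

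The main obstacle I anticipate is controlling the $\Lambda_1$ and $\Lambda_2$ pieces and more generally handling the convergence $\epsilon\to 0$ uniformly enough that $\text{singsupp}(\sslimit)$ is exactly $\gamma\cap V_e$ and not merely contained in it. For the equality (rather than inclusion), one must show the symbol of the point-source contribution at $z_1$ is genuinely nonzero — this uses the strict positivity condition~\eqref{condition4} and the fact that the transversality built into the admissible intersection property (via Corollary~\ref{coro_renerew}) makes the symbol computation~\eqref{symbol_in_the_corollary} a nondegenerate product of the (nonvanishing) principal symbols of the conormal distributions $\chi u_1$, $\chi u_2$. Then one propagates: the solution operator $\IX$ maps a nonzero conormal distribution with wavefront in $N^*\{z_1\}$ to a distribution whose wavefront along the bicharacteristics through that fiber is nonzero (by the propagation-of-singularities / transport-equation structure of $\XX$), so the singularity at $z_1$ does reach every point of $\gamma$ in $V_e$, giving $\gamma\cap V_e \subset \text{singsupp}(\sslimit)$. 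For the reverse inclusion I would track the full wavefront set of $\IX\COLOP_{gain}^{P_e}[\cdots]$ using~\eqref{wave_front_calc_in_prop4} and the fact that $\IX$ propagates along $\XX$-bicharacteristics: the only lightlike geodesic from the interaction locus that is seen in $V_e$ after composing with $P_e$ and using optimality is $\gamma$, so nothing else contributes to the singular support there. Finally I would justify interchanging $\lim_{\epsilon\to0}$ with $\IX$ and with composition by $P_e$ using the sequential continuity statements in Corollary~\ref{sofisti_further} and Lemma~\ref{vlasov_ext_conormal}, and note that $P_e$ and $V_e$ are chosen precisely so that $\gamma$ is an integral curve of $P_e$, which is what pins the singular support to the single curve $\gamma$.
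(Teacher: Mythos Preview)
Your outline has many correct ingredients (Lemma~\ref{deriv}, vanishing of the loss term on lightlike vectors, Corollary~\ref{sofisti_further}, the role of admissibility condition~\eqref{condition4}), but there is a genuine gap in how you pass from the microlocal description of $\COLOP_{gain}^{P_e}[u_1,u_2]$ on the neighbourhood $U\subset M$ of $z_1$ to the singular support of $\sslimit$ on $V_e$. You write ``Applying $\IX$ \ldots\ propagates this singularity along the geodesic flowout,'' but $\IX$ acts on $\OVS M$ (or, via Lemma~\ref{vlasov_ext_conormal}, on $\SP M$ or $L^+M$), whereas Corollary~\ref{sofisti_further} only tells you about $\COLOP_{gain}^{P_e}[\cdot,\cdot]\in\mathcal{D}'(U)$ on the base. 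You never establish the microlocal structure of $\COLOP_{gain}[u_1,u_2]$ as a distribution on $L^+M$ itself, so there is nothing for $\IX_L$ to act on in the sense you intend.

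The paper closes this gap with a concrete identity you do not mention: because the section $P_e$ is chosen (via Lemma~\ref{ooiiuu}) so that its integral curves are the lightlike geodesics themselves, one has
\[
(\IX\COLOP_{gain}[u_1^\eps,u_2^\eps])\circ P_e \;=\; P_e^{-1}\,\COLOP_{gain}^{P_e}[u_1^\eps,u_2^\eps],
\]
where $P_e^{-1}\phi(x) = \int_{-\infty}^0 \phi(\gamma_{(x,p(x))}(t))\,dt$ is integration along integral curves of $P_e$ on the base $M$. This operator $P_e^{-1}$ is then treated as a Fourier integral operator $C_c^\infty(U)\to C^\infty(V_e)$ with an explicit canonical relation $C$; the transversal intersection calculus is checked directly and yields $WF(\sslimit)\subset C\circ\Lambda_0 = N^*\gamma\cap T^*V_e$, with nonvanishing principal symbol near $(\gamma,-\dot\gamma^\flat)$. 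This is the mechanism that replaces your ``propagate then restrict'' picture.

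Your worry about the $\Lambda_1,\Lambda_2$ contributions is also handled differently and more simply than you suggest: the paper observes that $\text{supp}(\COLOP_{gain}^{P_e}[u_1^\eps,u_2^\eps])$ shrinks to the single point $z_1$ as $\eps\to 0$ (since $\text{supp}(u_j^\eps)\subset K_{\text{supp}(h_j^\eps)}$ and the projected flowouts meet first at $z_1$), so in the limit the distribution is supported at $z_1$ and its entire wavefront set lies in $\Lambda_0=T^*_{z_1}M$. No separate argument about arranging $S_1$ or shrinking $V_e$ to dodge $\Lambda_1,\Lambda_2$ is needed.
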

%Notice here that since $(M,g)$ is globally hyperbolic, the graph of $\gamma$ in the proposition is a submanifold of dimension $1$. In particular, the conormal bundle $N^* (\gamma )$  is well defined. 
In the context of the main theorem, the proposition above can be used to detect rays of light propagating from the unknown set $W$ to the measurement neighbourhood $V$. In that setting 
%$z_1\in W$ whereas 
$e \in V$ and $V_e \subset V$.
The situation is pictured in \tbl{Figure~\ref{fig:light_obs2}} below.
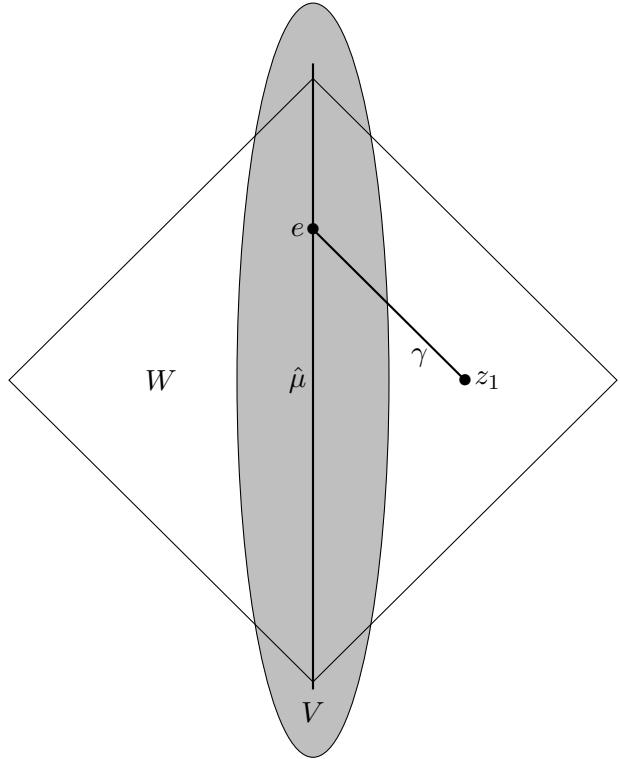
\begin{figure}[ht!]\label{fig:kooasu}
    \centering
%
% New picture
\begin{tikzpicture}
    \draw [fill=lightgray] (0,0) ellipse (1cm and 5cm);
    \draw[thick] (0,4.2) -- (0,-4.1);
    \node at (-0.2,0) {$\hat\mu$} ;
     \node(dot) at (0, 2) {\textbullet};
    \node at (-0.2,2) {$e$} ;
        \node at (1.4,0.3) {$\gamma$} ;
       \node at (2,0) {\textbullet} ;
         \node at (2.3,0) {$z_1$} ;
         \draw[thick] (2,0) -- (0,2);
        \node at (-2,0) {$W$} ;
                \node at (0,-4.4) {$V$} ;
    \draw (0,4) -- (4,0) -- (0,-4) -- (-4,0) -- (0,4);
\end{tikzpicture}
   \caption{\antti{The setup of Proposition \ref{kooasu} can be used to detect singularities propagating along rays of light in $V$.} }
    \label{fig:light_obs2}
\end{figure}

Before proceeding with the proof of Proposition \ref{kooasu}, \antti{we state the following supporting result which follows from a simple dimension argument similar to the one used in Lemma \ref{lemma_renerew}. In other words, there is so much freedom for variation that caustic effects can be avoided at finite number of fixed points. 
The vector field $P_e$ in Proposition \ref{kooasu} is constructed as a restriction of $P$ in the lemma below for convenient choice of the vectors $(x_j,p_j)$. }
%\footnote{\antti{I removed the gray stuff from here}} %, which are Lemma~\ref{hhaovg}, Lemma~\ref{ooiiuu} and Lemma~\ref{shortcut_lemma}. 

%The next lemma is
\begin{lemma}\label{ooiiuu}
%Let $s\mapsto \gamma(s)  \in \R \times N$ be a future-directed light-like geodesic and fix $(x_0,p_0):=\dot\gamma(s_0)$,  $(y_0,q_0):=\dot\gamma(s_1)$ where $s_0<s_1$. 
Let $(x_j,p_j)  \in L^+(M)$, $j=1,\dots,m$, be a finite set of vectors. 
There is an open (possibly disconnected) neighborhood $ Q  $ of $\{x_1,\dots,x_m\}$  in $M$ and a smooth local section 
 $P : Q \rightarrow L^+Q$,  $P(x) = (x,p(x))$,
of the bundle $\pi : L^+M  \rightarrow M$ such that for \tbl{$x\in Q$ and $s\in \R$ such that $\gamma_{(x,p(x))} (s) \in  Q$
%it contains the vectors $(x_j,p_j)$, $j=0,\dots,k$ and has geodesics as integral curves. That is,
\begin{equation}\label{conditions_for_P}
 \begin{split}
P( x_j ) &= (x_j,p_j)  \\  
%\text{if}
% \quad (x,p) = P(x) , \ \gamma_{(x,p)} (s) \in U, \quad \text{then} \quad
(\gamma_{x,p(x)}(s),\dot{\gamma}_{x,p(x)}(s)) &= P( \gamma_{(x,p(x))} (s) ).
%\dot\gamma_{(x,p(x))} (s) &= P( \gamma_{(x,p(x))} (s) ). % , \quad \text{for}  \quad x\in Q, \quad \gamma_{(x,p(x))} (s) \in  Q.
\end{split}
\end{equation}
}
%%%%
\end{lemma}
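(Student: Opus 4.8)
The plan is to construct the section $P$ as a geodesic flowout of a suitably chosen initial lightlike vector field on a hypersurface, following the same philosophy as in Lemma~\ref{lemma_renerew}: there is enough freedom to avoid the finitely many obstructions (here, the requirement that the prescribed vectors $(x_j,p_j)$ be mutually compatible and that no caustic collapses the construction). First I would reduce to the case where the points $x_1,\dots,x_m$ are pairwise ``independent'' in the sense that no two of them lie on a common null geodesic with the prescribed directions; if two of the prescribed data $(x_i,p_i)$ and $(x_j,p_j)$ do happen to lie on the same null geodesic, then the second condition in~\eqref{conditions_for_P} forces them to be consistent, and we may simply treat that geodesic once. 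After this reduction, the $x_j$ can be separated by disjoint open sets, so it suffices to build $P$ near a single point $x_1$ with $P(x_1)=(x_1,p_1)$ and the flow-invariance property, and then take $Q$ to be the (disjoint) union of such neighbourhoods.

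For the local construction near $x_1$: choose a smooth spacelike hypersurface $\Sigma \subset M$ through $x_1$ that is transversal to $p_1$ (for instance a piece of a local Cauchy surface, or $\{x^0 = \text{const}\}$ in the global splitting $M=\R\times N$). On a neighbourhood $\Sigma_0 \subset \Sigma$ of $x_1$ pick any smooth future-directed lightlike vector field $q : \Sigma_0 \to L^+\Sigma_0$ with $q(x_1)=p_1$; such a field exists because the fibre $L^+_x M$ is (a cone, hence) connected of positive dimension for $n\ge 3$, and one can take $q$ to be, e.g., the future null normalization of $\partial_{x^0}$ adjusted so that $q(x_1) = p_1$. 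Now define $P$ by flowing out: for $x$ in a small neighbourhood $Q_1$ of $x_1$, let $x = \gamma_{(y,q(y))}(s)$ be the unique representation with $y\in\Sigma_0$ and $s$ small (this is well-defined and smooth because the map $(s,y)\mapsto \gamma_{(y,q(y))}(s)$ is a local diffeomorphism near $(0,x_1)$: its differential at $(0,x_1)$ sends $\partial_s$ to $p_1 \notin T_{x_1}\Sigma$ and restricts to the identity on $T_{x_1}\Sigma_0$), and set $P(x) := \dot\gamma_{(y,q(y))}(s)$. By construction $P(x_1) = \dot\gamma_{(x_1,p_1)}(0) = (x_1,p_1)$, $P$ takes values in $L^+Q_1$ since null geodesics stay null, and the flow-invariance $(\gamma_{(x,p(x))}(s),\dot\gamma_{(x,p(x))}(s)) = P(\gamma_{(x,p(x))}(s))$ holds whenever both sides are defined, because $P$ is defined precisely by following geodesic trajectories: $\gamma_{(x,p(x))}$ is a reparametrization-free continuation of $\gamma_{(y,q(y))}$, so its velocity at later parameter times is again read off by the flowout rule.

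The only point requiring the ``dimension argument'' of Lemma~\ref{lemma_renerew} is ensuring that the flowout neighbourhood $Q_1$ can be chosen so that $P$ is single-valued on it even after we shrink to make the different $Q_j$ disjoint --- i.e., that no null geodesic issued from $\Sigma_0$ returns to $Q_1$ with a different direction. This is automatic for $Q_1$ sufficiently small: global hyperbolicity implies $(M,g)$ is causally disprisoning and pseudoconvex (Section~\ref{prelim}), so on a small enough geodesically convex neighbourhood of $x_1$ each pair of points is joined by at most one geodesic, and null geodesics do not self-intersect there. Hence after shrinking, $P:Q_1\to L^+Q_1$ is a well-defined smooth section with the required properties, and $Q := \bigsqcup_j Q_j$. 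The main obstacle is purely bookkeeping: verifying that the flow-invariance condition in~\eqref{conditions_for_P} is compatible across the whole of $Q$ rather than just locally near each $x_j$, which is handled by taking the $Q_j$ small and disjoint and invoking the absence of trapped or self-returning null geodesics in small convex neighbourhoods. I expect no genuine difficulty beyond this; the lemma is a soft transversality/flowout statement of the same flavour as Remark~\ref{flowout-is-manifold} and Lemma~\ref{lemma_renerew}.
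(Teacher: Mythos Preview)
Your approach via flowout of an initial lightlike field from a hypersurface is the right idea and matches the paper's construction in spirit. However, there is a genuine gap in your reduction step. You write that if several $(x_j,p_j)$ lie on a common null geodesic you ``simply treat that geodesic once'' and then reduce to the single-point case. This does not work: the set $Q$ must be a neighbourhood of \emph{all} of the $x_j$, and in the application of the lemma (Proposition~\ref{kooasu}) the two prescribed vectors $(z_1,\dot\gamma(0))$ and $(e,\dot\gamma(T))$ lie on the same null geodesic $\gamma$ but at macroscopic separation. A small flowout neighbourhood of $x_1$ does not contain $x_2,\dots,x_m$; one must transport $P$ along the geodesic flow to neighbourhoods $Q_2,\dots,Q_m$ of the remaining points, and for $P$ to be a well-defined \emph{section} of $L^+Q_j$ one needs the flowout map $y\mapsto\gamma_{(y,q(y))}(T_j)$ from $\Sigma_0$ to a neighbourhood of $x_j$ to be a local diffeomorphism. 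This can fail due to caustics if $q$ is chosen arbitrarily, so ``pick any smooth $q$'' is not sufficient.

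This is precisely where the dimension argument enters, and you have misidentified its role. It is not about self-returning geodesics in a single small convex ball (which, as you say, is automatic); it is about avoiding conjugate-point degeneracies at the \emph{distant} points $x_2,\dots,x_m$ on the same null geodesic. The paper's argument fixes a Cauchy surface $\mathcal{C}_1\ni x_1$ and for each $j$ considers the geodesic-flow map $\phi_j:P^0\mathcal{C}_1\to P^0\mathcal{C}_j$. Each kernel $\ker D_{(x_1,p_1)}(\pi\circ\phi_j)$ is an $(n-1)$-dimensional subspace of the $(2n-2)$-dimensional $T_{(x_1,p_1)}P^0\mathcal{C}_1$, so a generic $(n-1)$-dimensional subspace $L$ meets their (finite) union only at the origin; any submanifold $G\subset P^0\mathcal{C}_1$ tangent to such an $L$ at $(x_1,p_1)$ has a flowout whose projection to $M$ is locally diffeomorphic near every $x_j$. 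In your language this amounts to choosing the differential $Dq|_{x_1}$ of the initial field generically---a step your argument omits.
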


\tbl{Following~\cite{oneill1983semiriemannian},} we say a path $\alpha([t_1,t_2])$ is a pre-geodesic if  $\alpha(t)$ is a $C^1$-smooth
curve such that $\dot\alpha(t)\not =0$ 
on $t\in [t_1,t_2]$, and there exists a \tbl{reparametrization} of $\alpha([t_1,t_2])$ so that it becomes a geodesic. 
Proposition 10.46 of \cite{oneill1983semiriemannian} implies the existence of a shortcut path between points which are not connected by lightlike pre-geodesics: 

\begin{lemma}[Shortcut Argument]\label{shortcut_lemma}
\tbl{Let $(M,g)$ be globally hyperbolic and }$x,y,z\in M$. Suppose that $x$ can be connected to $y$ by a future-directed lightlike geodesic $\gamma_{x\to y}$, and $y$ can be connected to $z$ by a  future-directed lightlike geodesic $\gamma_{y\to z}$. Additionally, assume that $\gamma_{x\to y}\cup\gamma_{y\to z}$ is not a lightlike pre-geodesic.
Then, there exists a timelike geodesic connecting $x$ to $z$.~\f{Let's say what the shortcut argument implies, and where it is used.}
\end{lemma}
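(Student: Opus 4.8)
<br>

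The goal is to prove Lemma~\ref{shortcut_lemma}: if $x$ connects to $y$ and $y$ connects to $z$ by future-directed lightlike geodesics, but the concatenation $\gamma_{x\to y}\cup\gamma_{y\to z}$ fails to be a lightlike pre-geodesic, then $x\ll z$, i.e. there is a timelike geodesic from $x$ to $z$.

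\textbf{Plan of proof.} The plan is to reduce everything to a single causality-theoretic fact from O'Neill. The concatenated path $\alpha := \gamma_{x\to y} * \gamma_{y\to z} : [0,1]\to M$ is a future-directed \emph{causal} (in fact piecewise-lightlike) curve from $x$ to $z$. By hypothesis it is not a lightlike pre-geodesic; in particular it cannot be, even after reparametrization, a lightlike geodesic. I would first invoke \cite[Proposition 10.46]{oneill1983semiriemannian} (or the standard companion statement, e.g.\ \cite[Chapter 10, Lemma on causal curves]{oneill1983semiriemannian}), which says precisely: if a future-directed causal curve from $p$ to $q$ is not a (reparametrization of a) lightlike geodesic, then $q\in I^+(p)$, i.e.\ $p$ and $q$ can be joined by a timelike curve. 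Apply this with $p=x$, $q=z$ to conclude $x\ll z$ in the sense of the existence of a timelike \emph{curve}. The only subtlety is that the curve $\alpha$ has a single potential break point at $y$, and the cited proposition is typically stated for broken causal curves, so this is immediate; one should double-check the precise hypotheses (piecewise smooth, future-directed causal, break at $y$) match.

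\textbf{From timelike curve to timelike geodesic.} The statement of the lemma asks for a timelike \emph{geodesic} from $x$ to $z$, which is slightly stronger than merely $z\in I^+(x)$. Here global hyperbolicity does the work: on a globally hyperbolic manifold, for any $x\ll z$ there is a maximizing causal geodesic from $x$ to $z$ realizing the time separation $\tau(x,z)>0$ (this is the Avez–Seifert theorem, see \cite[Proposition 14.19]{oneill1983semiriemannian}, which is already quoted in the paper just above). Since $\tau(x,z)>0$ and the maximizing geodesic has length $\tau(x,z)>0$, it is timelike (a causal geodesic of positive length cannot be lightlike). Hence there is a timelike geodesic from $x$ to $z$, as claimed. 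I would state this step explicitly since the paper's conventions (e.g.\ the definition of $I^+$ via timelike geodesics in \eqref{W-set}) require producing an actual geodesic rather than just a timelike curve.

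\textbf{Main obstacle.} The argument is essentially a citation glued together from two standard results, so there is no deep obstacle; the one point requiring care is verifying that ``$\gamma_{x\to y}\cup\gamma_{y\to z}$ is not a lightlike pre-geodesic'' is exactly the hypothesis needed to trigger \cite[Proposition 10.46]{oneill1983semiriemannian}. In particular one must make sure the failure mode covered by the proposition — namely that the broken causal curve, after \emph{any} reparametrization, is not an (unbroken) lightlike geodesic — coincides with ``not a pre-geodesic'' in the paper's terminology; a pre-geodesic is by definition a $C^1$ curve with nonvanishing velocity that reparametrizes to a geodesic, so the two notions agree, and the corner at $y$ (if genuine, i.e.\ $\dot\gamma_{y\to z}(y)\notin \R_{>0}\,\dot\gamma_{x\to y}(y)$) guarantees the concatenation is not even $C^1$ up to reparametrization, hence certainly not a pre-geodesic. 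If instead the velocities at $y$ are positively proportional, the concatenation \emph{would} be a lightlike pre-geodesic, contradicting the hypothesis; so the hypothesis precisely excludes the only case where the conclusion could fail, and we are done.
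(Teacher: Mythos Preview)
Your proposal is correct and matches the paper's approach: the paper does not give a proof at all but simply states that the lemma follows from \cite[Proposition~10.46]{oneill1983semiriemannian}. Your write-up is in fact more complete than the paper, since you make explicit the second step (upgrading the timelike curve to a timelike geodesic via Avez--Seifert / \cite[Proposition~14.19]{oneill1983semiriemannian} and global hyperbolicity), which the paper leaves implicit but is genuinely needed given its geodesic-based definition of $I^+$.
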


%\begin{proof}
%The proof uses an elementary geometric fact.

%%%%%%%%%%%%%%%%%%%%%%%%%%%%%%%%%
%
% Proof of Proposition \ref{kooasu}
%
%
We now prove Proposition \ref{kooasu}:

\begin{proof}[Proof of Proposition \ref{kooasu} ]\label{89347jjaa}
%\textbf{Step 1.}
%Let $\mathcal{C}$ be a Cauchy surface of $(M,g)$.
\antticomm{Recall the notation $G_{S} = \pi( K_{S})$ for the base manifold flowout (see \eqref{Gees}) from a manifold $S$.} 
At this point, let $U$ be an open neighborhood of $z_1$, small enough so that $G_{S_1}\cap U$ and $G_{S_2}\cap U$ are manifolds and that $U\cap (\pi(S_1)\cup \pi(S_2))=\emptyset$. 
% % % \tbg{is a manifold~\f{Do we need the subscript in $U$? I removed it elsewhere in the proof.} be a coordinate neighbourhood that is centered at the first intersection point $z_1$ and of the form (\ref{turhautuminen1}) for $Y_1 = \{ \hat{x}, \hat{p} \}$, $Y_2 = K_{S_2}$, $j=1,2$,~\f{What is $Y_1$ now?} (cf. conditions of Theorem \ref{sofisti_further}). 
% % % Again, this is possible due to our construction of $S$ which comes from Lemma \ref{two_manifold_jacobi_const}.}
% % % %Throughout the proof we shall denote $U= U_{z_1}$.w_{M_2}
Let $P: U\to L^+(U)$ be an arbitrary smooth section on $U$. We remark that we will choose a specific $P$ later in the proof. %and we will also redefine $U$ smaller if that is necessary. \tbg{by restricting the section $\tilde{P}$ from Lemma $\ref{ooiiuu}$ to a certain neighbourhood).}

% {\color{blue}

Let $h_1^\eps$ and $h_2^\eps$ be the respective approximations of the distributions $\chi_{R}\s\delta_{S_1}$ and $\delta_{S_2}$ as described in~\eqref{regu}-\eqref{solution_with_delta_source} compactly supported in $\PP^+M$. (We do not multiply $\delta_{S_2}$ with a cutoff function, since $S_2$ is just a point.) %--~\eqref{approx_sol_prop}. 
For $j=1,2$, let $u_j^\eps$ be the solutions corresponding to $h_j^\eps$: % to
\begin{equation}\label{regularized_equation}
\begin{split}
\XX u_j^\eps  &= h_j^\epsilon \text{ on } \SP M \\
u_j^\eps &= 0 \text{ on } \SP \mathcal{C}^-.
\end{split}
\end{equation}
%as described~\ref{delta_dist_of_subm}. 
By Lemma \ref{deriv} we have that the second linearization $\Phi''$ of the source to solution map $\Phi$ satisfies 
\begin{equation}\label{second_lin_solves}
\begin{split}
\Phi'' (0\;h_1^\epsilon, h_2^\epsilon)  \circ P  =  \big(\IX\mathcal{Q} [ u_1^\eps , u_2^\eps ]\big)\circ P +  \big(\IX\mathcal{Q} [u_2^\eps , u_1^\eps ]\big)\circ P, \\
 %=  \IX \pi^* \big( \mathcal{Q} [ u_1^\eps , u_2^\eps ]^P \big) +  \big(\IX\mathcal{Q} [u_2^\eps , u_1^\eps ]\big)\circ P,
\end{split}
\end{equation}
where $\IX$ is the solution operator to the Vlasov equation~\eqref{vlasov1}. % on $\OVS M$.
We first study the limit $\epsilon \to 0$ of 
\[
 \mathcal{Q} [ u_1^\eps , u_2^\eps ]\circ P +  \mathcal{Q} [u_2^\eps , u_1^\eps ]\circ P.
\]

%\tbg{We will control the singularities of the limit above and use the microlocal properties of the operator $\IX$ to keep track of their propagation.}
%
Since $h^{\epsilon}_{1}$ and $h^{\epsilon}_{2}$ are supported on $\SP M$ and since $P(x)$ is light-like, we find that 
\begin{equation}\label{tyhja123}
\mathcal{Q}_{loss} [ u_1^\eps , u_2^\eps ] \circ P(x) = \mathcal{Q}_{loss} [u_2^\eps , u_1^\eps ] \circ P(x)=0 .
\end{equation}
Therefore, the light which scatters from the collisions arises only from the terms %~\f{Where did the reversed $1$ and $2$ term go?} 
\[
\mathcal{Q}_{gain}^P [ u_1^\eps , u_2^\eps ]= \mathcal{Q}_{gain} [ u_1^\eps , u_2^\eps ] \circ P \ \ \text{and} \ \ \mathcal{Q}_{gain}^P [ u_2^\eps , u_1^\eps ]= \mathcal{Q}_{gain} [ u_2^\eps , u_1^\eps ] \circ P.
\]
%We can therefore replace $\mathcal{Q}$ above by $\mathcal{Q}_{gain}$. 

% {\color{blue}

\antti{By the discussion in Section~\ref{approx_delta_dists} we have that, away from $S_1$ and $S_2$, the element $  \IX(\chi_R\s \delta_{S_1})$ lies in $ I^{n/2-1/4}(\SP M; N^*K_{S_1})$ and $  \IX(\delta_{S_2})$ lies in $ I^{1-1/4}(\SP M; N^*K_{S_2})$. 
%where $\text{supp}(\chi_1),\text{supp}(\chi_2)\subset\subset \SP M\setminus (S_1\cup S_2)$ and $\chi_1$ and $\chi_2$ equal $1$ on $\SP U$. 
Therefore, if $\Gamma$ is any conical neighborhood of $\Lambda_1\cup\Lambda_2$, where $\Lambda_1$ and $\Lambda_2$ are as in Property~\ref{intersection_coords}, we have by \antti{Corollary} \ref{sofisti_further} that 
\begin{equation}\label{thiscvo}
\begin{split}
   \mathcal{Q}_{gain}^P& [ \IX \delta_{S_2} , \IX (\chi_R\s\delta_{S_1}) ] +  \mathcal{Q}_{gain}^P [ \IX (\chi_R\s \delta_{S_1}) ,  \IX \delta_{S_2}  ]  \\
 &\in I^l(\Lambda_0 \setminus (\Lambda_1 \cup \Lambda_2)\;U) + \mathcal{D}_\Gamma'(U) \subset \mathcal{D}'(U),
 \end{split}
 \end{equation}
 where $l=(n/2-1/4)+(1-1/4)+3n/4-1/2=5n/4$ \antti{and $U$ is a small neighbourhood around $z_1$. }
 Here we use the standard notation to denote 
 \[
 \begin{split}
%& \Lambda_1 = \Lambda_{(\hat{x},\hat{p})}, \quad \Lambda_2 = \Lambda_M, \quad
%
 \mathcal{D}_\Gamma'(U) = \{ u \in \mathcal{D}'(U)\s:\s WF(u) \subset \Gamma \}.
% \in I ( \Lambda_M ,  \Lambda_{\{ (\hat{x} , \hat{p}) \}}  ) +  I (    \Lambda_{\{ (\hat{x} , \hat{p}) \}}  , \Lambda_M)
 \end{split}
\]
 %See discussion in Section~\ref{approx_delta_dists}. % and $\supp(\chi_1)\in \SP M\setminus S_2$
%up to a residual with singular support in a small neighbourhood of $ (\Lambda_1 \cup \Lambda_2)$. 
By \antticomm{the sequential} continuity of $\IX$ (see Section~\ref{approx_delta_dists}) and $\mathcal{Q}_{gain}^P$ (Corollary \ref{sofisti_further}), the distribution~\eqref{thiscvo} equals the limit
\begin{equation}\label{implieslimit}
\lim_{\epsilon\rightarrow 0} \big( \mathcal{Q}_{gain}^P [ u_1^\epsilon , u_2^\epsilon ] +  \mathcal{Q}_{gain}^P [ u_2^\epsilon,  u_1^\epsilon  ]\big),
\end{equation}
\tbl{ in $\mathcal{D}'(\SP M)$.} It also follows by definition of the collision operator and $\text{supp} (u_j^\epsilon ) \subset K_{\text{supp}(h_j^\epsilon)}$ that the support of \eqref{implieslimit} focuses close to the intersection point $z_1$ as $\epsilon \to 0$. In particular, we may consider it as a compactly supported distribution in $U$.
Now let $P : x \mapsto (x,p(x))$ be a restriction of a section in Lemma \ref{ooiiuu} for $k=2$, $(x_1,p_1)=(z_1, \dot\gamma(0))$ and $(x_2,p_2) = (e,\dot\gamma(T))$ where $\gamma$ is the optimal geodesic connecting $z_1$ to $e$. Then the terms in \eqref{second_lin_solves} satisfy
\begin{align}
 (  \IX  \mathcal{Q}_{gain} [ u_1^\eps,u_2^\eps ]  ) \circ P   &= P^{-1}  \mathcal{Q}_{gain}^P [ u_1^\eps,u_2^\eps ] \label{2134}  ,
 % \longrightarrow P^{-1} \mathcal{Q}_{gain} [ \IX \delta_{S_2} , \IX (\chi_R\s\delta_{S_1}) ]  , 
 \\
 ( \IX  \mathcal{Q}_{gain} [ u_2^\eps,u_1^\eps ]  ) \circ P &=   P^{-1} \mathcal{Q}_{gain}^P [ u_2^\eps,u_1^\eps ] \label{2135} ,
 %\longrightarrow P^{-1} \mathcal{Q}_{gain} [ \IX \delta_{S_2} , \IX (\chi_R\s\delta_{S_1}) ]
\end{align}
where $P^{-1} : C_c^\infty (U) \to C^\infty(V_e)$, $P^{-1} \phi (x) := \int^0_{-\infty}  \phi ( \gamma_{x,p(x)} (t) ) dt$, i.e. the integration along integral curves of the light-like field $P$. Here $V_e$ is a small neighbourhood of $e$ in the domain of the field. We may assume that $U$ and $V_e$ are distinct.  
%Microlocally, the kernel of $P^{-1} $ is of the form 
%\[
%P^{-1}(x,y) =   \int_\R e^{i \sigma \varphi (x,y)  } a(x,y,\sigma)   d \sigma 
%\]
%where $\varphi (x,y)$ is a smooth function that vanishes iff $x$ lie in the geodesic $\gamma_{(y,p(y))}$  tangent to $P$ and $a(x,y,\sigma) $ is a classical symbol.  
%The principal symbol of $P^{-1}$ is a non-zero constant along the characteristic flow of the operator $-i P$.  
% and $f_1$ and $f_2$ are initial values invariant in the flow of $P$. 
%EDITING... 
The operator $P^{-1}$ corresponds to the canonical relation\footnote{We do not  have to treat $P^{-1}$ as a FIO with a pair $(\Delta, C)$ of canonical relations  (cf. \cite{melrose-uhlmann1979}) since $U$ and $V_e$ are distinct sets. That is; the diagonal part $\Delta$ does not contribute in these domains.} 
%the cleanly intersecting pair $(\Delta,C)$ of canonical relations where $C$ is the characteristic flowout relation
\[
C=  \{ (x,\xi \ ; \ y,\eta ) \in T^*V_e \times T^* U :  y \in \gamma_{x,p(x)} , \  p^j(x) \xi_j = 0, \ p^j(y) \eta_j = 0  \}.
\]
\antticomm{Moreover, $C$ is the set of pairs $(x,\xi \ ; \Xi (r, x, \xi) ) $ for convenient parameters $r<0$ where 
\[
r \mapsto \Xi (r, x, \xi) 
%= ( y (r,x) , \eta(r,x,\xi) ) 
\in ( \gamma_{(x,p(x))}(r)  , \dot\gamma^\perp_{(x,p(x))}(r)  ) , \footnote{Here $\dot\gamma^\perp_{(x,p(x))}(r): = \{ \xi \in T^*_{\gamma_{(x,p(x))}(r)} M : \langle \xi , \dot\gamma_{(x,p(x))}(r) \rangle = 0 \}  $.} 
\]
is the bicharacteristic of $-iP$ through $(x,\xi)$ and the homogeneous principal symbol of $P^{-1}$ is non-vanishing and independent of the flow parameter $r$.
Hence, we may write the principal symbol of $P^{-1}$ on $C$ as a function of $(x,\xi)$ by applying the parametrisation of $C$ above. 

%where the union is taken over $y \in U$ and $ \sigma \in \R \setminus \{0\} $. 
%}
%and $\Delta $ stands for the diagonal of $T^*V_e \times T^*V_e$. 
%Theory for operators of this form was developed by Melrose and Uhlmann in \cite{melrose-uhlmann1979}. 
%Since $\Lambda_1$ and $\Lambda_2$ are conormal bundles of the manifolds $N_1 = G_{S_1} \cap U$ and $N_2 = G_{S_2} \cap U$ and the flowouts $G_{S_k} $ in $M$ are along time-like geodesics, it follows that none of the curves 
%$\big(   \gamma_{y,p(y)} , - \sigma  \dot\gamma_{y,p(y)}^\flat \big)$ in $C$ meet the manifolds $\Lambda_1$ and $\Lambda_2$ (i.e $C \circ \Lambda_j = \emptyset$ for $j=1,2$). Hence, $P^{-1}$ is a smoothing operator microlocally near both  $\Lambda_j $, $j=1,2$ and it suffices to study $P^{-1}$ away from them. 
%We shall focus on only one of the
Recall from Corollary \ref{sofisti_further} that $\mathcal{Q}_{gain}^P [\IX (\chi_R\s\delta_{S_1}),  \IX \delta_{S_2}  ]$ is well defined and corresponds to the Lagrangian manifold $\Lambda_0= T^*_{z_1}M$ microlocally away from $\Lambda_j $, $j=1,2$. 
The distributional limit 
\begin{align}
 L := 
 %\lim_{\epsilon \to 0} (  \IX  \mathcal{Q}_{gain} [ u_1^\eps,u_2^\eps ]  ) \circ P = 
 \lim_{\epsilon \to 0} P^{-1}  \mathcal{Q}_{gain}^P [ u_1^\eps,u_2^\eps ]  = P^{-1} \mathcal{Q}_{gain}^P [\IX (\chi_R\s\delta_{S_1}),  \IX \delta_{S_2}  ], 
 %\\
 %(  \IX  \mathcal{Q}_{gain} [ u_2^\eps,u_1^\eps ]  ) \circ P   \longrightarrow P^{-1} \mathcal{Q}_{gain} [ \IX \delta_{S_2} , \IX (\chi_R\s\delta_{S_1}) ]  , 
\end{align}
together with 
\[
WF (L) 
%\Big( P^{-1} \mathcal{Q}_{gain}^P [\IX (\chi_R\s\delta_{S_1}),  \IX \delta_{S_2}  ]  \Big) 
\subset C \circ \Lambda_0  = N^* \gamma_{(z_1,p(z_1))} \cap T^*V_e
%\ \cup \ C \circ \Lambda_1 \ \cup \  C \circ \Lambda_2 
%= \big(\gamma_{z_1,p(z_1)},- \R  \dot\gamma_{z_1,p(z_1)}^\flat \big) \cap  T^*V_e\setminus \{0\} .
\]
is a direct consequence of the Corollary \ref{sofisti_further}, \cite[Corollary 1.3.8]{duistermaat2010fourier} and the fact (deduced above) that the support of  $\mathcal{Q}_{gain}^P [ u_1^\eps,u_2^\eps ]$ focuses into the single point $z_1$. 
Analogous identities hold for the other term $\eqref{2135}$.  
For more detailed analysis of the wave front set, we need to compute the principal symbol: 

Provided that the conditions of transversal intersection calculus are satisfied (shown below), the principal symbol on $C \circ \Lambda_0 = N^* \gamma_{(z_1,p(z_1))}\cap T^*V_e$ away from $C \circ \Lambda_j$, $j=1,2$ (e.g. near $( \gamma_{z_1,p(z_1)}, -\dot{\gamma}^\flat_{z_1,p(z_1)}   )$ ) can be computed using the standard formula (see e.g. \cite[Theorem 4.2.2.]{duistermaat2010fourier}) which in our setting reads
\[
\sigma ( L ) (x,\xi) =
   \sigma ( P^{-1} ) (x,\xi)  \   \sigma (\mathcal{Q}_{gain}^P [\IX (\chi_R\s\delta_{S_1}),  \IX \delta_{S_2}  ] )(\Xi_0(x,\xi)) ,
   %\quad \text{on} \quad N^* \gamma_{(z_1,p(z_1))}\cap T^*V_e  %\quad  (x,\xi) \in C \circ \Lambda_0,
\]
where $\Xi_0(x, \xi ) \in (z_1, p^\perp(z_1)) \subset  \Lambda_0$ stands for the homogeneous translation of $(x,\xi) \in N^* \gamma_{(z_1,p(z_1))} \cap T^*V_e$ along the bicharacteristic $r \mapsto \Xi(r,x,\xi)$. 
%Here we may assume that 
%Uniqueness of this point follows from the causality of globally hyperbolic $(M,g)$. 
%Causality of $(M,g)$ implies that the point is unique. 
%where  $(x,\xi) \in  ( \gamma_{z_1,p(z_1)}  , - \sigma \dot\gamma_{z_1,p(z_1)}^\flat   )$ and $\eta = - \sigma p^\flat(z_1)$. 
%on $C \circ \Lambda_0$ 
%where the sum is over points $(y,\eta)$ such that $(x,\xi \ ; \ y,\eta) \in C \cap (T^*V_e \times \Lambda_0)$. 
As the positively homogeneous principal symbol of $P^{-1}$ is non-vanishing, it suffices to focus on the latter term $\sigma (\mathcal{Q}_{gain}^P [\IX (\chi_R\s\delta_{S_1}),  \IX \delta_{S_2}  ] )(\Xi_0 (x,\xi)) $. The term was computed in Corollary \ref{sofisti_further} and we obtain a non-vanishing principal symbol by the fact that $A$ is admissible in $W\ni z_1$. This implies the presence of singularities in the claim. 
%$\big(   \gamma_{y,p(y)} , - \sigma  \dot\gamma_{y,p(y)}^\flat \big) $ in $C$ and 

Let us finish the proof by showing that the conditions \cite[Theorem 4.2.2]{duistermaat2010fourier} of the transversal intersection calculus are satisfied for the composition of $P^{-1}$ and $\chi \mathcal{Q}_{gain}^P [\IX (\chi_R\s\delta_{S_1}),  \IX \delta_{S_2}  ]$. Here $\chi= \chi(x,D)$ is a microlocal cut-off that vanishes on $\Lambda_1$ and $\Lambda_2$ so that the resulting object is Lagrangian distribution over $\Lambda_0$. 
%The latter is microlocalised away from $\Lambda_1$, $\Lambda_2$ by applying microlocal cut-off $\chi = \chi(x,D)$. 
%The latter can be treated as a Lagrangian distribution over $\Lambda_0= T_{z_1}^* M$ as it is supported only at $\{z_1\}$ due to focusing of the support described earlier. 
%Alternatively, one may study microlocally away from $C \circ \Lambda_1$ and $C \circ \Lambda_2$ as their contribution    
The condition \cite[(4.2.4), Theorem 4.2.2]{duistermaat2010fourier} is clear by locality of the construction.  Moreover, \cite[(4.2.5), (4.2.6), Theorem 4.2.2]{duistermaat2010fourier} follow directly from the definition of $C$ and $\Lambda_0$. Let us check the transversality condition \cite[(4.2.7), Theorem 4.2.2]{duistermaat2010fourier}. It suffices to show that $(C \times \Lambda_0) \cap (T^*V_e \times \Delta_{T^*U})$ is a manifold of dimension $n$. This intersection is the set 
\[
\{  ( x ,\xi \ ; \  \Xi_0(x  ,\xi) \ ; \ \Xi_0(x ,\xi) ) : (x,\xi)  \in N^* \gamma_{z_1,p(z_1)}  \cap T^*V_e \} 
%\approx N^* \gamma_{z_1,p(z_1)} \cap T^*V_e
\]
which is a manifold of dimension $\dim ( N^* \gamma_{z_1,p(z_1)} \cap T^*V_e ) = n$. In conclusion, the conditions are satisfied. 

%by Corollary \ref{sofisti_further} 
%EDITING --ANTTI
}

}
\end{proof}
%\[
%\sigma(\IX f ) = \sigma (\IX ) \sigma(f) \simeq \sigma(f).
%\]
%for principal symbol.  
% % % % % % % % % % % % % % % % % % % % % % % % % %  \end{proof}
 
Let $S_1$ and $S_2$ be as earlier. Then, $G_{S_1} =  \pi K_{S_1} $ and $G_{S_2} = \pi K_{S_2}$ intersect in $ \pi(\text{supp}(A))$ only finitely many times and at discrete points. Given that the intersections exist, we can write  $\{z_1, \dots, z_k\} = \pi(K_{S_1})\cap \pi(K_{S_2})\cap \pi(\text{supp}(A))$.
\begin{lemma}\label{estupido}
\antti{As above, denote the points in $\pi(K_{S_1})\cap \pi(K_{S_2})\cap \pi(\text{supp}(A))$ by $z_1,\dots, z_k$ (if exist) and  \tbl{arrange them so that $z_1\ll z_2\ll\cdots\ll z_k$. %Let $V$ be an~\f{What is $V$ suppose to be here? Add}% be the intersections of $\gamma_{(\hat{x},\hat{p})}(s)$, $s > 0$, and $\gamma_{(\hat{y},\hat{q})}(s')$, $s'>0$, inside $\text{supp}(x \mapsto A(x, \cdot,\cdot,\cdot, \cdot))$ if they exist. 
For every section} $P$ of the bundle $L^+V$ } we have
\[
\text{supp}( \sslimit_P) \subset
%\mathcal{L}^+   \big[ \gamma_{(\hat{x},\hat{p})} (0,\infty) \cap  \gamma_{(\hat{y},\hat{q})}(0,\infty) \cap \text{supp}(x \mapsto A(x, \cdot,\cdot,\cdot, \cdot)) \big]=
\bigcup_{l=1}^k \mathcal{L}^+(z_l) \subset  J^+(z_1),
\]
or $\text{supp}( \sslimit_P) = \emptyset$ if $\pi(K_{S_1})\cap \pi(K_{S_2})\cap \pi(\text{supp}(A)) = \emptyset$. 
Here we denote
\[
\sslimit_P := \lim_{\epsilon \rightarrow 0} (\Phi''( 0 ; h_1^\epsilon, h_2^\epsilon) \circ P ).
\]
\tbl{In particular, if for some section $P$ we have that $\text{supp}( \sslimit_P) \neq \emptyset$,} then the first intersection point $z_1$ exists.  
\begin{proof}
\antti{Let us adopt the notation of the proof of Proposition \ref{kooasu}.
One checks that}
\[
 \text{supp}\big(\mathcal{Q}_{gain}^P [u_1^\eps , u_2^\eps ] +  \mathcal{Q}_{gain}^P [ u_2^\eps,  u_1^\eps  ]\big) \subset E_\epsilon 
\]
\tbl{and
\[
\Phi''( 0 ; h_1^\epsilon, h_2^\epsilon) \circ P %= \IX \mathcal{Q}_{gain}^P [u_1^\eps , u_2^\eps ] +  \IX \mathcal{Q}_{gain}^P [u_2^\eps,  u_1^\eps  ]
=\int_{-\infty}^0  \mathcal{Q}_{gain}^{\tbl{P}} [ u_1^\eps , u_2^\eps ]  ( \gamma_{(x,p(x))} (s) ) + \mathcal{Q}_{gain}^{\tbl{P}} [ u_2^\eps , u_1^\eps  ]  ( \gamma_{(x,p(x))} (s) ) ds, 
\]
}
where for $\epsilon>0$ 
\[
E_\epsilon := G_{\text{supp} ( h_1^\epsilon)}^+ \cap G_{\text{supp} ( h_2^\epsilon)}^+ \cap \pi(\text{supp}(A)) .
% (x \mapsto A(x, \cdot,\cdot,\cdot,\cdot)), %\ \ \epsilon>0, %\text{supp} (x \mapsto A(x, \cdot,\cdot,\cdot,\cdot))
\]
%\tbl{satisfy $E_{\eps_1}\subset E_{\eps_2}$, $0< \eps_1\leq \eps_2$. }
\antti{As $E_\epsilon$ is a monotone sequence }it has the limit which is $\bigcap_{\epsilon>0} E_\epsilon = \{z_1,\dots,z_k\}$.
Thus, we obtain 
\[
\text{supp}( \Phi''( 0 ; h_1^\epsilon, h_2^\epsilon) \circ P ) \subset  \mathcal{L}^+ E_\epsilon
\]
and hence
\[
\text{supp}( \sslimit_P ):= \text{supp}(  \lim_{\epsilon \rightarrow 0} (\Phi''( 0 ; h_1^\epsilon, h_2^\epsilon) \circ P ) ) \subset \bigcap_{\tbl{\epsilon>0}} \mathcal{L}^+ E_\epsilon = \mathcal{L}^+ \{ z_l, \dots, z_k \} = \bigcup_{l=1}^k \mathcal{L}^+ (z_l)
\]
%By transitivity property of the past and future relation we have 
\tbl{It also follows from the definition of $\mathcal{L}^+$ and $J^+$ that $\bigcup_{l=1}^k \mathcal{L}^+ (z_l) \subset J^+(z_1)$.}
\end{proof}
\end{lemma}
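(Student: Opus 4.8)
The plan is to track how supports propagate through the second linearization and then pass to the limit $\epsilon\to 0$. Adopting the notation of the proof of Proposition~\ref{kooasu}, write $u_j^\epsilon=\IX h_j^\epsilon$ for the solutions of the Vlasov problems~\eqref{regularized_equation}. By Lemma~\ref{deriv} we have $\Phi''(0;h_1^\epsilon,h_2^\epsilon)=\IX\COLOP[u_1^\epsilon,u_2^\epsilon]+\IX\COLOP[u_2^\epsilon,u_1^\epsilon]$, and since $h_1^\epsilon,h_2^\epsilon$ are supported in $\SP M$ while $P$ takes values in the lightlike bundle $L^+V$, the loss parts of $\COLOP$ drop out along $P$ as in~\eqref{tyhja123}, so
\[
 \Phi''(0;h_1^\epsilon,h_2^\epsilon)\circ P=\big(\IX\COLOP_{gain}[u_1^\epsilon,u_2^\epsilon]\big)\circ P+\big(\IX\COLOP_{gain}[u_2^\epsilon,u_1^\epsilon]\big)\circ P .
\]
From the explicit solution formula~\eqref{integrate_over_flow} and~\eqref{approx_sol_prop}, each $u_j^\epsilon$ is supported in the future geodesic flowout of $\text{supp}(h_j^\epsilon)$, hence $\pi(\text{supp}(u_j^\epsilon))\subset G^+_{\text{supp}(h_j^\epsilon)}$. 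Inspecting the integrand of $\COLOP_{gain}$ shows that $\COLOP_{gain}[u_1^\epsilon,u_2^\epsilon]$ (and the term with $u_1^\epsilon,u_2^\epsilon$ exchanged) can be nonzero at $(x,p)$ only if $x$ lies simultaneously in $\pi(\text{supp} A)$, $\pi(\text{supp}(u_1^\epsilon))$ and $\pi(\text{supp}(u_2^\epsilon))$; therefore
\[
 \text{supp}\big(\COLOP_{gain}^P[u_1^\epsilon,u_2^\epsilon]+\COLOP_{gain}^P[u_2^\epsilon,u_1^\epsilon]\big)\subset E_\epsilon:=G^+_{\text{supp}(h_1^\epsilon)}\cap G^+_{\text{supp}(h_2^\epsilon)}\cap\pi(\text{supp} A).
\]

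Next I would apply $\IX$ and compose with $P$, which (see the proof of Proposition~\ref{kooasu}) amounts to integrating backward along the lightlike integral curves of $P$ and hence only moves mass to the lightlike future, so $\text{supp}(\Phi''(0;h_1^\epsilon,h_2^\epsilon)\circ P)\subset\mathcal{L}^+(E_\epsilon)$. Because the mollifications $h_j^\epsilon$ are built so that $\text{supp}(h_j^\epsilon)$ shrinks to $S_j$ and may be taken monotone in $\epsilon$, the sets $E_\epsilon$ form a nested family of compact sets with $\bigcap_{\epsilon>0}E_\epsilon=\pi(K_{S_1})\cap\pi(K_{S_2})\cap\pi(\text{supp} A)=\{z_1,\dots,z_k\}$. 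Using the sequential continuity of $\IX$ and of $\COLOP_{gain}^P$ (Corollary~\ref{sofisti_further}) together with the existence of the limit $\sslimit_P$ (Proposition~\ref{kooasu}), this yields
\[
 \text{supp}(\sslimit_P)\subset\bigcap_{\epsilon>0}\mathcal{L}^+(E_\epsilon).
\]

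It remains to identify $\bigcap_{\epsilon>0}\mathcal{L}^+(E_\epsilon)$ with $\mathcal{L}^+\!\big(\bigcap_{\epsilon>0}E_\epsilon\big)=\bigcup_{l=1}^k\mathcal{L}^+(z_l)$. The inclusion $\bigcap_{\epsilon>0}\mathcal{L}^+(E_\epsilon)\supseteq\mathcal{L}^+(\bigcap_{\epsilon>0}E_\epsilon)$ is immediate from monotonicity. For the reverse inclusion, if $y$ lies in $\mathcal{L}^+(E_\epsilon)$ for every $\epsilon$, pick $x_\epsilon\in E_\epsilon$ and a future-directed lightlike geodesic from $x_\epsilon$ to $y$; compactness of $E_{\epsilon_0}$ lets one extract $x_\epsilon\to x$, and since $(M,g)$ is globally hyperbolic these causal curves stay in a fixed compact set, so a limit-curve argument produces a lightlike geodesic from $x$ to $y$; nestedness forces $x\in\bigcap_{\epsilon>0}E_\epsilon=\{z_1,\dots,z_k\}$, whence $y\in\bigcup_{l}\mathcal{L}^+(z_l)$. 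Finally $z_1\ll z_l$ for $l\ge 2$ gives $\mathcal{L}^+(z_l)\subset J^+(z_l)\subset J^+(z_1)$, while $\mathcal{L}^+(z_1)\subset J^+(z_1)$, so $\bigcup_{l=1}^k\mathcal{L}^+(z_l)\subset J^+(z_1)$. If $\pi(K_{S_1})\cap\pi(K_{S_2})\cap\pi(\text{supp} A)=\emptyset$, the finite intersection property forces some $E_\epsilon$ to be empty, so the collision terms vanish identically and $\sslimit_P=0$; conversely $\text{supp}(\sslimit_P)\neq\emptyset$ forces $\bigcup_{l}\mathcal{L}^+(z_l)\neq\emptyset$ and hence $k\ge 1$, so the first intersection point $z_1$ exists. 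I expect the main obstacle to be the commutation $\bigcap_{\epsilon>0}\mathcal{L}^+(E_\epsilon)=\mathcal{L}^+(\bigcap_{\epsilon>0}E_\epsilon)$, that is, the limit-curve argument ensuring that the lightlike geodesics realizing $y\in\mathcal{L}^+(E_\epsilon)$ do not degenerate as $\epsilon\to 0$; the remaining steps are bookkeeping on supports.
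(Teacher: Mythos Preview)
Your proposal is correct and follows essentially the same route as the paper: localize the support of the gain terms to $E_\epsilon$, propagate forward along lightlike geodesics to get $\mathcal{L}^+(E_\epsilon)$, and then intersect over $\epsilon$. The paper simply asserts the identity $\bigcap_{\epsilon>0}\mathcal{L}^+(E_\epsilon)=\mathcal{L}^+\{z_1,\dots,z_k\}$ without comment, whereas you correctly flag this commutation as the nontrivial step and supply the limit-curve argument (compactness of $E_{\epsilon_0}$ plus global hyperbolicity) that the paper leaves implicit; your treatment of the empty case is also more explicit than the paper's.
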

In our inverse problem we will observe the $\text{supp}(\sslimit_P)$ and use the last claim of Lemma~\ref{estupido} to detect the collision of particles in a subset $W\subset M$.

%%%%%%%%%%%%%%%%%%%%%%%%%%%%%%%%%%%%
%
%  Time separation fuctions
%
%%%%%%%%%%%%%%%%%%%%%%%%%%%%%%%%%%%%

 \subsection{Separation time functions }\label{time_sep_fnct}
Let $(M,g)$ be a globally hyperbolic, $C^\infty$-Lorentzian manifold. Also, let $\hat{\mu}:[-1,1]\to M$ be a given smooth, future-directed, timelike geodesic, and $V$ be an open neighbourhood of $\hat{\mu}$.  In this subsection we will use $g|_V$, $\hat{\mu}$ and $V$ to introduce a useful representation for certain points in $M$. 
%the causal diamond $W :=  I^-(\hat{\mu}(1)) \cap I^+(\hat{\mu}(-1))\subset M$. 
Our representation scheme will associate a point $w\in M$ to a subset $\mathcal E_{\mathcal U}(w)\subset {\mathcal U}$, where ${\mathcal U}\subset V$ is an open set. Loosely speaking, the subset $\mathcal{E}_{\mathcal U}(w)$ will be comprised of points in ${\mathcal U}$ which lie on a \tbl{optimal} lightlike geodesic emanating from $w$. This representation was first introduced by Kurylev, Lassas, and Uhlmann in~\cite{Kurylev2018}; we reproduce a summary of it here for the reader's convenience. We later will use this representation to construct the desired isometry $F:W_1\to W_2$ described in Theorem \ref{themain}.
%We introduce concepts from the paperWe do the introduction on a single geodesically complete globally hyperbolic~\f{Do we need global hyperbolicity here?} Lorentzian manifold $(M,g)$. In the subsequent sections, we will use the introduced concepts on two (possibly) different manifolds to ultimately prove Theorem~\ref{themain}.%, these concepts will be relative to two different manifolds. 

% \begin{definition}[Smaller Observation Domain]

To begin, as shown in \cite[Section II]{manasse-misner}, there exists a bounded, connected, open set $\mathcal{A}\subset \R^{n-1}$ and a  neighbourhood $\mathcal{U}\subset V$ of $\hat{\mu}$ on which we may define \tbl{coordinates}
\[
x\in \mathcal{U}\mapsto (s,a^1,a^2,\dots,a^{n-1})\in [-1,1]\times\mathcal{A} %(y^0,y^1,\dots,y^{n-1})\in (-1,1)\times\mathcal{A}.
\]
These coordinates have the property that $\hat\mu(s) =(s,0,\dots,0)$ and for fixed $a= (a^1,a^2,\dots,a^{n-1}) \in \mathcal{A}$ the map $\mu_a(s) = (s,a^1,a^2,\dots,a^{n-1})$ is a $C^\infty$-smooth timelike curve. Further, writing $\mu_{\hat{a}} = \hat{\mu}$ where $\hat{a} = (0,\dots,0)\in \mathcal{A}$, we have
\[
\mathcal{U} = \bigcup_{a\in \mathcal{A}}\mu_a[-1,1].
\]
Let $\overline{\mathcal{A}}$ be the closure of $\mathcal{A}$ in $\R^{n-1}$.
Below, we will assume that for all $a\in \overline{\mathcal{A}}$ we have $\hat{\mu}(s_+)\ll \mu_a(1)$ and 
 $\mu_a(-1)\ll \hat{\mu}(s_-)$.
%
%let $\mathcal{A}\subset V$ be a connected, open, and relatively compact set which parametrizes a smooth variation of $\hat{\mu}$ in $V$.
%

 Given $\mathcal{U}$ and the family of curves $\mu_a$, ${a\in \overline{\mathcal{A}}}$ (which may be defined by replacing $\mathcal{A}$ above by a smaller open subset if necessary), we will next define the notions of time separation functions and observation time functions. 

Consider $-1< s^{-} < s^{+} < 1$ and set $x^{\pm}:= \hat\mu(s^{\pm})\in V$. As in \cite[Definition 2.1]{Kurylev2018}, for each $a\in \overline{\mathcal{A}}$ and corresponding path $\mu_a$, we define the \emph{observation time functions} $f_a^\pm:J^- (x^+) \setminus I^- (x^- )\to \R$ by the formulas
\[
f_a^+ (x) := \inf(  \{ s \in (-1,1) : \tau(x, \mu_a(s) ) > 0 \}  \cup \{1\} ) 
\]
and
\[
f_a^- (x) := \sup(  \{ s \in (-1,1) : \tau(\mu_a(s),x ) > 0 \}  \cup \{-1\} ).
\]
\tbl{Here $\tau$ is the time separation function defined in Section~\ref{sca123}. We note that if $x\in M$ and  $a\in \mathcal{A}$ are such that at least one point in $\mu_a(-1,1)$ can be reached from $x$ by a future-directed timelike curve
we obtain $\tau(x, \mu_a( f_a^+ (x) ) ) = 0$, see \cite{Kurylev2018}.~\f{Check that this is found from the paper, add location. -Tony} In this case, there also
%there 
exists a future-directed optimal light-like geodesic that connects $x$ to $\mu_a(f_a^+(x))$ as discussed in Section \ref{sca123}.} 

The  earliest time observation functions  $f_a^+:J^- (x^+) \setminus I^- (x^- )\to \R$
determine the set
\begin{equation}\label{eioreiie-eq}
\mathcal{E}_{\mathcal{U}} (w) = \{ \mu_a (f_a^+ (w) ) : a\in \mathcal{A} \}\subset U,
\end{equation}
that is the \emph{earliest light observation set} of $w\in J^- (x^+) \setminus I^- (x^- )$.

Finally, as shown in \cite[Proposition 2.2.]{Kurylev2018}, we may construct the conformal type of the open, relatively compact set $W\subset J^- (x^+) \setminus I^- (x^- )$ when we are given the collection
of all earliest light observation sets associated to points $w\in W$, that is, 
\[
\mathcal{E}_{\mathcal{U}}(W) =\{ \mathcal{E}_{\mathcal{U}} (w)\,:\, w\in W\} \subset 2^{\mathcal{U}}.
\]

%%%%%%%%%%%%%%%%%%%%%%%%%%%%%%%%%%%%%%%%%%%%%%%%
%
%. Determination of light obs sets from SS map
%
%%%%%%%%%%%%%%%%%%%%%%%%%%%%%%%%%%%%%%%%%%%%%%%%

 \subsection{Source-to-Solution map determines earliest light observation sets}\label{ss_elos}
 %\antti{
 %\\
% --------------------------------------------------------------- 
% \\
% EDITING THIS SECTION AT THE MOMENT. GOT TIRED OF USING THE COLORS. --Antti \\
% \\
% ---------------------------------------------------------------
%  \\
% --------------------------------------------------------------- \\
% --------------------------------------------------------------- \\
% --------------------------------------------------------------- \\
% --------------------------------------------------------------- \\
% }
 
In this section, we prove that the source-to-solution map for light observations (see \eqref{light-source-soln})
of the Boltzmann equation on a subset $V$ of a manifold determines the earliest light observation sets on a  subset of the manifold which properly contains $V$. We will define such a set below. % The set $W$ depends on $V$ set by a causality condition. %the location of the sources and where measurements are made. %determined by casuality conditions. that depends o, which depends naturally on the set where  described in Section \ref{time_sep_fnct} for subsets of the manifold satisfying causuality properties. 
After proving this, the main result of this paper, Theorem \ref{themain}, will follow by applying~\cite[Theorem 1.2]{Kurylev2018}, which states that the earliest light observation sets determine the Lorentzian metric structure of the manifold up to conformal class. % Theorem 1.2 of Kurylev, Lassas, and Uhlmann \cite{Kurylev2018}. 
%In contrast to the result, in our setting the source-to-solution data allows us to recover the conformal factor, and hence the complete metric on the unknown set. This is shown in the last section. 
 
% \subsection{Diffeomorphism $W_1 \cong W_2$}\label{uptocon}
  
 %To prove the uniqueness statement of Theorem \ref{themain}, we now consider the following setting. 
 
From this point onwards, we assume that $(M_1,g_1)$ and $(M_2, g_2)$ are two geodesically complete, globally hyperbolic, $C^\infty$-Lorentzian manifolds, which contain a common open subset $V$ and %~\f{I did this assumption here early on, since we will use it anyhow.} 
 \[
g_1|_V=g_2|_V.  
 \]
 We assume that $\hat\mu:[-1,1]\to V$ is a given future-directed timelike geodesic. Let $\mathcal{A}\subset \R^{n-1}$, the family of paths $(\mu_a)_{a\in \mathcal{A}}$, and the subset $\mathcal{U}\subset V$ be as in the Section \ref{time_sep_fnct}.

% % %  
 For $-1< s^{-} < s^{+} < 1$, we set $x^{\pm}:= \hat\mu(s^{\pm})\in V$ and define%~\f{Removed $\lambda$ from the notation. } %subset of $M_1$ and $M_2$  and let 
 \begin{align}
 W_1 &:=  I^-(x^{+}) \cap I^+(x^{-})\subset M_1 \ \ \text{defined with respect to $(M_1,g_1)$},\label{ppa1}
 \\
 W_2 &:=  I^-(x^{+}) \cap I^+(x^{-})\subset M_2 \ \ \text{defined with respect to $(M_2,g_2)$}\label{ppa2}.
 \end{align}
 Additionally, for $\lambda=1,2$, let $A_\lambda$ be an admissible collision kernel (see Definition \ref{good-kernels}) with respect to the space $(M_\lambda,g_\lambda)$ and write $\Phi_{\lambda,L^+V}$ for the source-to-solution map for light observations (see Equation \eqref{light-source-soln}) associated to the relativistic Boltzmann equation \eqref{boltzmann} with respect to $g_\lambda$.  The notation $\Phi_\lambda$ denotes the full source-to-solution map for \eqref{boltzmann}.%Likewise, let $A_2$ be an admissible collision kernel with respect to $(M_2,g_2)$ and write $\Phi_2$ for the source-to-solution map for light observations associated to the relativistic Boltzmann problem \eqref{boltzmann} with respect to $g_2$. 
 
% We use the following shorthand notation throughout the rest of this paper: if there exists an isometry $\Xi: (V,g_1|_V) \to (V,g_2|_V)$ such that $\Phi_{2,L^+V}  =  (T \Xi^{-1})^* \circ \Phi_{1,L^+V} \circ T\Xi^*$, we write $\Phi_{1,L^+V}=\Phi_{2,L^+V}$ (and likewise for the maps $\Phi_\lambda$).
 
In the above setting we prove:
\begin{repproposition}{ss_map_first_obs}
Let $\Phi_{1,L^+V}$ and $\Phi_{2, L^+V}$ be the above source-to-solution maps for light observations.  
%Assume that the conditions of Theorem \ref{themain} are satisfied for $(M_j,g_j)$, $j=1,2$. %, fix $w_{M_1}\in W_1$ and let $w_{M_2}\in W_2$ be as described above. 
Then  $\Phi_{1,L^+V}=\Phi_{2, L^+V}$ implies
%\[
%\mathcal{E}_{\mathcal{U}} (w_{M_1}) = \mathcal{E}_{\mathcal{U}} (w_{M_2})
%\]
\[
\mathcal{E}^1_{\s\mathcal{U}}({W_1}) = \mathcal{E}^2_{\s\mathcal{U}}({W_2}). % (w_{M_2})
\]
\end{repproposition}

We prove Proposition~\ref{ss_map_first_obs} by showing that $\Phi_{1,L^+V}=\Phi_{2,L^+V}$ implies the existence of a diffeomorphism
\[
F:W_1 \rightarrow W_2 \]
 which satisfies 
 \[
 \mathcal{E}_{\s\mathcal{U}}^1(w_1) = \mathcal{E}_{\s\mathcal{U}}^2(F(w_1)),\quad w_1\in W_1.\]
 
To construct the map $F:W_1 \rightarrow W_2$, consider the observation time functions on $(M_j,g_j)$, $j=1,2$ which we denote by $f_{a,j}^{\pm}$. For each 
\[
w_1 \in W_1\subset M_1,
\]
we define $\eta_{w_1}$ to be an optimal future-directed light-like geodesic in $M_1$ such that
\begin{equation}\label{9090sdj}
\eta_{\s {w_1}} (0) = \hat\mu (f_{\hat{a},1}^- (w_1) ) \text{ and }\eta_{\s {w_1}}(T) = w_1 \text{ for some }T>0.
\end{equation}
%and $a\in \mathcal{A} \setminus \hat{a}$.
By the definition of $W_1$ such $\eta_{\s {w_1}}$ exists. In the following, $w_1$ will be fixed and we abbreviate
\[
 \eta_1:=\eta_{\s w_1}.
\]

% We also use the notation
% \[
%  \eta_{\s w_1}:=\eta_1
% \]
% to remind about the dependence of $\eta_1$ on the point $w_1\in M_1$.

Since $\mathcal{U}$ is open and $\text{dim}(M_1)=\nnn>2$, we may choose
%\begin{equation}\label{9090sdjl}
%$a \in \mathcal{A} $ such that $a\neq \hat a$ and on the maximal (inextensible) domain $\eta_{\s {1}}$ does not hit  $\mu_a(-1,1)$.  %\quad \eta_{\s {1}} \cap \mu_a(-1,1) = \emptyset ,
%\end{equation}
%We fix 
another future-directed optimal light-like geodesic $\widetilde{\eta}_{\s 1}$ that is not tangential to $\eta_1$ and satisfies
\begin{equation}\label{9090sdjlk}
\widetilde{\eta}_{\s {1}}(0) = \mu_a (f_{a,1}^-(w_1)) \text{ and }\widetilde{\eta}_{\s {1}}(\widetilde{T}) = w_1\text{ for some }\widetilde{T}>0,  \quad  a\in \mathcal{A} \setminus \hat{a}.
\end{equation}
%One may also assume that the initial value $\widetilde{\eta}_{\s {1}}(0)$ belongs to the same Cauchy surface as $\eta_1(0)$. 
Since both segments $\eta_1|_{[0,T]},\tilde\eta_1|_{[0, \widetilde{T}]}$ are optimal, the shortcut argument implies that $\eta_{\s {1}}(s)$, $s>0$ and $\widetilde{\eta}_{\s {1}}(s')$, $s'>0$ intersect the first time at $w_1$. 

% % % % 

In the lemma below we approximate the light-like geodesics $\eta_{1}$ and $\tilde{\eta}_{1}$ by  time-like geodesics
\[
\gamma_{(\hat{x},\hat{p})}(s) \text{ and } \gamma_{(\hat{y},\hat{q})}(s) ,  \quad s>0, \quad (\hat{x}, \s \hat{p})\in \mathcal{P}^+V, \quad  (\hat{y}, \s \hat{q})  \in \mathcal{P}^+V
\]
that \emph{intersect for the first time at $w_1$ as geodesics in $M_1$}.
%that as geodesics in $M_1$ intersect first time at $w_1$ and  satisfy 
%respectively. 
%This is done as follows. %Here $(\hat{x}_l,\hat{p}_l)\in P^{m_1}M_1$ and $(\hat{x}_l,\hat{p}_l)\in P^{m_2}M_1$.~\f{Can we replace $M_1$ by $W_1$ or $V$ here?} 
%By Lemma \ref{druqks} in Appendix, % given $m_1,m_2>0$  and $\mathcal{U}\subset V$ as defined in section \ref{time_sep_fnct}, 
%there exists a sequences $(\hat{x}_l,\hat{p}_l) \in P^{m_1} (\mathcal{U} \cap W_1 \cap W_2) $ and $(\hat{y}_l,\hat{q}_l) \in  P^{m_2} (\mathcal{U} )$, and positive numbers $\lambda_l$ and $\lambda_l'$ (which tend to infinity along $l$), such that 
One may always fix the geodesics such that $\hat{x}$ and $\hat{y}$ belong to a  same  Cauchy surface and 
%\[
$(\hat{x}, \s \hat{p}) $ is arbitrarily near the curve   $(\gin,\dgin)$, and $(\hat{y}, \s \hat{q})  $ is arbitrarily near the curve $  (\ginp, \dginp)$.
%\]
%as $l\to \infty$.
Notice the abuse of notation: $\gamma_{(\hat{x},\hat{p})}$, $\gamma_{(\hat{y},\hat{q})}$ may refer to a pair of geodesics either in $(M_1,g_1)$ or $(M_2,g_2)$. 
 Due to global hyperbolicity we may always redefine the initial vectors by sliding them along the geodesic flow so that $\hat{x}$ and $\hat{y}$ lie in a Cauchy surface. 
% The reason why we need to consider approximative geodesics is that our 
 Let $\eta_2$ and $\tilde{\eta}_2$ be the geodesics in $(M_2,g_2)$, which %let $\tgin$ and $\tginp$ be the analogous geodesics 
 have the same initial data  with $\eta_{1}$ and $\tilde{\eta}_{1}$ respectively:
 \begin{equation}\label{last_geo}
  \dot{\eta}_2(0)=\dot{\eta}_{1}(0)\in T_{\eta_{1}(0)}\mathcal{U} \text{ and } \dot{\tilde{\eta}}_2(0)=\dot{\tilde{\eta}}_{1}(0)\in T_{\tilde{\eta}_{1}(0)}\mathcal{U}.
 \end{equation}
 (Recall that $ \mathcal{U} \subset V$ is a mutual set of $M_1$ and $M_2$ so that this makes sense.)

 %$(\gin(0), \dgin(0)) = ( \tgin(0),\dtgin(0))$ and $(\ginp(0), \dginp|_{s=0}) = ( \tginp(0),\dtginp|_{s=0})$. 
% %  {\color{gray}
% %  Let $w_{M_2}$ (which at this point may or may not lie in $W_2$ or even exist) be the first intersection of $\tgin (s)$, $s>0$ and $\tginp (s') $, $s'>0$.
% %  Assuming that the conditions of Theorem \ref{themain} are satisfied, each $w_{M_1}$ in $W_1$ is connected to corresponding $w_{M_2}$ in $W_2$. This is mainly due to identical source-to-solution operators. 
% %  }
 
 %By assuming the conditions of Theorem~\ref{themain},~\f{Write precisely after the claim of Theorem~\ref{themain} is fixed/finalized.} 
We thus define  
 \[
F :W_1 \rightarrow W_2  
 \]
as the map which assigns a given point $w_1\in W_1$ to the first intersection of $\eta_2(s)$, $s>0$ and $\tilde{\eta}_2(s')$, $s'>0$ denoted by $w_2\in M_2$.  For the assignment $w_1\mapsto w_2$ to be well-defined, we of course need to show that the first intersection $w_2$ exists and lies in $W_2$. There are also many choices for the geodesics $\eta_1$ and $\tilde{\eta}_1$ on $(M_1,g_1)$, which are used to define $\eta_2$ and $\tilde{\eta}_2$ on $(M_2,g_2)$. Therefore we need also to show that $w_2$ is independent of our choices of $\eta_1$ and $\tilde{\eta}_1$. These necessities are proven in Lemma~\ref{ss_map_determines_intersection} below. %After proving the lemma,  %In this geometry, we use the same source data as constructed % in the case of $(M_1,g_1)$.
 \begin{figure}[h]
\begin{tikzpicture}[scale=2]
%ELLIPSE:
    \draw[fill=gray!20] (0,0) ellipse (0.5cm and 2.7cm); 
%SMALL DETAIL TO CREATE 3D PERSPECTIVE :   
    % \draw[gray,dashed] (0,0) ellipse (0.5cm and 0.1cm);
%VERTICAL LINES:    
    \draw (0,-2.7) -- (0,2.7);
    \draw (0.25,-1.5) -- (0.25,1.5);
%RAYS:    
    \draw[thick] (0,-0.5) -- (1,0.5);
      \draw[blue,thick] (-0.03,-0.7) -- (1,0.5) -- (1.515,1.1);
    \draw[thick] (0.25,-0.5) -- (1,0.5);
      \draw[blue,thick] (0.27,-0.7) -- (1.365,1.1);
%DIAMOND:
    \draw (0,1.8) -- (2,0) -- (0,-1.8) -- (-2,0) -- (0,1.8);
%INTERSECTION POINT:  
    \filldraw[red] (1,0.5)  circle (1pt);
%LABEL OF THE INTERSECTION POINT:    
  \node[red] at (1.3,0.3) {$w_{1}$};
\end{tikzpicture}
\caption{Given $w_{1} \in W_1$ (in red) the light-like geodesics $\gin,\ginp$ (in black) are chosen such that they maximize distance between their initial points in $\mathcal{U}$ (in gray) and $w_{1}$. The timelike geodesics $\gamma_{(\hat{x},\hat{p} )}$ and $\gamma_{(\hat{x},\hat{p} )}$ (in blue) intersect first time at $w_{1}$ and approximate the light-like segments.
%{\color{red}(This is a tikz picture so you can modify it directly in the tex file here. --Antti)}
}
\end{figure}
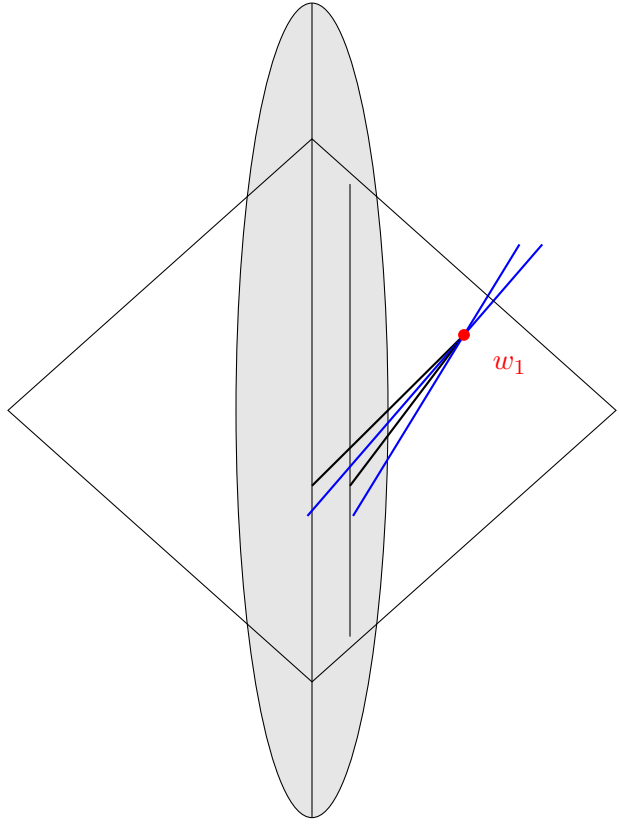
 
 %To prove Proposition~\ref{ss_map_first_obs} we begin with a lemma.
 \begin{lemma}\label{ss_map_determines_intersection}%there is an isometry $\Xi: (V,g_1|_V) \to (V,g_2|_V)$ such that $\Phi_{2}  =  (T \Xi^{-1})^* \circ \Phi_{1} \circ T\Xi^*$
 
 Let $(M_j,g_j)$, $\mathcal{U}\subset V \subset M_j$, $W_j \subset M_j$, $\Phi_{j}$ for $j=1,2$ be as described above. %the source-to-solution maps described above and assume that the conditions of Theorem \ref{themain} are satisfied for both $M_j$, $j=1,2$.~\f{Write precisely after the claim of Theorem~\ref{themain} is fixed/finalized. Let's use two manifolds $M_1$ and $M_2$ in the statement.} 
 Let $w_{1} \in W_1$  and consider light-like future-directed geodesics $\eta_{1}(s)$, $s>0$, $ \tilde{\eta}_{1}(s')$, $s'>0$ in $(M_1,g_1)$ with \eqref{9090sdj} and \eqref{9090sdjlk} intersecting the first time at $w_1$. Let $\eta_2$ and $\widetilde\eta_2$ be the associated light-like geodesics in $(M_2,g_2)$ with the initial conditions $( \eta_2(0), \dot\eta_2(0)) = ( \eta_1(0), \dot\eta_1(0))$ and $( \tilde\eta_2(0), \dot{\tilde\eta}_2(0)) = ( \tilde\eta_1(0), \dot{\tilde\eta}_1(0))$. 
% \[
%\eta_{1},\  \tilde{\eta}_{1},\ \eta_2,\ \tilde{\eta}_2,\ \gamma_{(\hat{x}_l,\hat{p}_l)} \text{ and } \gamma_{(\hat{y}_l,\hat{q}_l)}
% \]
%be the geodesics described above in the lines between \eqref{9090sdj} and \eqref{last_geo}. 
Then the condition $\Phi_{1,L^+V}=\Phi_{2,L^+V}$ implies the following:
\begin{enumerate}
\item %For every $w_{M_1}\in W_1$ 
There exists the first intersection  $w_2$  of $\tgin(s)$, $s>0$ and $\tginp(s')$, $s'>0$ in $(M_2,g_2)$. Moreover, $w_2 \in W_2$.  % the point $w_{2}$ exists.  
\item The first intersection point $w_{2}$ is independent from the choice of the geodesics $\gin $, $\ginp $ satisfying the required conditions above. %  yields  the same $w_{2}$. 
\item For every  pair $(\hat{x},\hat{p}) \in \mathcal{P}^+ V$ and $(\hat{y},\hat{q}) \in   \mathcal{P}^+ V $ 
%(elements of the sets (\ref{linea1}),(\ref{linea2})) 
with the geodesics $\gamma_{(\hat{x},\hat{p})}(s)$, $s>0$, and $\gamma_{(\hat{y},\hat{q})}(s')$, $s'>0$ in $(M_1,g_1)$ intersecting the first time at $w_{1}$, the associated geodesics in $(M_2,g_2)$ intersect for the first time at $w_2$. 
%$w_{M_2}$ above is the first intersection of the timelike estimates $\gamma_{(\hat{x},\hat{p})}$ and $\gamma_{(\hat{y},\hat{q})}$ in the space $(M_2,g_2)$. 
%(In your notation above you've used tildes to distinguish gammas in $M_2$ from those in $M_1$. I haven't). -Antti}   
\end{enumerate}

\end{lemma}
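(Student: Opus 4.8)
The plan is to combine the two main microlocal tools developed earlier---Proposition~\ref{kooasu}, which says that the limit $\sslimit_P$ sees exactly the optimal light ray $\gamma\cap V_e$ emanating from the first interaction point, and Lemma~\ref{estupido}, which says that $\text{supp}(\sslimit_P)$ is nonempty precisely when the projected flowouts $\pi(K_{S_1})$ and $\pi(K_{S_2})$ meet inside $\pi(\text{supp}A)$---with the hypothesis $\Phi_{1,L^+V}=\Phi_{2,L^+V}$ to transport these detections between the two spacetimes. The key observation is that for a source pair $(h_1^\eps,h_2^\eps)$ approximating $\chi_R\delta_{S_1}$ and $\delta_{S_2}$, the object $\sslimit_P = \lim_{\eps\to0}(\Phi''(0;h_1^\eps,h_2^\eps)\circ P)$ is, via Lemma~\ref{deriv} and the fact that $P$ takes values in $L^+V$, computable purely from $\Phi_{\lambda,L^+V}$ restricted to $V$; hence $\Phi_{1,L^+V}=\Phi_{2,L^+V}$ forces $\sslimit^1_P=\sslimit^2_P$ as distributions on $V$ for every admissible choice of $S_1,S_2,P$.

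First I would set up the construction: given $w_1\in W_1$ and geodesics $\gamma_{(\hat x,\hat p)},\gamma_{(\hat y,\hat q)}$ in $(M_1,g_1)$ meeting first at $w_1$, use Corollary~\ref{coro_renerew} to build an $(n-2)$-dimensional submanifold $S_1\subset\PP^+(\mathcal{C}\cap V)$ through $(\hat y,\hat q)$ and set $S_2=\{(\hat x,\hat p)\}$, so that the flowouts $K_{S_1}$ and $K_{S_2}$ have the admissible intersection property in $\pi(\text{supp}A_\lambda)$ \emph{simultaneously} on both manifolds (this simultaneity is exactly what Corollary~\ref{coro_renerew} provides). Since the common set $V$ where the sources live is isometric, $S_1$ and $S_2$ are literally the same submanifolds on $M_1$ and $M_2$. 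Then choose $P=P_e$ from Lemma~\ref{ooiiuu} adapted to the optimal light ray from the first interaction point. On $(M_1,g_1)$ the projected flowouts meet first at a point $z_1$ which---because $\gamma_{(\hat x,\hat p)}$ and $\gamma_{(\hat y,\hat q)}$ meet first at $w_1$ and the admissible intersection property forces $\pi(K_{S_2})$ to be that single geodesic---can be taken to be $w_1$ itself (or arbitrarily near it by localizing the sources around $(\hat x,\hat p),(\hat y,\hat q)$). Proposition~\ref{kooasu} then gives $\text{singsupp}(\sslimit^1_{P_e})=\gamma\cap V_e\neq\emptyset$, where $\gamma$ is the optimal lightlike geodesic from $w_1$.

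Now I would push this to $M_2$. Since $\sslimit^1_{P}=\sslimit^2_{P}$ for \emph{every} section $P$ of $L^+V$, Lemma~\ref{estupido} applied on $(M_2,g_2)$ shows $\pi(K_{S_1})\cap\pi(K_{S_2})\cap\pi(\text{supp}A_2)\neq\emptyset$, so a first intersection point $z_1^{(2)}$ exists in $M_2$; since the flowouts are geodesic flowouts of $S_1,S_2\subset V$, this first intersection is exactly the first meeting $w_2$ of $\gamma_{(\hat x,\hat p)}$ and $\gamma_{(\hat y,\hat q)}$ in $(M_2,g_2)$. Applying Proposition~\ref{kooasu} on $(M_2,g_2)$ with the same $P_e$, we get $\text{singsupp}(\sslimit^2_{P_e})=\gamma^{(2)}\cap V_e$, the optimal lightlike geodesic from $w_2$. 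But $\sslimit^1_{P_e}=\sslimit^2_{P_e}$, so $\gamma\cap V_e=\gamma^{(2)}\cap V_e$ as subsets of $V$; since $V$ is a mutual open set and lightlike geodesics through a point of $V$ with initial data in $V$ are determined by $g|_V$ near that point, the initial data of $\gamma$ and $\gamma^{(2)}$ in $V$ agree, and both are future-directed optimal lightlike geodesics hitting $V_e$. This is what pins down $w_2$: it is the first intersection of the two timelike geodesics $\eta_2,\tilde\eta_2$ (equivalently $\gamma_{(\hat x,\hat p)},\gamma_{(\hat y,\hat q)}$ in the limit) in $(M_2,g_2)$, and the singular support data forces $w_2$ to lie on the (unique, $g|_V$-determined) optimal ray detected in $V_e$. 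Independence of $w_2$ from the choice of approximating geodesics (claim~(2)) and from the choice of $\eta_1,\tilde\eta_1$ realizing the first intersection at $w_1$ (claim~(3)) follows because any two admissible choices produce, on $M_2$, the same measured singular support $\gamma\cap V_e$, hence the same detected optimal ray, hence the same endpoint $w_2$; and $w_2\in W_2$ because $w_2\in I_2^+(x^-)$ (it lies on future-directed causal curves from $\eta_2(0),\tilde\eta_2(0)\in V$, which are in turn reached from $x^-$) and $w_2\in I_2^-(x^+)$ (the optimal ray from $w_2$ reaches a point $\mu_a(f^+_{a,2}(w_2))\ll x^+$ by the assumption $\hat\mu(s^+)\ll\mu_a(1)$).

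\textbf{Main obstacle.} The delicate point is the identification of the first interaction point $z_1$ on $M_j$ with the first geodesic intersection $w_j$, and the transfer of ``first-ness'' across the two spacetimes. One must argue carefully that localizing the sources $h_1^\eps,h_2^\eps$ around $(\hat y,\hat q)$ and $(\hat x,\hat p)$ (using the freedom in Corollary~\ref{coro_renerew} and the localization discussion in Remark~\ref{remarko}) makes $\pi(\text{supp}u_j^\eps)$ collapse onto the single geodesics $\gamma_{(\hat x,\hat p)},\gamma_{(\hat y,\hat q)}$, so that the first meeting point of the \emph{projected flowouts} genuinely coincides with the first meeting point of the two geodesics---and that this holds on $M_2$ too, where a priori the geodesics could meet earlier or not at all. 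The resolution is that nonemptiness of $\text{supp}(\sslimit^2_P)$ for all $P$ (inherited from $M_1$) forces at least one interaction in $\pi(\text{supp}A_2)$, hence the geodesics do meet in $M_2$; and the optimality/shortcut argument (Lemma~\ref{shortcut_lemma}) applied to the two optimal lightlike segments $\eta_2|_{[0,T_2]},\tilde\eta_2|_{[0,\tilde T_2]}$ (which share the initial data of the optimal segments in $M_1$, hence are themselves optimal, hence cannot be prolongations of one another) guarantees their first intersection is a single well-defined point. Keeping the bookkeeping between ``interaction of singularities'' and ``geodesic intersection'' consistent on both manifolds at once is where most of the care goes.
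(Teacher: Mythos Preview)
Your overall framework is right---you correctly identify Proposition~\ref{kooasu}, Lemma~\ref{estupido}, Corollary~\ref{coro_renerew}, and the transfer via $\Phi_{1,L^+V}=\Phi_{2,L^+V}$ as the engine---but there are two genuine gaps.

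\textbf{Gap in (1): passing from timelike to lightlike intersections.} Lemma~\ref{estupido} and Proposition~\ref{kooasu} apply to the \emph{timelike} approximating geodesics $\gamma_{(\hat x,\hat p)},\gamma_{(\hat y,\hat q)}$, and your argument indeed gives that these meet in $M_2$. But claim~(1) is about the \emph{lightlike} geodesics $\eta_2,\tilde\eta_2$. The paper bridges this by taking a sequence $(\hat x_l,\hat p_l)\to(\eta_1(0),\dot\eta_1(0))$, $(\hat y_l,\hat q_l)\to(\tilde\eta_1(0),\dot{\tilde\eta}_1(0))$ of timelike data whose geodesics all meet first at $w_1$ in $M_1$; then the first intersections $z_{2,1}(l)$ in $M_2$ are trapped in the compact set $J_2^+(\hat\mu(s_0))\cap J_2^-(e)$ (this uses \eqref{ralksd347} and global hyperbolicity), a subsequence converges, and the flowout parametrization forces the limit to lie on both $\eta_2$ and $\tilde\eta_2$. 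Your sketch asserts $\eta_2,\tilde\eta_2$ meet without this compactness step, and the shortcut argument you invoke only gives uniqueness, not existence.

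\textbf{Gap in (2)--(3): ``same detected ray $\Rightarrow$ same source point'' is false.} You argue that two admissible choices give the same singular support $\gamma\cap V_e$ on both manifolds, hence the same optimal ray in $V_e$, hence the same first-intersection point $w_2$. The last implication fails: two distinct points $z^{(1)}\neq z^{(2)}$ in $W_2$ can lie on the past extension of the \emph{same} lightlike geodesic through $V_e$ and thus produce identical $\gamma\cap V_e$. The paper's proof handles this with an extra combinatorial step: it compares the three overlapping pairs $\{(\hat x,\hat p),(\hat y,\hat q)\}$, $\{(\hat x,\hat p),(\hat y',\hat q')\}$, $\{(\hat x',\hat p'),(\hat y',\hat q')\}$ and uses the auxiliary Lemma~\ref{intersection_lightlike_geo} to place each $z^{(k)}$ on the fixed lightlike geodesic $\gamma_2$. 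Since $z^{(1)},z^{(2)}$ also both lie on the \emph{timelike} geodesic $\gamma_{(\hat x,\hat p)}$, if $z^{(1)}\neq z^{(2)}$ then $\gamma_{(\hat x,\hat p)}$ meets $\gamma_2$ twice; the shortcut argument (Lemma~\ref{shortcut_lemma}) then produces a timelike path from $z^{(1)}$ to $e$, yielding an observation on $\hat\mu$ strictly earlier than $e$ and contradicting $e=\hat\mu(f^+_{\hat a,1}(w_1))$. This earliest-observation contradiction is the missing idea; without it, the data $\gamma\cap V_e$ alone does not separate points along the ray.
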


%In this section, we show that our source-to-solution map data captures information about the earliest light observation sets for points in $W_{1}$ and $W_{2}$. That is,

%\begin{lemma}\label{ewekk}
%Under the assumptions of Theorem \ref{themain}, for every $w_{M_1}\in W_1$ the geodesics $\tgin (s)$, $s>0$ and $\tginp (s') $, $s'>0$ of the space $(M_2,g_2)$ intersect in $W_2$.
%Further, the first intersection point $w_{M_2}$ in $(M_2,g_2)$ is independent of the choice of the light-like future directed geodesics $\gin $, $\ginp $ defined as above and satisfies
%\[
%\mathcal{E}_{\mathcal{U}} (w_{M_1}) = \mathcal{E}_{\mathcal{U}} (w_{M_2})
%\]
%\end{lemma}

%\tjb{TB: I suggest splitting Lemma \ref{ewekk} into the following:}

%%%%%%%%%%%%%%%%%%%%%%%%%%%%%%%%%%
%
% Proof of Lemma for SS map determines intersection points
%

%Now are ready to prove Lemma \ref{ss_map_determines_intersection}:
\begin{proof}[Proof of Lemma \ref{ss_map_determines_intersection}]
%As $\Phi_{1}=\Phi_{2}=:\Phi$, by Lemma
%Again, simplify the nomenclature by denoting the image (set) of a geodesic in $M$ with the same symbol as the geodesic. 
 Let $w_1\in W_1$ and let $e \in \mathcal{L}^+ ( w_{1} )$ be a first observation of $w_{1}$ in $\mathcal{U}\subset V \subset M_1$. That is, $e\in \mathcal{E}_{\mathcal{U}}^1(w_1)$. By the fact \eqref{eioreiie-eq} there is a point $a\in \mathcal{A}$ and the corresponding path $\mu_a$ such that
 \begin{equation}\label{diieee4}
 e= \mu_{a}(f_{a,1}^+(w_{1} ) ) \in (M_1,g_1).
 \end{equation}
Let $\gamma_{1}$ be the \tbl{optimal} geodesic in $(M_1,g_1)$ as in Proposition~\ref{kooasu} such that $ \gamma_1(0)=w_{1}$ and $ \gamma_1(1)=e$. %that connects $w_{1}$ to $e$.
%$\eta_1$~\f{The claim of this lemma needs to rewritten.} in $(M_1,g_1)$  $ \gamma_1(0)=w_{1}$ that connects $w_{1}$ to $e$.~\f{Should it be maximizing geodesic as in ?}

We approximate the light-like geodesics $\eta_1$ and $\tilde{\eta}_1$ by time-like geodesics from $(\hat{x},\hat{p})\in  \mathcal{P}^+  V $ and $(\hat{y},\hat{q})\in \mathcal{P}^+ V$ as described earlier by requiring that for the geodesics in $(M_1,g_1)$ their first intersection for positive parameter values is at $w_1$. 
%To simplify the notation let us fix $l=l_0$ for now and write
%\begin{equation}\label{eq:shorthand_not_for_xj_pj}
% (\hat{x},\hat{p}) :=(\hat{x}_{l_0},\hat{p}_{l_0}) \text{ and } (\hat{y},\hat{q}):=(\hat{y}_{l_0},\hat{q}_{l_0}).
%\end{equation}
\tbl{Let $S_2=\{\hat{x},\hat{p}\}\subset P^{m_1}V$. By Corollary \ref{coro_renerew} 
%Lemma~\ref{two_manifold_jacobi_const},
 there exists a submanifold $S_1\subset \mathcal{P}^+ V$ with $(\hat{y},\hat{q})\in S_1$ such that the geodesic flowouts $Y_{j,1} := K_{S_1;M_j} \subset \mathcal{P}^+M_j$ of $S_1$ and $Y_{j,2}:= K_{S_2;M_j} \subset  \mathcal{P}^+M_j$ of $S_2$ in $(M_j ,g_j)$ have an admissible intersection property in the sense of Definition \ref{intersection_coords}} for both $j=1,2$. Therefore, we are in the setting for which the earlier results of this this section and Section~\ref{microlocal-collision} are valid. %Here $K_{1,S_j}$, $j=1,2$, are the geodesic flowouts of $S_j$ in $(M_1,g_1)$. %The intersection of $\pi(Y_{1,1})$ and $\pi(Y_{1,2})$  $z_{1,a}$, $a=1,\ldots, k_1$, as
With this in mind, let us write  
\begin{equation}\label{3249reeeredfds}
 \pi(Y_{1,1})\cap\pi(Y_{1,2})=\{z_{1,1},z_{1,2},\ldots,z_{1,k_1}\}\subset M_1,
\end{equation}
where the intersection points $z_{1,l}$, $l=1,\ldots, k_1$, of $\pi(Y_{1,1})$ and $\pi(Y_{1,2})$ are ordered causally as $z_{1,1}\ll z_{1,2}\ll\cdots \ll z_{1,k_1}$. (The index $1$ in $z_{1,l}$ 
%, $K_{1,S_j}$ and $Y_{1,j}$ 
refers to the manifold $(M_1,g_1)$)
Notice that $z_{1,1} = w_1$. 
For $\eps>0$, let 
\[
 h_1^\eps\in C_c^\infty(\SP V)\text{ and } h_2^\eps\in C_c^\infty(\SP V)                                                                                                                                                                                                                                                                                            \]
be the sequences of approximative delta functions of $S_1$ and $S_2$ described in~\eqref{regu}.

%by 
%$\hat\mu ( f_{\hat{a}}^+ (w_{M_1}) )$ 
%in the space $(M_1,g_1)$.~\f{Remind why exists.} 
% % % % % %Let $\tilde{\eta}_1$ be another geodesic in $(M_1,g_1)$ with the properties described in~\eqref{9090sdjl}--\eqref{9090sdjlk}. Let also $\eta_2$ and $\tilde{\eta}_2$ be geodesics in $(M_2,g_2)$ described in~\eqref{last_geo}. %Since the geodesic segment maximizes the length function, 
%We denote by $\gout$ also the image of the geodesic. 

%We denote the images of geodesics by the symbol of the geodesics. 
%We also consider geodesics only for positive geometric parameter values if not otherwise stated. Especially, 
%\tbl{When we say that geodesics intersect, we mean that the geodesics intersect for some positive geodesic parameter values.}
%We also consider without further  consider geodesic  for non-negative geodesic parameters.
By Proposition \ref{kooasu} there is a neighbourhood $V_e\subset V$ of $e$ and a section $P_e:V_e\to  L^+V_e$ %,  $x\in V_e$, 
%$e:=\hat\mu ( f_{\hat{a}}^+ (w_{M_1}) )$ 
 such that the distribution %~\f{Do we want here $j=1$ or $j=1,2$?} 
 \[
  \sslimit_1:=\lim_{\epsilon \rightarrow 0} (  \Phi_1'' (0;\s h_1^\epsilon,\s h_2^\epsilon)  \circ P_e )\in \mathcal{D}'(V_e),
 \]
%\sslimit_j:=\lim_{\epsilon \rightarrow 0} (  \Phi_j'' (0;h_1^\epsilon, h_2^\epsilon)  \circ P_e )$, $j=1,2$ 
 satisfies
\[
\text{singsupp}( \sslimit_1) = \gout  \cap V_e.
\]

%Since $V$ is a mutual set of $M_1$ and $M_2$, we also have that $S_1\subset  \mathcal{P}^+M_2$ and $S_2\subset  \mathcal{P}^+M_2$. We denote the respective geodesic flowouts of $S_1$ and $S_2$ with respect to the metric $g_2$ on $M_2$ by $Y_{2,1}$ %and $Y_{2,1}$. 
The index $2$ in $Y_{2,j}$, and in $z_{2,l}$, $\mathcal{L}_2^+$ and $J_2^+$ below, refers to corresponding quantities on the manifold $(M_2,g_2)$. The submanifolds of $Y_{2,1}$ and $Y_{2,2}$ have admissible intersection property by their definition above. %~\f{Need to change notation of Lemma~\ref{two_manifold_jacobi_const}, since $\Phi$ is also the source to solution map.} 

The condition \tbl{$\Phi_{1,L^+V}=\Phi_{2,L^+V}$} %~\f{Do we actually want $\Phi_{j,L}$ here?} 
implies 
\[
 \sslimit_1:=\lim_{\epsilon \rightarrow 0} ( \Phi_1'' (0;\s h_1^\epsilon,\s h_2^\epsilon)  \circ P_e )=\lim_{\epsilon \rightarrow 0} ( \Phi_2'' (0;\s h_1^\epsilon,\s h_2^\epsilon)  \circ P_e ) =:\sslimit_2.
\]
%\sslimit_1= \sslimit_2$ and 
Therefore, we have $\text{singsupp}( \sslimit_2) = \gamma_1 \cap V_e\subset V$ and
\[
 e \in \text{supp} (\sslimit_1)\cap \text{supp} (\sslimit_2).
\]
%   e \in \text{supp} (\sslimit_j )$ for both $j=1,2$.  
Consequently,  the sets $\pi(Y_{2,1})$ and $\pi(Y_{2,2})$ intersect by Lemma \ref{estupido}. Let us denote 
\[
 \pi(Y_{2,1})\cap\pi(Y_{2,2})=\{z_{2,1},z_{2,2},\ldots,z_{2,k_2}\}\subset M_2,
\]
where the intersection points of $\pi(Y_{2,1})$ and $\pi(Y_{2,2})$ are ordered as $z_{2,1}\ll z_{2,2}\ll\cdots \ll z_{2,k_2}$.
At this point we do not know whether $z_{2,1}$ is $w_2$ or not. 
% Let us denote the intersection points of $\eta_2$ and $\tilde{\eta}_2$ by $z_{l,2}$, $l=1,2\ldots, k_2<\infty$, where $z_{1,2}$ is the first intersection point.
%first intersection point by $z_{1,2}$ and the other possible intersection points of $\eta_2$ and $\tilde{\eta}_2$ by $z_{l,2}$, $l=2,\ldots, k_2<\infty$. %, where $k_2<\infty$. 
%re is the first intersection point ~\f{What are? Some geodesics? This part should likely be after defining $\varphi_2$} $z_{1,2}$ in $M_2$  and
By applying Lemma~\ref{estupido} again, we conclude for both $j=1,2$ that
\begin{equation}\label{tytyt}
\gamma_1  \cap V_e \subset \bigcup_{l=1}^{k_j} \mathcal{L}_2^+ (z_{j,l}) \subset J_j^+ (z_{j,1})  \subset (M_j,g_j).
\end{equation}
%(The index $2$ in $z_{l,2}$, $\mathcal{L}_2^+$ and $J_2^+$ refers to corresponding quantities on the manifold $(M_2,g_2)$.~\f{Write better.})
 %$z_{l,2}\in M_2$, $l=1,\ldots,k$. 
In particular, 
\begin{equation}\label{ralksd347}
%  \hat\mu ( f_{\hat{a} }^+ (w_{M_1})) =: 
  e \in  J_j^+(z_{j,1}) \subset (M_j,g_j).
\end{equation}
%for both $j=1,2$.  
%As Lemma~\ref{estupido} also holds for the space $(M_1,g_1)$, the conclusions \eqref{tytyt} and \eqref{ralksd347} hold in both spaces $(M_1,g_1)$ and $(M_2,g_2)$. 

Recall that $V\subset M_j$ for both $j=1,2$ and $g_1|_V=g_2|_V$. Let $\gamma_2$ be the geodesic in $(M_2,g_2)$, which has the same initial condition as $\gamma_1$ at $e$, that is, $(\gamma_2(1) , \dot\gamma_2(1)) = (\gamma_1(1) , \dot\gamma_1(1))$. 
%~\f{Is this the correct definition for $\gamma_2$? It was missing. Write precisely.} %Since by assumption $V\subset M_1\cap M_2$ and $g_1|_V=g_2|_V$, 
%By the assumptions, the manifolds $M_j$, $j=1,2$, and hence 
It follows that the geodesics 
%$\eta_2(t)$ and $\eta_2(t)$ coincide near $e \in V_e \subset V$. 
$\gamma_1$ and $\gamma_2$ coincide in $ V_e \subset V$. 
%By redefining the neighbourhood $V_e \subset V$ of $e$ smaller, if necessary, we may assume that $\gamma_1 \cap V_e =\gamma_2 \cap V_e $. 
Thus,
\begin{equation}\label{uuipaa}
\gamma_l  \cap V_e \subset  \bigcup_{h=1}^{k_j} \mathcal{L}_2^+ (z_{j,h}) \subset J_j^+(z_{j,1}) \subset  (M_j,g_j)
\end{equation}
for every combination of $l=1,2$ and $j=1,2$. 
%
%Thus $\Phi_1=\Phi_2$ provides a link between the first light observations in the spaces $W_1$ and $W_2$. 
%Notice that $\gamma_{(\hat{x},\hat{p})}=\gamma_{(\hat{x},\hat{p}),M_j}$ and $\gamma_{(\hat{y},\hat{q})}=\gamma_{(\hat{y},\hat{q}),M_j}$ above depend on $j=1,2$.  
%As $\Phi_1=\Phi_2$ we can repeat the argument on $M_2$ to obtain 
%\begin{equation}\label{ralksd3472}
%%  \hat\mu ( f_{\hat{a} }^+ (w_{M_1})) =: 
%  e \in \mathcal{L}^+ \big[ \gamma_{(\hat{x},\hat{p})} (0,\infty) \cap  \gamma_{(\hat{y},\hat{q})}(0,\infty) \cap W_{2} \big] 
%  \end{equation}
%in the geometry $(M_2,g_2)$. 
%In particular, the first intersection point $z_1$ of the approximative geodesics $\gamma_{(\hat{x},\hat{p})} (s) $ and $  \gamma_{(\hat{y},\hat{q})} (s')$ exists in $(M_2,g_2)$ although it is not necessarily the point $w_{M_2}$. 
%
%\end{proof}
%
%

\noindent\textbf{Proof of (1):} We prove 
%the first claim of Lemma \ref{ss_map_determines_intersection} 
that $\tgin$ and  $\tginp$ intersect in $W_2\subset M_2$ the first time at geodesic parameter times $s>0$ and $s'>0$. %The first intersection will be then the % $s>0$ and $s'>0$ respectively. 
%and i$W_2$ (see~\eqref{ppa2}).
%The  proof of (3) follows from the construction.

%Let us show the existence of the intersection of $\tgin(s)$, $s>0$ and $\tginp(s)$, $s>0$ in $W_2$. 
%First, we show that if the intersection point exists, it must lie in $W_2$. As $\tgin(0) = \hat{\mu}(f_{\hat{a}} (w_{M_1}) )$ it is a consequence of the shortcut lemma (Lemma \ref{shortcut_lemma}) that also the first intersection belongs to $W_2$. 

Fix $e:= \hat\mu( f_{\hat{a}}^+ (w_{1} ) )$, that is, $a= \hat{a}$ in \eqref{diieee4}.  
%From our source-to-solution map data and Lemma \ref{estupido}, we have (\ref{ralksd347}) and \eqref{ralksd3472} holds. This implies 
%Thus, 
%\[
%J^- ( e )
%%f_{\hat{a} }^+ (w_{M_1}) + \delta)
%  % \cap \big[ \gamma_{(\hat{x},\hat{p})}(0,\infty) \cap  \gamma_{(\hat{y},\hat{q})}(0,\infty) \cap K_1 \big] \neq \emptyset
% \cap \{ z_1, \dots, z_k \} \neq \emptyset, \quad (z_l = z_{l,M_j}, \quad j=1,2)
%\]
%which holds in both spaces $(M_j,g_j)$, $j=1,2$.
%viewed in $(M_1,g_1)$ and 
%\[
%J^- ( e )
%%f_{\hat{a} }^+ (w_{M_1}) + \delta)
%   \cap \big[ \gamma_{(\hat{x},\hat{p})}(0,\infty) \cap  \gamma_{(\hat{y},\hat{q})}(0,\infty) \cap K_2 \big] \neq \emptyset
%\]
%in $(M_2,g_2)$.
% and $s^-< f_{\hat{a} }^+ (w_{M_1}) + \delta < s^+$ for $\delta>0$  sufficiently small by the shortcut argument. 
We approximate the light-like geodesics $\eta_1$ and  $\tilde{\eta}_1$ with  sequences $\gamma_{(\hat{x}_l,\hat{p}_l)}$ and $\gamma_{(\hat{y}_l,\hat{q}_l)}$, $l \in \mathbb{N}$ of time-like geodesics $\gamma_{(\hat{x},\hat{p})}$ and $\gamma_{(\hat{y},\hat{q})}$. 
In other words, we choose the geodesics such that for every $l\in \N $ the first intersection $z_{1,1}$ of $\gamma_{(\hat{x}_l,\hat{p}_l)}$ and $\gamma_{(\hat{y}_l,\hat{q}_l)}$ as geodesics in $M_1$ is $w_1$ and the initial values $(\hat{x}_l,\hat{p}_l)$ and $  (\hat{x}_l,\hat{p}_l)$ converge to some points in $ ( \eta_1, \dot\eta_1 ) \cap \OVS V$ and $( \tilde\eta_1, \dot{\tilde\eta}_1 )  \cap  \OVS V$, respectively.  
%\[
%(\hat{x}_l,\hat{p}_l) \to ( \eta_1(t), \dot\eta_1(t) )  \quad \text{and}  \quad  (\hat{x}_l,\hat{p}_l) \to ( \tilde\eta_1(\tilde{t}), \dot{\tilde\eta}_1(\tilde{t}) ) 
%\]
%for some $t<T$ and $\tilde{t} < \widetilde{T}$.
%as $l$ grows.
We may take $\hat{x}_l$ and $\hat{y}_l$ to lie in a fixed Cauchy surface $\mathcal{C}$ in $M_2$ 
and  $\hat{x}_l \in  W_2$ by removing the first terms in the sequence, if necessary.  
One applies the shortcut argument (Lemma \ref{shortcut_lemma}) and convergence of $\hat{x}_l$ to show that $\hat{x}_l \in J^+_2( \hat\mu(s_0) )$ for all indices $l$ and some $s_0\in (s^-,s^+)$. 
%there is $s_0\in (s^-,s^+)$ and a time-like future-directed geodesic in $(M_2,g_2)$ that connects $\hat\mu(s_0)$ to $\hat{x}$. 
Moreover,  $\gamma_{(\hat{x}_l,\hat{p}_l)}(s) \in J^+_2( \hat\mu(s_0) )$ in $M_2$, for all $s>0$, by a similar argument. 
%\begin{figure}
%    \centering
%    \includegraphics[scale=0.25]{pictures/light_obs_indep_IC.pdf}
%    \caption{Temporary picture to help illustrate the geodesics and points in claim 1. {\color{red}What's the message here? To me it looks like $\hat{x}$ is not in the right place. Is this %intentional (e.g. an indirect proof)?--antti}}
%    \label{fig:claim1_setup}
%\end{figure}
Consequently, we have for the first intersections $z_{2,1} = z_{2,1}(l)$ of $\gamma_{(\hat{x}_l,\hat{p}_l)}$ and $\gamma_{(\hat{y}_l,\hat{q}_l)}$ in $M_2$ the condition
\[
z_{2,1} \in \pi(Y_{2,1})\cap \pi(Y_{2,2})=\{\gamma_{(\hat{x},\hat{p})}(s)\in (M_2,g_2): s\geq 0 \}\cap \pi(Y_{2,2}) \subset J^+_2( \hat\mu(s_0) ) \quad \text{in} \quad (M_2,g_2),
 \]
 where we omitted the index $l$. 
% where $l$ refers to 
% Here $\gamma_{2,(\hat{x},\hat{p})}$ is a geodesic in $(M_2,g_2)$ with initial data $(\hat{x},\hat{p})$.
%Notice that the existence of $z_{1,2}$ in $(M_2,g_2)$ is already proven.~\f{Since? Or write differently ``We have proven the existence of $z_{1,2}$.}
From (\ref{ralksd347}) we obtain 
\[
 z_{2,1}(l)\in J^+_2( \hat\mu(s_0) ) \cap J^-_2(e)  \text{ in } (M_2,g_2),
\]
where $z_{2,1}(l)$ is the first intersection of the geodesics $\gamma_{(\hat{x}_l,\hat{p}_l)}$ and $\gamma_{(\hat{y}_l,\hat{q}_l)}$ in $(M_2,g_2)$. 
Thus, there exist sequences $(s_l)$ and $(s_l')$ of positive numbers such that 
\begin{equation}\label{324234325346}
 z_{2,1}(l)= \gamma_{(\hat{x}_l, \s \hat{p}_l)} (s_l) =   \gamma_{(\hat{y}_l, \s \hat{q}_l)}(s_l') \in   J^+_2( \hat\mu(s_0) ) \cap J^-_2(e). % \subset (M_2,g_2)
\end{equation}
Here $s_l>0$ and $s_l'>0$ are so that the geodesics $\gamma_{(\hat{x}_l,\hat{p}_l)}$,  $ \gamma_{(\hat{y}_l,\hat{q}_l)}$ intersect the first time at the geodesic parameter times $s_l$ and $s_l'$. % $s,s'>0$. 
To finish the prove of $(1)$, we show that a subsequence of $z_{2,1}(l)\in M_2$ converges to the first intersection of $\eta_2$ and $\tilde{\eta}_2$. 
% \begin{enumerate}
% \item $z_1^{(2)}(l):= \gamma_{(\hat{x}_l, \lambda_l \hat{p}_l)} (s_l) =   \gamma_{(\hat{y}_l, \lambda_l' \hat{q}_l)}(s_l') \in   J^+( \hat\mu(s_0) ) \cap J^-(e) \text{ in }(M_2,g_2),$ for some sequences $s_l'>0$ and $s_l>0$, and
% \item as $l\to\infty$, the sequence $z_1^{(2)}(l)
% %= \gamma_{(\hat{x}_l,\hat{p}_l)} (s_l))= \gamma_{(\hat{y}_l,\hat{q}_l)}(s_l')
% $ converges in the space $J^+( \hat\mu(s_0) ) \cap J^-(e)$.
% \end{enumerate}

Since $(M_2,g_2)$ is globally hyperbolic, the set $J^+_2( \hat\mu(s_0) ) \cap J^-_2(e)$ is compact. Thus, we may pass to a subsequence so that 
\[
z_{2,1}(l) \text{ converges in } J^+_2( \hat\mu(s_0) ) \cap J^-_2(e)\subset W_2 \text{ as } l\to \infty.
%as $l\to\infty$, the sequence $z_1^{(2)}(l)
% %= \gamma_{(\hat{x}_l,\hat{p}_l)} (s_l))= \gamma_{(\hat{y}_l,\hat{q}_l)}(s_l')
% $ converges in the space $J^+( \hat\mu(s_0) ) \cap J^-(e)$.
% \end{enumerate}
\]
%By the shortcut argument $J^+_2( \hat\mu(s_0) ) \cap J^-_2(e) \subset W_2$ so the limit lies in $W_2$.  
%
%Notice that we may assume that the sequence $\hat{x}_l$ (resp. $\hat{y}_l$) lie in a Cauchy surface in $M_2$  which we denote by $\mathcal{C}$. 
Applying the parametrisation $\overline{\mathcal{P}}^+ \mathcal{C} \times \R \to \overline{\mathcal{P}}^+ M_2$, $(x,p,t) \mapsto \gamma_{x,p}(t)$ near the curve $\eta_2$ (resp. $\tilde{\eta}_2$) to \eqref{324234325346} implies that $s_l$ and $s_l'$ must converge. 
Thus, as the time-like curves in $M_2$ with initial values $(\hat{x}_l,\hat{p}_l)$ and $(\hat{y}_l,\hat{q}_l)$ approximate $\eta_2$ and $\tilde{\eta}_2$, respectively, it follows that there must exist the first intersection point of $\eta_2$ and $\tilde{\eta}_2$ in $W_2$ at the limit of $z_{2,1}(l)$. This is the point $w_2$. 

 \noindent\textbf{Proof of (2) and (3):} 
 %To prove that the first intersection point $w_{2}$ does not depend on the choice of $\gin$ and $\ginp$, we first 
%  To prove this, we first 
%  
%  
%prove the condition (3). % which is then applied to prove the condition 2.~\f{Change the order in the statement of the lemma.} 
 %By excluding a finite number of the first elements in the sequences above, we may localize 
 Recall that the first intersection $w_1\in W_1$ of  the time-like approximations $\gamma_{ (\hat{x},\hat{p}) }$ and $\gamma_{(\hat{y},\hat{q})}$
 %$\gamma_{ (\hat{x},\hat{p}) } (s)$, $s>0$ and $\gamma_{(\hat{y}',\hat{q}')} (s')$, $s'>0$ in $(M_1,g_1)$ 
 %sufficiently close to $w_{M_1}$ so that local injectivity of the exponential map at $w_{M_1}$ guarantees that the first intersection of these curves 
 %takes place exactly at the point $w_{M_1}$ 
 %(cf. proof of Lemma \ref{druqks}). 
%By definition, 
%which, also by definition, 
is also the first intersection $w_1$ of $\gin$ and $\tilde{\eta}_1$ in $(M_1,g_1)$ by definition.  We also know that the first intersection exist for the associated time-like geodesics in $(M_2,g_2)$. 
%takes also place at $w_{M_1}$ and these geodesics are approximated by $\gamma_{ (\hat{x},\hat{p}) } (s)$, $s>0$ and $\gamma_{(\hat{y}',\hat{q}')} (s')$, $s'>0$ intersecting first time at $w_{M_1}$. 
  To prove (2), let $\gino$ and $\ginpo$ be another pair of geodesics that satisfies the  conditions  of $\gin$ and $\ginp$. 
  % replaced with $\gino$ and $\ginpo$ respectively. 
 The first part of the proof above applies also for $\gino$ and $\ginpo$ and the first intersection is obtained as a limit of first intersections for some pair of time-like geodesics $\gamma_{ (\hat{x}_l',\hat{p}_l') }$ and $\gamma_{(\hat{y}_l',\hat{q}_l')}$ 
 that as geodesics in $M_1$ intersect first time in $w_1$ for every $l$ and approximate the light-like geodesics $\gino$ and $\ginpo$. 
 Thus it suffices to show that for two pairs 
 %$\gamma_{ (\hat{x},\hat{p}) }$, $\gamma_{(\hat{y},\hat{q})}$ and $\gamma_{ (\hat{x}',\hat{p}') }$, $\gamma_{(\hat{y}',\hat{q}')}$ of time-like geodesics with  
 $(\hat{x},\hat{p}),(\hat{y},\hat{q})\in \mathcal{P}^+V$ and $(\hat{x}',\hat{p}'), (\hat{y}',\hat{q}')  \in \mathcal{P}^+V$ with the associated pairs of geodesics  in $(M_1,g_1)$  intersecting the first time at $w_1$ have the property that both pairs $\gamma_{ (\hat{x},\hat{p}) }$, $\gamma_{(\hat{y},\hat{q})}$ and $\gamma_{ (\hat{x}',\hat{p}') }$, $\gamma_{(\hat{y}',\hat{q}')}$ of geodesics in $(M_2,g_2)$ intersect the first time at a mutual point. Note that this point must be the limit $z_{2,1}(l) \to w_2$ of the first intersection points 
 constructed in the proof of $(1)$. In fact, the sequence is a constant sequence.

 As earlier, let $\gamma_1$ be a light-like optimal geodesic in $M_1$ from $w_1= \gamma_1 (0 )$ to $e = \gamma_1(1)$ and define the geodesic $\gamma_2$ in $M_2$ by the condition $(\gamma_2(1) , \dot\gamma_2(1)) = (\gamma_1(1) , \dot\gamma_1(1))$.
Let us then consider three pairs of initial vectors 
\begin{align}\label{vector-pairs}
\{ (\hat{x},\hat{p}), (\hat{y},\hat{q}) \},\ \ \{(\hat{x},\hat{p}), (\hat{y}',\hat{q}')\},  \text{ and }  \{(\hat{x}',\hat{p}'), (\hat{y}',\hat{q}') \}. 
\end{align}
constructed from the pairs  $(\hat{x},\hat{p}),(\hat{y},\hat{q})\in \mathcal{P}^+V$ and $(\hat{x}',\hat{p}'), (\hat{y}',\hat{q}')  \in \mathcal{P}^+V$ above. 
To each of these pairs of vectors, we may associate a pair of geodesics in $(M_1,g_1)$ which have the vectors as initial data. Each pair of geodesics has the property that they intersect for first time (for positive geodesic parameter times) at $w_{1}\in W_1$. On the other hand, to each pair of vectors in \eqref{vector-pairs}, we may achieve a pair of geodesics in $(M_2,g_2)$ that have the vectors as initial conditions at  $s=0$. 
As shown earlier, each pair of geodesics has the property that they intersect in $M_2$ and for the first time (for positive geodesic parameter times) it happens in $W_2$. 
%We aim at showing that the intersection point corresponding to the first 
We label these first intersection points in $W_2$ by $z^{(1)}$, $z^{(2)}$ and $z^{(3)}$, respectively. These intersections points lie in $\{ \gamma_2(s) : s<1 \}$ according to Lemma \ref{intersection_lightlike_geo} below. %~\f{Remove Lemma \ref{intersection_lightlike_geo} and copy the argument here.} 
Before proving the lemma let us assume that it holds and show that the intersection points $z^{(1)}$, $z^{(2)}$, $z^{(3)}\in M_2$ ($z^{(1)} = z_{2,1}$ in the proof of (1)) are actually identical. The claim (2) then follows from this  since the first intersections  of the approximative time-like geodesics accumulate arbitrarily near $w_2$, as shown in the proof of (1). 
%which is analogous to the corresponding curves in $(M_1,g_1)$. 

We argue by contradiction and suppose that $z^{(1)}\neq z^{(2)}$.   % by assuming the opposite and 
%Contrary to the claim, assume that for two of the pairs the associated first intersections in $(M_2,g_2)$ are distinct. 
That is, we suppose the first intersection of the pairs of geodesics 
\[
\{ \gamma_{(\hat{x},\hat{p})}, \gamma_{(\hat{y},\hat{q})}\} \text{ and } \{  \gamma_{(\hat{x},\hat{p})}, \gamma_{(\hat{y}',\hat{q}')}\}.
\]
in $(M_2,g_2)$ are distinct. 
The proof for the case $z^{(2)}\neq z^{(3)}$ is analogous. 
Applying Lemma \ref{intersection_lightlike_geo} we deduce that  $\gamma_{(\hat{x},\hat{p})}(s)$, $s>0$ hits $\{ \gamma_2(s) : s<1 \}$ twice, first at one of the points $z^{(1)}$, $z^{(2)}$ and then after at the other. 
%For the other cases the proofs are analogous. 
% for positive geodesic parameter times. 
%(As before, we consider all geodesic to be defined only for non-negative geodesic parameter times so that the intersections happen when the geodesic parameters times are positive.)   
%
We may assume that $\tau(z^{(1)} ,z^{(2)})>0$ in $M_2$. For $\tau(z^{(2)} ,z^{(1)})>0$ one simply swaps the roles of the points in the proof. 
The shortcut lemma (Lemma \ref{shortcut_lemma}) implies that there is a time-like future-directed segment connecting $z^{(1)}$ to $e$ in $M_2$.
%$\tgout$ can not maximise the length function over the causal curves between the intersection $\tgout(T_1)$ and $e= \tgout(T)$. In particular, 
Thus, there is some point $e'$ in the curve $\hat\mu$ that satisfies $\tau(e',e)>0$ and which can be reached from $z^{(1)}$ along a future-directed lightlike geodesic segment. %We repeat the argument in the end of the proof of Lemma \ref{intersection_lightlike_geo}: 
We can now apply Corollary \ref{coro_renerew}  and 
 Proposition \ref{kooasu} to get $e' \in\text{singsupp} ( \lim_{\epsilon \rightarrow 0} ( \Phi''(0;  h_1^\epsilon ,  h_2^\epsilon) \circ P_{e'} ) ) $ in both spaces from which by Lemma \ref{estupido} one obtains $e' \in  J^+ (w_{1})$ in the space $(M_1,g_1)$. Since $\tau(e',e)>0$, we have $e \neq \hat\mu ( f_{\hat{a}}^+(w_{1}) )$ which is a contradiction. 
 %
 %One arrives similarly into a contradiction by assuming instead $z^{(1)}\neq z^{(2)}$ or $z^{(2)}\neq z^{(3)}$. 
 Thus, we have that 
 \[
  z^{(1)}=z^{(2)}.
 \]
 In a similar way one shows that $ z^{(2)}=z^{(3)}$.

\end{proof}

%%%%%%%%%%%%%%%%%%%%%%%%%%%%%%%%%%%%%

We now prove the following auxiliary lemma:

\begin{lemma}\label{intersection_lightlike_geo}
 % Under the conditions of Theorem \ref{themain} and the assumptions above 
   Let $\gamma_1$ be \tbl{an optimal future-directed lightlike geodesic in $(M_1,g_1)$ between the points $w_{1}= \gamma_1(0)\in W_1$  and $e=\hat\mu( f_{\hat{a}}^+ (w_{1} ) ) = \gamma_1(1)$.} 
   Let $\gamma_2$ be the geodesic in $(M_2,g_2)$ with $(\gamma_2 (1) , \dot\gamma_2(1))= (\gamma_1 (1) , \dot\gamma_1(1))$. Let $(x,p),(y,q) $ be elements in $ \mathcal{P}^+   V $ for both $j=1,2$ such that in $(M_1,g_1)$ the geodesics $\gamma_{(x,p)}(s)=\gamma_{(x,p);M_1}(s)$, $s>0$ and $\gamma_{(y,q)}(s')= \gamma_{(y,q);M_1}(s')$, $s'>0$ intersect the first time at $w_1 \in W_1$. 
  %Let $\tgout$ be the geodesic in $(M_2,g_2)$ having the same initial data as $\eta_1$. %: % that is,
  %\[
  %(\tgout (T), \dot{\eta}_2 (T)) = (\gout (T),\dot{\eta}_1 (T) ).
  %\] 
  Then, $\Phi_1 = \Phi_2$ implies that the first intersection point $z=z_{2,1}$ of $\gamma_{ (x,p) }(s) = \gamma_{ (x,p);M_2 }(s)$, $s>0$ and $\gamma_{(y,q)}(s')= \gamma_{(y,q);M_2}(s')$, $s'>0$  in $(M_2,g_2)$ belongs to $\{ \gamma_2(s) : s<1 \}$.  
  %for $s>0$ and $s'>0$, in the space $(M_2,g_2)$ lies in 
 % in $(M_2,g_2)$ lies in 
%\[
% \{ \gamma_1(t) : t < 1 \}.
%\] 
%where $[0,S)$, $0 < S \leq \infty$, is the inextendible domain of $t \mapsto \tgout (-t)$. 
 \end{lemma}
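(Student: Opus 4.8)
\textbf{Proof plan for Lemma \ref{intersection_lightlike_geo}.}

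The plan is to reuse the machinery of Proposition \ref{kooasu} and Lemma \ref{estupido} together with the shortcut argument, exactly in the spirit of the proof of Lemma \ref{ss_map_determines_intersection}. First I would recall the situation: the geodesics $\gamma_{(x,p)}$ and $\gamma_{(y,q)}$ in $(M_1,g_1)$ meet first at $w_1 \in W_1$, and since $\gamma_1$ is optimal and connects $w_1$ to $e = \hat\mu(f_{\hat a}^+(w_1))$, by Proposition \ref{kooasu} there is a neighbourhood $V_e \subset V$ of $e$ and a section $P_e : V_e \to L^+V_e$ so that $\sslimit_1 := \lim_{\epsilon\to 0}(\Phi_1''(0; h_1^\epsilon, h_2^\epsilon)\circ P_e)$ has $\text{singsupp}(\sslimit_1) = \gamma_1 \cap V_e$. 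Here $h_1^\epsilon, h_2^\epsilon$ are the approximate delta distributions for $S_2 = \{(x,p)\}$ and a submanifold $S_1 \ni (y,q)$ provided by Corollary \ref{coro_renerew} so that the flowouts have the admissible intersection property \emph{in both} $(M_1,g_1)$ and $(M_2,g_2)$. Using $\Phi_1 = \Phi_2$, we get $\sslimit_1 = \sslimit_2$, hence $e \in \text{supp}(\sslimit_2)$; by Lemma \ref{estupido} applied in $(M_2,g_2)$ the sets $\pi(Y_{2,1})$ and $\pi(Y_{2,2})$ intersect, so the first intersection point $z = z_{2,1} \in M_2$ exists, and moreover $\gamma_2 \cap V_e \subset \bigcup_l \mathcal{L}_2^+(z_{2,l}) \subset J_2^+(z)$ since $\gamma_1$ and $\gamma_2$ coincide on $V_e$ (they share initial data at $e$ and $g_1|_V = g_2|_V$). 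In particular $e \in J_2^+(z)$ in $(M_2,g_2)$.

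Next I would show $z \in \{\gamma_2(s) : s < 1\}$. We have $e \in J_2^+(z)$, so there is a future-directed causal geodesic segment $\beta$ from $z$ to $e$ in $(M_2,g_2)$. The claim is that $\beta$ must be (a reparametrization of a subsegment of) $\gamma_2$, i.e.\ that $z$ lies on the image of $\gamma_2$ before the parameter $1$. I would argue as follows: if $\beta$ were timelike, or if $\beta$ were lightlike but the concatenation of $\beta$ with $\gamma_2|_{[\text{arrival},1]}$ (viewing $z = \gamma_?$) fails to be a lightlike pre-geodesic, then by the shortcut argument (Lemma \ref{shortcut_lemma}) there would be a timelike geodesic from $z$ to $e$. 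Since $z = \gamma_{(x,p)}(s) = \gamma_{(y,q)}(s')$ for some $s,s'>0$ and (by the analogue of the reasoning in the proof of Lemma \ref{ss_map_determines_intersection}(1)) $z$ can be pushed back along the timelike geodesic $\gamma_{(y,q)}$ to a point on the curve $\mu_a$ near $\tilde\eta_1(0)$ for suitable $a$, one obtains a future-directed timelike curve from a point $e'$ on $\hat\mu$ strictly in the past of $e$ to $z$, then timelike to $e$; concatenating shows $\tau(e', e) > 0$ while $e'$ is reachable from $w_1$, contradicting $e = \hat\mu(f_{\hat a}^+(w_1))$ via Lemma \ref{estupido} and Proposition \ref{kooasu} applied with the section $P_{e'}$. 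Hence $\beta$ is lightlike and the concatenation with the relevant piece of $\gamma_2$ is a pre-geodesic, which forces $\beta$ to lie along $\gamma_2$, so $z \in \{\gamma_2(s) : s < 1\}$.

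The main obstacle I anticipate is making the identification ``$z$ lies on $\gamma_2$ before parameter $1$'' fully rigorous rather than just ``$z \in J_2^-(e)$ and $z$ meets $\gamma_2$''. The subtlety is that a priori $\gamma_{(x,p)}(s)$, $s>0$, could hit the curve $\gamma_2$ at some point strictly \emph{after} $e$ (parameter $>1$) or not at all, and one needs to exclude this by combining the causal relation $e \in J_2^+(z)$ with optimality of $\gamma_1$ (hence $\gamma_2$ on $V_e$) and the shortcut/no-shortcut dichotomy: either $z$ connects to $e$ along $\gamma_2$ (lightlike pre-geodesic case), giving the conclusion, or a genuine shortcut exists, contradicting that $e$ is the earliest observation point of $w_1$. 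Getting the causality bookkeeping right — in particular carefully tracking which geodesic segments are optimal and invoking Proposition \ref{kooasu} and Lemma \ref{estupido} for the auxiliary observation point $e'$ — is where the care is needed; the rest is a direct transcription of arguments already developed in the proof of Lemma \ref{ss_map_determines_intersection}.
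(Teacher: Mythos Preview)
Your high-level strategy is the paper's strategy: use Corollary \ref{coro_renerew} and Proposition \ref{kooasu} to produce $\sslimit$, invoke Lemma \ref{estupido} in $(M_2,g_2)$, then argue by shortcut and a second application of Proposition \ref{kooasu} at an auxiliary point $e'$ to contradict the earliest-observation property of $e$. However, two steps in your execution do not go through as written.

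\emph{The shortcut step.} You take a causal geodesic $\beta$ from $z$ directly to $e=\gamma_2(1)$ and then speak of ``the concatenation of $\beta$ with $\gamma_2|_{[\text{arrival},1]}$''. But $\beta$ already terminates at $e=\gamma_2(1)$, so there is nothing to concatenate and Lemma \ref{shortcut_lemma} does not apply. The paper avoids this by using that Lemma \ref{estupido} actually gives the whole segment $\gamma_2\cap V_e\subset J_2^+(z)$, hence in particular $\gamma_2(1-\delta)\in J_2^+(z)$ for small $\delta>0$. One then takes a causal geodesic $\nu$ from $z$ to $\gamma_2(1-\delta)$ and concatenates with $\gamma_2|_{[1-\delta,1]}$; if $z\notin\{\gamma_2(s):s<1\}$ this concatenation is not a lightlike pre-geodesic, and the shortcut argument yields a timelike curve from $z$ to $e$ in $(M_2,g_2)$.

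\emph{The contradiction step.} Your auxiliary point $e'$ is obtained by going \emph{backward} from $z$ along $\gamma_{(y,q)}$ to a point on $\mu_a$ near $\tilde\eta_1(0)$. Such an $e'$ lies in the causal past of $z$ (and typically in the past of $w_1$), so neither ``$\tau(e',e)>0$'' (automatic for two points on the timelike curve $\hat\mu$) nor ``$e'$ is reachable from $w_1$'' yields any contradiction. The correct $e'$ lies in the \emph{future} of $z$: once you know $\tau_2(z,e)>0$, there is a point $e'$ on $\hat\mu$ with $e'\ll e$ that is reached from $z$ by an optimal lightlike geodesic $\tilde\gamma_2$ in $(M_2,g_2)$. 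Apply Proposition \ref{kooasu} at $e'$ with a section $P_{e'}$ tangent to $\tilde\gamma_2$; since $\Phi_1=\Phi_2$, Lemma \ref{estupido} in $(M_1,g_1)$ then gives $e'\in J_1^+(w_1)$, contradicting $e=\hat\mu(f_{\hat a,1}^+(w_1))$. This is exactly the ``causality bookkeeping'' you flagged as delicate, and it needs to be oriented forward, not backward.
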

 \begin{proof}
Following the construction in page \pageref{3249reeeredfds} one puts together Corollary \ref{coro_renerew}, Proposition \ref{kooasu} to show that 
 for certain sources $h_1^\epsilon ,  h_2^\epsilon$ and a vector field $P_e$ on neighbourhood $U_e$ around $e$ we have 
 \[
 \text{singsupp} ( \lim_{\epsilon \rightarrow 0} ( \Phi''(0;  h_1^\epsilon ,  h_2^\epsilon) \circ P_{e} ) ) =  \gamma_2 \cap U_e.
 \]
Moreover, Lemma \ref{estupido} implies that $ \gamma_2 (1-\delta)$ belongs to the causal future of $z$ for sufficiently small $\delta >0$. Hence there is a causal geodesic $\nu$ connecting $z$ to $ \gamma_2 (1-\delta)$. 
If $z_1 \notin \{ \gamma_2(s) : s<1 \}$ we may combine $\nu$ with $\gamma|_{[1-\delta,1]}$ and apply shortcut argument to show that $z$ can be connected to $e$ with a timelike curve in $(M_2,g_2)$. 
Thus, $\hat{\mu}$ contains a point $e' \ll e$ that can be reached from $z$ with an optimal lightlike geodesic $\tilde\gamma_2$. We can then repeat the construction above with some vector field $P_{e'}$ tangent to $\tilde\gamma_2$ to derive that $e' \in J^+_1(w_1)$. Thus $\gamma_1$ does not define optimal segment from $w_1$ to $e$ which is in conflict with the assumptions. 

\end{proof}

%%%%%%%%%%%%%%%%%%%%%%%%%%%%%%%%
%
% Quick proof of the proposition for determining E.L.O.S
%
The determination of the earliest light observation sets from the knowledge of the source-to-solution map data follows from %is an easy consequence of 
Lemma \ref{ss_map_determines_intersection}: %and the definition of earliest light observation set:

Below, let $f_{a,1}^+$ 
and $f_{a,2}^+$ be the earliest observation time functions on $M_1$ and $M_2$, respectively.
 Lemma~\ref{ss_map_determines_intersection} shows that there is a mapping 
%We prove that the mapping
% Using these lemmas, for $w_{M_{1}}\in W_{1}$ and $w_{M_{2}}\in W_{2}$ the corresponding intersection point, define
% 
\begin{equation}\label{F_map}
F : W_1 \rightarrow W_2 , \quad F(w_{1}) := w_{2},
\end{equation}
which maps the first intersection $w_1$ of $\eta_1$ and $\tilde{\eta}_1$ to the first intersection $w_2$ of $\eta_2$ and $\tilde{\eta}_2$. Here $\eta_j$ and $\tilde{\eta}_j$, $j=1,2$, are defined as in~\eqref{9090sdj}--\eqref{last_geo}. 
%Thus $F$ 
%is well defined.

\begin{proposition}\label{prof: Map F preserves E-sets}
Let $\Phi_{1,L^+V}$ and $\Phi_{2,L^+V}$ be as earlier in Section~\ref{main-proof} and assume that $\Phi_{1,L^+V}=\Phi_{2,L^+V}$. Assume also that the conditions of Theorem \ref{themain} are satisfied for the Lorentzian manifolds $(M_1,g_1)$ and $(M_2,g_2)$.

Then, the map~\eqref{F_map}
\[
F : W_1 \rightarrow W_2 
\]
 is a bijection and
 %satisfies. Further, the first light observations of the points in $W_j$, $j=1,2$ measured in $\mathcal{U} \subset V$ are invariant under the transformation $F$. That is,
\begin{eqnarray}\label{eq: F and E sets}
f_{a,1}^+( w_1 ) = f_{a,2}^+ (F(w_1)),
\end{eqnarray}
for every $w_1\in W_1$ and $a\in \mathcal A$. 
%Especially, $\mathcal{E}_{\mathcal{U}}^1(W_1)=\mathcal{E}_{\mathcal{U}}^2(W_2)$.
\end{proposition}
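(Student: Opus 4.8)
The plan is to establish that the map $F$ defined in \eqref{F_map} is a bijection satisfying \eqref{eq: F and E sets} by exploiting the symmetry of the construction of $F$ together with the already-proven Lemma \ref{ss_map_determines_intersection}. First I would observe that the construction of $F$ in \eqref{9090sdj}--\eqref{last_geo} is essentially symmetric in the roles of $(M_1,g_1)$ and $(M_2,g_2)$: given $w_1 \in W_1$, one picks an optimal lightlike geodesic $\eta_1$ from $\hat\mu(f^-_{\hat a,1}(w_1))$ to $w_1$ and a second optimal lightlike geodesic $\tilde\eta_1$ (from $\mu_a(f^-_{a,1}(w_1))$ to $w_1$, with $a \neq \hat a$, not tangential to $\eta_1$), then transports their initial data to $(M_2,g_2)$ via the common set $V$ and declares $F(w_1) = w_2$ to be the first intersection of $\eta_2$ and $\tilde\eta_2$. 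Lemma \ref{ss_map_determines_intersection} guarantees (1) $w_2$ exists and lies in $W_2$, (2) $w_2$ is independent of the chosen geodesics $\eta_1, \tilde\eta_1$ (so $F$ is well defined), and (3) the same $w_2$ arises from any pair of timelike approximating geodesics intersecting first at $w_1$. The same statements applied with the roles of $M_1$ and $M_2$ reversed — which is legitimate because $\Phi_{1,L^+V}=\Phi_{2,L^+V}$ is a symmetric hypothesis and $g_1|_V = g_2|_V$ — produce a map $G : W_2 \to W_1$ constructed in exactly the same way.

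Next I would argue that $G \circ F = \mathrm{id}_{W_1}$ and $F \circ G = \mathrm{id}_{W_2}$, which gives bijectivity. The key point is that transporting initial data from $V \subset M_1$ to $V \subset M_2$ and back is the identity on those initial vectors, so the geodesics $\eta_1, \tilde\eta_1$ used to define $F(w_1) = w_2$ are exactly (up to the freedom already shown irrelevant by part (2) of Lemma \ref{ss_map_determines_intersection}) the geodesics one would use to compute $G(w_2)$. One must check the compatibility of the earliest-past-observation data: namely that $f^-_{a,1}(w_1) = f^-_{a,2}(w_2)$ for the relevant $a$, so that the geodesic $\eta_2$ starting at $\mu_a(f^-_{a,1}(w_1))$ is indeed the one that would be selected by the $(M_2,g_2)$-construction starting from $w_2$. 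This requires the observation that $\eta_2$ is optimal in $(M_2,g_2)$ — which follows because its first intersection with $\tilde\eta_2$ is at $w_2$ and a shortcut (Lemma \ref{shortcut_lemma}) would otherwise contradict the defining property of $w_2$ or of $f^\pm$ — together with the argument already used in the proof of Lemma \ref{ss_map_determines_intersection} that singular supports of $\mathcal{S}_P$ detect exactly the optimal geodesics reaching the measurement region. Hence the past observation data match and $G(F(w_1)) = w_1$; symmetrically $F(G(w_2)) = w_2$.

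Finally, for the identity \eqref{eq: F and E sets}, I would fix $w_1 \in W_1$, set $w_2 = F(w_1)$, and take any $a \in \mathcal{A}$; write $e := \mu_a(f^+_{a,1}(w_1)) \in \mathcal{E}^1_{\mathcal{U}}(w_1)$, so that there is an optimal future-directed lightlike geodesic $\gamma_1$ in $(M_1,g_1)$ from $w_1$ to $e$. Exactly as in the proof of Lemma \ref{ss_map_determines_intersection}, Corollary \ref{coro_renerew} and Proposition \ref{kooasu} give sources $h_1^\epsilon, h_2^\epsilon$ and a lightlike section $P_e$ near $e$ with $\mathrm{singsupp}(\mathcal{S}_1) = \gamma_1 \cap V_e$, and the hypothesis $\Phi_{1,L^+V} = \Phi_{2,L^+V}$ forces $\mathcal{S}_1 = \mathcal{S}_2$, so in $(M_2,g_2)$ the point $e$ lies in the singular support of the corresponding limit and, by Lemma \ref{estupido}, $e \in J_2^+(z_{2,1})$ where $z_{2,1}$ is the first intersection of the transported flowouts; combined with part (1) of Lemma \ref{ss_map_determines_intersection} (which identifies $z_{2,1}$ with $w_2$ for the relevant choice of approximating geodesics) this yields $e \in J_2^+(w_2) \cap \mathcal{U}$. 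Optimality of $\gamma_2$ (the $(M_2,g_2)$-geodesic with the same initial data as $\gamma_1$ at $e$) then shows $e$ is a first observation of $w_2$, i.e. $f^+_{a,2}(w_2) \le f^+_{a,1}(w_1)$; the reverse inequality follows by running the same argument with the roles of $M_1$ and $M_2$ interchanged (using $G = F^{-1}$), giving equality. Since $a$ was arbitrary, \eqref{eq: F and E sets} holds. The main obstacle I expect is carefully justifying the reversibility of the construction — in particular that the past observation times $f^-_{a,1}$ and $f^-_{a,2}$ agree along the way, and that $G$ really is the set-theoretic inverse of $F$ rather than merely another map with matching earliest-observation data; this is where one must lean hardest on the uniqueness/independence statements (parts (2)–(3)) of Lemma \ref{ss_map_determines_intersection} and on the optimality arguments via the shortcut lemma.
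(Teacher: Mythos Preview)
Your proposal is correct and follows the same overall strategy as the paper: symmetrize the construction to obtain $G:W_2\to W_1$, argue that $G=F^{-1}$, derive the inequality $f_{a,2}^+(w_2)\le f_{a,1}^+(w_1)$ from $e\in J_2^+(w_2)$ (via Lemma~\ref{estupido} and $\Phi_{1,L^+V}=\Phi_{2,L^+V}$), and then swap roles to get the reverse inequality.

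The one place where you create extra work for yourself is the verification that $G\circ F=\mathrm{id}$. You propose to check $f^-_{a,1}(w_1)=f^-_{a,2}(w_2)$ so that the lightlike geodesics $\eta_2,\tilde\eta_2$ themselves qualify as admissible starting geodesics for the $G$-construction at $w_2$. This detour is unnecessary and is exactly the obstacle you flag at the end. The paper bypasses it via part~(3) of Lemma~\ref{ss_map_determines_intersection}: applied with the roles of $M_1$ and $M_2$ reversed, it says that \emph{any} timelike pair in $\mathcal P^+V$ whose $(M_2,g_2)$-geodesics meet first at $w_2$ has its $(M_1,g_1)$-geodesics meeting first at $G(w_2)$. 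But the timelike approximations $(\hat x,\hat p),(\hat y,\hat q)$ of $\eta_1,\tilde\eta_1$ already have this property---their $(M_2,g_2)$-geodesics meet first at $w_2$ by the forward use of part~(3), and their $(M_1,g_1)$-geodesics meet first at $w_1$ by construction---so $G(w_2)=w_1$ follows immediately, with no need to match past observation times or to check optimality of $\eta_2,\tilde\eta_2$ in $M_2$. Similarly, in your $f^+$ step the clause about ``optimality of $\gamma_2$'' is superfluous: once $e\in J_2^+(w_2)$, the inequality $f_{a,2}^+(w_2)\le f_{a,1}^+(w_1)$ is immediate from the definition of $f_{a,2}^+$.
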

\begin{proof}
Let $w_1\in W_1$ and $w_2=F(w_1)$. 
%Using Lemma \ref{ss_map_determines_intersection}, we can define the corresponding point $w_2\in W_2$. 
We show that the earliest observation time functions $f_{a,1}^+$ on $M_1$
and $f_{a,2}^+$ on $M_2$  satisfy $f_{a,1}^+(w_1)=f_{a,2}^+(w_2)$.
%$\mathcal{E}_{\mathcal{U}}^1 (w_{1} )  =\mathcal{E}_{\mathcal{U}}^2  (w_{2} ) $.

Let  $a\in \mathcal A$ and  $f_{a,1}^+:W_1\to \R$ 
be {an earliest observation time function} on $M_1$.
 %(not necessarily the one corresponding to $a= \hat{a}$) such that 
Moreover, let 
 $e= \mu_a (f_{a,1}^+ (w_{1} ) )$. (This means that $e$ can be reached by a lightlike future-directed geodesic from $w_{1}$.) 
Let $(\hat{x},\hat{p})$ and $(\hat{y},\hat{q})$ be timelike initial vectors as in the last condition of Lemma \ref{ss_map_determines_intersection}. In the space $(M_1,g_1)$ the first intersection of the associated geodesics is $w_1$ whereas the first intersection $z_{2,1}$ in $(M_2,g_2)$ is $w_2$. 
%intersect first time at $w_{M_1}$ as before. 
As in the proof of Lemma \ref{ss_map_determines_intersection} %one combines Proposition \ref{kooasu} and Lemma \ref{estupido} to 
we deduce from  $\Phi_{1,L^+V}=\Phi_{2,L^+V}$  that 
%As $\Phi_1=\Phi_2$ we deduce
%\[
$e \in J_2^+(z_{2,1})$ in $(M_2,g_2)$ and hence $w_2 \in J^-_2(e)$. 
%\mathcal{L}^+ \big[ \gamma_{(\hat{x},\hat{p})} (0,\infty) \cap  \gamma_{(\hat{y},\hat{q})}(0,\infty) \cap W_{2} \big].
%\]
%Since $z_1 = w_{2}$, we conclude $e \in J_1^+(w_{1})$.
%Lemma \ref{estupido}. 
%Applying the last condition of Lemma \ref{ss_map_determines_intersection} we can assume $z_1 = w_{M_2}$ so $e \in J^+(w_{M_2})$. 
Thus, the earliest light observation from $w_{2}$ on the path $\mu_a$ occurs either at $e$ or at a point that is in the past of $e$. In particular, 
\begin{equation}\label{ff}
f_{a,2}^+ (w_{2} ) \leq  f_{a,1}^+ (w_{1}). 
\end{equation}

To see that $F$ has an inverse, note that we may change the roles of $(M_1,g_1)$ and $(M_2,g_2)$ in Lemma~\ref{ss_map_determines_intersection} to have a mapping 
\[
 W_2\to W_1,
\]
which maps the first intersection of $\eta_2$ and $\tilde{\eta}_2$ to the first intersection of $\eta_1$ and $\tilde{\eta}_1$. Here $\eta_j$ and $\tilde{\eta}_j$, $j=1,2$, are defined as in~\eqref{9090sdj}--\eqref{last_geo}. This mapping is the inverse of $F$ and 
$F:W_1\to W_2$ is a bijection.

By interchanging the roles of $(M_1,g_1)$ and $(M_2,g_2)$ and repeating the above construction we see as in
\eqref{ff} that
\[
 f_{a,1}^+ (w_{1}) \leq  f_{a,2}^+ (w_{2}).
\]
Thus, we have
$f_{a,1}^+ (w_{1} ) =  f_{a,2}^+ (w_{2} )=  f_{a,2}^+ (F(w_{1}) )$ for all $w_1\in W_1$. 
Hence,
$F:W_1\to W_2$ is a bijection
 satisfying $f_{a,1}^+ (w_{1} ) = f_{a,2}^+ (F(w_{1}) )$ for all $w_1\in W_1$. 
 
\end{proof}

This result implies  Proposition \ref{ss_map_first_obs}:

\begin{proof}[Proof of Proposition \ref{ss_map_first_obs}]
By definition \eqref{eioreiie-eq} of the sets  
$\mathcal{E}_{\mathcal{U}}^j (w )$,
Proposition \ref{prof: Map F preserves E-sets}  proves that 
$\mathcal{E}_{\mathcal{U}}^1 (w_{1} )  = \mathcal{E}_{\mathcal{U}}^2  (w_{2} ) $ for
all $w_1\in W_1$ and $w_2=F(w_1)$. As $F:W_1\to W_2$ is a bijection,
%By changing the roles of manifolds $M_1$ and $M_2$, we see that
this proves $\mathcal{E}_{\mathcal{U}}^1  (W_1)= \mathcal{E}_{\mathcal{U}}^2 (W_2) $.
% which proves the claim.
%by (\ref{eioreiie-eq}).
\end{proof}

%%%%%%%%%%%%%%%%%%%%%%%%%%%%%%%%%%%%
%
%  Metric uniqueness from source-to-solution map
%
%%%%%%%%%%%%%%%%%%%%%%%%%%%%%%%%%%%%

\subsection{Determination of the metric from the source-to-solution map}
% .

 In this section we prove that $F$ is in fact an isometry,
\[
 F^*g_2=g_1.
\]
This will prove our main theorem, Theorem~\ref{themain}.
% The definition is valid by Lemma \ref{ss_map_determines_intersection}. Putting together the results above yields the following:

To this end, we apply Proposition \ref{prof: Map F preserves E-sets} to have the following implication of \cite[Theorem 1.2]{Kurylev2018}:

\begin{theorem}\label{oo1}
Let $\Phi_{1}$ and $\Phi_{2}$ be as earlier in Section~\ref{main-proof} and assume that $\Phi_{1,L^+V}=\Phi_{2,L^+V}$. Assume also that the conditions of Theorem \ref{themain} are satisfied for the Lorentzian manifolds $(M_1,g_1)$ and $(M_2,g_2)$.
%Let $\Phi_1 = \Phi_2$  and assume that the conditions of Theorem \ref{themain} are satisfied for the Lorentzian manifolds $(M_j,g_j)$, $j=1,2$. 
Then, the map 
\[
{F} : W_1 \rightarrow W_2
\]
is a diffeomorphism  and the metric ${F}^* g_2$ is conformal to $g_1$ in $W_1$. 
\end{theorem}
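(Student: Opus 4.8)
The plan is to derive Theorem~\ref{oo1} as a direct consequence of Proposition~\ref{ss_map_first_obs} (equivalently Proposition~\ref{prof: Map F preserves E-sets}) together with the reconstruction result \cite[Theorem~1.2]{Kurylev2018}. First I would recall the precise content of \cite[Theorem~1.2]{Kurylev2018}: given a relatively compact subset $W\subset J^-(x^+)\setminus I^-(x^-)$ of a globally hyperbolic Lorentzian manifold, the collection of earliest light observation sets $\mathcal{E}_{\mathcal{U}}(W)=\{\mathcal{E}_{\mathcal{U}}(w):w\in W\}$, viewed as a subset of $2^{\mathcal{U}}$, determines the set $W$ together with its differentiable structure and the conformal class $[g|_W]$ of the metric. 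More precisely, if two manifolds $(M_j,g_j)$ share the open set $V\supset \mathcal{U}$ with $g_1|_V=g_2|_V$, and the families $\mathcal{E}^1_{\mathcal{U}}(W_1)$ and $\mathcal{E}^2_{\mathcal{U}}(W_2)$ coincide as subsets of $2^{\mathcal{U}}$, then there is a $C^\infty$-diffeomorphism $\Theta:W_1\to W_2$ with $\Theta^*g_2=c\,g_1$ on $W_1$ for some positive $c\in C^\infty(W_1)$, and this $\Theta$ is characterized by $\mathcal{E}^1_{\mathcal{U}}(w_1)=\mathcal{E}^2_{\mathcal{U}}(\Theta(w_1))$.

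Next I would invoke Proposition~\ref{ss_map_first_obs}, which under the hypothesis $\Phi_{1,L^+V}=\Phi_{2,L^+V}$ gives $\mathcal{E}^1_{\mathcal{U}}(W_1)=\mathcal{E}^2_{\mathcal{U}}(W_2)$ as collections of subsets of $\mathcal{U}$. Applying \cite[Theorem~1.2]{Kurylev2018} to this equality yields a conformal diffeomorphism $\Theta:W_1\to W_2$ with $\mathcal{E}^1_{\mathcal{U}}(w_1)=\mathcal{E}^2_{\mathcal{U}}(\Theta(w_1))$. It then remains to identify this $\Theta$ with the map $F$ constructed in \eqref{F_map}. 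For this I would appeal to Proposition~\ref{prof: Map F preserves E-sets}, which shows $f^+_{a,1}(w_1)=f^+_{a,2}(F(w_1))$ for all $a\in\mathcal{A}$ and $w_1\in W_1$, hence by the definition \eqref{eioreiie-eq} of the earliest light observation set, $\mathcal{E}^1_{\mathcal{U}}(w_1)=\mathcal{E}^2_{\mathcal{U}}(F(w_1))$ for every $w_1\in W_1$. Since the map from $M$ into $2^{\mathcal{U}}$ given by $w\mapsto\mathcal{E}_{\mathcal{U}}(w)$ is injective on $W$ (this injectivity is part of \cite[Proposition~2.2 / Theorem~1.2]{Kurylev2018}, and is precisely what makes the representation faithful), the characterization $\mathcal{E}^1_{\mathcal{U}}(w_1)=\mathcal{E}^2_{\mathcal{U}}(\Theta(w_1))$ forces $\Theta=F$. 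Therefore $F:W_1\to W_2$ is a $C^\infty$-diffeomorphism and $F^*g_2$ is conformal to $g_1$ on $W_1$.

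The one point requiring a little care — the main (though modest) obstacle — is ensuring that the hypotheses of \cite[Theorem~1.2]{Kurylev2018} genuinely hold in our setting: namely that $\hat\mu$ is a timelike geodesic observed in the open set $V$, that $\mathcal{U}\subset V$ and the family $(\mu_a)_{a\in\overline{\mathcal A}}$ are set up as in Section~\ref{time_sep_fnct} (with the endpoint conditions $\hat\mu(s^+)\ll\mu_a(1)$ and $\mu_a(-1)\ll\hat\mu(s^-)$), and that $W_j=I^-_j(x^+)\cap I^+_j(x^-)$ is relatively compact and contained in $J^-(x^+)\setminus I^-(x^-)$. All of these are built into the standing assumptions of Theorem~\ref{themain} and the constructions of Section~\ref{ss_elos}, so the verification is routine; I would simply state that the setup matches and cite the relevant lines. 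With that in place, the theorem follows immediately by chaining Proposition~\ref{ss_map_first_obs}, \cite[Theorem~1.2]{Kurylev2018}, and Proposition~\ref{prof: Map F preserves E-sets}, and the remaining task — upgrading ``conformal'' to ``isometric'' by pinning down the conformal factor $c$ — is deferred to the final subsection, as already indicated in the paper.
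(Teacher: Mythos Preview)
Your proposal is correct and follows the same overall strategy as the paper: derive Theorem~\ref{oo1} from Proposition~\ref{prof: Map F preserves E-sets} (equivalently Proposition~\ref{ss_map_first_obs}) together with \cite[Theorem~1.2]{Kurylev2018}, and identify the resulting conformal diffeomorphism with the map $F$ of~\eqref{F_map} via injectivity of $w\mapsto \mathcal{E}_{\mathcal{U}}(w)$.

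The only notable difference is one of presentation rather than substance. You invoke \cite[Theorem~1.2]{Kurylev2018} as a black box and then identify $\Theta=F$; the paper instead unpacks the relevant steps of that theorem explicitly. It introduces the maps $\mathcal{F}_j:W_j\to C(\mathcal{A})$, $\mathcal{F}_j(w)=f^+_{\cdot,j}(w)$, writes $F=(\mathcal{F}_2)^{-1}\circ\mathcal{F}_1$, uses \cite[Proposition~2.2]{Kurylev2018} and the coordinate construction of \cite[Section~5.1.2]{Kurylev2018} to upgrade $F$ from a bijection to a diffeomorphism, and then argues conformality directly by showing via \cite[Proposition~2.6]{Kurylev2018} that $F$ maps lightlike geodesic segments to lightlike geodesic segments, hence $F_*$ preserves light cones. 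Your route is cleaner; the paper's is more self-contained and has the side benefit of making explicit that the map produced by \cite{Kurylev2018} is literally $F$ without a separate identification step.
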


\begin{proof} The claim follows from the proof of~\cite[Theorem 1.2]{Kurylev2018}. However,
let us briefly explain the main steps of the proof as we need the construction in the proof to determine the conformal factor between $F^*g_2$ and $g_1$ in the next section.

On the manifold $(M_j,g_j)$, let $\mathcal F_j:W_j\to C(\mathcal A)$ be the map from the point $q$
to the corresponding earliest observation time function, that is,  $\mathcal F_j(w)=f_{a,j}^+(w)$, $j=1,2$.
Then formula \eqref{eq: F and E sets} and \cite[Lemma 2.4 (ii)]{Kurylev2018} imply that
\begin{eqnarray}\label{eq: F and f maps }\mathcal F_1(w)=\mathcal F_2(F(w)),\quad
\hbox{for every $w\in W_1$}.
\end{eqnarray}
 In particular, the sets $\mathcal F_j(W_j)\subset C(\mathcal A)$, $j=1,2$, satisfy $\mathcal F_1(W_1)=\mathcal F_2(W_2)$, since $F:W_1\to W_2$ is a bijection by Proposition~\ref{prof: Map F preserves E-sets}. 
Thus we can write
$F=(\mathcal F_2)^{-1}\circ \mathcal F_1:W_1\to W_2$. By~\cite[Proposition  2.2]{Kurylev2018},
this implies that $F:W_1\to W_2$  is a homeomorphism. Hence the sets
$\mathcal F_j(W_j)$ can be considered as topological submanifolds of
the infinite dimensional vector space $C(\mathcal A)$.  It is shown in~\cite[Section 5.1.2]{Kurylev2018} 
that using the knowledge of the set $\mathcal F_1(W_1)$, we can construct 
for any point $Q\in \mathcal F_1(W_1)$ a neighborhood $U\subset  \mathcal F_1(W_1)$ of $Q$ and the values $a_1,a_2,\dots,a_n\in \mathcal A$ such that the following holds:
The point $q=\mathcal F_1^{-1}(Q)\in W_1$ has
a neighborhood $\mathcal F_1^{-1}(U)\subset W_1$,   
where the map $f_{a_1,\dots,a_n}:x\mapsto (f_{a_k,1}^+(x))_{k=1}^n$ defines smooth local coordinates on $W_1$. Observe that for $P\in U$, we have 
$f_{a_k,1}^+\circ \mathcal F_1^{-1}(P)=P(a_k)$. Thus when $E_a:C(\mathcal A)\to \R$, $a\in \mathcal A$ are the evaluation functions $E_a(P)=P(a)$ and $E_{a_1,\dots,a_n}(P)=(E_{a_j}(P))_{j=1}^n$, the above constructed pairs $(U,E_{a_1,\dots,a_n})$ determine on $\mathcal F_1(W_1)$ an atlas 
 of differentiable coordinates
on $\mathcal F_1(W_1)$ which makes $\mathcal F_1:W_1\to \mathcal F_1(W_1)$ a diffeomorphism. As noted above, such atlas can be constructed using only the knowledge of the set  $\mathcal F_1(W_1).$
Thus, as $\mathcal F_1(W_1)=\mathcal F_2(W_2)$, the atlases constructed on these sets
coincide. %, that is, the map  $Id:\mathcal F_1(W_1)\to \mathcal F_2(W_2)$ is a diffeomorhpsim.
This implies that the homeomorphism $F=(\mathcal F_2)^{-1}\circ\mathcal F_1:W_1\to W_2$  is actually a diffeomorhpsim.
  
  Let us next consider the conformal classes of the metric tensors $g_1$ and $g_2$.  Let
 $(x,\eta)\in L^+\mathcal{U}$ be a future directed lightlike vector.
 Let us consider the lightlike geodesics $ \gamma^{\s j}_{(x,\eta)}(\R_-)$, $j=1,2$, on $(M_j,g_j)$ that lie in the causal past of the point $x$,
and let $s_1>0$ be such that $ \gamma^{\s j}_{(x,\eta)}([-s_1,0])\subset \mathcal{U}$.
 By~\cite[Proposition 2.6 ]{Kurylev2018} we have that a point $w_j\in W_j$ satisfies
 $w_j\in  \gamma^{\s j}_{(x,\eta)}((-\infty,-s_1])$  if and only if $ \gamma^{\s j}_{(x,\eta)}((-s_1,0))\subset \mathcal{E}_{\mathcal{U}}^j ( w_j )$. In other words, for a point $w_j\in W_j $ 
 the set $\mathcal{E}_{\mathcal{U}}^j ( w_j )$ determines whether $w_j\in  \gamma^{\s j}_{(x,\eta)}((-\infty,-s_1])$.
 By formula \eqref{eq: F and E sets}, this implies that $F$ maps the lightlike geodesic
  $\gamma^{\s 1} ((-\infty,-s_1])\cap W_1$ on to $\gamma^{\s 2} ((-\infty,-s_1])\cap W_2$,
  \begin{equation}\label{F_preserve_light_geos}
   F\big(\gamma^{\s 1} ((-\infty,-s_1])\cap W_1\big) \subset \gamma^{\s 2} ((-\infty,-s_1])\cap W_2
  \end{equation}

  Let 
 $w_1\in W_1$ and $\xi_1\in L^+_{w_1}M_1$  be such that $\gamma^{\s 1}_{(w_1,\xi_1)}(\R_+)\cap \mathcal{U}\not=\emptyset$,
 so that there is $t_1>0$ so that $\gamma^{\s 1}_{(w_1,\xi_1)}(t_1)\in \mathcal{U}$. Let $x=\gamma^{\s 1}_{(w_1,\xi_1)}(t_1)$  and 
 $\eta=\dot\gamma^{\s 1}_{(w_1,\xi_1)}(t_1)$.
   Then~\eqref{F_preserve_light_geos} implies that there is $t_0\in (0,t_1)$  such that 
 \begin{eqnarray}\label{eq: light-like geodesics}
 F\big(\gamma^{\s 1}_{(w_1,\xi_1)}((-t_0,t_0))\big)\subset F\big(\gamma^{\s 1}_{(x,\eta)}((-\infty,-t_1+t_0))\cap W_1\big)
\subset \gamma^{(2)}_{x,\eta}(\R)\cap W_2.
 \end{eqnarray}
Thus for any $w_1\in W_1$ and $\xi_1\in L^+_{w_1}M_1$  
 such that $\gamma_{w_1,\xi_1}(\R_+)\cap \mathcal{U}\not=\emptyset$, formula \eqref{eq: light-like geodesics}
 implies that 
  $F$ restricted to a neighborhood of $w_1$ maps the lightlike geodesic $\gamma^{\s 1}_{(w_1,\xi_1)}$ 
 to a segment of a light-like geodesic of $(M_2,g_2)$.
%  $F$  maps  near $w_1$ the light-like geodesic $\gamma^{\s 1}_{(w_1,\xi_1)}$ 
%  to a light-like geodesic of $(M_2,g_2)$. 
% 
Hence, $F_*\xi_1\in L_{w_2}M_2$, where $w_2=w_1$.
 Thus for any $w_1\in W_1$ there are infinitely many vectors $\xi_1\in L^+_{w_1}M_1$  
  such that $\gamma_{(w_1,\xi_1)}^{\s 1}(\R_+)\cap \mathcal{U}\not=\emptyset$, and for which $F_*w_1\in L_{w_2}M_2$,
  we see that $F$  maps $L^+_{w_1}M_1$ to $L_{w_2}M_2$. Thus the metric tensor $g_1$ is conformal to $F^*g_2$ at $w_1$. As $w_1\in W_1$ is arbitrary, this implies that $g_1$ and $F^*g_2$  are conformal  
 \end{proof}

%In particular, this proves the uniqueness statement of . 
%
To complete the proof of Theorem \ref{themain} it ~ remains to show that the conformal factor is $1$.

%%%%%%%%%%%%%%%%%%%%%%%%%%%%%%%%%%%%
%
%  Determination of the conformal factor and proof finish
%
%%%%%%%%%%%%%%%%%%%%%%%%%%%%%%%%%%%%

\subsection{Determination of the conformal factor}\label{det-of-conform}
Here we prove that also the conformal factor of the metric is the same. This is the final step in the proof of Theorem \ref{themain}.

\begin{proposition}\label{conformii}
Let $(M,g), (\widetilde{M}, \widetilde{g})$ be globally hyperbolic $C^\infty$ manifolds with metrics $g$ and $\tilde{g}$.  
%= g_{ij}dx^i dx^j$, $\widetilde{g} = \widetilde{g}_{ij} dx^i dx^j$ and 
Let the subsets $W \subset M$ and $\widetilde{W} \subset \widetilde{M}$ be open. Let $c \in C^\infty (W)$ be a strictly positive function and assume there is a diffeomorphism 
$
F : W \rightarrow \widetilde{W} 
$ 
such that $(F^* \widetilde{g})(x) = c(x) g(x)$, $\forall x \in W$. % where $ F^* \widetilde{g}$ is the pull-back of $\widetilde{g}$. 
%Assume that there are no closed timelike geodesics (e.g. $X$ globally hyperbolic). 
%Let $B \subset X$ be open. 
Let $\gamma_{(x,p)}$ be the geodesic in $(M,g)$ with initial condition $\dot\gamma_{(x,p)} (0) = (x,p)$, where $(x,p) \in TW$. 
Also let $\sigma_{(x,p)}$ be the geodesic in the manifold $(W, F^*\widetilde{g})$ with $\dot\sigma_{(x,p)}(0) = (x,p)$.
Assume that for every $x \in W$ there exist two linearly independent timelike vectors $p_1,p_2\in T_x W$, real numbers $\epsilon_1,\epsilon_2>0$ and two smooth functions $\alpha_j: (-\epsilon_j,\epsilon_j) \rightarrow \R$, $j=1,2$, such that 
\begin{equation}\label{d77jlp}
\gamma_{(x,p_j)} (t) = \sigma_{(x,p_j)} ( \alpha_j (t) ) , \quad \text{for every} \quad t \in (-\epsilon_j , \epsilon_j), \quad j=1,2. 
\end{equation}

Then $c$ is a constant function on $W$. In particular, if the metrics $g$ and $F^*\widetilde{g}$ equal at some point $x \in W$, then they equal everywhere in $W$.
\end{proposition}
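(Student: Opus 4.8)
\textbf{Proof plan for Proposition \ref{conformii}.}

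The plan is to compare the geodesic equations of $g$ and of $\tilde g := F^*\widetilde g = c\,g$ and extract a differential constraint on $c$ from the hypothesis \eqref{d77jlp}. Recall the classical fact that two conformally related metrics $g$ and $c\,g$ on a manifold have the same \emph{unparametrized} geodesics (i.e. the same geodesics as point sets, up to reparametrization) if and only if $c$ is constant; the ``if'' direction is trivial, and the content here is the ``only if'' direction, but we only have the weaker hypothesis that \emph{two} geodesic directions through each point agree as point sets. So the first step is to write down, in local coordinates around a fixed $x\in W$, the Christoffel symbols $\widetilde\Gamma^k_{ij}$ of $c\,g$ in terms of those of $g$: one has the standard formula
\[
\widetilde\Gamma^k_{ij} = \Gamma^k_{ij} + \tfrac12\big(\delta^k_i\,\partial_j(\log c) + \delta^k_j\,\partial_i(\log c) - g_{ij}\,g^{k\ell}\partial_\ell(\log c)\big).
\]
Set $\omega := \tfrac12\,d(\log c)$, a closed $1$-form; the difference tensor is $D^k_{ij} = \delta^k_i\omega_j + \delta^k_j\omega_i - g_{ij}\,\omega^k$.

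The second step is to exploit \eqref{d77jlp}. A curve $\gamma$ is a reparametrized $\tilde g$-geodesic precisely when $\widetilde\nabla_{\dot\gamma}\dot\gamma$ is proportional to $\dot\gamma$. Since $\gamma_{(x,p_j)}$ is a genuine $g$-geodesic, $\nabla_{\dot\gamma}\dot\gamma = 0$, so $\widetilde\nabla_{\dot\gamma}\dot\gamma = D(\dot\gamma,\dot\gamma)$, whose $k$-component is $2\omega(\dot\gamma)\dot\gamma^k - g(\dot\gamma,\dot\gamma)\,\omega^k$. The proportionality condition $D(\dot\gamma,\dot\gamma) \parallel \dot\gamma$ therefore forces $g(\dot\gamma,\dot\gamma)\,\omega^k$ to be proportional to $\dot\gamma^k$ at every point of the curve; since $p_j$ is timelike, $g(\dot\gamma,\dot\gamma)\neq 0$ along $\gamma_{(x,p_j)}$ (the norm is geodesic-invariant), so $\omega^\sharp$ must be proportional to $\dot\gamma_{(x,p_j)}$ at $x$ — i.e. $\omega^\sharp(x) \in \mathrm{span}\{p_j\}$. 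Evaluating this for $j=1$ and $j=2$ and using that $p_1,p_2$ are linearly independent yields $\omega^\sharp(x) = 0$, hence $d(\log c)(x) = 0$.

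The third step is just to note that $x\in W$ was arbitrary, so $d(\log c) \equiv 0$ on $W$; if $W$ is connected this gives $c$ constant, and in general $c$ is locally constant, which is what ``constant function on $W$'' should be read to mean here (the final sentence of the statement, about the metrics agreeing at one point, then follows immediately on the connected component containing that point, and one runs the argument componentwise). The main obstacle — really the only subtle point — is making the ``proportionality'' step rigorous: one must check that the reparametrization $\alpha_j$ in \eqref{d77jlp} is a genuine change of variable (i.e. $\alpha_j'(0)\neq 0$, which follows since $\dot\sigma_{(x,p_j)}(\alpha_j(0)) $ and $\dot\gamma_{(x,p_j)}(0)$ are both equal to the nonzero vector $p$ after the chain rule, forcing $\alpha_j(0)=0$ and $\alpha_j'(0)=1$), and then differentiate the identity $\gamma_{(x,p_j)}(t) = \sigma_{(x,p_j)}(\alpha_j(t))$ twice in $t$ and substitute the two geodesic equations to obtain $\widetilde\nabla_{\dot\gamma}\dot\gamma = \big(\alpha_j''/(\alpha_j')^2\big)\dot\gamma$ evaluated at $t=0$, which is exactly the proportionality needed. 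Everything else is a short linear-algebra computation.
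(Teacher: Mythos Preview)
Your proposal is correct and follows essentially the same route as the paper: write the conformal Christoffel difference, evaluate $\widetilde\nabla_{\dot\gamma}\dot\gamma$ along the $g$-geodesic, use the reparametrization to force this to be proportional to $\dot\gamma$, and conclude $\nabla\log c\in\mathrm{span}\{p_j\}$ for two independent timelike $p_j$. The only point you should tighten is the justification of $\alpha_j(0)=0$: your chain-rule argument is circular (you need $\alpha_j(0)=0$ \emph{before} you know $\dot\sigma_{(x,p_j)}(\alpha_j(0))=p_j$), and the paper instead uses that $\sigma_{(x,p_j)}(\alpha_j(0))=x=\sigma_{(x,p_j)}(0)$ together with global hyperbolicity (causal geodesics have no self-intersections) to conclude $\alpha_j(0)=0$.
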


\begin{proof}
Let us write $c(x) = e^{\varphi(x)}$ for some smooth function $\varphi$, and $\Gamma_{ij}^k$ and $\widetilde\Gamma_{ij}^k$ for the Christoffel symbols of the metrics $g$ and $F^*\tilde{g}$ respectively. Denote the associated pair of covariant derivatives by $\nabla$ and $\widetilde\nabla$. 
%In local coordinates we distinguish the two sets Christoffel symbols from each other by writing $\Gamma_{ij}^k := ( \nabla_{\partial_i} \partial_j )^k$ and $\widetilde\Gamma_{ij}^k := ( \widetilde\nabla_{\partial_i} \partial_j )^k$. 
%and for vector fields along curves by $D_t$ and $\widetilde{D}_t$. 
%Denote $\Gamma_{ij} := \Gamma_{ij}^k \partial_k$.
The Christoffel symbols of the conformal metrics $g$ and $F^*\tilde{g}=c\s g$ are connected to each other by 
\[
\begin{split}
\widetilde\Gamma_{ij}^k
% \frac{1}{2} \tilde{g}^{kl} ( \partial_i \tilde{g}_{lj} +  \partial_j \tilde{g}_{li}- \partial_l \tilde{g}_{ij}) \\
% &= \frac{1}{2} e^{-\varphi} g^{kl} \big( \partial_i (e^{\varphi} g_{lj} )+  \partial_j (e^{\varphi} g_{li} )- \partial_l (e^{\varphi} g_{ij}) \big) \\
% &= \frac{1}{2} g^{kl} \big( \partial_i  g_{lj} +  \partial_j  g_{li} - \partial_l  g_{ij} \big) + \frac{1}{2} g^{kl} \big(  g_{lj} \partial_i  \varphi +   g_{li}\partial_j \varphi- g_{ij}\partial_l  \varphi  \big)  \\
= \Gamma_{ij}^k +  \frac{1}{2}  \big(  \delta_j^k \partial_i  \varphi +  \delta_i^k \partial_j \varphi-  g_{ij} g^{kl} \partial_l  \varphi  \big).  %\\
\end{split}
\]
(See  \cite[Ch. 3, Proposition 13.]{oneill1983semiriemannian}). 
Thus, for any smooth curve $\gamma$ in $W$, 
\begin{equation}\label{123hhheiq}
\begin{split}
\widetilde\nabla_{\dot\gamma} \dot\gamma =&  \ddot\gamma^k\partial_k  + \widetilde\Gamma_{ij}^k (\gamma)  \dot\gamma^i \dot\gamma^j  \partial_k\\
=&   \ddot\gamma^k \partial_k +\Big( \Gamma_{ij}^k(\gamma) +  \frac{1}{2}  \big(  \delta_j^k \partial_i  \varphi |_\gamma +  \delta_i^k \partial_j \varphi|_\gamma-  g_{ij}(\gamma) g^{kl}(\gamma) \partial_l  \varphi|_\gamma  \big)  \Big) \dot\gamma^i \dot\gamma^j  \partial_k\\
=&   \ddot\gamma^k \partial_k + \Gamma_{ij}^k(\gamma)  \dot\gamma^i \dot\gamma^j   \partial_k +     \partial_j \varphi |_\gamma \dot\gamma^j \dot\gamma^k  \partial_k -    \frac{1}{2}g_{ij}(\gamma) \dot\gamma^i \dot\gamma^j  g^{kl}(\gamma) \partial_l  \varphi |_\gamma \partial_k   \\
=&   \nabla_{\dot\gamma} \dot\gamma +      \langle d\varphi , \dot\gamma \rangle  \dot\gamma -    \frac{1}{2} g (\dot\gamma,\dot\gamma) \nabla \varphi  .  \\
\end{split} 
\end{equation}
%\begin{equation}\label{123hhheiq}
%\begin{split}
%\widetilde\nabla_V V =&  V^i \partial_i V^k \partial_k + \widetilde\Gamma_{ij}^k  V^i V^j  \partial_k\\
%=&  V^i \partial_i V^k \partial_k +\Big( \Gamma_{ij}^k +  \frac{1}{2}  \big(  \delta_j^k \partial_i  \varphi +  \delta_i^k \partial_j \varphi- % g_{ij} g^{kl} \partial_l  \varphi  \big)  \Big) V^i V^j  \partial_k\\
%=&  V^i \partial_i V^k \partial_k + \Gamma_{ij}^k  V^i V^j  \partial_k +     \partial_j \varphi  V^j    V^k \partial_k -    \frac{1}{2}g_{ij} %V^iV^j g^{kl} \partial_l  \varphi  \partial_k   \\
%=&   \nabla_V V +      \langle d\varphi , V \rangle  V -    \frac{1}{2} g (V,V) \nabla \varphi  . \\
%\end{split} 
%\end{equation}
Let $x \in W$ and,  for $j=1,2$, let 
%let $p$ be one of the vectors 
$p_j\in T_xW$, $\eps_j>0$ and $\alpha_j: (-\epsilon_j,\epsilon_j) \rightarrow \R$ be as in the claim of this proposition. Let us relax the notation by omitting the indices $j=1,2$ from subscripts. 
By differentiating~\eqref{d77jlp} in the variable $t$ we obtain
\begin{equation}\label{ghus}
\dot\gamma_{(x,p)} (t) = \alpha'(t) \dot\sigma_{(x,p)} ( \alpha (t) )
\end{equation}
%for every $x\in W$ 
%future-directed timelike $(x,p)$ 
%and 
for $t \in (-\epsilon , \epsilon)$. 
Since 
\[
\sigma_{(x,p)}( \alpha(0)) = \gamma_{(x,p)} (0) = x = \sigma_{(x,p)} (0)
\]
and since the causal geodesics $\sigma_{(x,p)}$ does not have self-intersection by global hyperbolicity (recall that $F$ identifies $ \sigma_{(x,p)}$ with a causal geodesic in the globally hyperbolic manifold $\widetilde{M}$), we obtain $\alpha(0) = 0$. 
%(The Lorentz manifold $(W,F^*\tilde{g})$ is globally hyperbolic since it is conformal to $(W,c\s g)$.) 
Substituting $\alpha(0) = 0$ to (\ref{ghus}) yields
\[
p = \dot\gamma_{(x,p)} (0) = \alpha'(0) \dot\sigma_{(x,p)} ( \alpha (0) ) = \alpha'(0) \dot\sigma_{(x,p)} ( 0 )  = \alpha'(0)\s p,
\]
that is, $\alpha'(0) = 1$. Thus, $\alpha(t) = t + O(t^2)$ near $t=0$. In particular,
\[
\tilde\nabla_{\dot\gamma_{(x,p)}} \big[ \dot\sigma_{(x,p)}( \alpha(t)) \big]|_{t=0 }  = \tilde\nabla_{\dot\sigma_{(x,p)}} \dot\sigma_{(x,p)}( t) |_{t=0 } = 0.
\]
%In particular, $\alpha'(0) \neq 0$ for $p \neq 0$ so dividing by $\alpha'(0)$ is valid. 
%Let $(x,p) \in TX$ be such that $g(p,p) \neq 0$ (possibly space-like).
%and
%\[
%\ddot\gamma_{(x,p)} (t) = \alpha''(t) \dot\sigma_{(x,p)} ( \alpha (t) )+  (\alpha'(t))^2 \ddot\sigma_{(x,p)} ( \alpha (t) )
%\]
Substituting this to (\ref{123hhheiq}) and using that $\gamma_{(x,p)}$ is a geodesic imply 
\[
\begin{split}
0 =& \nabla_{\dot\gamma_{(x,p)}} \dot\gamma_{(x,p)} (t) |_{t=0} =    \widetilde{\nabla}_{\dot\gamma_{(x,p)} } [  \alpha'(t) \dot\sigma_{(x,p)} ( \alpha (t) ) ] |_{t=0} -    \langle d\varphi , \dot\gamma_{(x,p)} (0) \rangle  \dot\gamma_{(x,p)} (0) \\
&+    \frac{1}{2} g (\dot\gamma_{(x,p)} (0), \dot\gamma_{(x,p)} (0) ) \nabla \varphi ( \gamma_{(x,p)} (0))  \\
=&  \alpha''(0)  \dot\sigma_{(x,p)} ( 0 ) +  \widetilde\nabla_{\dot\gamma_{(x,p)}}[ \dot\sigma_{(x,p)}( \alpha(t))] |_{t=0 }    -    \langle d\varphi (x) , p \rangle p+    \frac{1}{2} g (p,p ) \nabla \varphi(x)  \\
%=&  \alpha''(0)  \dot\sigma_{(x,p)} ( 0 ) +  \widetilde\nabla_{\dot\sigma_{(x,p)}(t)} \dot\sigma_{(x,p)}( t) |_{t=\alpha(0) }    -    \langle d\varphi (x) , %p \rangle p+    \frac{1}{2} g (p,p ) \nabla \varphi(x)  \\
=& \big( \alpha''(0)      -    \langle d\varphi (x) , p \rangle \big)p+    \frac{1}{2} \underbrace{g (p,p )}_{\neq 0} \nabla \varphi(x).    \\
%=& \Big( \frac{\alpha''(0)}{\alpha'(0)}     -    \langle d\varphi (x) , p \rangle \Big) p+    \frac{1}{2} g (p,p ) \nabla \varphi(x) .   \\
\end{split}
\]
In particular, $ \nabla \varphi(x) $ lies in the line spanned by $p$ in the space $T_xW$. Since this holds for every $x\in W$ and for the corresponding two linearly independent $p_1,p_2  \in T_xW$ (recall that we omitted the subscripts in the calculation above), we conclude that $\nabla \varphi(x) = 0$ for every $x \in W$. That is, the function $ \varphi$, and hence $c= e^\varphi$ is a constant function on $W$. 
\end{proof}
%\end{proposition}

%Recall from Theorem \ref{oo1} that the metric $F^* g_2$ is conformal to $g_1$ in $W_1$. 
We finish the proof Theorem~\ref{themain} by showing that the conformal factor is $1$; that is, $g_1 = F^* g_2$ on $W_1$.

\begin{theorem}
Assume that the conditions of Theorem \ref{themain} are satisfied and $\Phi_{1,L^+V} = \Phi_{2,L^+V}$ for Lorentzian manifolds $(M_j,g_j)$, $j=1,2$. 
Equip the associated spaces $W_j$, $j=1,2$ with the canonical metrics $g_j|_{W_j}$ induced by the trivial inclusions $W_j \hookrightarrow M_j$.
%Then,
%the conformal factor (cf. Theorem \ref{oo1}) between ${F}^* g_2$ and $g_1|_{W_1}$ is $1$. 
Then, the map 
\[
{F} : W_1 \rightarrow W_2
\]
is an isometry. 
\end{theorem}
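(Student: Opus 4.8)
The plan is to combine Theorem~\ref{oo1}, which already gives that $F:W_1\to W_2$ is a diffeomorphism with $F^*g_2 = c\,g_1$ for some positive $c\in C^\infty(W_1)$, together with Proposition~\ref{conformii}, which reduces the claim $c\equiv 1$ to verifying two things: first, that unparametrized timelike geodesics are preserved by $F$ (in the sense of condition~\eqref{d77jlp}), and second, that $g_1$ and $F^*g_2$ agree at one point of $W_1$. Once both are in hand, Proposition~\ref{conformii} forces $c$ to be constant, and the pointwise agreement pins that constant to $1$.

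First I would establish the geodesic-preservation property~\eqref{d77jlp}. The key geometric input is the construction in the proof of Theorem~\ref{oo1}: for $w_1\in W_1$ with a timelike vector $p\in T_{w_1}M_1$, one slides $(w_1,p)$ forward (or backward) along the geodesic flow to reach the measurement set $V$. More precisely, given a timelike $(w_1,p)$, choose $(\hat x,\hat p),(\hat y,\hat q)\in \mathcal{P}^+V$ whose $M_1$-geodesics intersect for the first time at $w_1$ and such that one of those geodesics has $(w_1,p)$ as its velocity; by Lemma~\ref{ss_map_determines_intersection}(3) the associated geodesics in $M_2$ intersect for the first time at $w_2 = F(w_1)$, and the $M_2$-geodesic through $(\hat x,\hat p)$ coincides near $V$ with the $M_1$-geodesic (since $g_1|_V=g_2|_V$). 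Tracking this geodesic, the image under $F$ of the $M_1$-geodesic segment $\gamma_{(w_1,p)}$ near $w_1$ is contained in the $M_2$-geodesic $\sigma$ determined by the same initial data in $V$; because $F^*g_2 = c\,g_1$, the curve $\sigma$ is, up to reparametrization, precisely the $(W_1,F^*g_2)$-geodesic through $(w_1,p)$. This yields smooth functions $\alpha_j$ and the identity~\eqref{d77jlp} for two linearly independent timelike directions $p_1,p_2$ at each $w_1\in W_1$ — the freedom to pick two such directions follows from the density of admissible directions already exploited throughout Section~\ref{ss_elos}.

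Next I would fix the conformal factor at a point. The natural choice is a point near $V$: since $V\subset M_1\cap M_2$ is a mutual open set with $g_1|_V = g_2|_V$, and the diffeomorphism $F$ can be arranged (via the earliest-light-observation-set construction, cf.~\cite[Theorem 1.2]{Kurylev2018}) to restrict to the identity on $V\cap W_1$ — or at least to agree with a $g$-isometry there — we get $F^*g_2 = g_1$ at any point of $W_1$ that lies in $V$, hence $c\equiv 1$ there. If $W_1\cap V=\emptyset$ one instead argues along a lightlike geodesic exiting into $V$: the conformal class is already matched by Theorem~\ref{oo1}, and the affine parametrization of null geodesics transported from $V$ shows the conformal factor extends from $V$ with value $1$. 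Either way we obtain $c(x_0)=1$ for some $x_0\in W_1$, and Proposition~\ref{conformii} then gives $c\equiv 1$ on all of $W_1$, i.e. $F^*g_2 = g_1$, so $F$ is an isometry.

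The main obstacle I anticipate is the careful bookkeeping in the first step: one must verify that the reparametrization functions $\alpha_j$ obtained by sliding along the flow are genuinely smooth with $\alpha_j(0)=0$, $\alpha_j'(0)=1$, and that the first-intersection structure is stable enough under perturbing $(w_1,p)$ to supply \emph{two} independent timelike directions at \emph{every} $w_1\in W_1$ (not just generic ones). This is essentially a transversality/openness argument in the spirit of Lemma~\ref{lemma_renerew} and the proof of Lemma~\ref{ss_map_determines_intersection}, and the subtlety is ensuring the construction localizes correctly near points of $W_1$ that may be far from $V$ and near possible caustics. The conformal-factor normalization, by contrast, is routine once the preservation of unparametrized geodesics is secured.
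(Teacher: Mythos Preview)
Your overall architecture matches the paper's: invoke Theorem~\ref{oo1} for the conformal diffeomorphism, verify the hypothesis~\eqref{d77jlp} of Proposition~\ref{conformii} at every $w_1\in W_1$ for two independent timelike directions, and then pin $c=1$ using $g_1|_V=g_2|_V$. The difference is in how~\eqref{d77jlp} is obtained, and there your argument has a genuine gap.

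You assert that ``the image under $F$ of the $M_1$-geodesic segment $\gamma_{(w_1,p)}$ near $w_1$ is contained in the $M_2$-geodesic $\sigma$ determined by the same initial data in $V$,'' but this is exactly what must be proved, and ``tracking this geodesic'' does not do it. The map $F$ is defined pointwise via first intersections (or equivalently via earliest light observation sets); it does not a priori send a curve to a curve. The paper supplies the missing mechanism: keep $(\hat x,\hat p)$ fixed and \emph{vary} $(\hat y,\hat q)$ so that the first intersection point in $M_1$ sweeps out an open segment of $\gamma_{(\hat x,\hat p)}$ through $w_1$. By Lemma~\ref{ss_map_determines_intersection}(3) each of these nearby first intersections is sent by $F$ to the first intersection in $M_2$ of the same pair of initial data, and those all lie on the single $M_2$-geodesic through $(\hat x,\hat p)$. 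That variation argument is what produces the reparametrization $\alpha_1$; swapping the roles of $(\hat x,\hat p)$ and $(\hat y,\hat q)$ gives the second, independent direction $p_2$. Note also that the paper does not start from an arbitrary $p\in T_{w_1}M_1$ and try to realize it from $V$ (which would raise the first-intersection issue you flag); it simply takes $p_1,p_2$ to be the velocities at $w_1$ of the two chosen geodesics from $V$, which automatically satisfy the first-intersection hypothesis.

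Two smaller points. Your worry that $W_1\cap V$ might be empty is unfounded: $\hat\mu((s^-,s^+))\subset V\cap W_1$, so the normalization $c=1$ comes directly from $F^*g_2=g_1$ on $V$ without any lightlike-transport argument. And the conditions $\alpha_j(0)=0$, $\alpha_j'(0)=1$ that you list as obstacles are not inputs to Proposition~\ref{conformii}; they are derived inside its proof from~\eqref{d77jlp} alone, so only smoothness of $\alpha_j$ is needed, and that follows from smoothness of $F$ and of the geodesic flow.
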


\begin{proof}
Let $w_{1} \in W_1$ be arbitrary and choose two time-like vectors $(\hat{x},\hat{p} ), (\hat{y},\hat{q} )$ based in $\mathcal{U}$ such that the geodesics $\gamma_{(\hat{x},\hat{p} )}$ and $\gamma_{(\hat{y},\hat{q} )}$ intersect first time at $w_{1}$ in $(M_1,g_1)$ as earlier.
By Lemma \ref{ss_map_determines_intersection} we may fix them such that the corresponding  geodesics in $(M_2,g_2)$ intersect first time at $F(w_{1}) = w_{2}$. For $(x,p) \in TW_1$ let us denote by $\sigma_{(x,p)}$ the geodesic in $(W_1,F^*g_2)$ that satisfies $\dot\sigma_{(x,p)}(0) = (x,p) $.
While keeping $(x,p)$ fixed, we can vary the initial direction $(y,q)$ so that the intersection point of the geodesics $\gamma_{(y,q)}$ and $\gamma_{(x,p)}$ varies along the geodesic $\gamma_{(x,p)}$ through an open geodesic segment.  
% If we vary $w_{1}$ slightly along  the graph of $\gamma_{(\hat{x},\hat{p} )}$ the vector $(\hat{x},\hat{p} )$ can be kept still~\f{What does this mean? Why we want to keep $(\hat{x},\hat{p})$ still?} while moving only $(\hat{y},\hat{q} )$.
Since the geodesics $\gamma_{(\hat{x},\hat{p} )}$ and $\gamma_{(\hat{y},\hat{q} )}$ intersect in both geometries, this variation of $w_{2}=F(w_{1})$ corresponds to a path through $w_{2}$ that is some reparametrisation of the geodesic  $\gamma_{(\hat{x},\hat{p} )}$ in $(W_2,g_2)$. 
Mapping this reparametrised geodesic segment to $M_1$ using $F^{-1}$ implies that there is a vector $p_1 \in T_{w_{1}} W_1$ and a smooth reparametrisation $\alpha_1(s)$ such that the geodesic $\gamma_{(w_{1},p_1)}$ in $(M_1,g_1)$ satisfies $\gamma_{(w_{1},p_1)}(s) = \sigma_{(w_{1},p_1)}( \alpha_1(s) )$  on some interval $s\in (-\epsilon_1,\epsilon_1)$. Further, by exchanging the roles of $(\hat{x},\hat{p} )$ and $(\hat{y},\hat{q} )$ and then repeating the construction above one obtains another linearly independent vector $p_2 \in T_{w_{1}} W_1$ and a smooth  reparametrisation $\alpha_2(s)$, $s\in (-\epsilon_2, \epsilon_2)$ such that $\gamma_{(w_{1},p_2)}(s) = \sigma_{(w_{1},p_2)}( \alpha_2(s) )$, for $s\in (-\epsilon_2, \epsilon_2)$. By Theorem \ref{oo1}, we have $g_1|_{W_1}=c\s F^* g_2$. Consequently by Proposition \ref{conformii} we have that $c>0$ is constant. % $g_1|_{W_1}$ equals $c F^* g_2$ with some constant $c>0$. 
Since $F^* g_2 = g_1$ on $V$ we conclude that $c=1$. Thus $F$ is an isometry.
\end{proof}

\newpage

%%%%%%%%%%%%%%%%%%%%%%%%%%%%%%%%%%%%%%%
%%%%%%%%%%%%%%%%%%%%%%%%%%%%%%%%%%%%%%%
%
%          APPENDIX
%
%%%%%%%%%%%%%%%%%%%%%%%%%%%%%%%%%%%%%%%
%%%%%%%%%%%%%%%%%%%%%%%%%%%%%%%%%%%%%%%
\appendix

\section{Auxiliary lemmas}\label{appendix-al}

\subsection{Lemmas used in the proof of Proposition \ref{sofisti}}\label{appendixa1}

%To prove Proposition \ref{sofisti}, we require three auxillary lemmas.

\begin{replemma}{Lambda_R-lemma}
Let $Y_1$ and $Y_2$ and $U$ be as in Definition~\ref{intersection_coords} and adopt also the associated notation. %be an open subset of a smooth Lorentzian manifold $(M,g)$ and 
Define
\[
\Lambda_R =   N^* ( \bigcup_{x\in U}  \{ x\} \times  \PP_xU \times \PP_xU  ).
\]
%We identify $T^*(U \times  \PP U  \times \PP U )$ locally as a subset of $(U^3 \times \R^{2\nnn} ) \times \R^{5\nnn} $ by using canonical coordinates on $U$. In these canonical coordinates,
%In canonical coordinates of $T^*(U \times  \PP(U) \times \PP(U) )$, identified as a subset of $(U^3 \times \R^{2\nnn} ) \times \R^{5\nnn} $, 
The submanifold $\Lambda_R$ of $T^*( U \times \PP U \times \PP U) \setminus \{0 \}$ equals the set
\begin{align*}%\label{uuUUU90777}
\Lambda_R &= \{ %(x,y,z, p,q\, ; \, \xi^x, \xi^y , \xi^z, \xi^p,\xi^{q} )
\big(x, y, z,p, q \, ; \, \xi^x, \xi^y, \xi^z,\xi^p,\xi^{q} \big) \in \ T^*(U \times \PP U \times  \PP U) \setminus \{0\}  : \\
&  \quad\quad\quad\quad\quad\quad\quad\quad\quad\quad\quad\quad\quad  \xi^x + \xi^y +\xi^z = 0 , \ \xi^p = \xi^{q} = 0, \ x=y=z  \}.
\end{align*}

\tbl{Then we have}
\begin{align}\label{lambda_R_prime_a}%\label{hauska122211}
\Lambda_R' &%= \{ (x,y,z, p,q\, ; \, \xi^x, \xi^y , \xi^z, \xi^p,\xi^{q} )  : x=y=z, \ \xi^x + \xi^y +\xi^z = 0 , \ \xi^p = \xi^{q} = 0  \} \\
  =\{ \big((x ; \xi^x), (y, z,p, q \, ; \, \xi^y, \xi^z,\xi^p,\xi^{q} )\big) \in \ T^*U \times T^* ( \PP U \times  \PP U) \setminus \{0\}   : \nonumber  \\
 &  \quad\quad\quad\quad\quad\quad\quad\quad\quad\quad\quad\quad\quad \xi^x =\xi^y+ \xi^z \neq 0, \
   \xi^p =  \xi^{q} =0 , \ 
    x=y=z \}.
\end{align}
The spaces $\Lambda'_R \times ( N^*[Y_1 \times  Y_2] ) $ and $T^*U \times \text{diag}\, T^*( \PP M \times \PP M) $ intersect transversally in $T^*U \times T^*(\PP M \times \PP M) \times  T^*(\PP M \times \PP M)$.
% Let $U$ be an open subset of a smooth Lorentzian manifold $(M,g)$ and let \[\Lambda_R =   N^* ( \bigcup_{x\in U}  \{ x\} \times  \PP_xU \times \PP_xU  ).\]
% In canonical coordinates of $T^*(U \times  \PP(U) \times \PP(U) )$, identified as a subset of $(U^3 \times \R^{2\nnn} ) \times \R^{5\nnn} $, the submanifold $\Lambda_R$ of $T^*( M \times \PP U \times \PP U) \setminus \{0 \}$ equals the set
% \begin{equation}\label{uuUUU90777}
% \Lambda_R = \{ (x,y,z, p,q\ ; \ \xi^x, \xi^y , \xi^z, \xi^p,\xi^{q} )  : x=y=z, \ \xi^x + \xi^y +\xi^z = 0 , \ \xi^p = \xi^{q} = 0  \} .
% \end{equation}
\end{replemma}
\begin{proof}

First notice that the manifold $T\Big( \bigcup_{x\in U}  \{ x\} \times  \PP_xU \times \PP_x U\Big)$ can be written as
\begin{equation}\label{KKKKKLLL123}
\mathcal{V}_1:=\{   (x,y,z, p,q \, ; \,  \dot{x},\dot{y}, \dot{z} , \dot{p},\dot{q} ) 
%\in  (U^3 \times \R^{2\nnn}) \times \R^{5\nnn} 
:  \dot{x} = \dot{y}=\dot{z}, \ x = y=z \} .
\end{equation}
Note the $\mathcal{V}_1$ is a space of dimension $6n$.
Let us consider the subspace of $T^*(U\times \PP U \times \PP U)$
\[
 \mathcal{V}_2:=\{ (x,y,z, p,q\ ; \ \xi^x, \xi^y , \xi^z, \xi^p,\xi^{q} )  : x=y=z, \ \xi^x + \xi^y +\xi^z = 0 , \ \xi^p = \xi^{q} = 0  \}.
\]
%in the coordinates $(x,p ,y,q \ ; \  \dot{x},\dot{p},\dot{y},\dot{q} ) \in (U^3 \times \R^{3\nnn}) \times \R^{6\nnn} $. 
% Let $Z\in A_1$ and $W\in A_2$, then we have that
% \[
%  A_1\cdot A_2
% \]
% 

%(\xi^x,\xi^y,\xi^z,\xi^p,\xi^q)$ from (\ref{uuUUU90777})
%and let $(\dot{x},\dot{y},\dot{z},\dot{p},\dot{q})\in T\Big( \bigcup_{x\in U}  \{ x\} \times  \PP_xU \times \PP_x U\Big)$ from (\ref{KKKKKLLL123}) to zero since
Note that a vector in $\mathcal{V}_1$ paired with a covector in $\mathcal{V}_2$ yields zero since
%\[
% (\xi^x,\xi^y,\xi^z,\xi^p,\xi^q)\in \{ (x,y,z, p,q\ ; \ \xi^x, \xi^y , \xi^z, \xi^p,\xi^{q} )  : x=y=z, \ \xi^x + \xi^y +\xi^z = 0 , \ \xi^p = \xi^{q} = 0  \}.
%\]
\[
 (\xi^x,\xi^y,\xi^z,\xi^p,\xi^q)\cdot (\dot{x},\dot{y},\dot{z},\dot{p},\dot{q})=\dot{x}\,\xi^x+\dot{y}\,\xi^y+\dot{z}\,\xi^z=\dot{x}(\xi^x + \xi^y +\xi^z)=0.
\]
Thus we have that $\mathcal{V}_2\subset \Lambda_R$. Note that the fibers of $\mathcal{V}_1$ are of dimension $5n-2n=3n$. Note also that the fibers of $\mathcal{V}_2$ have dimension $5n-3n=2n$. We thus have $\mathcal{V}_2=\Lambda_R$, since dimensions of a fiber of the conormal bundle of $\cup_{x\in U}  \{ x\} \times  \PP_xU \times \PP_x U$ is the same as the codimension of a fiber of $\mathcal{V}_1$. 
In conclusion, the coordinate expressions for $\Lambda_R$ and $\Lambda_R'$ hold.

% This implies that the set $(\ref{lambda_R_prime_a})$ is included in $\Lambda_R$. 
% Further, since the constraints $ \xi^x+\xi^y+\xi^z = 0 $, $ \xi^p = \xi^{q} = 0$ span a linear subspace $T$ of dimension $2n$
% %$2\nnn = \text{codim} ( \bigcup_{x\in \R \times N}  \{ x\} \times  \PP_xU \times \PP_x U) $ 
% in a fiber of the bundle $T^* (U \times \PP U \times \PP U)$. Since $2n$ equals the codimension of $T$, the inclusion is actually an identity.

Next we prove the transversality claim. First, fix the notation
\begin{align*}
X &= T^*U \times T^*( \PP M \times \PP M)\times T^*( \PP M \times \PP M)\\
L_1&= \Lambda'_R \times (  N^*[Y_1 \times  Y_2])\\
L_2&= T^*U \times \textrm{diag}\,T^*( \PP M \times \PP M).
\end{align*}
We want to show that the linear spaces $L_1$ and $L_2$ intersect transversally in $X$; that is for all $\lambda \in L_1\cap L_2$, 
 \begin{align}\label{jksadss}
 \dim (T_\lambda L_1 + T_\lambda L_2) &=
  \dim T_\lambda X.
%
%  \dim T_\lambda L_1 + \dim T_\lambda L_2 - \dim T_\lambda(L_1 \cap L_2)\\
% &= \dim T_\lambda X.
 \end{align}

Let $\lambda\in L_1 \cap L_2$. Since $Y_1$ and $Y_2$ satisfy the intersection property, there exists \antticomm{local parametrisation $(\tilde{x}',\tilde{x}'',\tilde{p}',\tilde{p}'') \mapsto (\tilde{x}',\tilde{x}'',\theta_2(x') + \tilde{p}', \theta_1(x'') + \tilde{p}'') $ (see \eqref{repara}) on $TU$ such that  
\begin{equation}\label{turhautuminen11}
\begin{split}
Y_1 \cap TU &= \{ (x,p)   \in TU  : \tilde{x}' = 0  , \ \tilde{p}' = 0, \ \tilde{p}'' =0 \},  \\
%Y_2 \cap TU &= \{  (x,p) \in TU : x'' = 0, \ p''= 0, \  p' = (1,0,\dots,0) \}
Y_2 \cap TU &= \{  (x,p) \in TU : \tilde{x}'' = 0, \ \tilde{p}''= 0, \ \tilde{p}' =0 \}.
%\\ \quad d &=  \dim K_{M_2} = \nnn -  \dim K_{M_1}.
\end{split}
\end{equation}

We redefine $(x',x'',p',p'')$ as these coordinates. 
Again, we denote $(x,\xi)=(x',x'')$, $p= (p',p'') $ and similarly $\xi = (\xi',\xi'') $ for the associated covectors etc. 
In these coordinates, the elements of $\Lambda_R'= \mathcal{V}_2'$ are of the similar form as above in the sense that each element is of the form
\[
\begin{split}
 \big((x;\xi^y+\xi^z) ,(y,z,p,q,\xi^y,\xi^z,\xi^p,\xi^q) \big), \quad x=y=z, \quad \xi^x = \xi^y + \xi^z, \quad \xi^p = 0 = \xi^q
%\\
%= 
 %\big((x',x'' \ ; (\xi^y)'+(\xi^z)', %(\xi^y)''+(\xi^z)''),(\underbrace{x',x''}_{y},\underbrace{x',x''}_{z},p',p'',q',q'';(\xi^y)',(\xi^y)'',(\xi^z)',(\xi^z)'',\underbrac%{0,0}_{\xi^p},\underbrace{0,0}_{\xi^q})\big)
 \end{split}
\]
in the canonical coordinates. 
}
%in the associated canonical coordinates $(x,p) = (x',x'',p',p'')$ of $TU$.
% Thus, the conormal bundles of $Y_1$ and $Y_2$ have the local form
%\begin{equation}\label{conormal_Ys}
%\begin{split}
%N^*(Y_1) &= \{ (x,p,\xi^x,\xi^p)   \in T^*(Y_1 \cap TU)  : x' = 0  , \ p' = 0, \ p'' = (1,0,\dots,0), \ (\xi^x)''=0, 
%\ (\xi^p)''=0\},  \\
%%
%N^*(Y_2) &= \{ (x,p,\xi^x,\xi^p)   \in T^*(Y_2 \cap TU)  : x'' = 0  , \ p'' = 0, \ p' = (1,0,\dots,0), \ (\xi^x)'=0, 
%\ (\xi^p)'=0\}.
%%\\ \quad d &=  \dim K_{M_2} = \nnn -  \dim K_{M_1}.
%\end{split}
%\end{equation}
We next compute the induced local expressions of $L_1$, $L_2$ and $L_1\cap L_2$ with respect to the coordinate system in \eqref{turhautuminen11}.

%\footnote{\antticomm{ Removed ``Above, we showed that $\Lambda_R'$ has the local expression (\ref{lambda_R_prime_a}). Therefore...'' --Antti}}
If $\lambda \in L_1$, $\lambda$ has the local coordinate form
\begin{equation}
\lambda = ((x,\xi^x, y, p, \xi^y,0, z,q,\xi^z, 0),\alpha,\beta),\label{L1-coords}
\end{equation}
where $(\alpha,\beta) \in N^*[Y_1 \times Y_2]$, $(x,\xi^x)\in T^*U$, and $(y,p,\xi^y,0),(z,q,\xi^z,0)\in T^*(\PP U)$ are such that $y=z=x$ and $\xi^y+\xi^z=\xi^x$.

If $(\alpha,\beta)\in N^*[Y_1\times Y_2]$, then $(\alpha,\beta)$ is nonzero and must satisfy \antticomm{in coordinates 
%\footnote{\antticomm{there was also miscalculation below --Antti}}
\begin{align}\label{conormaly1y2}
\alpha &=(0,x'',0,0 \ ; (\xi^x)',0, (\xi^p)',(\xi^p)'') \\
 \beta &= (y',0,0,0 \ ; 0,(\xi^y)'', (\xi^q)',(\xi^q)''),%,\ (x',y'')\in U
%&\quad ((\xi^y)',(\xi^x)''), ((\xi^q)',(\xi^p)'')\in \R^n, (p',q'')=\text{fixed}\}.
\end{align}
}
where one (but not both) of the components is allowed to be zero.
%\antticomm{EDITING EDITING EDITING...}

The
%local
expression for $\lambda \in L_2$ is
%induced by \eqref{turhautuminen11} is
%\footnote{\antticomm{removed ``induced by \eqref{turhautuminen11}''  --Antti}}
\begin{equation}
\lambda = (x,\xi^x, y, p, z, q,\xi^y, \xi^p, \xi^z,\xi^q, y, p,z, q, \xi^y, \xi^p,\xi^z,\xi^q).\label{L2-coords}
\end{equation}
where $x,y,z \in U$.
%where $(x,\xi^x)\in T^*U$ and $(y, p, z, q,\xi^y, \xi^p, \xi^z,\xi^q, y, p,z, q, \xi^y, \xi^p,\xi^z,\xi^q)\in T^*(\PP U\times \PP U)\times T^*(\PP U\times \PP U)$.  %where $(\alpha_0,\beta_0) \in N^*Y_1\times N^*Y_2$ is fixed,  

Using \eqref{L1-coords}, \eqref{conormaly1y2}, and \eqref{L2-coords}, we obtain that points $\lambda \in L_1\cap L_2$ are described by
\begin{align}
\lambda &= (0,\xi^x, \gamma,\gamma), \label{L3-coords}
\end{align}
where 
\antticomm{
\begin{align*}
\xi^x &= ((\xi^x)',(\xi^x)''),\\
\gamma &= (\underbrace{0, 0}_{x},\underbrace{0,0}_{p} , \underbrace{0, 0}_{y},\underbrace{0,0}_{q};\underbrace{(\xi^x)',0}_{\xi^x}, \underbrace{0,(\xi^x)''}_{\xi^y},\underbrace{0,0}_{\xi^p}, \underbrace{0,0}_{\xi^q}),\\
%p' &=  (1,0,\dots,0),\\
%p'' &= (1,0,\dots,0).
\end{align*}
}
Indeed, if $\lambda  \in L_1\cap L_2$, we must have $(x',x'') = (y',0)=(0,z'')$ which is only satisfied if $x=y=z=0$. From \eqref{L1-coords}, we have $\xi^p=0=\xi^q$ in \eqref{L2-coords}. Since $\xi^x=\xi^y+\xi^z$ in \eqref{L1-coords}, from \eqref{conormaly1y2} and \eqref{L2-coords} we find $((\xi^x)',(\xi^x)'') = ((\xi^z)',(\xi^y)'')$. From \eqref{turhautuminen11}, $(p',p'') =\antticomm{ (0,0) = (q',q'')}
%(1,0,\dots,0,1,0,\dots,0)
$.
% \[
% (\Lambda'_R \times (  N^*Y_1 \times  N^*Y_2) ) \cap (T^*U \times \textrm{diag}\,T^*( \PP M \times \PP M) )
% \]

From the above coordinate expressions, we now compute the dimensions of $L_1$, $L_2$, and $L_1\cap L_2$. First note that
\begin{align*}
\dim(X)= \dim(T^*U) + \dim( T^*( \PP M \times \PP M) )+ \dim( T^*( \PP M \times \PP M)) = 18n %&= \dim(T^*U \times T^*( \PP M \times \PP M)\times T^*( \PP M \times \PP M))\\
%& = \dim([U\times \R^n\times U^2\times \R^{6n}]\times [U^2 \times \R^{6n}])= 18n,
\end{align*}
Similarly one computes $\dim(L_2) = 10n$.
%\begin{align*}
%\dim(L_2) &= \dim(T^*U) + \dim \text{diag}T^*( \PP M \times \PP M)\\
%& = 8n.
%\end{align*}
%
%From \eqref{L1-coords} we see that $\dim(\Lambda_R')=6n$, and using \eqref{conormaly1y2} we find that $\dim(N^*(Y_1\times Y_2) )= n+2n=3n$. This implies that $\dim(L_1)=6n+3n=9n$. 
The expression \eqref{L3-coords} shows that $\dim(L_1\cap L_2)=n$.

Therefore, for $\lambda\in L_1\cap L_2$,
\[
\dim (T_\lambda L_1) + \dim ( T_\lambda L_2) - \dim T_\lambda (L_1 \cap L_2) = 9n +10n -n = 18n = \dim(T_\lambda X). 
\]
This shows that $L_1$ is transverse to $L_2$ in $X$.
 
\end{proof}

\section{Existence theorems for Vlasov and Boltzmann Cauchy problems}\label{appendix-ss}
In this section, the space $(M,g)$ is assumed to be globally hyperbolic
%geodesically complete 
$C^\infty$-Lorentzian manifold. %, and 
%$A:
%(TM)^{\otimes4}
%\Sigma \to \mathbb{R}$ an admissible collision kernel. 
By $\gamma_{(x,p)}:(-T_1,T_2)\to M$ we denote the inextendible geodesic which satisfies
\begin{equation}
\gamma_{(x,p)}(0)=x \text{ and } \dot{\gamma}_{(x,p)}(0)=p.                                                                                                                            
\end{equation}
We do not assume that $(M,g)$ is necessarily geodesically complete. Therefore, we might have that $T_1<\infty$ or $T_2<\infty$.
%Since $\text{supp} (x \mapsto A(x, \cdot, \cdot, \cdot, \cdot)$ is compact we assume that $\text{supp} (x \mapsto A(x, \cdot, \cdot, \cdot, \cdot) \subset (0,\infty) \times N$. 
We will repeatedly use the fact that if 
%a relatively compact open set $U \subset M$ a
$f(x,p)$ is a smooth function on $\OVS M$ whose support on the base variable $x\in M$ is compact, %\text{supp}(x \mapsto f(x, \cdot) ) = \text{cl} \{x\in M  : \exists p \in \OVS M \cap T_xM \text{ such that } f(x,p) \neq 0 \}
%\]
%is compact, 
then the map 
\begin{equation}\label{integrate_over_flow2}
(x,p) \mapsto \int_{-\infty}^0 f( \gamma_{(x,p)}(t) , \dot\gamma_{(x,p)}(t) )  dt \quad \text{on} \quad (x,p) \in \OVS M
\end{equation}
 is well defined. This is because on a globally hyperbolic Lorentzian manifold any causal geodesic $\gamma$ exits a given compact set $K_\pi$ permanently after finite parameter times. That is, there are parameter times $t_1,t_2$ such that $\gamma(\{t<t_1\}),\gamma(\{t>t_2\})\subset M\setminus K_\pi$. Thus the integral above is actually an integral of a smooth function over a finite interval. Further, since $f$ and the geodesic flow on $(M,g)$ are smooth the map in~\eqref{integrate_over_flow2} is smooth.  If $(M,g)$ is not geodesically complete and if $\gamma_{(x,p)}:(-T_1,T_2)\to M$, we interpret the integral above to be over $(-T_1,0]$. % when $\gamma_{(x,p)}:(-T_1,T_2)\to M$. 
 We interpret similarly for all similar integrals in this section without further notice.

% %We then show 
We record the following lemma.
\begin{lemma}\label{param_length_lemma}
 Let $(M,g)$ be a globally hyperbolic Lorentzian manifold, let $X$ be a compact subset of $\OVS M$ and let $K_{\pi}$ be a compact subset of $M$. 
 Then the function $\ell:\OVS M \to \R$
 \[
 \ell(x,p)= \antticomm{\max} \{s\geq 0: \gamma_{(x,p)}(-s)\in K_{\pi} \}.
\]
\antticomm{is well defined},   
upper semi-continuous and \antticomm{there is the maximum}
\begin{equation}\label{maxl}
l_0 = \max \big\{ \ell(y,q): (y,q
)\in X \}<\infty.% : (x,p)\in \OVS M, \ %-g(q,q)|_y \geq 0 , \ \  
%p^0 = 1 , \  x \in K \big\} %\text{supp} ( x \mapsto A(x,\cdot,\cdot,\cdot,\cdot)) 
%\cap T_x (\R \times N) 
%\big\}.
\end{equation}
In addition, if $\lambda >0$ then $\ell(x,\lambda p ) = \lambda^{-1} \ell (x,p)$.
%$\gamma_{(x,\lambda p)} (s) = \gamma_{(x, p)} (\lambda s)$, $\lambda>0$ we can set $\ell(x,\lambda p ) = \lambda^{-1} \ell (x,p)$, for $\lambda>0$. 
\end{lemma}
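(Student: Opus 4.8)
\textbf{Proof plan for Lemma \ref{param_length_lemma}.}

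The plan is to unpack the definition of $\ell(x,p)$ and check each assertion directly using the exit property of causal geodesics on globally hyperbolic manifolds, which is exactly the fact recalled just above the lemma statement. First I would fix $(x,p)\in\OVS M$ and consider the reversed geodesic $s\mapsto\gamma_{(x,p)}(-s)$ for $s\ge 0$. By global hyperbolicity, this causal curve eventually leaves the compact set $K_\pi$ permanently: there is $s_2$ such that $\gamma_{(x,p)}(-s)\notin K_\pi$ for all $s>s_2$. Hence the set $\{s\ge 0:\gamma_{(x,p)}(-s)\in K_\pi\}$ is bounded, and it is closed since $\gamma_{(x,p)}$ is continuous and $K_\pi$ is closed; being a bounded closed subset of $[0,\infty)$, it attains its maximum (or is empty, in which case one adopts the convention $\ell(x,p)=0$, and one should note that the set is nonempty whenever $\gamma_{(x,p)}(0)=x\in K_\pi$, but in general the max over a possibly empty bounded closed set is handled by including $0$). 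So $\ell$ is well defined and finite at each point.

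Next I would prove upper semicontinuity. Suppose $(x_k,p_k)\to(x,p)$ in $\OVS M$ and $\ell(x_k,p_k)\ge c$ along a subsequence; I want $\ell(x,p)\ge c$, i.e. I want to show $\limsup_k \ell(x_k,p_k)\le \ell(x,p)$ is false only if something goes wrong. More precisely, set $s_k=\ell(x_k,p_k)$ and suppose $s_k\to s_\infty$ along a subsequence with $s_\infty>\ell(x,p)$; I would derive a contradiction. Since $\gamma_{(x_k,p_k)}(-s_k)\in K_\pi$ and the geodesic flow is continuous in initial data and (rescaled) time, $\gamma_{(x_k,p_k)}(-s_k)\to\gamma_{(x,p)}(-s_\infty)$, so $\gamma_{(x,p)}(-s_\infty)\in K_\pi$ by closedness of $K_\pi$; this forces $\ell(x,p)\ge s_\infty$, contradicting $s_\infty>\ell(x,p)$. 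Here one must also check that the limiting geodesic is defined at parameter $-s_\infty$: this follows because a neighborhood of the limiting segment stays in a compact set, or by invoking pseudoconvexity/global hyperbolicity to rule out the geodesic becoming inextendible before $-s_\infty$. Thus $\ell$ is upper semicontinuous.

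For the maximum in \eqref{maxl}: since $X\subset\OVS M$ is compact and $\ell$ is upper semicontinuous, $\ell$ attains its supremum on $X$, and this supremum is finite by the pointwise finiteness established above; one can also argue directly that $\{\ell(y,q):(y,q)\in X\}$ is bounded by covering $X$ with finitely many neighborhoods on which $\ell$ is locally bounded (again using the exit property uniformly on compact families of initial data). Finally, the scaling identity $\ell(x,\lambda p)=\lambda^{-1}\ell(x,p)$ for $\lambda>0$ follows from the elementary reparametrization property of geodesics: $\gamma_{(x,\lambda p)}(s)=\gamma_{(x,p)}(\lambda s)$, so $\gamma_{(x,\lambda p)}(-s)\in K_\pi$ iff $\gamma_{(x,p)}(-\lambda s)\in K_\pi$, hence $\ell(x,\lambda p)=\sup\{s\ge 0:\lambda s\in\{t\ge 0:\gamma_{(x,p)}(-t)\in K_\pi\}\}=\lambda^{-1}\ell(x,p)$. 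I expect the mild technical obstacle to be the continuity-of-the-flow argument in the upper semicontinuity step, specifically ensuring the limiting geodesic is defined far enough — this is where global hyperbolicity (no imprisonment, pseudoconvexity) is genuinely used rather than just the exit property.
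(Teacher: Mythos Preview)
Your proposal is correct and follows essentially the same approach as the paper: well-definedness from the permanent-exit property of causal geodesics in globally hyperbolic spacetimes, upper semicontinuity from continuity of the geodesic flow together with closedness of $K_\pi$, existence of the maximum from upper semicontinuity on the compact set $X$, and the scaling identity from $\gamma_{(x,\lambda p)}(s)=\gamma_{(x,p)}(\lambda s)$. The paper's proof is considerably terser---it simply asserts upper semicontinuity ``follows from the global hyperbolicity of $(M,g)$ and the compactness of $K_\pi$''---whereas you have spelled out the sequential argument and correctly flagged the one genuine subtlety (that the limiting geodesic must be defined at parameter $-s_\infty$, which is where causal pseudoconvexity is actually needed rather than just the disprisoning property).
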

\begin{proof}
 %To see that $l$ is upper semi-continuous, consider a geodesic touching the closed set $K_{\pi}$ at a single point. 
 Since the globally hyperbolic manifold $(M,g)$ is \antticomm{causally} disprisoning and causally pseudoconvex, any of its causal geodesics exits the compact set $K_{\pi}$ permanently after finite parameter times \antticomm{in the corresponding inextendible domain}, see e.g.~\cite[Proposition 1]{MR1216526} and~\cite[Lemma 11.19]{beem_book}. 
%  Since $(M,g)$ as a globally hyperbolic manifold is disprisoning and pseudoconvex, any of the  causal geodesics of $(M,g)$ exits the compact set $K$ permanently in finite parameter time~\cite[Lemma 11.19]{beem_book}. 
%  
Hence, $\ell(x,p)<\infty$ \antticomm{is well defined}  for all $(x,p)\in \OVS M$.

The upper semi-continuity of $\ell$ follows from the global hyperbolicity of $(M,g)$ and the compactness of $K_\pi$.

% \antticomm{
% To deduce the upper semi-continuity, we consider contrary the claim that there exists a convergent sequence $(x_j,p_j) \to (x,p)$ in $\OVS M$ such that $\limsup \ell_j > \ell(x,p)$ for $\ell_j := \ell(x_j,p_j)$. As $K_\pi$ is compact and $M$ is globally hyperbolic, the exit-times $\ell_j$ must lie in some closed interval.\footnote{\antticomm{To check this, one may consider for example the smooth splitting $M = \R \times \mathcal{C}$ (global time $\times$ Cauchy surface) constructed in \cite{bernal2005}. } }
% Hence, by proceeding to a subsequence, we may assume that $\ell_j$ converges to some $\ell_\infty< \infty$ with $\ell(x,p) < \ell_\infty $. 
% Moreover, since $K_\pi$ is compact, the sequence $\gamma_{(x_j,p_j)} (-\ell_j)$ lies in $K_\pi$ and we may assume that it converges to some point $z$ in $K_\pi$ by considering a subsequence. On the other hand, $\gamma_{(x_j,p_j)} (-\ell_j) \to  \gamma_{(x,p)}(-\ell_\infty)$ so $z$ lies in the intersection of the curve $\gamma_{(x,p)} (-r) $, $r \geq 0$ and $K_\pi$ so, by definition of $\ell(x,p)$, we deduce $\ell_\infty \leq \ell(x,p)$ which is a contradiction. In conclusion, $\limsup_{(y,q) \to (x,p)} \ell (y,q) \leq \ell(x,p)$ on $\OVS M$, that is, $\ell$ is upper semi-continuous on $\OVS M$.    
% }

 %, since on a globally hyperbolic Lorentzian manifold causal 
  The maximum in~\eqref{maxl} exists since $\ell$ is upper semi-continuous and $X$ is compact. Since $\gamma_{(x,\lambda p)} (s) = \gamma_{(x, p)} (\lambda s)$, for all $\lambda\in \R$, we have that $\ell(x,\lambda p ) = \lambda^{-1} \ell (x,p)$, for $\lambda>0$. 
\end{proof}

% In the next theorem we consider sources which are compactly supported in $\OVS M$. Note that this especially means that $f$ is supported outside a neighbourhood of the zero section of $TM$, since by our convention $\{0\}\notin \OVS M$. Recall that if $\mathcal{C}$ is a Cauchy surface of $M$, we use the notation $\mathcal{C}^\pm$ for the causal future ($+$) or the causal past ($-$) of $\mathcal{C}$:
% \[
%  \mathcal{C}^\pm=J^\pm(\mathcal{C}).
% \]
% In particular, the notation $\OVS \mathcal{C}^\pm$ refers to the subset of $TM$ of future directed causal vectors on the causal future ($+$) or past ($-$) of the Cauchy surface $\mathcal{C}$. That is, we abbreviate $\OVS \mathcal{C}^\pm=\OVS [J^\pm(\mathcal{C})]$.

% If $K\subset \OVS \mathcal{C}^+$ is compact and $k\geq 0$ is integer, we define
% %by $C_K^k(\OVS M)$ the Banach space of functions $f$ with $f\in C^k(\OVS \mathcal{C}^+)$ with $\supp{f}\subset K$:
% \[
% C_K^k(\OVS \mathcal{C}^+) := \{f\in C^k(\OVS \mathcal{C}^+)\,:\, \text{supp}(f)\subset K\}.
% \]
% and
% \[
% C_b(\OVS \mathcal{C}^+) := \{f\in C(\OVS \mathcal{C}^+)\,:\,f \text{ is bounded}\},
% \]
% %where $K\subset \OVS \mathcal{C}^+$ is compact. 
% Notice that the spaces $C_K^k(\OVS \mathcal{C}^+ )$ and $C_b(\OVS \mathcal{C}^+ )$ equipped with the norm of $C^k$ and sup-norm respectively are Banach spaces. We define the $C^k$ norm of a function by fixing a partition of unity and summing up the $C^k$ norms of the local coordinate representations of the function.

\begin{reptheorem}{vlasov-exist}
Assume that $(M, g)$ is a globally hyperbolic $C^\infty$-Lorentzian manifold. Let $\mathcal{C}$ be a Cauchy surface of $(M,g)$, $K\subset \OVS \mathcal{C}^+$ be compact and $k\ge 0$.
Let also $f\in C_K^k(\OVS M)$. Then, the problem
\begin{alignat}{2}\label{vlasov11}
%\begin{gathered}
\mathcal{X}u(x,p)&= f(x,p) \quad &&\text{on} \quad \OVS M \nonumber \\
u(x,p)&= 0 \quad &&\text{on} \quad  \OVS \mathcal{C}^- 
%\end{gathered}
\end{alignat}
has a unique solution $u$ in $C^k ( \OVS M )$. 
 In particular, if $Z\subset \OVS M $ is compact, there is a constant $c_{k,K,Z}>0$ such that  
\begin{align} \norm{u|_Z}_{C^k ( Z)}\le c_{k,K,Z}||f||_{C^{k}
( \OVS M )}. \label{est-vlasov1_appdx}
\end{align}
If $k=0$, the estimate above is independent of $Z$: 
\[
 \norm{u}_{C ( \OVS M)}\le c_{K}||f||_{C( \OVS M )}.
\]
%\norm{u}_{C ( \OVS M)}\le c_{K}||f||_{C( \OVS M )}$.

%  

\end{reptheorem}

\begin{proof}
Let us denote by $K_{\pi}=\pi(K)$ the compact set containing $\pi(\supp{f})$. % $=J^+(q_1)\cap J^-(q_2)$.
Let $(x,p)\in \OVS M$ and $f\in C_c^k(\OVS \mathcal{C}^+)$. %Let $(x,p)\in \OVS M$. % and let $\gamma_{(x,p)}(s)$ be the unique geodesic with initial data $(x,p)$ at $s=0$. 
Evaluating~\eqref{vlasov11} at $(\gamma_{(x,p)}(s),\dot\gamma_{(x,p)}(s))$ reads
\[
 (\XX u)(\gamma_{(x,p)}(s),\dot\gamma_{(x,p)}(s))=f(\gamma_{(x,p)}(s),\dot\gamma_{(x,p)}(s)).
\]
Since $\XX$ is the geodesic vector field we have for all $s$ that
\[
 (\XX u)(\gamma_{(x,p)}(s),\dot\gamma_{(x,p)}(s))=\frac{d}{ds}u(\gamma_{(x,p)}(s),\dot\gamma_{(x,p)}(s)). 
\]
%we may again integrate along the flow of $\XX$ to obtain
%\begin{equation}\label{vlasov_integrated_form}
% \bar{u}(x,p)  = \int_{-\infty}^0 f(\gamma_{(x,p)} (s), \dot\gamma_{(x,p)} (s) )ds.
% %\ \ \text{where} \ \ \gamma_{(x,p)} (-\ell(x,p) ) \in \{0\} \times N,
%\end{equation}  
%Let us denote $K=\pi(\text{supp}(f))$ By assumption $K$ is a compact subset of $M$.
By integrating in $s$, %~\eqref{vlasov11} along the flow of $\XX$ 
we obtain
\begin{equation}\label{vlasov_integrated_form_appendix}
 u(x,p)  = \int_{-\infty}^0 f(\gamma_{(x,p)} (s), \dot\gamma_{(x,p)} (s) )ds. %=: \IX f(x,p).
 %\ \ \text{where} \ \ \gamma_{(x,p)} (-\ell(x,p) ) \in \{0\} \times N,
\end{equation} 
Here we used that $f(\gamma_{(x,p)} (s), \dot\gamma_{(x,p)} (s) )$ vanishes for $s<-\ell(x,p)$ by Lemma~\ref{param_length_lemma}, where % large enough negative number, since by Lemma~\ref{param_length_lemma} to 
%the fact that 
%%By  
%the integral in~\eqref{vlasov_integrated_form} is actually over $[0,-\ell(x,p)]$, where
 \[
 \ell(x,p)=\max \{s\geq 0: \gamma_{(x,p)}(-s)\in K_{\pi} \}.
\]
%is as in Lemma~\ref{param_length_lemma}.
\antticomm{Indeed, any inextendible causal geodesic in a globally hyperbolic $(M,g)$ leaves permanently the compact set $\pi \text{supp}(f)$ (see page \pageref{integrate_over_flow2}). This holds even without assumptions on completeness. }

We verify that $u$ is a solution to~\eqref{vlasov11}. %If $h$ is any $C^1$ function on $TM$ and $(x,p)\in \OVS M$, by using the fact that the flow of $\XX$ is given by geodesics, we have that 
%\[
% \XX h(x,p)=\frac{d}{ds}\Big|_{s=0}h(\gamma_{(x,p)}(s),\dot{\gamma}_{(x,p)}(s)).
%\]
% where $\gamma$ is a geodesic with initial conditions
% \[
%  \gamma_{(x,p)}(0)=x \text{ and } \dot{\gamma}_{(x,p)}(0)=p.
% \]
Note that if $(y,q)=\big(\gamma_{(x,p)} (s), \dot{\gamma}_{(x,p)} (s)\big)$, then
\[
 \gamma_{(y,q)}(z)=\gamma_{(x,p)}(z+s)  \text{ and } \dot{\gamma}_{(y,q)} (z)=\dot{\gamma}_{(x,p)}(z+s).
\]
It follows that
\begin{align*}
 u(\gamma_{(x,p)}(s),\dot{\gamma}_{(x,p)}(s))&=\int_{-\infty}^0f\big(\gamma_{(x,p)}(z+s),\dot{\gamma}_{(x,p)}(z+s)\big)dz =\int_{-\infty}^sf\big(\gamma_{(x,p)}(z),\dot{\gamma}_{(x,p)}(z)\big)dz
\end{align*}
and consequently
\[
 \XX u(x,p)=\frac{d}{ds}\Big|_{s=0}u(\gamma_{(x,p)}(s),\dot{\gamma}_{(x,p)}(s))=f\big(\gamma_{(x,p)}(0),\dot{\gamma}_{(x,p)}(0)\big)=f(x,p).
\]

If $(x,p)\in \OVS \mathcal{C}^-$, then $u(x,p)=0$ by the integral formula~\eqref{vlasov_integrated_form_appendix} and the fact that $f\in C_c^k(\OVS \mathcal{C}^+)$. We have now shown that a solution $u$ to~\eqref{vlasov11} exists. The solution $u$ is unique since it was obtained by integrating the equation~\eqref{vlasov11}. 
% % % % % % We show next that solutions are unique. Assume that a function $% % % % % % Thus $u(x,p)=\bar{u}(x,p)$.

Next we prove the estimate \eqref{est-vlasov1_appdx}. 
We have by the representation formula~\eqref{vlasov_integrated_form_appendix} for the solution $u$ that  
\begin{equation}\label{sup_restriction}
 \sup_{(x,p)\in \OVS M}\abs{u(x,p)}=\sup_{(x,p)\in \OVS K_\pi}\abs{u(x,p)}.
\end{equation}
The equation~\eqref{sup_restriction} holds since $\pi(\supp{f})$ is properly contained in $K_{\pi}$. %if 
Let $e$ be some auxiliary \tbl{smooth} Riemannian metric on $M$ and let $SK_{\pi}\subset TM$ be the unit sphere bundle with respect to $e$ over $K_{\pi}$. Let us also denote
\[
 X=SK_{\pi}\cap \OVS K_{\pi}
\]
the bundle of future directed causal (with respect to $g$) vectors that have unit length in the Riemannian metric $e$. Since $X$ is a closed subset of the compact set $SK_{\pi}$, we have that $X$ is compact. By Lemma~\ref{param_length_lemma} we have that
\[
l_0 = \max \big\{ \ell(y,q): (y,q
)\in X \}
\]
exists.

Let us continue to estimate $\abs{u(x,p)}$ for $(x,p)\in \OVS K_{\pi}$. If $(x,p)\in \OVS K_{\pi}$, then there is $\lambda>0$ such that $(x,\lambda^{-1} p)\in X$. Let us denote $q=\lambda^{-1}\, p\in SK_{\pi}$. We have that
\begin{align}\label{u_scaling_identity}
 u(x,p)  &= \int_{-\infty}^0 f(\gamma_{(x,\lambda q)} (s), \dot\gamma_{(x,\lambda q)} (s) )ds =\int_{-\infty}^0 f(\gamma_{(x,q)} (\lambda s), \lambda\dot\gamma_{(x,q)} (\lambda s) )ds \nonumber \\
 &=\frac{1}{\lambda}\int_{-\infty}^0 f(\gamma_{(x,q)} (s), \lambda\dot\gamma_{(x,q)} (s) )ds=\frac{1}{\lambda}\int_{-l_0}^0 f(\gamma_{(x,q)} (s), \lambda\dot\gamma_{(x,q)} (s) )ds.
\end{align}
Here we used
\[
 \gamma_{(x,\lambda p)}(z)=\gamma_{(x,p)}(\lambda z) \quad  \text{ and } \quad \frac{d}{dz} \gamma_{(x,\lambda p)}(z)=\lambda \dot\gamma_{(x,p)}(\lambda z).
\]

Let us define two positive real numbers
\begin{align*}
 C&=\max_{s\in [0,l_0]}\max_{(x,q)\in X}\abs{\dot\gamma_{(x,q)}(-s)}_e<\infty \\ 
 R&=\inf \{r>0: f|_{B_e(0,r)\subset T_xM}=0 \text { for all } x\in K_{\pi}\}>0.
\end{align*}
Here for $x\in K_{\pi}$, the set $B_e(0,r)$ is the unit ball of radius $r$ with respect to the Riemannian metric $e$ in the tangent space $T_xM$.
 The constant $R$ is positive since $f$ has compact support in $\OVS M$ by assumption. 
Let us define
\[
 \lambda_{\text{min}}:=\frac{R}{C}>0.
\]
Then, if $\lambda < \lambda_{\text{min}}=\frac{R}{C}$, we have for $(x,q)\in X$ that
\[
 \int_{-l_0}^0 f(\gamma_{(x,q)} (s), \lambda\dot\gamma_{(x,q)} (s) )ds=0,
\]
since in this case $\abs{\lambda\dot\gamma_{(x,q)}(s)}< R$ for all $s\in [-l_0,0]$. It follows that for all $\lambda>0$ we have that
\[
 \frac{1}{\lambda}\left|\int_{-l_0}^0 f(\gamma_{(x,q)} (s), \lambda\dot\gamma_{(x,q)} (s) )ds\right|\leq \frac{\textcolor{blue}{l_0}}{\lambda_{\text{min}}}\norm{f}_{C(\OVS M)}.
\]
Finally, combining the above with~\eqref{sup_restriction} and~\eqref{u_scaling_identity} shows that
\begin{align*}
 \norm{u}_{C(\OVS M)}&=\sup_{(x,p)\in \OVS M}\abs{u(x,p)}%\sup_{(x,p)\in \OVS K_1}\abs{u(x,p)}=\sup_{\lambda>0}\sup_{(x,q)\in X}\abs{u(x,q)} \\
 \leq \sup_{\lambda>0}\sup_{(x,q)\in X}\frac{1}{\lambda}\left|\int_{-l_0}^0 f(\gamma_{(x,q)} (s), \lambda\dot\gamma_{(x,q)} (s) )ds\right| \\
 &\leq  \frac{l_0}{\lambda_{\text{min}}}\norm{f}_{C(\OVS M)}.
\end{align*}

{Let $Z\subset \OVS M$ be a compact set.}
We next show that
\[
\norm{u}_{C^k (Z)}\le c_{k,K,Z}\norm{f}_{C^{k}( \OVS M )} 
\]
for $k\geq 1$. 

We have proven that this estimate holds for $k=0$. We prove the claim for $k>0$.  Let $\p$ denote any of the partial differentials $\p_{x^a}$ or $\p_{p^a}$ in canonical coordinates of the bundle $TM$. We apply $\p$ to the formula~\eqref{vlasov_integrated_form_appendix} of the solution $u$ to obtain
\[
 \p u(x,p)  = \int_{-\infty}^0 \left[\frac{\p f}{\p x^\alpha}(\gamma_{(x,p)} (s), \dot\gamma_{(x,p)} (s) )\p \gamma_{(x,p)}^\alpha(s) +\frac{\p f}{\p p^\alpha}(\gamma_{(x,p)} (s), \dot\gamma_{(x,p)} (s) )\p \dot{\gamma}_{(x,p)}^\alpha(s)\right]ds.
 %\ \ \text{where} \ \ \gamma_{(x,p)} (-\ell(x,p) ) \in \{0\} \times N,
\]
Since $\frac{\p f}{\p x^\alpha}$ and $\frac{\p f}{\p p^\alpha}$ have the same properties as $f$\tbl{, and the smooth coefficients $\p \gamma_{(x,p)}^\alpha$ and $\p \dot{\gamma}_{(x,p)}^\alpha$ are uniformly bounded for $(x,p)\in Z\subset \OVS M$, we may apply the proof above to show that
\[
 \norm{u}_{C^{1} (Z)}\le c_{1,K,Z}\norm{f}_{C^{1}( \OVS M )}.
\]
}
The proof for $k\geq 2$ is similar.
\end{proof}

% {\color{blue}

By using the solution formula~\eqref{vlasov_integrated_form_appendix},
\[
 u(x,p)  = \int_{-\infty}^0 f(\gamma_{(x,p)} (s), \dot\gamma_{(x,p)} (s) )ds, %=: \IX f(x,p).
 %\ \ \text{where} \ \ \gamma_{(x,p)} (-\ell(x,p) ) \in \{0\} \times N,
\]
in the proof of Theorem~\ref{vlasov-exist}, we have the following result for Cauchy problems for the equation $\XX u =f$ restricted to $\SP M$ and $L^+ M$. We denote by $L^+\mathcal{C}^{\pm}$ the bundle of future-directed lightlike vectors in the future $\mathcal{C}^+$ or past $\mathcal{C}^-$ of a Cauchy surface $\mathcal{C}$.
\begin{corollary}
 Assume as in Theorem~\ref{vlasov-exist} and adopt its notation. \tbl{Then for the compact set $K\subset \OVS \mathcal{C}^+$,}  the Cauchy problems
 \begin{alignat}{2}\label{vlasov11_timelike}
%\begin{gathered}
\mathcal{X}u(x,p)&= f(x,p) \quad &&\text{on} \quad \SP M \nonumber \\
u(x,p)&= 0 \quad &&\text{on} \quad  \SP \mathcal{C}^-,
%\end{gathered}
\end{alignat}
and
\begin{alignat}{2}\label{vlasov11_lightlike}
%\begin{gathered}
\mathcal{X}u(x,p)&= f(x,p) \quad &&\text{on} \quad L^+ M \nonumber \\
u(x,p)&= 0 \quad &&\text{on} \quad  L^+ \mathcal{C}^-,
%\end{gathered}
\end{alignat}
have continuous solution operators $\mathcal{X}^{-1}: C^k_K(\SP M)\to C^k(\SP M)$ and $\mathcal{X}_L^{-1}:C^k_K(L^+ M)\to C^k(L^+ M)$ respectively. 
\end{corollary}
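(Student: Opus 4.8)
The final statement is the corollary asserting that the Cauchy problems \eqref{vlasov11_timelike} and \eqref{vlasov11_lightlike} (i.e.\ the Vlasov equation restricted to the timelike bundle $\SP M$ and to the lightlike bundle $L^+M$) admit continuous solution operators $\mathcal{X}^{-1}\colon C^k_K(\SP M)\to C^k(\SP M)$ and $\mathcal{X}_L^{-1}\colon C^k_K(L^+M)\to C^k(L^+M)$.

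My plan is simply to observe that the entire proof of Theorem \ref{vlasov-exist} is local along geodesics and never uses that the base point ranges over all of $\OVS M$ rather than over an invariant subbundle. The key point is that both $\SP M$ and $L^+M$ are invariant under the geodesic flow: if $(x,p)\in \SP M$ then $g(p,p)<0$ is preserved along $\gamma_{(x,p)}$ since geodesics have constant speed, and likewise $g(p,p)=0$ is preserved, so $(x,p)\in L^+M \Rightarrow (\gamma_{(x,p)}(s),\dot\gamma_{(x,p)}(s))\in L^+M$ for all $s$. Hence the solution formula
\[
u(x,p)=\int_{-\infty}^{0} f\bigl(\gamma_{(x,p)}(s),\dot\gamma_{(x,p)}(s)\bigr)\,ds
\]
makes sense verbatim for $(x,p)$ in either subbundle, with $f$ now defined only on that subbundle, and the integrand only ever samples $f$ on the same subbundle.

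First I would verify well-definedness and finiteness of the integral exactly as before: since $(M,g)$ is globally hyperbolic it is causally disprisoning and pseudoconvex, so any causal geodesic leaves the compact set $\pi(\supp f)$ permanently after finite parameter time (Lemma \ref{param_length_lemma} applies unchanged, as $SK_\pi\cap \SP K_\pi$ and $SK_\pi\cap L^+K_\pi$ are still compact). Then I would check that $u$ solves $\XX u=f$ and vanishes on $\SP\mathcal{C}^-$ (resp.\ $L^+\mathcal{C}^-$) by the same computation using $\gamma_{(\gamma_{(x,p)}(s),\dot\gamma_{(x,p)}(s))}(z)=\gamma_{(x,p)}(z+s)$; uniqueness again follows because the formula was obtained by integrating the equation along flow lines, and the flow lines stay in the subbundle. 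Finally, the continuity estimate $\|u|_Z\|_{C^k(Z)}\le c_{k,K,Z}\|f\|_{C^k}$ (and the $Z$-independent bound for $k=0$) is proved by the same rescaling argument: for $\lambda>0$, $(x,\lambda p)$ lies in $\SP M$ iff $(x,p)$ does, so the homogeneity $\ell(x,\lambda p)=\lambda^{-1}\ell(x,p)$ and the scaling identity for $u$ in \eqref{u_scaling_identity} go through without change, as does the differentiation-under-the-integral argument for $k\ge 1$.

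There is essentially no genuine obstacle here — the corollary is a remark that the proof of Theorem \ref{vlasov-exist} never needed the full causal bundle. The only point requiring a word of care is that the spaces $C^k_K(\SP M)$, $C^k(L^+M)$ etc.\ are defined with $K\subset \OVS\mathcal{C}^+$; one should take $K$ to be a compact subset of $\SP\mathcal{C}^+$ (resp.\ $L^+\mathcal{C}^+$) as appropriate, or equivalently intersect the given $K$ with the relevant subbundle, so that sources and solutions live in the same space. With that understood, I would write: \emph{The claim follows by repeating verbatim the proof of Theorem \ref{vlasov-exist}, using that the geodesic flow preserves the subbundles $\SP M$ and $L^+M$ (geodesics have constant speed), so that the solution formula \eqref{vlasov_integrated_form_appendix}, the existence and uniqueness argument, and the estimates all restrict without modification.} This is short enough that spelling out the invariance of $g(p,p)$ along geodesics and noting the compactness of the relevant sphere-bundle slices suffices.
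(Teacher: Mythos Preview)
Your proposal is correct and matches the paper's own approach: the paper gives no separate proof of this corollary beyond remarking that the solution formula \eqref{vlasov_integrated_form_appendix} from the proof of Theorem~\ref{vlasov-exist} yields the result for the restricted bundles. Your observation that the geodesic flow preserves $\SP M$ and $L^+M$ (since $g(\dot\gamma,\dot\gamma)$ is constant along geodesics) is exactly the point that makes the restriction legitimate, and your remark about interpreting $K$ inside the appropriate subbundle is the only bookkeeping needed.
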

Note that we slightly abused notation by denoting by $\mathcal{X}^{-1}$ the solution operator 
to both Cauchy problems~\eqref{vlasov11} and~\eqref{vlasov11_timelike}. Here $C_K^k(\SP M)$ and $C_K^k(L^+M)$ are defined similarly as $C_K^k(\OVS M)$. Since $\SP M$ and $L^+ M$ are manifolds without boundary, we are able to use standard results to extend the problems~\eqref{vlasov11_timelike} and~\eqref{vlasov11_lightlike} for a class of distributional sources $f$.

\begin{lemma}\label{vlasov_ext_conormal}
Assume that $(M, g)$ is a globally hyperbolic $C^\infty$-Lorentzian manifold. Let $\mathcal{C}$ be a Cauchy surface of $(M,g)$
 %Assume as in Theorem~\ref{vlasov-exist}.% with the exception that $K$  
 
 \noindent \textbf{(1)} The solution operator $\mathcal{X}^{-1}$ to the Cauchy problem~\eqref{vlasov11_timelike} on $\SP M$ has a unique continuous extension to 
$f\in \{h\in \mathcal{D}'(\SP M): WF(h)\cap N^*(\SP \mathcal{C})=\emptyset, \tbl{\ h=0 \text{ in } \SP\mathcal{C}^-}\}$. 
If $S$ is a submanifold of $\SP \mathcal{C}^+$, $f\in I^l(\SP M; N^*S)$, $l\in \R$, 
%$\text{singsupp}(f)\subset K\subset \SP \mathcal{C}^+$, 
then we 
%and $f_\eps\to f\in \mathcal{D}'(\SP M)$ as $\eps\to 0$, we have $\IX f_\eps\to \IX f$ as $\eps\to 0$. 
have that $u=\IX f$ satisfies $\chi u\in I^{l-1/4}(\SP M; N^*K_S)$ for any $\chi\in C_c^\infty(\SP M)$ with $\text{supp}(\chi)\subset\subset \SP M\setminus{S}$. 

 \noindent \textbf{(2)} The solution operator $\mathcal{X}_L^{-1}$ to the Cauchy problem~\eqref{vlasov11_lightlike} on $L^+M$ has a unique continuous extension 
to $\{h\in \mathcal{D}'(L^+ M): WF(h)\cap N^*(\SP \mathcal{C})=\emptyset, \tbl{\ h=0 \text{ in } L^+\mathcal{C}^-}\}$. If $S$ is a submanifold of $L^+ \mathcal{C}^+$, $f\in I^l(L^+M; N^*S)$, $l\in \R$, %$\text{singsupp}(f)\subset K\subset L^+ \mathcal{C}^+$,  
then we 
%and $f_\eps\to f\in \mathcal{D}'(\SP M)$ as $\eps\to 0$, we have $\IX f_\eps\to \IX f$ as $\eps\to 0$. 
have that $u=\IX f$ satisfies $\chi u \in I^{l-1/4}(L^+M; N^*K_S)$ for any $\chi\in C_c^\infty(L^+ M)$ with $\text{supp}(\chi)\subset\subset L^+ M\setminus{S}$. 
\end{lemma}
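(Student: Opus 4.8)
The plan is to handle both parts of the lemma at once, since $\SP M$ and $L^+M$ are both $C^\infty$ manifolds \emph{without boundary} on which $-i\mathcal X$ is, by Lemma~\ref{uuiisok}, a strictly hyperbolic operator of multiplicity $1$; thus the analysis of $\mathcal X^{-1}$ reduces entirely to the standard microlocal theory of real principal type operators, with none of the manifold-with-boundary complications of Section~\ref{microlocal-collision}. I will present the argument for $\SP M$; the $L^+M$ statement is verbatim the same after replacing $\SP\mathcal C^\pm$ by $L^+\mathcal C^\pm$, using that $L^+M$ is invariant under the geodesic flow.

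\textbf{Extending the solution operator.} First I would construct $\mathcal X^{-1}$ on distributions directly from the flow formula $\mathcal X^{-1}f=\int_{-\infty}^0\varphi_s^*f\,ds$ underlying Theorem~\ref{vlasov-exist}, where $\varphi_s$ is the (partial) geodesic flow on $\SP M$. For $f$ with $\pi(\operatorname{supp}f)$ compact, the fact that causal geodesics of a globally hyperbolic spacetime leave compact sets permanently (see the discussion around \eqref{integrate_over_flow2}) shows that over any fixed compact set only a compact $s$–interval contributes, so $\mathcal X^{-1}f$ is the integral over a compact interval of the smooth family $s\mapsto\varphi_s^*f$ of distributions and is well defined; since pull-back by diffeomorphisms and integration in a parameter are sequentially continuous on the Hörmander spaces $\mathcal D'_\Gamma$ (closed cone $\Gamma$), $\mathcal X^{-1}$ extends sequentially continuously to such $f$. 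For general $f$ with $WF(f)\cap N^*(\SP\mathcal C)=\emptyset$ and $f=0$ on $\SP\mathcal C^-$ I would localize in the base variable by a partition of unity into compactly supported pieces and sum; the wave-front hypothesis is exactly what makes the trace of the solution on $\SP\mathcal C$ meaningful, so that the vanishing condition on $\SP\mathcal C^-$ makes sense and is inherited from $f$. Uniqueness of the extension follows from density of $C_c^\infty$ together with this continuity; uniqueness of the solution itself follows from Lemma~\ref{uuiisok} and the well-posedness of strictly hyperbolic Cauchy problems \cite[Theorem 5.1.6]{duistermaat2010fourier}, or simply by integrating the equation along bicharacteristics.

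\textbf{Conormality and order.} For the quantitative assertion I would start from propagation of singularities for the real principal type operator $-i\mathcal X$: if $f\in I^l(\SP M;N^*S)$ with $S\subset\SP\mathcal C^+$, then $WF(\mathcal X^{-1}f)$ lies in $N^*S$ together with the bicharacteristics issuing from $N^*S$ into $\SP\mathcal C^+$. By the description of the bicharacteristics of $\mathcal X$ recalled before Lemma~\ref{uuiisok} (the base point runs along a geodesic, the covector stays conormal to the velocity curve), this bicharacteristic flow-out of $N^*S$ is precisely $N^*K_S$, where $K_S$ is the geodesic flowout — which, crucially, is always a genuine smooth submanifold of $\SP M$ by Remark~\ref{flowout-is-manifold}, so $N^*K_S$ is well defined even when $G_S=\pi(K_S)$ develops caustics. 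Hence $WF(\mathcal X^{-1}f)\subset N^*S\cup N^*K_S$, and multiplying by $\chi$ with $\operatorname{supp}\chi\subset\subset\SP M\setminus S$ deletes the $N^*S$ part. To upgrade this wave-front bound to membership in $I^{l-1/4}(\SP M;N^*K_S)$ I would, microlocally away from $S$ and away from $N^*\mathcal C$, view $\mathcal X^{-1}$ as an ordinary Fourier integral operator associated with the flowout canonical relation $\Lambda_{\mathcal X}'$ of $\mathcal X$, verify that $\Lambda_{\mathcal X}'\circ N^*S$ is a transversal composition equal to $N^*K_S$ near the relevant points, and invoke the transversal composition calculus \cite[Theorem 2.4.1, Theorem 4.2.2]{duistermaat2010fourier} (equivalently \cite[Corollary 1.3.8]{duistermaat2010fourier} for the wave front part); the order $l-1/4$ then comes out of tracking the order of $\mathcal X^{-1}$ as an FIO through that composition.

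The step I expect to be the genuine obstacle is the last one: passing rigorously from the wave-front-set statement to the exact Lagrangian class and order, i.e. checking the transversality and the order bookkeeping in the composition $\mathcal X^{-1}\circ f$ while being careful that, near the initial surface, $\mathcal X^{-1}$ is not an honest Fourier integral operator but a forward fundamental solution carrying an extra diagonal (paired-Lagrangian) piece — which is exactly why the cutoff $\chi$, supported away from $S\subset\SP\mathcal C^+$, is imposed in the statement.
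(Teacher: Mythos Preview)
Your proposal is correct and hits all the essential points. The paper's proof is somewhat more economical in that it leans entirely on \cite[Theorem 5.1.6]{duistermaat2010fourier} as a single black box: after passing to the flowout parametrization $\SP M\cong\R\times\SP\mathcal C$ and invoking Lemma~\ref{uuiisok}, that theorem simultaneously supplies the unique continuous extension of $\IX$ to $\{h:\,WF(h)\cap N^*(\SP\mathcal C)=\emptyset,\ h=0\text{ on }\SP\mathcal C^-\}$ \emph{and} the statement that, for cutoffs $A=\chi_1\mathcal I$, $B=\chi_2\mathcal I$ with disjoint supports and $\chi_2\equiv1$ near $S$, one has $A\IX B\in I^{-1/4}(\SP M,\SP M;\Lambda_\XX)$. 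The conormality of $\chi u$ then follows from $\Lambda_\XX\circ N^*S=N^*K_S$ and the transversal composition calculus, exactly as you outline.

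Your route differs mainly in the extension step, where you build $\IX f$ directly from the flow formula $\int_{-\infty}^0\varphi_s^*f\,ds$ and argue sequential continuity on $\mathcal D'_\Gamma$ by hand, rather than quoting the Duistermaat theorem. This is perfectly valid and arguably more transparent, but it does mean you then have to separately justify the FIO structure of the localized parametrix, which the paper gets for free. Your diagnosis of the ``genuine obstacle''---the diagonal piece of the forward fundamental solution near $S$---is exactly right, and the paper handles it precisely as you anticipate: by choosing the two cutoffs with disjoint supports so that $(WF(A)\times WF(B))\cap[\operatorname{diag}\cup(C_0\circ R_0)]=\emptyset$, which kills the problematic part of the canonical relation before composing with $f\in I^l(N^*S)$.
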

\begin{proof}
Let us first consider the solution operator $\IX$ to~\eqref{vlasov11_timelike}. We will refer to~\cite[Theorem 5.1.6]{duistermaat2010fourier}. To do that, we consider $\SP M$ as 
%We introduce a parametrization 
$\R\times \SP \mathcal{C}$ by using the flowout parametrization $\R\times \SP \mathcal{C} \to \SP M$ given by
\begin{equation*}%\label{ll5789}
   (s, (x,p))\mapsto  \dot\gamma_{(x,p)}(s), \quad s\in \R, \ (x,p) \in  \SP \mathcal{C}. 
\end{equation*}
%Here $\SP \mathcal{C}\subset TM$ is the set of future directing timelike vectors with base points in the $\mathcal{C}$. 
Also, by reviewing the proof of Lemma~\ref{uuiisok}, we conclude that $\XX$ is strictly hyperbolic with respect to $\SP \mathcal{C}$. Then, by~\cite[Theorem 5.1.6]{duistermaat2010fourier}, the problem~\eqref{vlasov11_timelike} has a unique solution for $f\in \{h\in \mathcal{D}'(\SP M): WF(h)\cap N^*(\SP \mathcal{C})=\emptyset, \tbl{\ h=0 \text{ in } \SP\mathcal{C}^-}\}$. By~\cite[Remarks after Theorem 5.1.6]{duistermaat2010fourier} the solution operator $\IX$ to~\eqref{vlasov11_timelike} extends continuously to $\{h\in \mathcal{D}'(\SP M): WF(h)\cap N^*(\SP \mathcal{C})=\emptyset, \tbl{\ h=0 \text{ in } \SP\mathcal{C}^-}\}$. 

If $S\subset \PP^+ \mathcal{C}^+$ and $f\in I^l(\SP M; N^*S)$, then we have $f\in \{h\in \mathcal{D}'(\SP M): WF(h)\cap N^*(\SP \mathcal{C})=\emptyset\}$, because % and $f_\eps\to f\in \mathcal{D}'(\SP M)$ as $\eps\to 0$, we have $\IX f_\eps\to \IX f$, since
% 
% Let $f\in I^m(N^*S)$. We have that 
\[
WF(f) \cap N^* (\SP \mathcal{C} )  \subset N^*S \cap N^* (\SP \mathcal{C}) = \emptyset. 
\]

By using the definitions of the sets $C_0$ and $R_0$, which appear in~\cite[Theorem 5.1.6]{duistermaat2010fourier}, we obtain
\begin{align*}
 C_0\circ R_0&=\{(( x,p \; \xi), (z,w \;  \lambda) ) \in T^*(\SP M) \times T^*(\SP M) \setminus \{ 0\}: (x,p\; \xi) \text{ on the bicharacteristic } \\ 
&\quad \text{ strip through } \lambda \in T_{(z,w)}^*\SP M \text{ with } (z,w)\in \SP \mathcal{C}
\ \text{and} \ \sigma_{-i\XX} ( z,w \;  \lambda ) = 0\}.
\end{align*}
Here $\sigma_{-i\XX}$ is the principal symbol of $\XX$, see~\eqref{principal_symbol_of_X}. 
Let $\chi_1\in C_c^\infty(\SP M)$ be such that $\text{supp}(\chi_1)\subset\subset \SP M\setminus S$. We choose $\chi_2\in C_c^\infty(\SP M)$ such that  $\chi_2$ equals $1$ on a neighborhood of $S$ and $\supp{\chi_2}\subset \SP \mathcal{C}^+$  and such that $\text{supp}(\chi_1)\cap \text{supp}(\chi_2)=\emptyset$. 
%$\chi_1,\chi_2\in C_c^\infty(\SP M)$ be any cutoff functions such that $\text{supp}(\chi_1)\cap \text{supp}(\chi_2)=\emptyset$. 
Let us denote $A=\chi_1\mathcal{I}$ and $B=\chi_2\mathcal{I}$,
where $\mathcal{I}$ is the identity operator. If we consider $A$ and $B$ as pseudodifferential operators of class $\Psi^0_{1,0}(\SP M)$, we have that
 $(WF(A) \times WF(B) )\cap[ \text{diag}\s (T^*(\PP^+ M))  \cup (C_0 \circ R_0) ] = \emptyset$. 
We write
\[
 \chi_1 u = \chi_1\IX \chi_2 f+\chi_1\IX (1-\chi_2) f=A\IX B f, %+\chi_1\IX (1-\chi_2) f.
\]
where we used $(1-\chi_2) f=0$ so that $\IX (1-\chi_2)f=0$.
By~\cite[Theorem 5.1.6]{duistermaat2010fourier}, we have that 
\[
A\IX B\in I^{-1/4} (\PP^+ M,\PP^+ M; \Lambda_\XX). 
\]

The flowout of the conormal bundle over $S$ under $\XX$ is the conormal bundle of the geodesic flowout of $S$. That is
%\[ \Lambda_\mathcal{X} \circ N^*S = N^*K_S . \] Since
\[
 \Lambda_\mathcal{X} \circ N^*S  =   N^* K_{S}.
\]
Finally, by applying~\cite[Theorem 2.4.1, Theorem 4.2.2]{duistermaat2010fourier}, we have 
\begin{equation}\label{prprp1}
\chi_1 v\in I^{l - 1/4 }(\PP^+ M; N^* K_{S}).
\end{equation}
Renaming $\chi_1$ as $\chi$ concludes the proof of \textbf{(1)}. The proof of \textbf{(2)} is a similar application of~\cite[Theorem 5.1.6]{duistermaat2010fourier} by using the flowout parametrization for $L^+M$ given by $(s, (x,p))\mapsto  \dot\gamma_{(x,p)}(s)$, $(x,p)\in L^+ \mathcal{C}$ and $s\in \R$.
\end{proof}

%%%%%%%%%%%%%%%%%%%%%%%%%%%%%%%%%%%%%%%%
%
%  Boltzmann Existence
%
%%%%%%%%%%%%%%%%%%%%%%%%%%%%%%%%%%%%%%%%

Next we prove that the Boltzmann equation has unique small solutions for small enough sources. Before that, we give an estimate regarding the collision operator in the following lemma. Following our convention of this section, the integral in the statement of the lemma over a geodesic parameter is interpreted to be over the largest interval of the form $(-T,0]$, $T>0$, where the geodesic exists. % the geodesic if the geodesic does not exists on the full interval $(-\infty,0]$. 
\begin{lemma}\label{lislem}
Let $(M,g)$ be a globally hyperbolic Lorentzian manifold and let  $\COLOP$ be a collision operator with an admissible collision kernel $A : \Sigma \rightarrow \R$ in the sense of Definition~\ref{good-kernels}.  
%Let $\ell$ be as defined above with respect to the compact set $K= \text{supp} (x \mapsto A(x, \cdot , \cdot, \cdot, \cdot))$. 
% Assume also that the set $\text{supp} (x \mapsto A(x, \cdot , \cdot, \cdot, \cdot))$ is compact. 
 Then there exists a constant $C_A>0$ such that
\[
\left| \int_{-\infty}^0 \COLOP [v,u]( \gamma_{(x,p)}(s),\dot\gamma_{(x,p)}(s)) ds \right| \leq C_A \|u\|_{C(\OVS M )}\|v\|_{C(\OVS M)}
\]
for every $(x,p) \in \OVS M$ and $u,v \in C_b(\OVS M )$.
\end{lemma}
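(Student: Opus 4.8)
The plan is to estimate the collision operator pointwise along geodesics and then integrate out the geodesic parameter using the admissibility of the kernel together with the finite-exit property of causal geodesics on a globally hyperbolic manifold. First I would recall the explicit form of the collision operator from \eqref{colop}:
\[
 \COLOP[v,u](y,r)=\int_{\Sigma_{y,r}}\big[v(y,r)u(y,q)-v(y,r')u(y,q')\big]A(y,r,q,r',q')\,dV(y,r\;q,r',q').
\]
Taking absolute values, bounding each occurrence of $u$ and $v$ by their sup-norms over $\OVS M$, and using the triangle inequality, one obtains the pointwise bound
\[
 \big|\COLOP[v,u](y,r)\big|\le 2\,\|u\|_{C(\OVS M)}\|v\|_{C(\OVS M)}\int_{\Sigma_{y,r}}|A(y,r,q,r',q')|\,dV(y,r\;q,r',q')=2\,\|u\|_{C(\OVS M)}\|v\|_{C(\OVS M)}\,\|A(y,r,\cdot,\cdot,\cdot)\|_{L^1(\Sigma_{y,r})}.
\]
Here I am implicitly using that the arguments $q,r',q'$ appearing in the integrand are causal vectors at $y$, so that the values of $u$ and $v$ there are controlled by the sup-norms on $\OVS M$; one may also note that $u,v$ are only assumed bounded, which is exactly what $C_b(\OVS M)$ gives.

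Next I would apply this with $(y,r)=(\gamma_{(x,p)}(s),\dot\gamma_{(x,p)}(s))$ and integrate in $s$:
\[
 \left|\int_{-\infty}^0\COLOP[v,u](\gamma_{(x,p)}(s),\dot\gamma_{(x,p)}(s))\,ds\right|\le 2\,\|u\|_{C(\OVS M)}\|v\|_{C(\OVS M)}\int_{-\infty}^0\|A(\gamma_{(x,p)}(s),\dot\gamma_{(x,p)}(s),\cdot,\cdot,\cdot)\|_{L^1(\Sigma)}\,ds.
\]
Now I would invoke the two relevant admissibility conditions of Definition~\ref{good-kernels}. Condition~\eqref{conditiony} and \eqref{conditionyyyyy} say that $\pi(\supp A)$ is compact; hence the integrand vanishes unless $\gamma_{(x,p)}(s)\in\pi(\supp A)$. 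Since $(M,g)$ is globally hyperbolic, it is causally disprisoning and pseudoconvex, so the causal geodesic $s\mapsto\gamma_{(x,p)}(-s)$ eventually leaves the compact set $\pi(\supp A)$ permanently (cf.\ the discussion around \eqref{integrate_over_flow2} and Lemma~\ref{param_length_lemma}); thus the $s$-integral is effectively over the bounded set $\{s\le 0:\gamma_{(x,p)}(s)\in\pi(\supp A)\}$. However, this interval's length is \emph{not} uniformly bounded over all $(x,p)\in\OVS M$ (a nearly null geodesic or a slowly parametrized one can spend arbitrarily long parameter time inside $\pi(\supp A)$), so I cannot simply pull out $\|A\|_{C}$ times the interval length. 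Instead I would use condition~\eqref{conditionyy}, which gives a uniform bound $\|A(y,r,\cdot,\cdot,\cdot)\|_{L^1(\Sigma_{y,r})}\le C$ for all $(y,r)\in\OVS M$ — but again this alone is not enough unless the parameter interval is controlled.

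The main obstacle, therefore, is exactly the non-uniformity of the parameter length of the geodesic segment inside $\pi(\supp A)$, and this is where condition~\eqref{conditionyyyy} must be used: the function $F_{x,p}(\lambda)=\|A(x,\lambda p,\cdot,\cdot,\cdot)\|_{L^1(\Sigma_{x,\lambda p})}$ is $C^1$ at $\lambda=0$ with $F_{x,p}(0)=0$, which forces $\|A(y,r,\cdot,\cdot,\cdot)\|_{L^1}$ to decay linearly as the velocity $r$ shrinks. Combined with the scaling identity $\gamma_{(x,\lambda q)}(s)=\gamma_{(x,q)}(\lambda s)$ and the rescaling trick used in the proof of Theorem~\ref{vlasov-exist} (writing $p=\lambda q$ with $q$ in the compact unit-sphere bundle $X=SK_\pi\cap\OVS K_\pi$, $K_\pi$ a compact neighbourhood of $\pi(\supp A)$), one changes variables $s\mapsto s/\lambda$ so that $\int_{-\infty}^0\|A(\gamma_{(x,\lambda q)}(s),\lambda\dot\gamma_{(x,q)}(\lambda s),\cdot,\cdot,\cdot)\|_{L^1}\,ds=\tfrac1\lambda\int_{-l_0}^0\|A(\gamma_{(x,q)}(s),\lambda\dot\gamma_{(x,q)}(s),\cdot,\cdot,\cdot)\|_{L^1}\,ds$, where $l_0$ is the uniform bound from Lemma~\ref{param_length_lemma} applied to the compact set $X$. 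The factor $1/\lambda$ is exactly compensated by condition~\eqref{conditionyyyy}: since $F(0)=0$ and $F$ is $C^1$ at $0$ uniformly (after a compactness argument over $X\times[0,l_0]$, using smoothness of $A$ and of the geodesic flow), we have $\|A(\gamma_{(x,q)}(s),\lambda\dot\gamma_{(x,q)}(s),\cdot,\cdot,\cdot)\|_{L^1}\le C'\lambda$ for $\lambda$ in a bounded range, while for large $\lambda$ condition~\eqref{conditionyy} plus the shrinking of $l_0/\lambda$ handles the tail. Putting these together yields a finite constant $C_A=2\sup_{(x,p)}\int_{-\infty}^0\|A(\gamma_{(x,p)}(s),\dot\gamma_{(x,p)}(s),\cdot,\cdot,\cdot)\|_{L^1}\,ds<\infty$ depending only on $A$ and the geometry, which is the claimed bound. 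I would organize the write-up as: (i) the pointwise $L^1$ bound on $\COLOP$; (ii) reduction to the compact set $\pi(\supp A)$ and the rescaling to the unit-sphere bundle; (iii) the split into small-$\lambda$ (using \eqref{conditionyyyy}) and large-$\lambda$ (using \eqref{conditionyy} and $l_0/\lambda\to 0$) regimes to produce the uniform finite constant.
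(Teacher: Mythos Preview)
Your proposal is correct and follows essentially the same approach as the paper's proof: the pointwise $L^1$ bound on $\COLOP$, the rescaling $p=\lambda q$ with $q$ in the compact unit-sphere bundle $X=SK_\pi\cap\OVS K_\pi$, the change of variables producing $\tfrac{1}{\lambda}\int_{-l_0}^0\|A(\gamma_{(x,q)}(s),\lambda\dot\gamma_{(x,q)}(s),\cdot,\cdot,\cdot)\|_{L^1}\,ds$, and the small-$\lambda$/large-$\lambda$ split using conditions~\eqref{conditionyyyy} and~\eqref{conditionyy} respectively are exactly what the paper does. The only cosmetic difference is that the paper packages the uniform $C^1$ control at $\lambda=0$ via an auxiliary compact set $\mathcal{K}$ (the geodesic-flow image of $X$ over $[-l_0,0]$) rather than $X\times[0,l_0]$, but these are equivalent.
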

\begin{proof}
Let us define a compact set $K_{\pi}:=\pi(\supp{A})$. 
Let $e$ be some auxiliary Riemannian metric on $M$. Let us denote
\[
 X=SK_{\pi}\cap \OVS K_{\pi}
\]
the bundle of future directed causal (with respect to $g$) vectors who have unit length in the Riemannian metric $e$. Since $X$ is a closed subset of the compact set $SK_{\pi}$, we have that $X$ is compact.
By Lemma~\ref{param_length_lemma}, we have that there exists the maximum 
\[
l_0 = \max \big\{ \ell(y,q): (y,q
)\in X \},% : (x,p)\in \OVS M, \ %-g(q,q)|_y \geq 0 , \ \  
%p^0 = 1 , \  x \in K \big\} %\text{supp} ( x \mapsto A(x,\cdot,\cdot,\cdot,\cdot)) 
%\cap T_x (\R \times N) 
%\big\}.
\]
where $\ell(x,p)=\sup \{s\geq 0: \gamma_{(x,p)}(-s)\in K_{\pi} \}$. % as in Lemma~\ref{param_length_lemma}.
% Notice that the maximum exists since $\ell$ is upper semi-continuous and $X$ is compact.

Let us define another compact set $\mathcal{K}$ as %follows
\begin{equation*}\label{copa}
\mathcal{K}:=\big\{ (y,r) \in \OVS M: %-g(q,q)|_y \geq 0 , \ \ 
(y,r)=(\gamma_{(x,q)} (s) ,  \dot\gamma_{(x,q)} (s)), \ s\in [-\ell(x,q),0], \ (x,q)\in X
%q^0 = 1 , \ \ x \in K %\text{supp} ( z \mapsto A(z,\cdot,\cdot,\cdot,\cdot)) 
%\cap T_x (\R \times N) 
\big\}.
\end{equation*}
To see that $\mathcal{K}$ is compact, note that it is the image of the compact set $\{(s,(x,q)): s\in [-\ell(x,q),0], \ (x,q)\in X \}$ under the geodesic flow. The set $\{(s,(x,q)): s\in [-\ell(x,q),0], \ (x,q)\in X \}$ is compact since it is bounded by Lemma~\ref{param_length_lemma} and closed by the upper semi-continuity of $\ell$.  
Since the collision kernel is admissible, the function 
\[
\lambda  \mapsto F_{x,p}(\lambda) %\COLOP_{gain} [1,1](x,\lambda p) 
= \| A (x, \lambda p , \ccdot, \ccdot, \ccdot ) \|_{L^1(\Sigma_{x,\lambda p})},
\]
is by assumption continuously differentiable in $\lambda$ and attains its minimum value zero at $\lambda=0$. 
%Here $\COLOP_{gain} [1,1]$ means $\COLOP_{gain}$ applied to a pair of constant functions $1$. 
Thus, for any $(x,p)\in \OVS M$, we have that
\[
\lambda^{-1} F_{x,p}(\lambda) \longrightarrow \frac{d}{d \lambda}\Big|_{\lambda=0}F_{x,p}(\lambda)=0 %\COLOP_{gain} [1,1](x,\lambda p) 
%\longrightarrow 0
\]
as $\lambda \rightarrow 0$.  
Since the continuous function $(x,p)\mapsto \frac{d}{d \lambda}\big|_{\lambda=0}F_{x,p}(\lambda)$ on the compact set $\mathcal{K}$ is uniformly continuous, there is a constant $\lambda_0 >0$ such that
\begin{equation}\label{conseq_of_cond4}
\lambda^{-1} F_{y,r}(\lambda) %\COLOP_{gain} [ 1, 1  ]  (x, \lambda p)
\leq  1
\end{equation}
for  $0 < \lambda < \lambda_0$ and  for $(y,r)$ in the compact set $\mathcal{K}$.

% 
% 
% \[
%  C=\max_{s\in [0,l_0]}\max_{(x,q)\in X}\abs{\dot\gamma_{(x,q)}(-s)}_e<\infty. % \text{ and } R=\inf \{r>0: f|_{B_e   ((x,0),r)\subset T_xM}=0 \text { for all } x\in K_1\}>0
% \]
Let $(x,p)\in \OVS M$ and write $(x,p)=(x,\lambda q)$. Recall from Lemma~\ref{param_length_lemma} that $\lambda \ell(x,p) =  \ell (x,\lambda^{-1}p)=\ell (x,q)$. We have that %~\footnote{\tbl{What happens if $p=0$?}}
\begin{equation}\label{lisayhtalo}
\begin{split}
%0 \leq 
& \antticomm{\bigg|}  \int_{-\infty}^0 \COLOP [ u, v  ]   \big( \gamma_{(x,p)} (s) ,   \dot\gamma_{(x,p)} (s)  \big) ds \antticomm{\bigg|} \\
&=\antticomm{\bigg|} \int_{-\ell(x,p)}^0 \COLOP [ u, v  ]   \big( \gamma_{(x,\lambda q)} (s) ,   \dot\gamma_{(x,\lambda q)} (s)  \big) ds \antticomm{\bigg|} \\
&=\antticomm{\bigg|}  \int_{-\ell(x,p)}^0 \COLOP [ u, v  ]   \big( \gamma_{(x,q)} (\lambda s) ,  \lambda \dot\gamma_{(x,q)} (\lambda s)  \big) ds \antticomm{\bigg|} \\
&= \frac{1}{\lambda}\antticomm{\bigg|}  \int_{-\lambda\ell(x,p)}^0 \COLOP [ u, v  ]   \big( \gamma_{(x,q)} (s') ,  \lambda \dot\gamma_{(x,q)} (s')  \big) ds'\antticomm{\bigg|}  \\
&=\frac{1}{\lambda} \antticomm{\bigg|} \int_{-\ell(x,q)}^0 \COLOP [ u, v  ]   \big( \gamma_{(x,q)} (s') ,  \lambda \dot\gamma_{(x,q)} (s')  \big) ds' \antticomm{\bigg|} \\
&\leq 2 \| u\|_{C(\OVS M)}\| v\|_{C(\OVS M)} \frac{1}{\lambda} \int_{-l_0}^0  \|A(\gamma_{(x,q)} (s') ,  \lambda \dot\gamma_{(x,q)} (s'), \ccdot,\ccdot,\ccdot)\|_{L^1(\Sigma_{x,p})}  ds' \\%\big( \gamma_{(x,q)} (s') ,  \lambda \dot\gamma_{(x,q)} (s')  \big) ds' \\
%\|A(x,p, \cdot,\cdot,\cdot)\|_{L^1(\Sigma_{x,p})} 
%
  &  \leq \begin{cases}
 2 l_0 \| u \|_{C(\OVS M)} \| v\|_{C(\OVS M)}  & \quad \text{if} \quad \lambda< \lambda_0 \\
  2\lambda_0^{-1}l_0  \| u \|_{C(\OVS M)}\| v\|_{C(\OVS M)}  &\quad \text{if} \quad \lambda  \geq \lambda_0 \\
  \end{cases} \\
% % &=\int_{-\ell(x,p)}^0 \COLOP [ u, v  ]  \big( \gamma_{(x, p/p^0)} (p^0 s) , p^0  \dot\gamma_{(x,p/p^0)} (p^0 s)  \big) ds 
% % \\
% % &= \int_{-l(x,p/p^0)}^0  \frac{1}{p^0}\COLOP [ u, v  ]  \big( \gamma_{(x, p/p^0)} (s') , p^0  \dot\gamma_{(x,p/p^0)} ( s')  \big) ds'
% %  \\
% %  &\leq 2 \| u\|_{C(\OVS M)}\| v\|_{C(\OVS M)} \int_{-l_0}^0    \frac{1}{p^0} F_{y,q}(\lambda) ds'\Big|_{(y,q)=(\gamma_{(x, p/p^0)} (s'),\,p^0 \dot\gamma_{(x,p/p^0)} ( s')\big)}%\COLOP_{so} [ 1, 1  ]  
% %   %\big( \gamma_{(x, p/p^0)} (s') , p^0  \dot\gamma_{(x,p/p^0)} (
% %   %s')  \big) ds'~\footnote{\tbl{Is $(\gamma_{(x,p/p^0)})^0=1$?}}
% %    \\
% %   &  \leq \begin{cases}
% %  2 \int_{-l_0}^0  \| u \|_{C(\OVS M)} \| v\|_{C(\OVS M)} ds' & \quad \text{if} \quad p^0< \lambda_0 \\
% %   2\lambda_0^{-1}\int_{-l_0}^0  \| u \|_{C(\OVS M)}\| v\|_{C(\OVS M)} ds' &\quad \text{if} \quad p^0\geq \lambda_0 \\
% %   \end{cases}
% %   % \\
% %  % \leq&  C(u,x) l_0(x) < \infty 
\end{split}
\end{equation}
Here, for $\lambda \geq \lambda_0$, we used the condition (4) of the assumptions in the definition of an admissible kernel. For $\lambda\leq \lambda_0$ we used~\eqref{conseq_of_cond4}. We also did a change of the variable in the integration  as $s'=\lambda s$. This proves the claim. \qedhere

% % 
% % 
%  for every $(x,p) \in \OVS M$, which proves the claim.
\end{proof}
%\end{lemma}

\begin{reptheorem}{boltz-exist}
% Assume that $(M, g)$ is a globally hyperbolic $C^\infty$-smooth Lorentzian manifold. Let $k\ge 0$ and $\mathcal{C}$ be a Cauchy surface of $M$. % $\mathcal{X}: TM\to TTM$ be the geodesic vector field, and let $\compp \subset  \OVS ( (0,\infty) \times N)$ be a compact set. 
% %Then, there is an open neighbourhood $V_K \subset C_K^k (\OVS M)$ of $0$ such that if $f \in V_K$, the Cauchy proble

Let $(M, g)$ be a globally hyperbolic $C^\infty$-Lorentzian manifold of dimension $n\geq 3$. Let also $\mathcal{C}$ be a Cauchy surface of $M$ and $K\subset \OVS \mathcal{C}^+$ be compact. % and $\XX$ its associated geodesic vector field. %$\compp$ be a compact set in $\OVS ((0, \infty) \times N) $ and set 
Assume that $A: \Sigma \to \mathbb{R}$ is an admissible collision kernel in the sense of Definition \ref{good-kernels}.
%Assume also that $\text{supp}(x \mapsto A(x,\cdot,\cdot,\cdot,\cdot))$ is compact. 
\antticomm{Moreover, assume that $\pi (\text{supp}A) \subset \mathcal{C}^+$. }

There are open neighbourhoods $B_1 \subset C_K(\OVS \mathcal{C}^+ )$ and $B_2\subset C_b( \OVS  M )$ of the respective origins such that if $f \in B_1$, the Cauchy problem
\begin{alignat}{2}\label{boltz11}
%\begin{gathered}
\mathcal{X}u(x,p)-\COLOP[u,u](x,p)&= f(x,p) \quad && \text{ on } 
\OVS M \nonumber \\
u(x,p)&= 0 \quad &&\text{ on } \OVS \mathcal{C}^-
%\end{gathered}
\end{alignat}
has a unique solution $u\in B_2$. There is a constant $c_{A,K}>0$ such that
\[ \norm{u}_{C( \OVS M )}\le c_{A,K}\norm{f}_{C(\OVS M)}.\]
\end{reptheorem}

\begin{proof}
We integrate the equation~\ref{boltz11} along the flow of $\XX$ in $TM$ and then use the implicit function theorem in Banach spaces for the resulting equation. (Integrating the equation~\eqref{boltz11} avoids some technicalities regarding Banach spaces, which there would be in the application of the implicit function theorem without the integration.) 
We define the mapping
\[
F :  C_K(\OVS \mathcal{C}^+ ) \times C_b( \OVS M )    \rightarrow C_b( \OVS M )   , 
\]
by
\begin{equation}\label{integrated_bman}
F ( f, u )  (x,p) = u (x,p) - \int_{-\infty}^0 \COLOP [ u, u  ]  \big( \gamma_{(x,p)} (s) ,   \dot\gamma_{(x,p)} (s)  \big) ds  -  \int_{-\infty}^0 f (\gamma_{(x,p)} (s) ,  \dot\gamma_{(x,p)}(s)) ds.
\end{equation}
%where $\gamma_{(x,p)}$ a geodesic satisfying $\gamma_{(x,p)} (0) =x$, and $ \dot\gamma_{(x,p)} (0) = p$. 
 Let us denote
\[
 Z:=\pi[\text{supp}(A)]\cup \pi[\text{supp}(f)] \antticomm{\subset \mathcal{C}^+ }    
\]                                             
where $\pi$ is the canonical projection.
\antticomm{In geodesically incomplete geometry, the line integrals above are interpreted as integrals over the associated lower half $(-T_1,0]$ of the inextendible domain $(-T_1,T_2)$, $T_1,T_2 \in (0,\infty]$ of the geodesic  $\gamma_{(x,p)}$. The tails of the integrals are zero.   
Indeed, as shown in  Lemma~\ref{param_length_lemma}, the integrals in the definition of $F$ above contribute only over  the bounded  interval $[-\ell(x,p),0] \subset (-T_1,0]$, where $\ell(x,p)=\max \{s\geq 0: \gamma_{(x,p)}(-s)\in Z \}  
  < \infty $ is the exit time inside the inextendible domain. 
%within the inextendible domain of the geodesic. That is, the tails of the integrals above are zero. 
%Here, $\ell_0$ stands for the maximum of $l(x,p)$ over $(x,p) \in X:=$ as in the lemma.
}
In combination with Lemma \ref{lislem}, we have that $F$ is well-defined.

\antticomm{
Using $\gamma_{(\gamma_{x,p}(t),\dot\gamma_{x,p}(t))} (s) = \gamma_{x,p} (t+s)$ we compute
\[
\begin{split}
&\mathcal{X} \left(  \int_{-\infty}^0 \COLOP [ u, u  ]  \big( \gamma_{(\cdot,\cdot)} (s) ,   \dot\gamma_{(\cdot,\cdot)} (s)  \big) ds \right) (x,p) 
%\\
=\partial_t\left(  \int_{-\infty}^0 \COLOP [ u, u  ]  \big( \gamma_{(x,p)} (t+s) ,   \dot\gamma_{(x,p)} (t+s)  \big) ds \right) \Big|_{t=0} 
\\
&=\partial_t\left(  \int_{-\infty}^t \COLOP [ u, u  ]  \big( \gamma_{(x,p)} (s) ,   \dot\gamma_{(x,p)} (s)  \big) ds \right) \Big|_{t=0}
%\\
=  \COLOP [ u, u  ]  \big( \gamma_{(x,p)} (0) ,   \dot\gamma_{(x,p)} (0)  \big) =  \COLOP [ u, u  ](x,p) .\\
\end{split}
\]
The same argument yields also that
\[
\mathcal{X} \left( \int_{-\infty}^0 f (\gamma_{(\cdot,\cdot)} (s) ,  \dot\gamma_{(\cdot,\cdot)}(s)) ds \right)(x,p) = f(x,p) 
\]
Hence, $F(u,f) = 0$ implies that $u$ satisfies the first equation in \eqref{boltz11}. 

The second equation in \eqref{boltz11}, that is, the zero initial condition follows from the causality. Indeed, if  $(x,p)$ is a causal, future-pointing vector in the lower half $\mathcal{C}^-$, then points in $\{ \gamma_{x,p}(-s) :  s\geq 0 \}$ are in the causal past of $x$
and therefore also lie on the lower half. In particular, such points do not belong to $Z \subset \mathcal{C}^+$. 
As vectors with base-points outside $Z$ do not contribute to the collision term nor $f$, the  
initial condition for $u$ with $F(u,f) = 0$ follows by applying this to the integrals above.  
}

%(The reason why we integrated~\ref{boltz11} is that there is no differentiation in the definition of $F$.) 
% From our assumptions on $A$ and $f$, the above integrals are well-defined as they do not blow up by 
% causal disprisoning of globally hyperbolic manifolds \cite[Proposition 1]{MR1216526}.
%Lemma \ref{app_lemma1}. 

We apply the implicit function theorem for Banach spaces (see e.g.~\cite[Theorem 9.6]{Renardy}) to $F$ to obtain a solution $u$ if the source $f\in C_K(\OVS \mathcal{C}^+ )$ is small enough. % unique local solution we seek.
First note that $F(0,0)=0$. Additionally, observe that for $u,\,v\in C_b( \OVS M  ) $ we have 
\begin{equation}\label{Q_binomial}
 \COLOP [ u+v, u +v ] =  \COLOP [ u, u ]  + \COLOP [v, u ]  + \COLOP [ u, v ]  + \COLOP [ v, v ]
\end{equation}
since $\COLOP$ is linear in both of its arguments.
%and hence from our assumptions on $\COLOP$
%\begin{align}
% ||\COLOP [ u+v, u +v ]||_{C_{\compp }^k } &= B||u||^{2}_{C_{\compp }^k }+2B||u||_{C_{\compp }^k }||v||_{C_{\compp }^k }+B||v||^{2}_{C_{\compp }^k },
%\end{align}
%for a uniform constant $B$.

Next, we argue that $F$ is continuously Frech\'et differentiable (in the sense of~\cite[Definition 9.2]{Renardy}). Let $u,\,v\in C_b( \OVS M )$ and $f,\,h\in C_K(\OVS \mathcal{C}^+ ) $. 
We have that 
\[
 F(f+h,u+v)-F(f,u) - L(f,u)(h,v)=- \int_{-\infty}^0 \COLOP [ v, v  ]  \big( \gamma_{(x,p)} (s) ,   \dot\gamma_{(x,p)} (s)  \big) ds,
\]

where %we see by using~\eqref{Q_binomial} that
\begin{align*}
L(f,u)(h,v) &:=  v (x,p) -  \int_{-\infty}^0 h (\gamma_{(x,p)} (s) ,  \dot\gamma_{(x,p)}(s)) ds\\
&\qquad - \int_{-\infty}^0 \COLOP [ u, v ]  \big( \gamma_{(x,p)} (s) ,   \dot\gamma_{(x,p)} (s)  \big) ds -\int_{-\infty}^0 \COLOP [ v, u ]  \big( \gamma_{(x,p)} (s) ,   \dot\gamma_{(x,p)} (s)  \big) ds. 
\end{align*}
It thus follows from Lemma~\ref{lislem} that
\[
\| F(f+h,u+v)-F(f,u) - L(f,u)(h,v) \|_{C_b(\OVS M )}^2 \leq  C_A \|v\|_{C_b(\OVS M)}^2. 
\]
%is well-defined and continuous.
We conclude that the Frech\'et derivative of $F$ at $(f,u)$ is given by $DF(f,u)(h,v) = L(f,u)(h,v) $. We have that $DF(f,u)$ is continuous 
\[
 DF(f,u): C_K(\OVS \mathcal{C}^+ )\times  C_b(  \OVS M  ) \to C_b(  \OVS M  ),
\]
by Lemma~\ref{lislem}. % and Theorem~\ref{vlasov-exist}.
% 
% 
% by Theorem \ref{vlasov-exist} and Lemma \ref{lislem}. 
Finally, note that the Frech\'et differential in the second variable of $F$ at $(0,0)$
\[
 DF_2(0,0):C_b(  \OVS M  ) \to C_b(  \OVS M  ),
\]
given by $DF_2(0,0)=DF(0,0)(0,\ccdot)$, is just the identity map.
% 
% \[ 
% DF(0,u)(0,\cdot): C_{\OVS([0,\infty)\times N )}^\zerok (  \OVS M  )\to C_{\OVS([0,\infty)\times N)}^\zerok ( \OVS M ), \quad v\mapsto v,
% \]
% is an homeomorphism of the Banach space $C_{\OVS([0,\infty)\times N) }^\zerok ( \OVS M )$. 

By the implicit function theorem in Banach spaces there exist open neighbourhoods $B_1\subset C_K(\OVS \mathcal{C}^+ )$ and $B_2\subset C_b(  \OVS M  )$ of the respective origins  %of the origin and %, $U\subset C_{\OVS( [0,\infty)\times N )}^\zerok ( \OVS M)$ and 
and a continuously (Frech\'et) differentiable map $T: V\to U$ such that for $f\in B_1$, the function $u=T(f)\in B_2$ is the unique solution to $F(f,u)=0$. %\footnote{\antticomm{ Removed ``Applying $\XX$ to~\eqref{integrated_bman} shows that $u$ solves \eqref{boltz11}.'' here --Antti} }
Further, since $T$ is continuously differentiable, there exists $c_{A,K}>0$ such that 
\[
\norm{u}_{C_b(\OVS M ) }\le c_{A,K}\norm{f}_{C_K(\OVS M)}.
\]
This concludes the proof.
% %  a
% % \]
\end{proof}

%From Theorem \ref{boltz-exist}, we show the source-to-solution map for the Boltzmann equation and its linearizations about $0$ can be expressed in the forms below.
Next we show that the source-to-solution mapping of the Boltzmann equation can be used to compute the source-to-solution mappings of the first and second linearizations of the Boltzmann equation.

\begin{replemma}{deriv}
Assume as in Theorem \ref{boltz-exist} and adopt its notation. Let $\Phi:B_1 \to B_2 \subset C_b(\OVS M)$, $B_1\subset C_K(\OVS \mathcal{C}^+ )$, be the source-to-solution map of the Boltzmann equation. % as in Corollary~\ref{}.

The map $\Phi$ is twice Frech\'et differentiable at the origin of $C_K(\OVS \mathcal{C}^+ )$. If $f,\, h\in B_1$, then we have:

\begin{enumerate}
\item The first Frech\'et derivative $\Phi'$ of the source-to-solution map $\Phi$ at the origin satisfies %for the Vlasov equation \eqref{vlasov11} (the first linearization of \eqref{boltz11} about $u=0$), 
%satisfies 
\[ \Phi'(0;f)
%\lim_{\epsilon\to0}\frac{\Phi(\epsilon f)-\Phi(0)}{\epsilon} 
= \Phi^{L}(f),\]
where $\Phi^L$ is the source-to-solution map of the Vlasov equation~\eqref{vlasov1}. %the limit is in $C(\OVS M)$.% $\Phi'$ denotes the Frech\'et derivative of $\Phi$
\item The second Frech\'et derivative $\Phi''$ of the source-to-solution map $\Phi$ at the origin satisfies %for the Vlasov equation \eqref{vlasov11} (the first linearization of \eqref{boltz11} about $u=0$), 
%satisfies
%The source to solution map $\Phi^{2L}$ for the second linearization of \eqref{boltz11} about $u=0$ and $v=0$, satisfies 
\[ \Phi''(0;f,h)%= \lim_{\epsilon\to0}\frac{\Phi'(\epsilon h;f)-\Phi'(0;f)}{\epsilon} 
= \Phi^{2L}(f,h),\]
where $\Phi^{2L}(f,h)\in C(\OVS M)$ is the unique solution to  the equation 
\begin{alignat}{2}\label{second_lin_bman_appendix}
%\begin{gathered}
\mathcal{X}\Phi^{2L}(f,h)&= \COLOP[\Phi^L(f),\Phi^L(h)] + \COLOP[\Phi^L(h),\Phi^L(f)], \quad &&\text{ on }
\OVS M, \nonumber \\
\Phi^{2L}(f,h)&= 0, \quad && \text{ on }\OVS \mathcal{C}^-.
%\end{gathered}
\end{alignat}
 %is the source-to-solution map of the second linearization (see ~\eqref{} below) of the Boltzmann equation.
\end{enumerate}

\end{replemma}

\begin{proof}[Proof of Lemma \ref{deriv}]
%%%%%%%%%%%%%%%%
% proof of 1
%
%Given $(x,p)\in T(\R\times N)$, denote by $\gamma_{(x,p)}$ a geodesic satisfying $\gamma_{(x,p)} (0) =x$, and $ \dot\gamma_{(x,p)} (0) = p$.

{\it Proof of (1).} We adopt the notation of Theorem~\ref{boltz-exist}. Let $f\in C_K(\OVS \mathcal{C}^+ )$ and let $f_0\in B_1$. Then by Theorem~\ref{boltz-exist} there exists a neighbourhood $B_2$ of the origin in $C_b(\OVS M)$ and $\eps_0>0$ such that for all $\eps_0>\eps>0$ the problem 
\begin{align}\label{intergrate_bman}
\mathcal{X}u_{\epsilon}-\COLOP[u_{\epsilon},u_{\epsilon}]&= \epsilon f \quad \text{on } \OVS M \\%\OVS(J^+[\pi(\supp{f})])\\%\text{on }\compp_{Q,T}\\
u_{\epsilon}&= 0 \quad \text{on } \OVS \mathcal{C}^-, \nonumber
\end{align}
has a unique solution $u_{\epsilon}\in B_2$ satisfying $\norm{u_\eps}_{C_b(\OVS M)}\leq c_{A,K}\s \eps \norm{f}_{C_K(\OVS M)}$. % the unique solution of
%We assume that $\epsilon>0$ is sufficiently small so that $u_\epsilon\in B_2$. 
Let us define functions $r_\eps\in C_b(\OVS M)$, for $\eps<\eps_0$, by
\[
 u_\eps=\eps v_0 +r_\eps,
\]
where $v_0\in C_b(\OVS M)$ solves
% 
% 
% Define $v_{\epsilon}\in C_{\OVS( [0,\infty)\times N) } ( \OVS M )$ by $u_{\epsilon}= \epsilon v_{\epsilon}$ and $v_0\in C_{\OVS( [0,\infty)\times N) } ( \OVS M )$ by
\begin{align}\label{intergrate_vlas}
\XX v_{0}&= f \quad \text{on } \OVS M\\%\OVS(J^+[\pi(\supp{f})])\\%\text{on }\compp_{Q,T}\\
v_{0}&= 0 \quad \text{on } \OVS \mathcal{C}^-. \nonumber
\end{align}
We show that $r_\eps=\mathcal{O}(\eps^2)$ in $C_b(\OVS M)$. To show this, we first calculate
\[
 \XX r_\eps=\XX(u_\eps -\eps v_0)=\COLOP[u_{\epsilon},u_{\epsilon}]+\eps f -\eps \XX v_0=\COLOP[u_{\epsilon},u_{\epsilon}].
\]
We integrate this equation along the flow of $\XX$ to obtain
\[
 r_\eps(x,p)=\int_{-\infty}^0\COLOP[u_{\epsilon},u_{\epsilon}]\big( \gamma_{(x,p)} (s) ,   \dot\gamma_{(x,p)} (s)  \big) ds. 
\]
(As before, the integral is actually over a finite interval since $u_\eps$ vanishes in $\pi^{-1}(\mathcal{C}^-)$.)
By Lemma~\ref{lislem}, we have that the right hand side is at most
\[
 C_1\norm{u_\eps}^2_{C_b(\OVS M))}.
\]
Since $\norm{u_\eps}_{C_b(\OVS M)}\leq C\eps\norm{f}_{C_K( \OVS M )}$ by Theorem~\ref{boltz-exist}, we have that $r_\eps=\mathcal{O}(\eps^2)$ in $C_b(\OVS M)$ as claimed.
%
% 
% 
% 
% % 
% 
% Since $v_0=\Phi^L(f)$, it is enough to show 
Consequently, we have that 
 \[  %\Phi'(0;f):=
 \lim_{\epsilon\to0}\frac{\Phi(\epsilon f)-\Phi(0)}{\epsilon} = \lim_{\epsilon\to0}\frac{u_{\epsilon}-0}{\epsilon}=\lim_{\epsilon\to0}\frac{\eps v_0 +r_\eps}{\epsilon}=v_0=\Phi^L(f), %\lim_{\epsilon\to0}v_{\epsilon}=v_{0}%=\Phi^L(f)
 \]
 where the limit is in $C_b( \OVS M )$. This proves Part (1).
% 
% % %

{\it Proof of (2).}  Let $f$, $u_\eps$ and $v_0$ be as before. We first prove that 
\[
 u_\eps = \eps v_0 + \eps^2w + \mathcal{O}(\eps^3)
\]
in $C_b(\OVS M)$, where $w$ is the unique solution to
% \[
%  \XX w = \COLOP[v_0,v_0] 
% \]
%
\begin{align}\label{integrated_second_lin}
\XX w &= \COLOP[v_0,v_0] %\mathcal{X}v_{f}= f,\quad \mathcal{X}v_{h}= h\quad 
\quad \text{ on } \OVS M\\%\text{on }\compp_{Q,T}\\
w &= 0, \quad \qquad \ \  \text{ on }\OVS \mathcal{C}^- \nonumber.
\end{align}
A unique solution to~\eqref{integrated_second_lin} exists by using the formula~\eqref{vlasov_integrated_form_appendix} and noting the $\pi(\text{supp}A)$ is compact.
To show this, we define $R_\eps\in C_b(\OVS M)$ by
\begin{equation}\label{Reps}
 u_\eps=\eps v_0-\eps^2w + R_\eps.
\end{equation}
To show that $R_\eps=\mathcal{O}(\eps^3)$ in ${C_b(\OVS M)}$ we apply $\XX$ to the equation~\ref{Reps} above. We have that
\begin{equation}\label{eqforReps}
 \XX R_\eps=\XX(u_\eps-\eps v_0-\eps^2w)=\COLOP[u_\eps,u_\eps]+\eps f-\eps\XX v_0 -\eps^2\COLOP[v_0,v_0] =\COLOP[u_\eps,u_\eps]-\eps^2\COLOP[v_0,v_0].
\end{equation}
Since the collision operator $\COLOP$ is linear in both of its arguments, we have that
%We have by the formula of the collion operator $\COLOP$ that 
\begin{align}\label{telescopic_Q}
 &\COLOP[u_\eps,u_\eps]-\eps^2\COLOP[v_0,v_0]= \COLOP[(u_\eps-\eps v_0),u_\eps]-\COLOP[\eps v_0,(u_\eps-\eps v_0)].
% %\COLOP [u, v] (x,p)  
% %=  A(x,p,q, p', q')  \delta_{\tilde\Sigma } 
% &=\int_{\Sigma_{x, p }}  \left[u_\eps(x,p) u_\eps(x, q)  - u_\eps (x,p' ) u_\eps (x,  q' ) \right] A(x,p,q, p', q') dV (x,p \ ; \ q,p',q') \\
% &-\eps^2\int_{\Sigma_{x, p }}  \left[v_0(x,p) v_0(x, q)  - v_0 (x,p' ) v_0 (x,  q' ) \right] A(x,p,q, p', q') dV (x,p \ ; \ q,p',q').
\end{align}

We integrate the equation~\eqref{eqforReps} for $R_\eps$ along the flow of $\XX$ to obtain %.and by using Lemma~\ref{lislem} shows that
\[
 %\norm{R_\eps(x,p)}_{C(\OVS M)}
 R_\eps(x,p)
 \leq C_1\norm{u_\eps-\eps v_0}_{C_b(\OVS M)}\norm{u_\eps}_{C_b(\OVS M)}+C_1\norm{\eps v_0}_{C_b(\OVS M)}\norm{u_\eps-\eps v_0}_{C_b(\OVS M)}.
\]
Here we used~\eqref{telescopic_Q} and Lemma~\ref{lislem}.
By using the estimate $\norm{u_\eps-\eps v_0}_{C(\OVS M)}\leq C\eps^2$ from Part (1) of this lemma, and by using that $\norm{u_\eps}_{C_b(\OVS M)}\leq C\eps\norm{f}_{C_K( \OVS M )}$  and that $\norm{v_0}_{C_b(\OVS M)}\leq C_2\norm{f}_{C_K( \OVS M )}$ we obtain
\[
 \norm{R_\eps(x,p)}_{C_b(\OVS M)}\leq C_3\eps^3.
\]
We have shown that
\[
 u_\eps=\eps v_0-\eps^2w + \mathcal{O}(\eps^3).
\]
It follows that $\Phi$ is twice Frech\'et differentiable at the origin in $C_b(\OVS M)$.

Let $f,\, h\in C_K(\OVS \mathcal{C}^+)$. To prove Part (2) of the claim, we use ``polarization identity of differentiation'', which says that any function $F$, which is twice differentiable at $0$, satisfies
%We  which says that the twicely differentiable map $\Phi$ satisfies
\[
  \frac{\p^2 }{\p \eps_1\eps_2}\Big|_{\eps_1=\eps_2=0}F(\eps_1f_1+\eps_2f_2)=\frac{1}{4}\frac{\p^2}{\p\eps^2}\Big|_{\eps=0}[F(\eps(f_1+f_2))-F(\eps(f_1-f_2))].
 \]
% 
% 
% 
% This means that we apply the above expansion for $u_\eps$ in two instances where we replace $f$ in~\eqref{intergrate_bman} by $f+g$ and by $f-g$. 
%
%We need to set up some notations. 
For $f\in B_1\subset C_K(\OVS \mathcal{C}^+ )$, we denote by $u_f$ the solution to the Boltzmann equation~\eqref{intergrate_bman} with source $f$. We also denote by $v_f$ the solution to the Vlasov equation~\eqref{intergrate_vlas} with source $f$, and we denote similarly for $w_f$, where $w_f$ solves~\eqref{integrated_second_lin} where $v_0$ is replaced by $v_f$. 

We need to show that
\begin{equation}\label{pol_id}
 \frac{1}{\eps_1\eps_2}\left[\Phi(\eps_1f_1+\eps_2f_2)-\Phi(\eps_2f_2)-\Phi(\eps_1f_1)+\Phi(0)\right]\longrightarrow w,
\end{equation}
as $\eps_1\to 0$ and $\eps_2\to 0$, where $w$ solves
% \[
%  \XX w = \COLOP [ v_{f_1},  v_{f_2} ]+  \COLOP [ v_{f_2},  v_{f_1}].
% \]
\begin{alignat}{2}
\XX w &= \COLOP [ v_{f_1},  v_{f_2} ]+  \COLOP [ v_{f_2},  v_{f_1}], %\mathcal{X}v_{f}= f,\quad \mathcal{X}v_{h}= h\quad 
\quad &&\text{ on } \OVS M\\%\text{on }\compp_{Q,T}\\
w &= 0,\quad  &&\text{ on }\OVS \mathcal{C}^- \nonumber.
\end{alignat}
By using the polarization identity~\eqref{pol_id} and the expansion of $u_{\eps(f_1\pm f_2)}$ for $\eps$ small, which we have already proven, we obtain
\begin{align*}
\lim_{\eps_1,\, \eps_2\to 0}\frac{\Phi(\eps_1f_1+\eps_2f_2)-\Phi(\eps_2f_2)-\Phi(\eps_1f_1)+\Phi(0)}{\eps_1\eps_2} &= \frac{1}{4}\frac{\p^2}{\p\eps^2}\Big|_{\eps=0}[u_{\eps(f_1+f_2)}-u_{\eps(f_1-f_2)}] \\
&=\frac{1}{2}w_{f_1+f_2}-\frac{1}{2}w_{f_1-f_2}. %\lim_{\eps\to 0}\frac{\Phi(\eps(f_1-f_2))-\Phi(\eps(f_2))-\Phi(\eps(f_1+f_2))}{2\eps^2}.
\end{align*}
By denoting $w_{f_1-f_2}-w_{f_1+f_2}=2w$ and by using the linearity of $\COLOP$ in both of its argument, we finally have have that
\[
 \XX w=\frac{1}{2}\COLOP[v_{f_1+f_2},v_{f_1+f_2}]-\frac{1}{2}\COLOP[v_{f_1-f_2},v_{f_1-f_2}]= \COLOP[v_{f_1},v_{f_2}]+\COLOP[v_{f_2},v_{f_1}].
\]
Renaming $f_1$ and $f_2$ as $f$ and $h$ respectively proves the claim.

\end{proof}

%%%%%%%%%%%%%%%%%%%%%%%%%%%%%%%%%%%%%%%
% Conflict of interest.
%

%\text{Conflict of Interest.}
%%%%%%%%%%%%%%%%%%%%%%%%%%%%%%%%%%%%%%%
% Bibliography
%

\bibliographystyle{acm}
\bibliography{Sections/citations-boltzmann}

\end{document}